\documentclass[11pt]{amsart}
\synctex=1
\usepackage{amssymb, latexsym, graphicx, mathrsfs, enumerate, amsmath, amsthm, textcomp, url}
\usepackage{color}
\input xy
\xyoption{all}

\newlength{\margins}
\setlength{\margins}{1in}
\usepackage[top=\margins,bottom=\margins,left=\margins,right=\margins]{geometry}

\numberwithin{equation}{section}
\newtheorem{Thm}{Theorem}[section]
\newtheorem{Prop}[Thm]{Proposition}
\newtheorem{Cor}[Thm]{Corollary}
\newtheorem{Lem}[Thm]{Lemma}
\newtheorem{Conj}[Thm]{Conjecture}

\theoremstyle{definition}
\newtheorem{Def}[Thm]{Definition}

\newtheorem{Rmk}[Thm]{Remark}

\newcommand{\bQ}{\overline{\mathbb{Q}}}

\newcommand{\bF}{\overline{\mathbb{F}}}

\newcommand{\C}{\mathbb{C}}
\newcommand{\R}{\mathbb{R}}
\newcommand{\Q}{\mathbb{Q}}

\newcommand{\Z}{\mathbb{Z}}

\newcommand{\F}{\mathbb{F}}

\newcommand{\lra}{\longrightarrow}

\newcommand{\A}{\mathbb{A}}

\newcommand{\T}{\mathcal{T}}

\newcommand{\diag}{{\rm diag}}
\newcommand{\ds}{\displaystyle}

\begin{document}

\title[Siegel series and  intersection numbers]
{A  reformulation of the Siegel series and  intersection numbers}

\keywords{Siegel series, Gross-Keating invariant, special cycles on an orthogonal Shimura variety}
\subjclass[2010]{MSC 11F30, 	11F46, 11G18, 14C17, 14G35, 	14J15}

\author[Sungmun Cho and Takuya Yamauchi]{Sungmun Cho and Takuya Yamauchi}
\thanks{The first author was   supported by JSPS KAKENHI Grant No. 16F16316, 
 Samsung Science and Technology Foundation under Project Number SSTF-BA1802-03, and NRF 2018R1A4A 1023590.}
\thanks{The second author is partially supported by JSPS KAKENHI Grant Number (B) No.19H01778.}
\address{Sungmun Cho \\ 
Department of Mathematics, POSTECH, 77, Cheongam-ro, Nam-gu, Pohang-si, Gyeongsangbuk-do, 37673, KOREA}

\email{sungmuncho12@gmail.com}
\address{Takuya Yamauchi \\ 
Mathematical Inst. Tohoku Univ.\\
 6-3,Aoba, Aramaki, Aoba-Ku, Sendai 980-8578, JAPAN}
\email{tyamauchi@m.tohoku.ac.jp}

\maketitle

\begin{abstract}
 


In this paper, we will explain a conceptual reformulation and inductive formula of the Siegel series. 
Using this, we will explain that both sides of the local intersection multiplicities of \cite{GK1} and the Siegel series have the same inherent structures, beyond matching values.

As an application, we will prove a new identity between the intersection number of two modular correspondences over $\F_p$ and the sum of the Fourier coefficients of the Siegel-Eisenstein series for $\mathrm{Sp}_4/\Q$ of weight $2$, which is independent of  $p \left(> 2\right)$.
In addition, we will explain a description of the local intersection multiplicities of the special cycles over $\F_p$ on the supersingular locus of the `special fiber'  of the Shimura varieties for 
$\mathrm{GSpin}(n,2), n\le 3$  in terms of the Siegel series directly.

\end{abstract}





\tableofcontents

\section{Introduction}\label{sectionintro}
\subsection{On the Gross-Keating's formula}\label{subsectionintro}
In \cite{GK1} Gross and Keating studied the arithmetic intersection number of three modular correspondences which are regarded 
as cycles in the self-product $Y_0(1)\times Y_0(1)/\Z$ of the moduli stack $Y_0(1)$ of elliptic curves over any scheme. 
They described it purely in terms of certain invariants of a ternary quadratic form  created by themselves to formulate it.
This invariant has
been generalized to quadratic forms of any degree over a local field, and is nowadays
called the Gross-Keating invariant.
They already  expected in the introduction of their paper that the  arithmetic intersection number coincides with the sum of the
Fourier coefficients of the derivative of the Siegel-Eisenstein series of weight 2 and of degree 3, which has been studied 
thoroughly in \cite{A}.
Kudla in \cite{Kudla1} later proposed a general program (local version) to make a connection between the  local intersection multiplicity of special cycles on 
an integral model of the Shimura varieties for $\mathrm{GSpin}(n,2)$
and the derivative of the local factor of a  Fourier coefficient of  the Siegel-Eisenstein series of weight $(n+2)/2$ and of degree $n+1$.
In the latter object, such local factor is called the Siegel series.


The program has been vastly studied by a series of papers  \cite{Kudla1}, \cite{KRY1},\cite{KRY2},\cite{KR2},\cite{KR1} for $0\le n\le 3$, and \cite{BY} for general $n$, when the dimension of the arithmetic intersection is zero.
A strategy to compute the local intersection multiplicities which had been taken in these papers is to reduce them to the case of Gross-Keating. 
On the other hand, a computation of the Siegel series side is based on Katsurada's paper \cite{Kat}. 
The relation between both sides then follows by a direct comparison.
Note that beyond direct comparison, there had not been known an evidence or structure to conceptually yield the equality between them.

 Therefore, in order to understand Kudla's program in conceptual way toward the higher dimensional case of the arithmetic intersection, it would be  important  to have a better understanding on the relation between  the local intersection multiplicity of Gross and Keating in \cite{GK1} and the Siegel series.
 In \cite{GK1}, a key step is to derive an inductive formula (Lemma 5.6 of \cite{GK1}) for the local intersection multiplicities over $W(\bF_p)$ 
at a prime $p$, which involves the local intersection multiplicity on the special fiber at $p$ (Lemma 5.7 of \cite{GK1}).  
As we will compare this inductive formula with our result in the next  subsection, we describe the precise form of their inductive formula.
 
Let $(L, q_L)$ be an anisotropic quadratic lattice over $\mathbb{Z}_p$ of rank $3$. Then the Gross-Keating invariant of $(L, q_L)$ consists of three integers $\mathrm{GK}(L)=(a_1, a_2, a_3)$ with $a_1\leq a_2 \leq a_3$.
If we denote by $\alpha_p(a_1, a_2, a_3)$  the local intersection multiplicity associated to $(L, q_L)$ (see (3.18) of \cite{GK1}), then it satisfies the following inductive formula, with respect to the Gross-Keating invariant:
(cf. the proof of Theorem \ref{thmaniso2}):
\begin{equation}\label{eqintro1}
\alpha_p(a_1, a_2, a_3)=\alpha_p(a_1, a_2, a_3-2)+\mathcal{T}_{a_1, a_2}.
\end{equation}
Here, $\mathcal{T}_{a_1, a_2}$ is the local intersection multiplicity of two cycles on the special fiber of the setting of Gross-Keating.

\subsection{On the Siegel series}\label{onss}
On the other hand,  the Siegel series is of great importance in automorphic forms, such as the study of conjectures related to automorphic $L$-functions  and  the construction of automorphic forms of level $1$, so-called Ikeda lift.
We refer to the first several paragraphs of \cite{IK2} for more introductory discussion about the important usages of the Siegel series in this context.
Theories of the Siegel series have been developed by many people such as Kitaoka and Shimura. It was Katsurada in \cite{Kat} who firstly found the explicit formula of the Siegel series for $\mathbb{Z}_p$. 
However, as mentioned in the introduction of \cite{IK2}, his formula is  complicated and it is not clear which invariant of a quadratic form determines the Siegel series. 

Recently, Ikeda and Katsurada in \cite{IK2} obtained the formula of the Siegel series over any finite extension of $\mathbb{Z}_p$.
Furthermore, they show that the Siegel series is completely determined by the Gross-Keating invariant with extra data, called the Extended Gross-Keating datum, for any quadratic form over any finite extension of $\mathbb{Z}_p$.

The Siegel series is usually described as a polynomial. 
An explicit formula of  the Siegel series given in \cite{Kat} and \cite{IK2} is to determine the coefficients of this polynomial. 
On the other hand, theoretical interpretation of these coefficients had not been known yet.

\subsection{Reformulation of the Siegel series}
Our first main result is to reformulate the Siegel series over any finite extension of $\mathbb{Z}_p$ in Theorem \ref{eqldf} and Corollary \ref{rmkse}. 
The Siegel series can be defined as an integral of certain volume form on a $p$-adic manifold (cf. Definitions \ref{deflocaldensity} and \ref{defse}) associated to a quadratic lattice $(L, q_L)$ over $\mathfrak{o}$, where $\mathfrak{o}$ is the ring of integers of a finite field extension of $\mathbb{Q}_p$ (for any $p$). 
It is usually denoted by $\mathcal{F}_L(X)$, as a polynomial of $X$.
Let $n$ be the rank of $L$ so that the Gross-Keating invariant consists of $n$-integers $\mathrm{GK}(L)=(a_1, \cdots, a_n)$ satisfying $a_i\leq a_j$ with $i\leq j$.

\begin{Thm}\label{main-1}(Corollary \ref{rmkse})
we have the following description of the Siegel series:
\begin{multline*}
\mathcal{F}_L(X)=\\
(1-X)\cdot
\sum_{\substack{0\leq b\leq \frac{|\mathrm{GK}(L)|}{2},\\ n_0\leq a\leq n}} \left(\#\mathcal{S}_{(L, a^{\pm}, b)}\cdot
f^{b\cdot(n+1)}X^{2b}\cdot (1+\chi(a^{\pm})f^{n-a/2}X)\prod_{1\leq i < n-a/2}(1-f^{2i}X^2)\right).
\end{multline*}
\end{Thm}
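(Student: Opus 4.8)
The strategy is to view the Siegel series $\mathcal{F}_L(X)$ as an integral over a $p$-adic manifold (Definitions \ref{deflocaldensity} and \ref{defse}) and to decompose that integral according to the combinatorial data attached to the Gross-Keating invariant. First I would recall the known reduction of $\mathcal{F}_L(X)$ to a weighted sum of local densities $\sum_k \beta_k(L) Y^k$ for suitable $Y$, where $\beta_k$ counts (with $p$-adic volume) the representations of $L$ modulo $\p^k$ by the standard hyperbolic-plus-diagonal forms. The point of the reformulation is that each such count stabilizes once $k$ is large relative to the entries $a_i$ of $\mathrm{GK}(L)$, and the ``jumps'' are organized by pairs $(a,b)$ with $n_0 \le a \le n$ and $0 \le b \le |\mathrm{GK}(L)|/2$. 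So Step 1 is to set up the stratification of the $p$-adic manifold by the sets $\mathcal{S}_{(L,a^\pm,b)}$, verifying that they are disjoint, exhaust the relevant locus, and have the volumes asserted; here the superscript $a^\pm$ records the discriminant/Hasse-type sign $\chi(a^\pm)$ of the relevant sublattice, which is exactly what controls the linear factor $1+\chi(a^\pm) f^{n-a/2}X$.

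Step 2 is to compute the contribution of a single stratum $\mathcal{S}_{(L,a^\pm,b)}$ to $\mathcal{F}_L(X)$. On such a stratum the local integral factors as a product of: (i) a ``finite'' part of size $\#\mathcal{S}_{(L,a^\pm,b)}$; (ii) a power $f^{b(n+1)}X^{2b}$ coming from the scaling of a rank-$2b$ isotropic sublattice (the $b$ hyperbolic planes that have been split off); and (iii) the local density of the complementary rank-$(n-2b)$ form, which — because the remaining form has bounded Gross-Keating data — is given by a closed-form Gauss-sum evaluation yielding precisely $(1+\chi(a^\pm)f^{n-a/2}X)\prod_{1\le i < n-a/2}(1-f^{2i}X^2)$, together with the universal prefactor $(1-X)$. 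This is essentially a rank-by-rank induction: splitting off one hyperbolic plane multiplies the density by the expected $(1-f^{2i}X^2)$-factor, and the terminal step (an anisotropic or split form of small rank) contributes the sign-dependent linear factor. I would organize this as a lemma computing $\mathcal{F}$ for the ``pure'' forms first, then assembling.

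Step 3 is to sum over all strata and match term-by-term with the asserted formula, checking that the ranges of $a$ and $b$ are exactly those for which $\mathcal{S}_{(L,a^\pm,b)} \neq \emptyset$; in particular the lower bound $n_0$ on $a$ and the upper bound $|\mathrm{GK}(L)|/2$ on $b$ must be shown to be forced by the structure of $\mathrm{GK}(L)$, using the Ikeda–Katsurada description via the Extended Gross-Keating datum cited above. The main obstacle I anticipate is Step 1 together with the volume computation in Step 2: one must identify the right $p$-adic stratification so that the sets $\mathcal{S}_{(L,a^\pm,b)}$ are genuinely combinatorial (independent of fine choices) and so that their volumes collapse to the clean product shown — in effect proving that no ``error terms'' survive beyond the listed factors. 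Once the stratification is pinned down, Steps 2 and 3 are a bookkeeping exercise in Gauss sums and geometric series; I would keep the sign-tracking of $\chi(a^\pm)$ explicit throughout, as that is where subtle errors tend to hide.
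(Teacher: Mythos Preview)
Your proposal misidentifies what the sets $\mathcal{S}_{(L,a^\pm,b)}$ are and where the various factors in the formula come from, and the approach built on that misreading would not produce the stated identity. In the paper, $\mathcal{S}_{(L,a^\pm,b)}$ is a set of \emph{superlattices} $L'\supseteq L$ in $V$ of the \emph{same rank} $n$, with $b=[L':L]$ and with $a$ equal to the dimension of the nondegenerate part of $L'\otimes\kappa$ (Proposition \ref{propzero}). The integer $b$ is not a count of hyperbolic planes, there is no ``rank-$(n-2b)$ complementary form,'' and no rank-by-rank induction of the kind you describe. The factor $f^{b(n+1)}X^{2b}$ arises from the comparison of volume forms $\omega_L^{\mathrm{ld}}=\pi^{(2k-(n+1))b}\omega_{L'}^{\mathrm{ld}}$ when one passes from $L$ to $L'$ (the proof of Equation (\ref{eqld})), and the product $(1+\chi(a^\pm)f^{n-a/2}X)\prod_{1\le i<n-a/2}(1-f^{2i}X^2)$ together with the prefactor $(1-X)$ is not obtained by a Gauss-sum induction but is simply the known closed formula (Equation (\ref{eq37})) for the number of isometries from an $n$-dimensional quadratic space with $a$-dimensional nondegenerate part into the split $2k$-dimensional space over the residue field $\kappa$.

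The actual argument runs as follows: stratify $\mathrm{O}_{\mathfrak{o}}(L,H_k)(\mathfrak{o})$ by the \emph{primitive} loci $\mathrm{O}^{prim}_{\mathfrak{o}}(L',H_k)(\mathfrak{o})$ as $L'$ ranges over integral superlattices of $L$ (Equation (\ref{eqldset})); prove that $\mathrm{O}^{prim}_{\mathfrak{o}}(L,H_k)$ is \emph{smooth} over $\mathfrak{o}$ (Theorem \ref{thmsmoothness} and Corollary \ref{corsmooth}), so that by Weil's theorem its $\mathfrak{o}$-volume equals $f^{-\dim}$ times the number of $\kappa$-points (Theorem \ref{tpld}); then group the superlattices by the invariants $(a^\pm,b)$ and plug in the finite-field count (\ref{eq37}). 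The step you are missing---and the only nontrivial one---is the smoothness of the primitive locus, which is what converts the $p$-adic integral into a pure residue-field count. Your Step 2 attempts to replace this with a hyperbolic-plane-splitting argument that has no counterpart here and does not match the indexing of the formula.
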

Here, $|\mathrm{GK}(L)|=a_1+\cdots +a_n$,
$n_0$ is the number of $0$'s in $\mathrm{GK}(L)=(a_1, \cdots, a_n)$ (cf. Proposition \ref{propzero}),
$f$ is the cardinality of the residue field of $\mathfrak{o}$, 
$\chi$ is defined just below Equation (\ref{eq37}),
and
$\mathcal{S}_{(L, a^{\pm}, b)}$ is the set of certain (depending on two integers $a^{\pm}$ and $b$) quadratic lattices containing $L$, which can be found at Equation (\ref{sab}).

This gives a conceptual and theoretical interpretation of  the coefficients of $\mathcal{F}_L(X)$
 as a weighted sum of certain number of quadratic lattices. 
The method  used in Theorem \ref{main-1} (Corollary \ref{rmkse}) is based on another geometric nature of the Siegel series involving the stratification of a $p$-adic scheme, geometric description of each stratum, Grassmannian, and lattice counting argument.
This  is
largely different from the known techniques in this context\footnote{Remark in page 444 of \cite{Kat} says `it seems very interesting problem to prove Theorems 4.1 and 4.2  directly from the local theory of quadratic forms'.
Here, Theorems 4.1 and 4.2 are main results of \cite{Kat}, which give an explicit formula of the Siegel series over $\mathbb{Z}_p$.
Our method can be understood in the spirit of the problem proposed by Katsurada.}.

Using the result of \cite{IK2}, we then  obtain an inductive formula of the Siegel series, with respect to the Gross-Keating invariant, under Conjecture \ref{conj4} concerning about quadratic forms (which is verified to be true when $p$ is odd or when $(L, q_L)$ is anisotropic over $\mathbb{Z}_2$ in Lemmas \ref{l4}-\ref{l5}).

We describe our inductive formula more precisely. 
If we choose the integer $d$ characterized by the condition $a_{n-d}<\underbrace{a_{n-d+1}=\cdots =a_n}_d$, then we can associate certain lattice $L^{(d, n)}$ containing $L$ whose rank is also $n$. 
To be more precise, for a reduced basis  $(e_1, \cdots, e_n)$ of $L$ given in Definition \ref{def3.2}, the lattice $L^{(d, n)}$ is spanned by $(e_1, \cdots, e_{n-d}, \underbrace{\frac{1}{\pi}\cdot e_{n-d+1}, \cdots, \frac{1}{\pi}\cdot e_{n}}_d)$. Here, $\pi$ is a uniformizer in $\mathfrak{o}$.
A second main theorem of the current paper is the following: 

\begin{Thm}\label{main0}(Theorem \ref{mainthm})
Assume that Conjecture \ref{conj4} is true.
 If  $L^{(d, n)}$ is a quadratic lattice,
then we have the following inductive formula, with respect to the Gross-Keating invariant, of the Siegel series $\mathcal{F}_L(X)$:

\begin{multline*}
\mathcal{F}_L(X)=\sum_{m=1}^{d} \left( c_m \cdot f^{\left(n+1\right)m}\cdot X^{2m}\cdot\sum_{L'\in \mathcal{G}_{L, d, m}}\mathcal{F}_{L'}(X)\right)+\\
\displaystyle (1-X)(1-f^{d}X)^{-1}\cdot \left(\prod_{i=1}^{d}(1-f^{2i}X^2)\right)\cdot \mathcal{F}_{L_0^{(d, n)}}(f^{d}X),
\end{multline*}
where $c_m=(-1)^{m-1}f^{m(m-1)/2}$.
Here, $f$ is the cardinality of the residue field of $\mathfrak{o}$.
For $L'\in \mathcal{G}_{L, d, m}$, 
\[
\left\{
  \begin{array}{l}
 \mathrm{GK}(L) \succ \mathrm{GK}(L');\\
|\mathrm{GK}(L')|=|\mathrm{GK}(L)|-2m;\\
 \mathrm{GK}(L_0^{(d, n)})=\mathrm{GK}(L)^{(n-d)}.
    \end{array} \right.
\]

Note that notion of 
$L^{(d, n)}$, $\mathcal{G}_{L, d, m}$, and $\binom{m}{k}_f$ 
can be found at the beginning of Section \ref{sectionifss}.
Notion of  $L_0^{(d, n)}$  can be found at Remark \ref{rlx}.(1).
\end{Thm}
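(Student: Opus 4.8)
The plan is to derive the inductive formula from the reformulation of the Siegel series in Theorem~\ref{main-1} (Corollary~\ref{rmkse}), turning the asserted identity into a combinatorial statement about the lattice-counting numbers $\#\mathcal{S}_{(\bullet,a^{\pm},b)}$. First I would apply Corollary~\ref{rmkse} to each Siegel series occurring in the formula: to $\mathcal{F}_L(X)$, to $\mathcal{F}_{L'}(X)$ for $L'\in\mathcal{G}_{L,d,m}$, and to $\mathcal{F}_{L_0^{(d,n)}}(f^{d}X)$, keeping careful track in the last case of how the substitution $X\mapsto f^{d}X$ acts on the monomials $f^{b(n+1)}X^{2b}$ and on the prefactor $(1-X)\prod_{1\le i<n-a/2}(1-f^{2i}X^2)$. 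After this step the claim becomes the equality of two explicit polynomials in $X$ whose coefficients are integral linear combinations of the $\#\mathcal{S}$-numbers attached to $L$, to the lattices $L'$, and to $L_0^{(d,n)}$.

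The heart of the argument is a counting identity among the sets $\mathcal{S}_{(\bullet,a^{\pm},b)}$, which I would prove by stratifying the lattices $M\supseteq L$ counted by $\mathcal{S}_{(L,a^{\pm},b)}$ according to the dimension $k$ of the subspace $(M\cap L^{(d,n)})/L$ inside $L^{(d,n)}/L$, a $d$-dimensional space over the residue field of $\mathfrak{o}$. The stratum $k=0$ should reproduce, after rescaling, the lattices counted on the side of $L_0^{(d,n)}$, and the rational factor $(1-X)(1-f^{d}X)^{-1}\prod_{i=1}^{d}(1-f^{2i}X^2)$ should emerge as the generating function repackaging the geometric contribution of the $d$ distinguished ``top'' directions together with the prefactor discrepancy between $L$ and $L_0^{(d,n)}$ in Corollary~\ref{rmkse}. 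The stratum $k=m>0$ is organized by the intermediate lattices $L'=M\cap L^{(d,n)}$, which range over $\mathcal{G}_{L,d,m}$; summing over the $\binom{d}{m}_f$ (resp.\ $\binom{m}{k}_f$) admissible subspaces and applying M\"obius inversion on the subspace poset --- whose M\"obius function is $(-1)^{j}f^{j(j-1)/2}$ for a dimension jump $j$ --- to pass from ``$M\cap L^{(d,n)}\supseteq L'$'' to ``$M\cap L^{(d,n)}=L'$'' should force exactly the coefficients $c_m=(-1)^{m-1}f^{m(m-1)/2}$ and the power $f^{(n+1)m}X^{2m}$.

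To complete the match one must certify that the auxiliary lattices carry the invariants asserted: $\mathrm{GK}(L)\succ\mathrm{GK}(L')$ and $|\mathrm{GK}(L')|=|\mathrm{GK}(L)|-2m$ for $L'\in\mathcal{G}_{L,d,m}$, and $\mathrm{GK}(L_0^{(d,n)})=\mathrm{GK}(L)^{(n-d)}$, and in particular that they are honest quadratic lattices --- this last being exactly what the hypothesis ``$L^{(d,n)}$ is a quadratic lattice'' supplies, and the reason Conjecture~\ref{conj4} is invoked. Here I would use the behaviour of reduced bases and of the Extended Gross-Keating datum under the operation $e_i\mapsto\pi^{-1}e_i$ applied to the top $d$ vectors, as developed in \cite{IK2}, together with Conjecture~\ref{conj4}, which holds for $p$ odd and for lattices anisotropic over $\Z_2$ by Lemmas~\ref{l4}--\ref{l5}. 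With these labels in place, the ranges $0\le b\le|\mathrm{GK}|/2$, the sign invariants $\chi(a^{\pm})$, and the truncated products $\prod_{1\le i<n-a/2}(1-f^{2i}X^2)$ on the two sides can be matched term by term in powers of $X$.

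The step I expect to be the main obstacle is controlling the finite quadratic space structure under the operation on the top $d$ vectors: tracking how the split/non-split type $a^{\pm}$ and the cutoff $b\le|\mathrm{GK}(L)|/2$ change when $e_{n-d+1},\dots,e_n$ are divided by $\pi$, most delicately when $p=2$ and the reduced form has non-diagonal $2\times 2$ blocks among these vectors; this is precisely the phenomenon Conjecture~\ref{conj4} encapsulates, and the reason the unconditional cases are confined to $p$ odd and to the anisotropic $\Z_2$ situation. A secondary, purely bookkeeping obstacle is checking that the rescaling $X\mapsto f^{d}X$ together with $(1-X)(1-f^{d}X)^{-1}\prod_{i=1}^{d}(1-f^{2i}X^2)$ exactly reconciles the two prefactors of Corollary~\ref{rmkse}, given that the index $n-a/2$ in $\prod_{1\le i<n-a/2}(1-f^{2i}X^2)$ shifts in passing from $L$ to $L_0^{(d,n)}$.
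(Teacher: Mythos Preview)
Your approach is essentially the same as the paper's: the paper proves the counting identity you describe as Proposition~\ref{prop3} (via Lemmas~\ref{l1}--\ref{l2} for the stratum where $M$ contains no lattice of $\mathcal{G}_{L,d,1}$, and inclusion--exclusion on the subspace poset for the complementary part, exactly your M\"obius inversion), then plugs it into Theorem~\ref{eqldf}/Corollary~\ref{rmkse}.

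One point you should make explicit: the $k=0$ stratum does \emph{not} directly produce lattices over $L_0^{(d,n)}$.  Lemma~\ref{l1} shows that a lattice $M\supseteq L$ with $M\cap L^{(d,n)}=L$ decomposes as $M'\oplus(\mathfrak{o}e_{n-d+1}\oplus\cdots\oplus\mathfrak{o}e_n)$ with $M'\supseteq L_x^{(d,n)}$ for some $x\in\mathrm{M}_{d\times(n-d)}(\mathfrak{o})$, not necessarily $x=0$.  The paper sums over all such $x$ (this is the weighting $f^{d(b-(n-d)b')}\sum_x$ in Proposition~\ref{prop3}) and then collapses the sum using Remark~\ref{rlx}.(5): since the column vectors of $\begin{pmatrix}\mathrm{id}_{n-d}&0\\x&\mathrm{id}_d\end{pmatrix}$ form an optimal basis of $L$, one has $\mathrm{EGK}(L_x^{(d,n)})=\mathrm{EGK}(L_0^{(d,n)})$, whence $\mathcal{F}_{L_x^{(d,n)}}(X)=\mathcal{F}_{L_0^{(d,n)}}(X)$ by the main result of \cite{IK2}.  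You allude to EGK later, but only in the context of verifying the GK invariants of $L'$ and $L_0^{(d,n)}$; its decisive use is here, to identify all the $\mathcal{F}_{L_x^{(d,n)}}$.  Conjecture~\ref{conj4} enters exactly where you indicate: to guarantee that the $a^{\pm}$ label of $M'\oplus(\mathfrak{o}e_{n-d+1}\oplus\cdots)$ agrees with that of $M'$.
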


Here, $\mathcal{G}_{L, d, m}$ is identified with Grassmannian to classify the set of $m$-dimensional subspaces of the vector space 
of dimension $d$ (given by $L^{(d, n)}/L$)  over a finite field $\mathfrak{o}/(\pi)$, whose order is $\binom{d}{m}_f$.

\subsection{The comparison between Gross-Keating's formula and the Siegel series}\label{subseccompa}
Since both sides, Gross-Keating's formula and the Siegel series, have inductive formulas, it is natural to ask whether or not there is a relation between them.

If we restrict ourselves to an anisotropic quadratic lattice over $\mathbb{Z}_p$ of rank $n$, which covers  the case of Gross-Keating, then we have more refined and simpler inductive formula (Theorem \ref{indfaniso3}) as follows:

\begin{equation}\label{eqintro2}
\mathcal{F}_L(X)=\left\{
  \begin{array}{l l}
  p^{n+1}\cdot X^2\cdot \mathcal{F}_{L^{(n)}}(X)+(1-X)(1+pX)\cdot \mathcal{F}_{L^{(n)}_0}(pX)   & \quad    \text{if $2\leq n \leq 4$};\\
  p^{2}\cdot X^2\cdot \mathcal{F}_{L^{(1)}}(X)+(1-X)(1+pX)  & \quad    \text{if $n=1$}.
    \end{array} \right.
\end{equation}

After comparing both inductive formulas, we obtain the following result:

\begin{Thm}\label{mainthmintro}(cf. Theorems \ref{thmaniso1} and \ref{thmaniso2})
The inductive formula of Gross-Keating in Equation (\ref{eqintro1}) and   the derivative of Equation (\ref{eqintro2})  at $1/p^2$ with $n=3$  \textbf{do match} each other.
 As a direct consequence, we have
\[
\left\{
  \begin{array}{l l}
\alpha_p(a_1,a_2,a_3)=c_1\cdot \mathcal{F}'_L(1/p^2);\\
\mathcal{T}_{a_1, a_2}=c_2\cdot \mathcal{F}'_{L_0^{(3)}}(1/p),
    \end{array} \right.\]
for  explicitly computed constants $c_1$ and $c_2$.
Here $\mathrm{GK}(L_0^{(3)})=(a_1, a_2)$.
\end{Thm}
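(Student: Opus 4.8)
The plan is to put the two inductive structures side by side and verify that, after applying $\mathcal{F} \mapsto \mathcal{F}'(1/p^2)$ to the Siegel-series recursion, the resulting recursion is literally the Gross--Keating recursion \eqref{eqintro1}, term by term. First I would specialize Equation \eqref{eqintro2} to $n=3$: since an anisotropic ternary lattice $(L, q_L)$ has Gross--Keating invariant $\mathrm{GK}(L) = (a_1, a_2, a_3)$ with (as recalled) $a_1 \le a_2 \le a_3$, the relevant integer $d$ (characterized by $a_{n-d} < a_{n-d+1} = \cdots = a_n$) equals $1$ when $a_2 < a_3$, which is the generic case; the refined formula then reads
\[
\mathcal{F}_L(X) = p^{4}\,X^2\,\mathcal{F}_{L^{(3)}}(X) + (1-X)(1+pX)\,\mathcal{F}_{L_0^{(3)}}(pX),
\]
with $\mathrm{GK}(L_0^{(3)}) = (a_1, a_2)$ and $\mathrm{GK}(L^{(3)}) = \mathrm{GK}(L)^{(2)}$, i.e. $L^{(3)}$ is again anisotropic ternary with $a_3$ decreased by $2$, so $\alpha_p(a_1, a_2, a_3-2)$ attaches to it. I would also need the boundary value: by the known identification of $\alpha_p$ with $\mathcal{F}'_L(1/p^2)$ in low cases, together with the Kudla--Rapoport--Yang normalization, there is a fixed constant $c_1$ (a power of $p$ times a factor coming from the functional equation of $\mathcal{F}_L$) so that $\alpha_p(\,\cdot\,) = c_1 \cdot \mathcal{F}'_{(\cdot)}(1/p^2)$ should be consistent across the recursion; the content of the theorem is precisely that it \emph{is} consistent, and that the ``remainder'' term matches $\mathcal{T}_{a_1,a_2}$.

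Next I would differentiate. Write $g(X) = (1-X)(1+pX)$ and note $g(1/p) = (1 - 1/p)(2) = 2(p-1)/p$, while the product $\prod_{i=1}^{1}(1 - p^{2i}X^2)$ evaluated at $X = 1/p$ vanishes — this is the key cancellation that makes the ``special-fiber'' term survive only through its derivative. Differentiating $\mathcal{F}_L(X)$ and evaluating at $X = 1/p^2$: the term $p^4 X^2 \mathcal{F}_{L^{(3)}}(X)$ contributes $p^4 \cdot (1/p^4) \cdot \mathcal{F}'_{L^{(3)}}(1/p^2)$ plus a piece $2 p^4 \cdot (1/p^2) \cdot \mathcal{F}_{L^{(3)}}(1/p^2)$; the value $\mathcal{F}_{L^{(3)}}(1/p^2)$ is \emph{not} zero in general, so I must track it — but by the explicit shape of the first line of Theorem \ref{main-1} specialized to an anisotropic lattice, $\mathcal{F}_{L'}(1/p^2)$ is a computable power of $p$ (the product $\prod_{1 \le i < n - a/2}(1 - p^{2i} X^2)$ has a factor forcing collapse unless the range is empty, which pins down which stratum contributes). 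The term $(1-X)(1+pX)\mathcal{F}_{L_0^{(3)}}(pX)$ contributes, via the product rule, $\frac{d}{dX}\big[(1-X)(1+pX)\big]\big|_{1/p^2} \cdot \mathcal{F}_{L_0^{(3)}}(1/p)$ plus $g(1/p^2) \cdot p \cdot \mathcal{F}'_{L_0^{(3)}}(1/p)$; since $\mathrm{GK}(L_0^{(3)}) = (a_1, a_2)$, the quantity $\mathcal{F}'_{L_0^{(3)}}(1/p)$ is exactly what should be identified (up to the constant $c_2$) with $\mathcal{T}_{a_1, a_2}$. Collecting the coefficients of $\mathcal{F}'_{L^{(3)}}(1/p^2)$, of $\mathcal{F}'_{L_0^{(3)}}(1/p)$, and of the explicit constant terms, and then substituting $\alpha_p = c_1 \mathcal{F}'$ throughout, I would arrive at an identity of the form $\alpha_p(a_1,a_2,a_3) = \alpha_p(a_1,a_2,a_3-2) + (\text{explicit multiple of } \mathcal{F}'_{L_0^{(3)}}(1/p)) + (\text{constant})$, and then check that the ``constant'' piece vanishes and the explicit multiple is exactly $\mathcal{T}_{a_1,a_2}$, i.e. matches \eqref{eqintro1} and the second displayed identity of the theorem.

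The main obstacle is the bookkeeping of normalizing constants and non-vanishing boundary values: one must pin down $c_1$, $c_2$ precisely (these depend on the rank $n=3$, on $p$, and on the functional-equation factor $f^{(\cdots)}X^{(\cdots)}\mathcal{F}_L(1/(f^{n+1}X))$-type symmetry of $\mathcal{F}_L$), and verify that the seemingly extraneous non-derivative terms $\mathcal{F}_{L^{(3)}}(1/p^2)$ and $(1-X)(1+pX)$ evaluated at $1/p^2$ conspire to cancel exactly. Concretely: I expect the cleanest route is to not carry $\alpha_p$ at all until the very end, but rather to first prove the \emph{clean} identity purely on the Siegel-series side — that $\mathcal{F}'_L(1/p^2)$ satisfies a three-term recursion of the shape $\mathcal{F}'_L(1/p^2) = \mathcal{F}'_{L^{(3)}}(1/p^2) + (\text{positive multiple of})\,\mathcal{F}'_{L_0^{(3)}}(1/p)$ with the non-derivative contributions cancelling — and only then invoke the known base-case equality $\alpha_p = c_1 \mathcal{F}'_L(1/p^2)$ (valid for small $a_i$, e.g. from \cite{KRY1} or \cite{KR1}) plus induction on $a_3$ to conclude the first displayed identity, and finally read off $\mathcal{T}_{a_1,a_2} = c_2 \mathcal{F}'_{L_0^{(3)}}(1/p)$ by comparing the remainder terms of the two recursions. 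The anisotropy hypothesis is used throughout to guarantee $L^{(3)}$, $L_0^{(3)}$ remain honest quadratic lattices (so that Conjecture \ref{conj4} is unconditionally available, by Lemmas \ref{l4}--\ref{l5}) and that the Grassmannian sum in Theorem \ref{main0} degenerates to the single term exhibited in \eqref{eqintro2}.
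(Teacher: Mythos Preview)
Your overall strategy---differentiate the Siegel-series recursion, match it term by term against Gross--Keating's recursion, and close by checking base cases---is exactly the paper's approach. However, you are missing the single fact that makes the whole computation clean, and this is why your ``main obstacle'' looks so daunting.

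The key input is Lemma~\ref{lemaniso1}: for an \emph{anisotropic} lattice over $\Z_p$ one has $\mathcal{F}_L(1/p^2)=0$ in rank~$3$, $\mathcal{F}_M(1/p)=0$ in rank~$2$, and $\mathcal{F}_N(1)=0$ in rank~$1$. This follows directly from the explicit shape of Corollary~\ref{rmkse}. In particular your assertion that ``$\mathcal{F}_{L^{(3)}}(1/p^2)$ is not zero in general'' is false here: $L^{(3)}$ is again anisotropic of rank~$3$, so this value vanishes, and likewise $\mathcal{F}_{L_0^{(3)}}(1/p)=0$. Once you know this, differentiating
\[
\mathcal{F}_L(X)=p^{4}X^{2}\mathcal{F}_{L^{(3)}}(X)+(1-X)(1+pX)\,\mathcal{F}_{L_0^{(3)}}(pX)
\]
at $X=1/p^2$ (not $X=1/p$; note the outer factor $(1-X)(1+pX)$ is evaluated at $1/p^2$, while the inner argument $pX$ becomes $1/p$) kills all non-derivative terms immediately and yields the clean identity of Proposition~\ref{propaniso1}. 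There is no ``conspiracy of cancellations'' to track; the constants $c_1,c_2$ are then read off by dividing through by $\alpha(L,O_D)$ and comparing initial values (Lemma~\ref{lemaniso2} and the explicit small-$(a_1,a_2,a_3)$ values from \cite{Rap1}).

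One further difference in organization: the paper proves the $\mathcal{T}_{a_1,a_2}$ identity \emph{first} (Theorem~\ref{thmaniso1}), by setting up an independent recursion for $\mathcal{T}_{a_1,a_2}$ (Proposition~\ref{propaniso3}) and matching it against the rank-$2$ derivative recursion (Proposition~\ref{propaniso2}); only then is this fed into the rank-$3$ comparison (Theorem~\ref{thmaniso2}). Your proposed order---deduce the $\mathcal{T}$ identity as a by-product of the rank-$3$ match---would also work, but the paper's order avoids having to know $c_1$ before $c_2$. Finally, a notational slip: $\mathrm{GK}(L^{(3)})$ is $(a_1,a_2,a_3-2)$ up to reordering, not $\mathrm{GK}(L)^{(2)}=(a_1,a_2)$; the latter is $\mathrm{GK}(L_0^{(3)})$.
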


This theorem shows that both sides of the local intersection multiplicity and the Siegel series have the same inherent structures, that is, the same inductive formula, beyond matching their values. 
In addition, it gives us a new observation that the local intersection multiplicity on the special fiber can also be described in terms of the derivative of the Siegel series.

 \subsection{Applications to intersection numbers over finite fields}\label{subsecmodpintro}
 Since the above theorem matches both sides on special fibers,
 we can naturally consider their applications over finite fields.
We explain two consequences in this line: intersection numbers over finite fields and the local intersection multiplicities on special fibers of  $\mathrm{GSpin}(n,2)$ Shimura varieties with $n\leq 3$ (in the case of zero dimension of the arithmetic intersection).
 
\subsubsection{Intersection numbers over finite fields} Since we obtained a new description of the local intersection multiplicity on the special fiber in the setting of Gross-Keating in terms of the derivative of the Siegel series, it is natural to compute the intersection numbers of two modular correspondences on finite fields.  
 
More precisely, let $\varphi_m$ be the modular polynomial in $\Z[x,y]$ of degree $m$ whose irreducible factor corresponds to  
an affine model of the modular curves $Y_0(m/n^2)$ for some $n^2|m$ in $Y_0(1)\times Y_0(1)$ (cf. \cite{V1}). 
Then for positive integers $m_1,m_2$ and a prime $p$, we define the intersection number over the finite field $\F_p$ or over the complex field as follows:
\begin{equation}\label{dimension}
(T_{m_1,p},T_{m_2,p}):={\rm length}_{\F_p}\F_p[x,y]/(\varphi_{m_1},\varphi_{m_2}),\ 
(T_{m_1,\C},T_{m_2,\C}):={\rm length}_{\C}\C[x,y]/(\varphi_{m_1},\varphi_{m_2}).
\end{equation}
We compare the above two intersection numbers by using 
Theorem \ref{mainthmintro} on the supersingular locus and the theory of quasi-canonical lifts on the ordinary locus.
The following theorem is our result:

\begin{Thm}\label{new}(Proposition \ref{ip} and Theorem \ref{newid}) 
The intersection number $(T_{m_1,p},T_{m_2,p})$ is finite if and only if $m_1m_2$ is not a square. In addition, 
if $m_1m_2$ is not a square and $p$ is odd, then 
$$(T_{m_1,p},T_{m_2,p})=(T_{m_1,\C},T_{m_2,\C}).$$
\end{Thm}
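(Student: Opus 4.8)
The plan is to separate the two assertions — finiteness and the numerical identity — and to handle the intersection locally at each point of $\mathrm{Spec}\,\F_p[x,y]/(\vp_{m_1},\vp_{m_2})$, splitting according to whether the underlying elliptic curve is ordinary or supersingular. First I would recall that, over an algebraically closed field, a closed point of the intersection corresponds (via the moduli interpretation of $\vp_{m}$ as carrying the cyclic-isogeny correspondence, cf.\ \cite{V1}) to an elliptic curve $E$ equipped with two cyclic isogenies of degrees dividing $m_1$ and $m_2$ respectively; composing one with the dual of the other produces an endomorphism of $E$ whose degree is $m_1 m_2$ up to a square factor. If $m_1 m_2$ is a square one can arrange this endomorphism to be (a multiple of) an automorphism, which forces a positive-dimensional family of solutions — Hecke correspondences sharing a common component — so the length is infinite; conversely, if $m_1 m_2$ is not a square, $E$ must be supersingular (an ordinary curve has commutative $\Z$-rank-$1$ endomorphism ring, which cannot contain an element of non-square norm giving infinitely many such configurations), there are only finitely many supersingular $j$-invariants over $\bF_p$, and at each the deformation space is $1$-dimensional with the two cycles meeting properly; hence the length is finite. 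This gives the first sentence.

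For the identity, I would treat the ordinary locus and the supersingular locus of the intersection cycle separately and compare, term by term, with the corresponding decomposition of $(T_{m_1,\C},T_{m_2,\C})$. On the ordinary locus over $\F_p$, each intersection point lifts, by the theory of quasi-canonical (Serre--Tate / Gross) lifts, to characteristic zero in a way that is length-preserving: a CM point of conductor $p^s c$ contributes the same local multiplicity upstairs as downstairs because the intersection is transverse-to-the-special-fiber in the relevant sense and the quasi-canonical lifting cycle meets the modular correspondence with the predicted multiplicity. Summing the ordinary contributions thus already matches the ordinary part of the complex count. On the supersingular locus, I would invoke Theorem~\ref{mainthmintro}: the local intersection multiplicity $\T_{a_1,a_2}$ of the two cycles on the special fiber is, for the relevant ternary quadratic lattice $(L,q_L)$ attached to the endomorphisms, equal to $c_2\cdot\mathcal{F}'_{L_0^{(3)}}(1/p)$, and this quantity is manifestly a polynomial expression in $p$ that is independent of $p$ once $(a_1,a_2)$ is fixed — so it coincides with the corresponding supersingular-reduction contribution computed in characteristic zero via the Kudla--Rapoport--Yang / Gross--Keating identification of intersection multiplicities with Fourier coefficients.

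The main obstacle I expect is the \emph{bookkeeping of the supersingular contributions}: namely, verifying that the sum over supersingular $j$-invariants in $\bF_p$, weighted by automorphism groups and by the local multiplicities $\T_{a_1,a_2}$, reproduces exactly the part of $(T_{m_1,\C},T_{m_2,\C})$ that ``degenerates'' onto the supersingular locus after reduction mod $p$. Concretely, one has to match two sums indexed by different parametrizations — on the $\C$-side by CM points / optimal embeddings into a definite quaternion algebra ramified at $p$ and $\infty$, on the $\F_p$-side by supersingular points with their endomorphism lattices — and show the index sets and weights agree; this is a mass-formula / Eichler-type counting argument. The remaining steps (extracting the ternary lattice $(L,q_L)$ from a given intersection datum, checking its anisotropy so that Theorem~\ref{indfaniso3} and hence \eqref{eqintro2} apply, and confirming that the oddness of $p$ places us in the range where Conjecture~\ref{conj4} is known by Lemmas~\ref{l4}--\ref{l5}) are routine given the machinery already set up in the excerpt. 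One caveat worth flagging: the hypothesis $p>2$ is essential both for the validity of Conjecture~\ref{conj4} in this setting and for the clean transversality used on the ordinary locus, which is why the identity is asserted only for odd $p$.
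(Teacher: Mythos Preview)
Your overall architecture matches the paper's --- decompose into ordinary and supersingular loci, handle the ordinary side via quasi-canonical lifts and the supersingular side via Theorem~\ref{thmaniso1} --- but the supersingular step has a genuine gap. Your claim that $\mathcal{T}_{a_1,a_2}=c_2\cdot\mathcal{F}'_M(1/p)$ is ``independent of $p$ once $(a_1,a_2)$ is fixed'' is false: from Equation~(\ref{eqta}) this is a nonconstant polynomial in $p$, and the paper stresses that the local multiplicities \emph{do} depend on $p$. What makes the \emph{global} supersingular sum match the complex side is a mechanism you omit entirely: after rewriting the sum over supersingular pairs $(E,E')$ via the Minkowski--Siegel mass formula as a product $\prod_l\alpha_l(T,O_D)$ times archimedean constants, one must invoke the \emph{functional equation} of the Siegel series (Theorem~4.1 of \cite{I1}) together with an explicit evaluation of $\alpha_p(T,O_D)$ (Lemma~\ref{ldrk2aniso}) to convert the product $\mathcal{F}'_{T,p}(1/p)\cdot\alpha_p(T,O_D)$ into a constant multiple of $\mathcal{F}_{T,p}(1/p^2)$. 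Only after this conversion does the full Euler product $\prod_{l<\infty}\mathcal{F}_{T,l}(1/l^2)$ assemble, which is then identified with the Fourier coefficient $c(T)$ of the weight-$2$ Siegel Eisenstein series via \cite{nagaoka}. Your sketch jumps from the derivative directly to the Fourier coefficient without this conversion, and the ``$p$-independence'' you invoke is not what drives the equality.

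Two smaller corrections. First, your finiteness argument asserts that when $m_1m_2$ is not a square ``$E$ must be supersingular'', but ordinary elliptic curves over $\bF_p$ have CM (their endomorphism ring is an imaginary quadratic order, not rank-one over $\Z$) and do contribute isolated points to the intersection --- indeed you use them in the next paragraph. The paper instead argues (Proposition~\ref{ip}) that a hypothetical common \emph{component} would have function field of transcendence degree one over $\bF_p$, over which the universal elliptic curve has $\mathrm{End}\otimes\Q=\Q$ by \cite{CL}, forcing $m_1m_2$ to be a square. Second, your diagnosis of where $p>2$ is needed is off: Conjecture~\ref{conj4} is proved for anisotropic $\Z_2$-lattices (Lemma~\ref{l5}) and the ordinary-locus computation (Theorem~\ref{ordpart}) holds for all $p$; the sole place $p>2$ enters is the local density computation of Lemma~\ref{ldrk2aniso}.
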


We refer to Remark \ref{conrad} for a discussion with $p=2$.\\
Since $(T_{m_1,\C},T_{m_2,\C})$ is the sum of the Fourier coefficients of the Siegel-Eisenstein series for $\mathrm{Sp}_4/\Q$ by Proposition 2.4 of \cite{GK1}, 
the intersection number on the special fiber is also the sum of the Fourier coefficients of the Siegel-Eisenstein series for $\mathrm{Sp}_4/\Q$.
Furthermore  the intersection number is independent of the characteristic of a finite field with $p>2$,
whereas the local intersection multiplicities highly depend on $p$.\\

The above theorem yields a new interpretation on a classical object $\mathbb{Z}[\frac{1}{2}][x,y]/(\varphi_{m_1}, \varphi_{m_2})$.
Note that the two main objects to be analyzed in  \cite{GK1}  are geometric interpretations of 
\[
\C[x,y]/(\varphi_{m_1},\varphi_{m_2})~~~~\textit{      and     }~~~~~~~~~~~~ \Z[x,y]/(\varphi_{m_1},\varphi_{m_2},\varphi_{m_3}).
\]
Namely, the dimension of the first object is the intersection number of two modular correspondences over $\C$ and (log of) the cardinality of the second object is the arithmetic intersection number of three modular correspondences over $\Z$. 
In this context, the $\Z$-module $\mathbb{Z}[\frac{1}{2}][x,y]/(\varphi_{m_1}, \varphi_{m_2})$ has the following interesting interpretations:

\begin{Thm}\label{new!!}(Theorem \ref{flat}) 
\begin{enumerate}
\item $\mathbb{Z}[\frac{1}{2}][x,y]/(\varphi_{m_1}, \varphi_{m_2})$ is a free $\mathbb{Z}[\frac{1}{2}]$-module. 
\item 
The rank of $\mathbb{Z}[\frac{1}{2}][x,y]/(\varphi_{m_1}, \varphi_{m_2})$, as a $\mathbb{Z}[\frac{1}{2}]$-module,  is equal to 
\[
\frac{1}{288}
 \sum_{T\in {\rm Sym}_{2}(\Z)_{> 0} \atop {\rm diag}(T)=(m_1, m_2)}c(T).
\]
Here, $c(T)$ is the Fourier coefficient of the Siegel-Eisenstein series for ${\rm Sp}_4(\Z)$ of weight 2 with respect to the $(2\times 2)$- half-integral symmetric matrix $T$.
\end{enumerate}

\end{Thm}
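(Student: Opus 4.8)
The plan is to deduce both statements from Theorem \ref{new} together with the finiteness/flatness criterion for complete intersections. First I would recall from Section \ref{subsecmodpintro} that when $m_1 m_2$ is not a square, $\varphi_{m_1}$ and $\varphi_{m_2}$ have no common component (no modular curve $Y_0(m_1/a^2)$ coincides with a $Y_0(m_2/b^2)$, since these have different levels unless $m_1 m_2$ is a square), so the quotient ring $R := \mathbb{Z}[\tfrac12][x,y]/(\varphi_{m_1},\varphi_{m_2})$ is a finite $\mathbb{Z}[\tfrac12]$-module: indeed $\mathbb{Z}[\tfrac12][x,y]$ is a $2$-dimensional ring over $\mathbb{Z}[\tfrac12]$ and we are cutting by two equations with no common factor, so $R$ is supported in relative dimension zero over $\mathrm{Spec}\,\mathbb{Z}[\tfrac12]$, hence finite.

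For part (1), the key point is that a finite $\mathbb{Z}[\tfrac12]$-module is free (equivalently flat) precisely when its fiber dimension is constant across all residue fields. For each odd prime $p$, the fiber $R \otimes_{\mathbb{Z}[\frac12]} \F_p$ has $\F_p$-dimension equal to $(T_{m_1,p}, T_{m_2,p})$ by the definition in Equation (\ref{dimension}), and the generic fiber $R \otimes_{\mathbb{Z}[\frac12]} \Q$ has $\Q$-dimension equal to $(T_{m_1,\C}, T_{m_2,\C})$ (base change from $\Q$ to $\C$ does not change the length). Theorem \ref{new} asserts exactly that $(T_{m_1,p},T_{m_2,p}) = (T_{m_1,\C},T_{m_2,\C})$ for all odd $p$; hence the fiber dimension of $R$ is the same at every point of $\mathrm{Spec}\,\mathbb{Z}[\tfrac12]$. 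Combined with finiteness this forces $R$ to be a locally free, hence free (as $\mathbb{Z}[\tfrac12]$ is a PID), $\mathbb{Z}[\tfrac12]$-module.

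For part (2), once freeness is established the rank is computed on the generic fiber, so it equals $(T_{m_1,\C},T_{m_2,\C}) = \dim_\C \C[x,y]/(\varphi_{m_1},\varphi_{m_2})$. By Proposition 2.4 of \cite{GK1} this complex intersection number is the sum of the Fourier coefficients $c(T)$ of the weight-$2$ Siegel-Eisenstein series for $\mathrm{Sp}_4(\Z)$ over positive-definite half-integral symmetric $2\times 2$ matrices $T$ with prescribed diagonal $(m_1,m_2)$, up to the normalizing constant $\tfrac{1}{288}$; substituting gives the stated formula.

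The main obstacle is part (1): finiteness of $R$ alone does not give freeness over $\mathbb{Z}[\tfrac12]$ without the uniformity of fiber dimensions, and that uniformity is not formal — it is precisely the content of Theorem \ref{new}, whose proof rests on the comparison of local intersection multiplicities via Theorem \ref{mainthmintro} on the supersingular locus and the theory of quasi-canonical lifts on the ordinary locus. So the real work is entirely upstream in Theorem \ref{new}; once that is in hand, the deduction of Theorem \ref{new!!} is the short commutative-algebra argument sketched above. A minor point to handle carefully is confirming that the two-equation quotient is genuinely zero-dimensional over $\mathbb{Z}[\tfrac12]$ at \emph{every} closed point (including the possibility of embedded or characteristic-$p$ components of positive dimension), which follows from the same no-common-factor input once one checks it persists modulo each odd $p$; this is where one uses that the modular polynomials $\varphi_m$ remain separable/nonzero and without repeated factors modulo $p>2$.
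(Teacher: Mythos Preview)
Your proposal is correct and matches the paper's approach exactly: the paper presents Theorem~\ref{flat} as a direct reinterpretation of Theorem~\ref{newid} (your Theorem~\ref{new}) with no further proof, and your argument---constant fiber dimension over the PID $\mathbb{Z}[\tfrac12]$ forces freeness, with the rank read off from the generic fiber via Proposition~2.4 of \cite{GK1}---is precisely the intended commutative-algebra deduction. One small point worth tightening: your claim that ``relative dimension zero, hence finite'' is not literally valid for affine schemes (quasi-finite $\neq$ finite), but finiteness of $R$ over $\mathbb{Z}[\tfrac12]$ follows cleanly from the fact that each modular polynomial $\varphi_m$ is monic in both $x$ and $y$ over $\mathbb{Z}$, so the monomials $x^iy^j$ with bounded exponents already span $R$; with that in hand the rest of your argument goes through verbatim.
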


\subsubsection{The local intersection multiplicities on the special fiber in orthogonal Shimura varieties} 
Let $p$ be an odd prime. 
Let $T'$ be a positive definite half-integral symmetric matrix over $\Z_{(p)}$ of size $n+1$ for a non-negative integer $n$. 
As explained in subsection \ref{subsectionintro}, the local intersection multiplicity on the arithmetic intersection $\mathcal{Z}=
\mathcal{Z}(T')$ of 
some special cycles associated to $T'$ on an integral model $\mathcal{M}$ over $\Z_{(p)}$ of Shimura varieties for 
$G=\mathrm{GSpin}(n,2)$, when the dimension of $\mathcal{Z}$ is zero,  is reduced to that of Gross-Keating.  
We refer Section \ref{sectionin} for the notation and the detailed explanation. 
For any geometric point $\xi$ on $\mathcal{Z}$ let us define the local intersection multiplicity at $\xi$:   
$e(\mathcal{Z},\xi)={\rm length}_{\Z_{(p)}}\mathcal{O}_{\mathcal{Z},\xi}.$  
Similarly for any positive definite half-integral symmetric matrix $T$ over $\Z_{(p)}$ of size $n$ one can also define 
the intersection $\mathcal{Z}(T)_{\F_p}$ of 
some special cycles associated to $T$ on $\mathcal{M}_{\F_p}$. 
For any geometric $\xi$ on $\mathcal{Z}(T)_{\F_p}$ let us define the local intersection multiplicity at $\xi$:   
$e(\mathcal{Z}(T)_{\F_p},\xi)={\rm length}_{\F_p}\mathcal{O}_{\mathcal{Z}(T)_{\F_p},\xi}.$  
Let $(a_0,\ldots,a_{n})$ be the GK-invariant of $T'\otimes \Z_p$. 
It turns out that $e(\mathcal{Z},\xi)$ depends only on $(a_0,\ldots,a_{n})$ and therefore we may put 
$e(\mathcal{Z},\xi)=e(a_0,\ldots,a_{n}).$ 
Further when $\mathcal{Z}(T')$ is of dimension zero, the initial integer $a_0$ has to be zero and we will see that 
$e(a_0,\ldots,a_{n})=\alpha_p(0,\ldots,0,a_1,\ldots,a_{n})$ where the zeros in front of $a_1$ are added until 
the number of all entries are 3.

In this context, we obtain the following result:

\begin{Thm}\label{main1}(Proposition \ref{special2} and Theorem \ref{main-ind}) Assume that $p$ is odd and $0\leq n \leq 3$.
\begin{enumerate}  
\item Assume that $\mathcal{Z}(T)$ is of dimension zero. Then the local intersection multiplicity $e(\mathcal{Z},\xi)$ has the same inductive formula which 
are parallel to  the inductive formula induced from Equation $\left(\ref{eqintro2}\right)$. 
\item Assume that $T$ represents 1. Then $\mathcal{Z}(T)_{\F_p}$ is of dimension zero and for any geometric point $\xi$ on 
$\mathcal{Z}(T)_{\F_p}$,   
the local intersection multiplicity $e(\mathcal{Z}(T)_{\F_p},\xi)$ is described in terms of the derivative of the Siegel series for a suitable anisotropic quadratic lattice of rank 2.
\end{enumerate}
\end{Thm}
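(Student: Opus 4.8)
The strategy is to transport the comparison of inductive formulae established in Theorem \ref{mainthmintro} through the geometric reduction to the Gross--Keating situation recalled in Section \ref{sectionin}; throughout one uses that $p$ is odd, so that Equation (\ref{eqintro2}) holds unconditionally.

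For part (1), I would first recall from Section \ref{sectionin} (following \cite{GK1}, \cite{KR1}, \cite{KR2}) the identity $e(\mathcal{Z},\xi)=\alpha_p(0,\dots,0,a_1,\dots,a_n)$, where $(a_0,\dots,a_n)=\mathrm{GK}(T'\otimes\Z_p)$ and the right-hand side is the value of Gross--Keating's $\alpha_p$ on the ternary datum obtained by padding $(a_1,\dots,a_n)$ with leading zeros until there are three entries; the hypothesis $\dim\mathcal{Z}=0$ forces $a_0=0$ and, via Serre--Tate theory together with the uniformization of the supersingular locus, forces the rank-$3$ quadratic lattice $L$ underlying this ternary datum to be anisotropic over $\Z_p$, so that the refined inductive formula of Equation (\ref{eqintro2}) with $n=3$ applies to $\mathcal{F}_L$. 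Next I would invoke Theorem \ref{mainthmintro}: differentiating Equation (\ref{eqintro2}) with $n=3$ at $X=1/p^2$ (using $\mathcal{F}_L(1/p^2)=0$ for anisotropic ternary $L$) reproduces the Gross--Keating recursion of Equation (\ref{eqintro1}) and yields $\alpha_p(a_1,a_2,a_3)=c_1\,\mathcal{F}'_L(1/p^2)$ together with $\mathcal{T}_{a_1,a_2}=c_2\,\mathcal{F}'_{L_0^{(3)}}(1/p)$. Finally I would check that inserting leading zeros into the GK invariant is transparent to this recursion (it merely shifts the indexing of the largest entry), which transfers the inductive formula to $e(\mathcal{Z},\xi)$ and proves Theorem \ref{main-ind}.

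For part (2), since $T$ represents $1$ over $\Z_{(p)}$ I would write $T\cong\langle 1\rangle\perp T_1$ with $T_1$ of rank $n-1$; a norm-$1$ vector $v$ has orthogonal complement of signature $(n-1,2)$ and cuts out the sub-Shimura variety for $\mathrm{GSpin}(n-1,2)$, while $\mathcal{Z}(1)$ is $\Z_{(p)}$-flat, so $\mathcal{Z}(T)_{\F_p}\cong\mathcal{Z}(T_1)_{\F_p}$ on the special fibre of this smaller model. Iterating this splitting reduces $T\otimes\Z_p$ to its anisotropic kernel, of rank at most $2$, and the presence of a norm-$1$ vector at each stage excludes vertical components, so that $\mathcal{Z}(T)$ is flat of relative dimension $0$ and $\mathcal{Z}(T)_{\F_p}$ is artinian; this is the $0$-dimensionality of Proposition \ref{special2}. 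For a geometric point $\xi$ of $\mathcal{Z}(T)_{\F_p}$, I would compute $\mathcal{O}_{\mathcal{Z}(T)_{\F_p},\xi}$ by the uniformization of the supersingular locus when $\xi$ is supersingular, identifying it with the special-fibre deformation ring of Gross--Keating whose length is $\mathcal{T}_{a_1,a_2}$ after padding the GK invariant of the anisotropic kernel with leading zeros to length $2$, and by the theory of quasi-canonical lifts (as in the proof of Theorem \ref{new} and in \cite{KRY1}) when $\xi$ is ordinary. In either case Theorem \ref{mainthmintro} rewrites the answer as $c_2\,\mathcal{F}'_{L_0^{(3)}}(1/p)$ with $\mathrm{GK}(L_0^{(3)})=(a_1,a_2)$; and $L_0^{(3)}$ is a rank-$2$ lattice which is anisotropic, being a truncation of the anisotropic ternary lattice attached to $\xi$.

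The comparison of the two inductive formulae is already available, so the work lies entirely in the geometric bookkeeping: verifying anisotropy and the exact ranks of the lattices of special endomorphisms, and checking that padding GK invariants by zeros commutes with the recursion. I expect the most delicate point to be part (2): proving that $\mathcal{Z}(T)_{\F_p}$ is genuinely $0$-dimensional --- a priori the special cycle need not meet $\mathcal{M}_{\F_p}$ properly --- and matching its completed local rings, at supersingular points via the uniformization of the supersingular locus and at ordinary points via quasi-canonical lifts, with the Gross--Keating special-fibre rings.
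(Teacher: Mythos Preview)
Your treatment of part (1) is essentially the paper's argument: identify $e(\mathcal{Z},\xi)$ with the padded Gross--Keating number $\alpha_p(0,\ldots,0,a_1,\ldots,a_n)$ via the relevant Kudla--Rapoport(--Yang) reference for each $n$, and then transport the recursion of Equation~(\ref{eqintro1}) (and hence its match with the derivative of Equation~(\ref{eqintro2})) through that identification.

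Your approach to part (2), however, diverges from the paper and contains a gap. The paper does \emph{not} argue by globally splitting off $\langle 1\rangle$ and descending to a smaller Shimura variety, nor does it invoke flatness of $\mathcal{Z}(T)$ or $\mathcal{Z}(1)$; it works purely locally at the formal group. Concretely (Proposition~\ref{specialcycles}), under the hypothesis that $A_x$ is superspecial and $q_L$ represents $1$, one picks $x_0\in L$ with $q_L(x_0)=1$ and uses the Clifford idempotents $e_0=\tfrac{1}{2}(1+x_0)$, $e_1=\tfrac{1}{2}(1-x_0)$ to split the polarized formal group as $\widehat{A}_0\simeq e_0\widehat{A}_0\times e_1\widehat{A}_0\simeq G_0^2$ with $G_0$ of height $2$; the relation $xe_0=e_1x$ for $x\in\langle x_0\rangle^\perp=:M_0$ realizes $M_0$ inside $\mathrm{End}(G_0)$, so the deformation problem for $(G,\varphi_1,\ldots,\varphi_n)$ becomes exactly the Gross--Keating special-fibre problem for the rank-$2$ lattice $M_0$, yielding $e(\mathcal{Z}(T)_{\F_p},x)=\mathcal{T}_{b_1,b_2}$. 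Theorem~\ref{main-ind} then follows immediately from Theorem~\ref{thmaniso1}. No ordinary points enter: the detailed statement (Proposition~\ref{specialcycles}) explicitly assumes $A_x$ superspecial, and the introductory formulation of Theorem~\ref{main1}(2) should be read with that hypothesis in mind.

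By contrast, your route asserts that ``the presence of a norm-$1$ vector at each stage excludes vertical components, so that $\mathcal{Z}(T)$ is flat of relative dimension $0$.'' This flatness is neither used nor proved in the paper; indeed, flatness of such arithmetic intersections is precisely one of the speculative points raised in Section~\ref{specul}, not an available input. What the paper actually obtains (Corollary~\ref{specialfiber}) is only that the superspecial point $x$ is isolated in $\mathcal{Z}(T)_{\bF_p}$, which follows a posteriori from the finiteness of $\mathcal{T}_{b_1,b_2}$. Your discussion of ordinary points via quasi-canonical lifts is therefore unnecessary here; that analysis belongs to Section~\ref{sectionapp} (the modular-correspondence application), not to the proof of Theorem~\ref{main1}(2). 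If you revise part~(2) to use the local idempotent decomposition of the formal group rather than a global embedding, the argument becomes both correct and substantially shorter.
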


Since we  have a better understanding on the latter objects that are Siegel series, it is natural to ask if our comparison argument between two inductive formulas in Gross-Keating's case can be extended to the general case.

\subsection{Speculation}\label{specul}
\subsubsection{} 
As Kudla expected, the local intersection multiplicity in higher dimensional case of the arithmetic intersection  on $\mathrm{GSpin}(n,2)$ Shimura varieties is believed to match with the derivative of the Siegel series for a suitable quadratic lattice.
The comparison results of Theorem \ref{mainthmintro} and Theorem \ref{main1}.(1) seem to imply that 
 there should be an inductive formula in geometric side which is parallel to that on the Siegel series side.
Thus our inductive formula of the Siegel series given in Theorem \ref{main0} would be the inductive formula that the local intersection multiplicity is expected to satisfy with.

Recently, Chao Li and Wei Zhang proved the Kudla-Rapoport conjecture  in \cite{LZ}, which is a unitary version of the above Kudla's conjecture.
Their proof relies on the principle  to compare inductive structures of both sides, rather than to compute them directly.

\subsubsection{}
Since we have an interpretation of the local intersection multiplicity on (the supersingular locus of) the special fiber of  $\mathrm{GSpin}(n,2)$ Shimura varieties in terms of the Siegel series in Theorem \ref{main1}, 
we will be able to relate the intersection numbers on the special fiber of $\mathrm{GSpin}(n,2)$ Shimura varieties with the sum of the Fourier coefficients of the Siegel-Eisenstein series with suitable weight and degree.
We expect that 
the  intersection number of the special cycles at the special fiber of  $\mathrm{GSpin}(n,2)$-Shimura variety
is independent of $p$ (possibly away from bad primes), which turns to be the sum of the Fourier coefficients of the Siegel-Eisenstein series. 
This observation is parallel to Theorems \ref{new}-\ref{new!!}. 
This would imply that an associated arithmetic intersection  is flat over $\mathbb{Z}$ (possibly away from bad primes).
 

\subsection{Organizations}
We will organize this paper as follows. 
After fixing notations in Section \ref{sectionnss},
we will derive a conceptual study of the Siegel series in Sections \ref{sectionld}-\ref{sectionifss}.
In Section  \ref{sectionssaql}, we will explain a refined formulation of the Siegel series for anisotropic quadratic lattices over $\mathbb{Z}_p$.
Section  \ref{reGK} is devoted to compare both sides of the local intersection multiplicity  of \cite{GK1} and the Siegel series.
In Sections \ref{sectionapp}-\ref{sectionin}, we will explain two applications in the context of   intersection numbers (or multiplicities) over a finite field.
In Appendix, we list up explicit examples for the intersection numbers related to Section \ref{sectionapp}.  \\

\textbf{Acknowledgments.} We would like to express our deep appreciation to Professors T. Ikeda and H. Katsurada for many fruitful discussions and suggestions.
We also thank Professors B. Conrad, B. Gross, B. Howard,   R. Schulze-Pillot, and W. Zhang 
for helpful discussions and corrections in Theorems \ref{new} and \ref{mainthm}. 
Special thanks are own to Professor S. Yokoyama for computing intersection numbers for modular correspondences over both  a finite field
and the complex  field in Appendix and also to Professor Chul-hee Lee for pointing out our mistakes on the table in Appendix.  
Finally we thank the referee for helpful suggestions and comments which substantially helped with the presentation of  our paper.


\section{Notations}\label{sectionnss}
\begin{itemize}
\item Let $F$ be a finite field extension of $\mathbb{Q}_p$ with $\mathfrak{o}$  its ring of integers and $\kappa$  its residue field.
Let $\pi$ be a uniformizer in $\mathfrak{o}$.
Let $f$ be the cardinality of the finite field $\kappa$.

\item For an element $x\in F$, the exponential order of $x$ with respect to the maximal ideal in $\mathfrak{o}$ is written by $\mathrm{ord}(x)$.

\item Let $e=\mathrm{ord}(2)$.
Thus if $p$ is odd, then $e=0$.

\item We consider an $\mathfrak{o}$-lattice $L$ with a quadratic form $q_L : L \rightarrow \mathfrak{o}$.
Here, an $\mathfrak{o}$-lattice means a finitely generated free $\mathfrak{o}$-module. 
Such a quadratic form $q_L$ is called \textit{an integral quadratic form} and 
such a pair  $(L, q_L)$  is called  \textit{a quadratic lattice}.
We sometimes say that $L$ is a quadratic lattice by omitting $q_L$, if this does not cause confusion or ambiguity.
Similarly, we define a quadratic space $(V, q_L\otimes_{\mathfrak{o}}1)$ for $V=L\otimes_{\mathfrak{o}}F$ and
sometimes say that $V$ is a quadratic space.
If there is no ambiguity, then we simply use $q_L$, rather than $q_L\otimes 1$,  to stand for the quadratic form defined on $V$. 
We assume that $V$ is nondegenerate with respect to the quadratic form $q_L\otimes 1$.
More generally, for a commutative $\mathfrak{o}$-algebra $R$, we define a quadratic $R$-module $L\otimes_{\mathfrak{o}} R$ in the same manner.
We sometimes call it a quadratic $R$-lattice.

\item For a  quadratic lattice $(L, q_L)$ over $\mathfrak{o}$,
the quadratic form $\bar{q}_L$ on $L\otimes_\mathfrak{o}\kappa$ is defined to be $q_L~mod~\pi$.

\item For two $\mathfrak{o}$-lattices $L$ and $M$,
assume that $L$ and $M$ have the same rank and that $L\supseteq M$.
Then we denote by $[L:M]$ the length of the torsion module $L/M$ so that the cardinality of $L/M$ is $f^{[L:M]}$.

\item The fractional ideal generated by $q_L(X)$ as $X$ runs through $L$ will be called the \textit{norm} of $L$ and written $N(L)$.

\item For matrices $X$ and $Y$ with entries in $F$, the matrix $\begin{pmatrix} X& 0 \\ 0 & Y \end{pmatrix}$ is denoted by $X\bot Y$
 and is called the orthogonal sum of $X$ and $Y$.

\item Let $(a_1, \cdots, a_m)$ and $(b_1, \cdots, b_n)$ be  non-decreasing sequences consisting of non-negative integers in $\mathbb{Z}$.
Then $(a_1, \cdots, a_m)\cup  (b_1, \cdots, b_n)$ is defined as the non-decreasing sequence  $(c_1, \cdots, c_{n+m})$
such that  $\{c_1, \cdots, c_{n+m}\}=\{a_1, \cdots, a_m\}\cup  \{b_1, \cdots, b_n\}$ as multisets.


\item For $\underline{a}=(a_1, \cdots, a_n)$ with each $a_i$ an element of $\mathbb{Z}$,
the sum $a_1+\cdots +a_n$ is denoted by $|\underline{a}|$.

\item For $\underline{a}=(a_1, \cdots, a_n)$ with each $a_i$ an element of $\mathbb{Z}$,
the first $m$-tuple $(a_1, \cdots, a_m)$ with $m\leq n$ is denoted  by $\underline{a}^{(m)}$.

\item Let $H=\begin{pmatrix} 0& 1/2 \\ 1/2 & 0 \end{pmatrix}$ and let $H_k$ be the orthogonal sum of the $k$-copies of $H$.
Then $H_k$ defines a quadratic lattice of rank $2k$.
We denote such a lattice by  $(H_k, q_k)$, if this does not cause confusion or ambiguity.
Similarly we define the quadratic space $(W_k,q_k\otimes 1)$, where $W_k=H_k\otimes_{\mathfrak{o}}F$, and sometimes say that $W_k$ is a quadratic space.
If there is no ambiguity, then we simply use $q_k$, rather than $q_k\otimes 1$,  to stand for the quadratic form defined on $W_k$. 

\item For a symmetric matrix $B$ of size $n\times n$, we say that 
 $B$ is  half-integral over $\mathfrak{o}$ if 
each non-diagonal entry  multiplied by $2$ and each diagonal entry of $B$ are in $\mathfrak{o}$.
We sometimes say that $B$ is half-integral, by omitting `over $\mathfrak{o}$', if it does not cause ambiguity.

\item We say that $B$ is non-degenerate if the determinant of $B$ is nonzero.

\item For $U\in \mathrm{GL}_n(\mathfrak{o})$ and $B$,  where $B$ is a  half-integral symmetric matrix over $\mathfrak{o}$  of size $n\times n$,
we set  $B[U]={}^tUBU$.
Here, ${}^tU$ is the matrix transpose of $U$.

\item We say that two half-integral matrices $B$ and $B'$ are $equivalent$ if $B'=B[U]$ for a certain $U\in \mathrm{GL}_n(\mathfrak{o})$.

\item For  two quadratic $R$-lattices $L$ and $L'$, where $R$ is a commutative $\mathfrak{o}$-algebra, we say that an $R$-linear map $f : L \rightarrow L'$ is \textit{isometry} if it is injective and preserves the associated quadratic forms, i.e. $q_L(x)=q_{L'}(f(x))$ for any $x\in L$.

\item 
Assume that $k\geq n$, where  $n$ is the rank of $L$.
We define $\mathrm{O}_{\mathfrak{o}}(L, H_k)$ as the affine scheme defined over $\mathfrak{o}$ such that
$\mathrm{O}_{\mathfrak{o}}(L, H_k)(R)$, the set of $R$-points of $\mathrm{O}_{\mathfrak{o}}(L, H_k)$ for any commutative $\mathfrak{o}$-algebra $R$,
is  the set of $R$-linear maps (not necessarily injective) from $L\otimes R$
to  $H_k\otimes R$ preserving the associated quadratic forms.
If $R$ is a flat $\mathfrak{o}$-domain, then $\mathrm{O}_{\mathfrak{o}}(L, H_k)(R)$ is the set of isometries (i.e. injective) from the quadratic $R$-lattice $L\otimes R$ to the quadratic $R$-lattice $H_k\otimes R$, which will be proved in the next lemma.


\end{itemize}

\begin{Lem}\label{lemgeneric}
Assume that $k\geq n$, where  $n$ is the rank of $L$.
If $R$ is a flat $\mathfrak{o}$-domain, then an $R$-linear map from  $L\otimes R$ to $H_k\otimes R$ preserving the associated quadratic forms is injective.
Thus the generic fiber of $\mathrm{O}_{\mathfrak{o}}(L, H_k)$, denoted by $\mathrm{O}_{F}(V, W_k)$ with $V=L\otimes_{\mathfrak{o}}F$ and $W_k=H_k\otimes_{\mathfrak{o}}F$, represents the set of isometries from the quadratic space $V$ to the quadratic space $W_k$.
\end{Lem}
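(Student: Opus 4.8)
The plan is to establish the two assertions of Lemma~\ref{lemgeneric} in order: first, that any quadratic-form-preserving $R$-linear map $\varphi\colon L\otimes R \to H_k\otimes R$ is automatically injective when $R$ is a flat $\mathfrak{o}$-domain, and second, to deduce the stated description of the generic fiber. For the first part, I would pass to the fraction field $K$ of $R$. Since $R$ is a domain, $L\otimes R \hookrightarrow L\otimes K$ and $H_k\otimes R\hookrightarrow H_k\otimes K$ are injective, and $\varphi\otimes_R K$ is still a quadratic-form-preserving $K$-linear map; so it suffices to prove injectivity over $K$. Because $R$ is flat over $\mathfrak{o}$ and $\mathfrak{o}$ is a domain with fraction field $F$, the field $K$ contains $F$, so the quadratic space $V\otimes_F K = L\otimes_{\mathfrak{o}} K$ is nondegenerate over $K$ (nondegeneracy of $V$ over $F$ was assumed in the Notations, and nondegeneracy is preserved under field extension since the discriminant stays nonzero). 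Now the key point: a quadratic-form-preserving linear map out of a nondegenerate quadratic space is automatically injective. Indeed, if $x\in\ker(\varphi\otimes K)$, then for every $y\in L\otimes K$ the polarization identity gives $b_{H_k}(\varphi x,\varphi y) = \tfrac12\bigl(q(\varphi(x+y)) - q(\varphi x) - q(\varphi y)\bigr) = \tfrac12\bigl(q(x+y) - q(x) - q(y)\bigr) = b_V(x,y)$, and with $\varphi x = 0$ the left side vanishes, so $b_V(x,y)=0$ for all $y$, forcing $x=0$ by nondegeneracy. (Here I use that $2$ is invertible in $K$, which holds since $\mathrm{char}\,F = 0$.)

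For the second assertion, apply the first part with $R = F$, which is certainly a flat $\mathfrak{o}$-domain: every $F$-point of $\mathrm{O}_{\mathfrak{o}}(L,H_k)$, i.e.\ every $F$-linear quadratic-form-preserving map $V\to W_k$, is injective, hence an isometry of quadratic spaces in the sense defined in the Notations. Conversely, any isometry $V\to W_k$ is in particular a quadratic-form-preserving $F$-linear map, hence an $F$-point of $\mathrm{O}_{\mathfrak{o}}(L,H_k)$. Since the generic fiber $\mathrm{O}_F(V,W_k) := \mathrm{O}_{\mathfrak{o}}(L,H_k)\times_{\mathfrak{o}} F$ has $F$-points precisely the $F$-points of $\mathrm{O}_{\mathfrak{o}}(L,H_k)$, and more generally its $R'$-points for an $F$-algebra $R'$ are the quadratic-form-preserving $R'$-linear maps $V\otimes_F R' \to W_k\otimes_F R'$, the same polarization argument (valid over any $F$-algebra that is a domain, and then over general $F$-algebras by a standard reduction, or simply as the statement about the functor of points restricted to field extensions) identifies these with isometries. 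This gives the claimed functorial description.

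The only genuinely delicate point is making sure the nondegeneracy of $V$ propagates to $L\otimes_{\mathfrak{o}} K$ for the fraction field $K$ of an arbitrary flat $\mathfrak{o}$-domain $R$: one needs that $R$ flat over $\mathfrak{o}$ implies $R$ is $\mathfrak{o}$-torsion-free, hence $\mathfrak{o}\hookrightarrow R\hookrightarrow K$, so $F = \mathrm{Frac}(\mathfrak{o}) \subseteq K$, and then base change of a nondegenerate quadratic space along the field extension $F\subseteq K$ remains nondegenerate. Everything else is the polarization identity together with the invertibility of $2$, which is harmless in characteristic zero. I do not expect any serious obstacle; the lemma is essentially the observation that "quadratic-form-preserving $\Rightarrow$ injective" for nondegenerate spaces, packaged scheme-theoretically, and the proof is short once the reduction to the generic point is in place.
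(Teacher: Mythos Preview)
Your proof is correct and follows essentially the same approach as the paper: reduce to the fraction field of $R$ (which has characteristic zero since $R$ is $\mathfrak{o}$-flat, hence contains $F$), then use nondegeneracy of the source quadratic space to force injectivity. The only cosmetic difference is in the final step---the paper argues via the Gram-matrix equation $q_L = {}^tT\, q_k\, T$ and a determinant contradiction, whereas you invoke the polarization identity directly to show any kernel element lies in the radical.
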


\begin{proof}
Let $\varphi: L\otimes R\rightarrow H_k\otimes R$ be an $R$-linear map preserving the associated quadratic forms for a flat $\mathfrak{o}$-domain $R$.
We choose $v\in L\otimes R$ such that $\varphi (v)=0$. Our goal is  to show that $v=0$.

Assume that $v\neq 0$ in $L\otimes R$. 
Let $R_0$ be the quotient field of $R$. Note that the characteristic of  $R_0$ is $0$ since $R$ is flat over $\mathfrak{o}$.

If we let $\tilde{v}=v\otimes 1\in L\otimes R_0$, then $\tilde{v}$ is nonzero. Thus we can choose a basis, say $\mathcal{B}$, of an $R_0$-vector space 
$L\otimes R_0$ of dimension $n$ involving $\tilde{v}$.
We may assume that the last vector in $\mathcal{B}$ is $\tilde{v}$.

We write $\tilde{\varphi}=\varphi\otimes 1 : L\otimes R_0 \rightarrow H_k\otimes R_0$ such that $\tilde{\varphi}(\tilde{v})=0$. 
If we express an $R_0$-linear map $\tilde{\varphi}$ as a matrix $T$ of size $(2k \times n)$ with respect to a basis $\mathcal{B}$, then the last column vector of $T$ is zero.
We now consider the following matrix equation:
\[
q_L={}^tT\cdot q_k\cdot T.
\]
Here, $q_L$ (respectively $q_k$) is the symmetric matrix associated to $L\otimes R_0$ (respectively $H_k\otimes R_0$) with suitable sets of basis.
Thus both $q_L$ and $q_k$ are nondegenerate.
This  contracts to the given setting since the determinant of $q_L$ is nonzero, whereas that of the right hand side is $0$.
Therefore $\tilde{v}=0$ which implies that $v=0$. 
\end{proof}

Let $B$ be a non-degenerate half-integral symmetric matrix over $\mathfrak{o}$ of size $n \times n$.
We will define the Gross-Keating invariant for $B$ below.
The definition is taken from \cite{IK1}.

\begin{Def}[Definitions 0.1 and 0.2 in \cite{IK1}]\label{gkdef}
\begin{enumerate}
\item 
We express $B=\begin{pmatrix}b_{ij}\end{pmatrix}$.
Let $S(B)$ be the set of all non-decreasing sequences $(a_1, \cdots, a_n)\in \mathbb{Z}^n_{\geq 0}$ such that
\[
\begin{array}{l l}
   \mathrm{ord}(b_{ii}) \geq a_i   & \quad    \text{($1 \leq i \leq n$)};\\
   \mathrm{ord}(2b_{ij}) \geq (a_i+a_j)/2   & \quad    \text{($1 \leq i \leq j \leq n$)}.
    \end{array}\]
   Put
   \[\mathbf{S}(\{B\})=\bigcup_{U\in \mathrm{GL}_n(\mathfrak{o})}S(B[U]).\]
The Gross-Keating invariant $\mathrm{GK}(B)$ of $B$ is the greatest element of $\mathbf{S}(\{B\})$ with respect to the lexicographic order
$\succeq$ on $\mathbb{Z}^n_{\geq 0}$.
Here, the lexicographic order $\succeq$ on $\mathbb{Z}^n_{\geq 0}$  is the following (cf. the paragraph following Definition 0.1 of \cite{IK1}).
Choose two elements $(a_1, \cdots, a_n)$ and $(b_1, \cdots, b_n)$ in $\mathbb{Z}^n_{\geq 0}$.
Let $i$ be the first integer over which $a_i$ differs from $b_i$ (so that $a_j=b_j$ for any $j<i$).
If $a_i>b_i$, then we say that $(a_1, \cdots, a_n) \succ (b_1, \cdots, b_n)$.
Otherwise, we say that $(a_1, \cdots, a_n) \prec (b_1, \cdots, b_n)$.

\item The symmetric matrix $B$ is called $optimal$ if $\mathrm{GK}(B)\in S(B)$.

\item If $B$ is a symmetric matrix associated to a quadratic lattice $(L, q_L)$, then  $\mathrm{GK}(L)$, called the Gross-Keating invariant of  $(L, q_L)$, is defined by $\mathrm{GK}(B)$.
$\mathrm{GK}(L)$ is independent of the choice of a matrix $B$.

\end{enumerate}
\end{Def}

It is known that the set $\mathbf{S}(\{B\})$ is  finite (cf. \cite{IK1}), which explains well-definedness of $\mathrm{GK}(B)$.
We can also see that $\mathrm{GK}(B)$ depends on the equivalence class of  $B$.
In general, it is a difficult question to check whether or not a given matrix $B$ is optimal.
Ikeda and Katsurada introduced so-called `reduced form' associated to $B$ and  showed that it is optimal.
We use a reduced form several times in this paper and thus
 provide its detailed definition through the following series of definitions \ref{def3.0}-\ref{def3.2}.
 They are taken from \cite{IK1} and \cite{IK2} for synchronization.
 
In \cite{IK1}, they defined a reduced form when $p=2$.
However, their definition and main theorems  hold for any $p$, which were explained in the initial version of their paper posted on arXiv. 
Thus, in the following, we will not make restriction on $p$, unless otherwise stated.

\begin{Def}[Definition 3.1 in \cite{IK2}]\label{def3.0}
Let $\underline{a}=(a_1, \cdots, a_n)$ be a non-decreasing sequence of non-negative integers.
Write $\underline{a}$ as
\[\underline{a}=(\underbrace{m_1, \cdots, m_1}_{n_1}, \cdots, \underbrace{m_r, \cdots, m_r}_{n_r} )\]
with $m_1<\cdots <m_r$ and $n=n_1+\cdots + n_r$.
For $s=1, 2, \cdots, r$, put
\[n_s^{\ast}=\sum_{u=1}^{s}n_u,\]
and
\[ I_s=\{ n_{s-1}^{\ast}+1, n_{s-1}^{\ast}+2, \cdots, n_{s}^{\ast}\}. \]
Here, we let $n_0^{\ast}=0$.
\end{Def}

Let $\mathfrak{S}_n$ be the symmetric group of degree $n$.
Let $\sigma\in \mathfrak{S}_n$ be an involution i.e. $\sigma^2=id$.

\begin{Def}[Definition 3.1 in \cite{IK1}]\label{def3.1}
For a non-decreasing sequence of non-negative integers $\underline{a}=(a_1, \cdots, a_n)$, we set
\[\mathcal{P}^0=\mathcal{P}^0(\sigma)=\{i|1\leq i \leq n, i=\sigma(i)\}, \]
\[\mathcal{P}^+=\mathcal{P}^+(\sigma)=\{i|1\leq i \leq n, a_i>a_{\sigma(i)}\}, \]
\[\mathcal{P}^-=\mathcal{P}^-(\sigma)=\{i|1\leq i \leq n, a_i<a_{\sigma(i)}\}. \]
We say that an involution $\sigma\in \mathfrak{S}_n$ is an $\underline{a}$-admissible involution if the following three conditions are satisfied:
\begin{enumerate}
\item $\mathcal{P}^0$ has at most two elements.
If $\mathcal{P}^0$ has two distinct elements $i$ and $j$, then
$a_i\not\equiv a_j$ mod $2$, and
\[a_i=\mathrm{max}\{a_j| j\in \mathcal{P}^0\cup \mathcal{P}^+,   a_j\equiv a_i \text{ mod } 2   \}.  \]

\item For $s=1, \cdots, r$, we have
\[\#(I_s\cap \mathcal{P}^+)\leq 1, ~~~ \#(I_s\cap \mathcal{P}^-)+\#(I_s\cap \mathcal{P}^0)\leq 1.\]

\item If $i\in \mathcal{P}^-$, then
\[a_{\sigma(i)}=\mathrm{min}\{a_j | j\in \mathcal{P}^+, ~ a_j>a_i,  a_j\equiv a_i \text{ mod } 2   \}.  \]
Similarly, if $i\in \mathcal{P}^+$, then
\[a_{\sigma(i)}=\mathrm{max}\{a_j | j\in \mathcal{P}^-, ~ a_j<a_i,  a_j\equiv a_i \text{ mod } 2   \}.  \]



\end{enumerate}

\end{Def}

\begin{Def}[Definition 3.2 in \cite{IK1}]\label{def3.2}
Write $B=\begin{pmatrix}b_{ij}\end{pmatrix}$.
Let $\underline{a}\in S(B)$.
Let $\sigma\in \mathfrak{S}_n$ be an $\underline{a}$-admissible involution.
We say that $B$ is a reduced form of GK-type $(\underline{a}, \sigma)$ if the following conditions are satisfied:
\begin{enumerate}
\item If $i \notin \mathcal{P}^0$, $j=\sigma(i)$, and $a_i\leq a_j$, then
\[\mathrm{GK}\begin{pmatrix}\begin{pmatrix}b_{ii} & b_{ij}\\ b_{ij}&b_{jj}\end{pmatrix}\end{pmatrix}=(a_i, a_j).\]
Note that if $p=2$ then this condition is equivalent to the following condition (by Proposition 2.3 of \cite{IK1}).
\[\left\{
  \begin{array}{l l}
  \mathrm{ord}(2b_{ij})=\frac{a_i+a_{j}}{2}   & \quad    \text{if $i\notin \mathcal{P}^0$, $j=\sigma(i)$};\\
   \mathrm{ord}(b_{ii})=a_i   & \quad    \text{if $i\in \mathcal{P}^-$}.
    \end{array} \right.\]

\item if $i\in \mathcal{P}^0$, then
\[\mathrm{ord}(b_{ii})=a_i.\]

\item If $j\neq i, \sigma(i)$, then
\[\mathrm{ord}(2b_{ij})>\frac{a_i+a_j}{2}.\]
\end{enumerate}

\end{Def}

\begin{Thm}[Corollary 5.1 in \cite{IK1}]\label{thm5.1}
A reduced form is optimal.
More precisely, if $B$ is a reduced form of GK-type $(\underline{a}, \sigma)$, then
$$\mathrm{GK}(B)=\underline{a}.$$
\end{Thm}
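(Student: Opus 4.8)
The inequality $\mathrm{GK}(B)\succeq\underline{a}$ is immediate: by Definition \ref{def3.2} one has $\underline{a}\in S(B)\subseteq\mathbf{S}(\{B\})$, so the lexicographically greatest element of $\mathbf{S}(\{B\})$ is $\succeq\underline{a}$. The content of the statement is the reverse inequality $\mathrm{GK}(B)\preceq\underline{a}$, i.e.\ that for every $U\in\mathrm{GL}_n(\mathfrak{o})$ and every $\underline{b}\in S(B[U])$ one has $\underline{b}\preceq\underline{a}$. The plan is to prove this by induction on $n$, the cases $n\le 2$ being dispatched directly from the classification of unary and binary half-integral forms over $\mathfrak{o}$ and the explicit computation of their Gross-Keating invariants --- trivial for $n=1$, and for $n=2$ a finite check whose only delicate instances are the anisotropic binary forms in residue characteristic $2$.

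For the inductive step, the first move is to bring $B$, by a change of basis in $\mathrm{GL}_n(\mathfrak{o})$, into the normal shape forced by the reducedness conditions: a diagonal entry of order exactly $a_i$ for each fixed point $i\in\mathcal{P}^0$, a $2\times 2$ block of Gross-Keating invariant $(a_i,a_{\sigma(i)})$ for each $2$-cycle $\{i,\sigma(i)\}$, and every off-block entry of order strictly larger than $(a_i+a_j)/2$. When $p$ is odd this is the usual Jordan splitting: each off-block entry satisfies $\mathrm{ord}(b_{ij})=\mathrm{ord}(2b_{ij})>(a_i+a_j)/2\ge\min(a_i,a_j)$, so the elementary operation clearing it is defined over $\mathfrak{o}$ and $B$ becomes an honest orthogonal sum of its blocks. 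When $p=2$ one cannot in general split individual blocks off, since $e=\mathrm{ord}(2)>0$ destroys the inequality just used whenever two levels lie within $2e$ of each other; instead one groups the blocks into clusters of nearby levels and splits $B$ as an orthogonal sum of clusters, each cluster being a scalar multiple (by a power of $\pi$) of a unimodular form. The parity hypotheses in Definition \ref{def3.1}, in particular that $\mathcal{P}^0$ has at most two elements and these of opposite parity, are what guarantee that such a normal shape exists.

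With $B$ split as an orthogonal sum of its blocks (or clusters), one removes a suitable sub-sum $B_1$ realizing the smallest value $a_1$ --- for $p$ odd the orthogonal summand spanned by the reduced basis vectors of order $a_1$, in general the lowest cluster together with the higher-level partners of its $2$-cycles --- and writes $B=B_1\bot B'$ with $B'$ again a reduced form, of smaller rank; here the admissibility conditions on $\sigma$ are precisely what ensure the leftover GK-type is again admissible. The claim then reduces to the additivity $\mathrm{GK}(B_1\bot B')=\mathrm{GK}(B_1)\cup\mathrm{GK}(B')$ for the blocks at hand, together with the induction hypothesis. This additivity is genuinely special to reduced blocks: it already fails for $\langle\pi\rangle\bot\langle\pi\rangle$ over $\mathbb{Z}_2$, whose Gross-Keating invariant is $(1,2)$ and not $(1,1)$, and it is exactly the role of conditions (1)--(3) of Definition \ref{def3.1} to rule out the configurations in which two blocks could be recombined --- by absorbing a cross term into a diagonal entry, or by re-pairing legs of different $2$-cycles --- into a basis with a lexicographically larger order sequence. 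To prove $\mathrm{GK}(B_1\bot B')\preceq\mathrm{GK}(B_1)\cup\mathrm{GK}(B')$ I would argue by contradiction: given $U$ and $\underline{b}\in S((B_1\bot B')[U])$ lexicographically larger than the concatenation, look at the first index $i_0$ at which $\underline{b}$ exceeds it, compare the $\mathfrak{o}$-valuations of the leading $i_0\times i_0$ minors of $(B_1\bot B')[U]$ with those in the normal shape (in which the standard flag $\langle e_1\rangle\subset\langle e_1,e_2\rangle\subset\cdots$ realizes the minimal such valuations), and play this against the induction hypothesis applied to $B'$; admissibility of $\sigma$ is what rules out, term by term, every way the minor valuations could be made large enough.

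The step I expect to be the main obstacle is everything touching residue characteristic $2$: the passage to the normal shape (the clustering, and the role of $e=\mathrm{ord}(2)$), the anisotropic binary base case, and the minor-valuation bookkeeping above --- all delicate precisely because $\mathrm{ord}(b_{ij})$ and $\mathrm{ord}(2b_{ij})$ no longer agree, so that a cross term can be partially absorbed into a diagonal entry unless the parity/order conditions of Definition \ref{def3.1} forbid it. In that setting one genuinely needs the full combinatorics of $\underline{a}$-admissible involutions to certify that no $\mathrm{GL}_n(\mathfrak{o})$-change of basis mixing distinct blocks can increase a partial sum or break a tie in the lexicographic order. For $p$ odd everything collapses to the classical Jordan decomposition together with the explicit binary case, and the argument is comparatively routine.
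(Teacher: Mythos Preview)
The paper does not prove this statement; Theorem~\ref{thm5.1} is quoted verbatim as Corollary~5.1 of \cite{IK1} and used as a black box. So there is no ``paper's own proof'' to compare against --- the actual argument lives in Ikeda--Katsurada's paper, and it is one of the main technical results there, occupying essentially all of their Sections~3--5.

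As for your outline: the easy direction $\mathrm{GK}(B)\succeq\underline{a}$ is correct, and for odd $p$ your strategy is viable, because every half-integral form diagonalizes and the claim reduces to the statement that the elementary-divisor sequence of a diagonal form is its Gross--Keating invariant; the partial-sum (minor-valuation) argument you sketch goes through cleanly there.

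For $p=2$, however, there is a genuine gap. Your plan hinges on splitting a reduced form as $B_1\bot B'$ with $B'$ again reduced of smaller rank, but a reduced form over a dyadic $\mathfrak{o}$ is generally \emph{not} orthogonally decomposable in any useful way: condition~(3) of Definition~\ref{def3.2} only forces $\mathrm{ord}(2b_{ij})>(a_i+a_j)/2$, which with $e=\mathrm{ord}(2)>0$ allows $\mathrm{ord}(b_{ij})<\min(a_i,a_j)$, so the elementary operation clearing $b_{ij}$ need not be integral. Your ``clustering'' remedy is too vague --- in many examples all blocks land in a single cluster and nothing splits off --- and the assertion that a cluster is a scalar times a unimodular form is simply false for a general reduced form. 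More fundamentally, the additivity $\mathrm{GK}(B_1\bot B')=\mathrm{GK}(B_1)\cup\mathrm{GK}(B')$ you invoke is essentially equivalent to the theorem itself in the dyadic case; you acknowledge this is where the work lies, but the contradiction-via-minor-valuations sketch does not engage with the mechanism by which a cross term can raise a diagonal order (your own example $\langle\pi\rangle\bot\langle\pi\rangle$ over $\mathbb{Z}_2$ shows exactly this). Ikeda--Katsurada's proof does not proceed by orthogonal splitting at all; it goes through a careful induction exploiting the constraints that $\underline{a}$-admissibility places on how the involution interacts with the level blocks $I_s$, together with their structure theory for ``pre-optimal'' forms. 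If you want a self-contained dyadic argument you will need a genuinely different idea from block decomposition.
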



\begin{Rmk}\label{rmk1}
Assume that $p=2$.
In this remark, we will explain the existence of a reduced form and the uniqueness of an involution up to equivalence.
\begin{enumerate}
\item For any given non-decreasing sequence of non-negative integers $\underline{a}=(a_1, \cdots, a_n)$,
there always exists an $\underline{a}$-admissible involution (cf. the paragraph following Definition 3.1 of \cite{IK1}).

\item For any given non-degenerate half-integral symmetric matrix $B$ over $\mathfrak{o}$,
there always exist  a $\mathrm{GK}(B)$-admissible involution $\sigma$ and
a reduced form of GK type $(\mathrm{GK}(B), \sigma)$ which is equivalent to  $B$
(cf. Theorem 4.1 of \cite{IK1}).

\item 
we say that two $\underline{a}$-admissible involutions are equivalent if they are conjugate by an element of $\mathfrak{S}_{n_1}\times \cdots \times \mathfrak{S}_{n_r}$.
Here, we follow the notation introduced in Definition \ref{def3.0} to specify the integers $n_1, \cdots, n_r$.
If $\sigma$ is an $\underline{a}$-admissible involution, then the equivalence class of $\sigma$ is determined by 
\begin{equation}\label{eqset}
\#(\mathcal{P}^+\cap I_s), ~~\textit{  }~~~ \#(\mathcal{P}^-\cap I_s),  ~~\textit{  }~~~  \#(\mathcal{P}^0\cap I_s)
\end{equation}
for $1\leq s \leq r$ (cf. the paragraph following Remark 4.1 in \cite{IK1}).

\item Let $\sigma$ and $\tau$ be  $\mathrm{GK}(B)$-admissible involutions 
associated to reduced forms of  GK types $(\mathrm{GK}(B), \sigma)$ and $(\mathrm{GK}(B), \tau)$, respectively, which are equivalent to a given symmetric matrix $B$.
 Then $\sigma$ and $\tau$ are equivalent (cf. Theorem 4.2 of \cite{IK1}).
 Therefore, the above sets in (\ref{eqset}) for $B$ are independent of the choice of a 
 $\mathrm{GK}(B)$-admissible involution with a reduced form.
\end{enumerate}
\end{Rmk}

We list a few  facts about the Gross-Keating invariant below.

\begin{Rmk}\label{rgk}
\begin{enumerate}
\item If $p$ is odd, then a diagonal matrix, whose diagonal entries are $u_i\pi^{a_i}$ with  $u_i\in \mathfrak{o}^{\times}$ and $a_i
\leq a_j$ for $i< j$, is a reduced form  (cf. Remark 1.1 of \cite{IK1}) and the Gross-Keating invariant is $(a_1, \cdots, a_n)$. 
Note that any half-integral symmetric matrix is diagonalizable for $p$ odd. 

\item For the half-integral symmetric matrix $H_k$ of rank $2k$, we have
$$\mathrm{GK}(H_k)=(0, \cdots, 0).$$

\item 
If there is an isometry from $(L, q_L)$ of rank $n$ to $(H_k, q_k)$,
then $n\leq 2k$ and
$$\mathrm{GK}(L) \succeq \mathrm{GK}(H_k)^{(n)}=(0, \cdots, 0)$$
by Lemma 1.2 of \cite{IK1}.

\item The first integer of $\mathrm{GK}(L)$ is 
the exponential order of a generator of $N(L)$ (cf. Lemma B.1 of \cite{Y}).

\item Consider $L \subseteq L' \subseteq V$ such that  $[L':L]=b$.
Then
\[|\mathrm{GK}(L')|=|\mathrm{GK}(L)|-2b\]
by Theorem 0.1 of \cite{IK1}.
Here, $|\mathrm{GK}(L)|=a_1+\cdots +a_n$ for $\mathrm{GK}(L)=(a_1, \cdots, a_n)$.
\end{enumerate}
\end{Rmk}

\section{Local densities}\label{sectionld}
The Siegel series of a quadratic lattice $(L, q_L)$ can be defined in terms of the local density associated to two quadratic lattices $(L, q_L)$ and $(H_k, q_k)$ (cf. Definition \ref{defse}). 
From this section to the end, we assume that $k\geq n$, where $n$ is the rank of $L$.
The purpose of this section is to reformulate the local density (and the Siegel series) in terms of certain lattice counting problem conceptually, whose explicit form  is given in Theorem \ref{eqldf} and Corollary  \ref{rmkse}.

\subsection{Local density and primitive local density}\label{subsectionldpld}
We define the following notions:
\[
\left\{
  \begin{array}{l}
  \textit{$\mathfrak{Q}$: the $F$-vector space of quadratic forms defined on $V$};\\
  \textit{$\mathfrak{M}$: the set of $F$-linear maps from $V$ to $W_k$};\\
  \textit{$\mathfrak{M}_L$: the set of  $\mathfrak{o}$-linear maps from $L$ to $H_k$};\\
  \textit{$\mathfrak{Q}_L$: the free $\mathfrak{o}$-module of integral quadratic forms defined on $L$}.
    \end{array} \right.
\]
Here, we remind that $V=L\otimes_{\mathfrak{o}}F$
and $W_k=H_k\otimes_{\mathfrak{o}}F$.

Regarding  $\mathfrak{M}$ and $\mathfrak{Q}$ as varieties over $F$,
let $\omega_{\mathfrak{M}, L}$ and $\omega_{\mathfrak{Q}, L}$ be nonzero,  translation-invariant forms on   $\mathfrak{M}$ and $\mathfrak{Q}$,
 respectively, with normalizations
$$\int_{\mathfrak{M}_L}|\omega_{\mathfrak{M}, L}|=1 \mathrm{~and~}  \int_{\mathfrak{Q}_L}|\omega_{\mathfrak{Q}, L}|=1.$$
Let $\mathfrak{M}^{\ast}$ be the set of injective linear maps from $V$ to $W$.
We can also regard $\mathfrak{M}^{\ast}$ as an open subvariety of $\mathfrak{M}$.
Thus $\mathfrak{M}^{\ast}$ is  a (not necessarily affine) nonsingular variety over $F$. 
Define a map $\rho : \mathfrak{M}^{\ast} \rightarrow \mathfrak{Q}$ by $\rho(m)=q_k\circ m$.
Here $q_k$ is the  quadratic form defined on $W_k$.
Then the inverse image of $q_L\otimes_{\mathfrak{o}}1$
 along the map $\rho$ is $\mathrm{O}_F(V, W_k)$, which 
represents the set of isometries from the quadratic space $V$ to the quadratic space $W_k$
(cf. Lemma \ref{lemgeneric}).
Here $q_L$ is the quadratic form defined on $L$.
It is easy to see that the morphism $\rho$ is representable as a morphism of schemes over $F$ and smooth by showing the surjectivity of the differential of $\rho$ over the Zariski tangent space on any closed point.

\begin{Def}\label{diff}
We will define a differential $\omega_{L}^{\mathrm{ld}}$ on $\mathrm{O}_F(V, W_k)$
associated to $\omega_{\mathfrak{M}, L}$ and $\omega_{\mathfrak{Q}, L}$. 
This is taken from Section 3 of \cite{GY}. 
Smoothness of the morphism $\rho : \mathfrak{M}^{\ast} \rightarrow \mathfrak{Q}$ induces the following short exact sequence of locally free sheaves on $\mathfrak{M}^{\ast}$ (cf. Proposition 5 of \cite{BLR}):
\begin{equation*}\label{eqshort}
0\rightarrow \rho^{\ast}\Omega_{\mathfrak{Q}\slash F}\rightarrow \Omega_{\mathfrak{M}^{\ast}\slash F} \rightarrow \Omega_{\mathfrak{M}^{\ast}\slash \mathfrak{Q}} \rightarrow 0.
\end{equation*}

This gives rise to an isomorphism
\begin{equation*}\label{eqtop}
\rho^{\ast}\left(\bigwedge\limits^{\mathrm{top}}\Omega_{\mathfrak{Q}\slash F}\right)\otimes
\bigwedge\limits^{\mathrm{top}}\Omega_{\mathfrak{M}^{\ast}\slash \mathfrak{Q}}\simeq 
\bigwedge\limits^{\mathrm{top}}\Omega_{\mathfrak{M}^{\ast}\slash F}.
\end{equation*}

Let $\omega_{L}\in \bigwedge\limits^{\mathrm{top}}\Omega_{\mathfrak{M}^{\ast}\slash \mathfrak{Q}}(\mathfrak{M}^{\ast})$ be such that 
$\rho^{\ast}\omega_{\mathfrak{Q}, L}\otimes \omega_{L}=\omega_{\mathfrak{M}, L}|_{\mathfrak{M^{\ast}}}$. 
We then denote by $\omega_{L}^{\mathrm{ld}}$  the restriction of $\omega_{L}$ to $\mathrm{O}_F(V, W_k)$. 
We sometimes write $\omega_{L}^{\mathrm{ld}}=\omega_{\mathfrak{M}, L}/\rho^{\ast}\omega_{\mathfrak{Q}, L}$.
\end{Def}

\begin{Lem}\label{volumeform}
We have the following equation: 
\[
\int_{\mathrm{O}_{\mathfrak{o}}(L, H_k)(\mathfrak{o})}|\omega_L^{\mathrm{ld}}|=\lim_{N\rightarrow \infty} f^{-N\cdot dim \mathrm{O}_F(V, W)}
\#\mathrm{O}_{\mathfrak{o}}(L, H_k)(\mathfrak{o}/\pi^N \mathfrak{o}),
\]
where the limit stabilizes for $N$ sufficiently large.
Here, $dim \mathrm{O}_F(V, W_k)=dim \mathrm{O}(W_k)-dim \mathrm{O}(V^{\perp})=
dim \mathfrak{M}-dim \mathfrak{Q}=2kn-(n^2+n)/2$.
\end{Lem}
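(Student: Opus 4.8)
The plan is to realize $\mathrm{O}_{\mathfrak{o}}(L,H_k)(\mathfrak{o})$ as the fiber of the smooth morphism $\rho\colon\mathfrak{M}^{\ast}\to\mathfrak{Q}$ over the point $q_L$, and then to combine the elementary ``count $=$ volume'' principle on the lattice $\mathfrak{M}_L$ with the $p$-adic fibration (Fubini) formula for $\rho$; the differential $\omega_L^{\mathrm{ld}}$ of Definition \ref{diff} was built precisely so that these fit together. First I would record two bookkeeping identifications. By Lemma \ref{lemgeneric} an $\mathfrak{o}$-linear map preserving the quadratic forms is injective, so
\[
\mathrm{O}_{\mathfrak{o}}(L,H_k)(\mathfrak{o})=\mathfrak{M}_L\cap\rho^{-1}(q_L\otimes 1)\subseteq\mathfrak{M}^{\ast};
\]
and, unwinding the functor of points (for $R=\mathfrak{o}/\pi^N\mathfrak{o}$ one has $\mathrm{Hom}_R(L\otimes R,H_k\otimes R)=\mathfrak{M}_L/\pi^N\mathfrak{M}_L$, and the quadratic map $\rho$ has coefficients in $\mathfrak{o}$),
\[
\mathrm{O}_{\mathfrak{o}}(L,H_k)(\mathfrak{o}/\pi^N\mathfrak{o})=\bigl\{\,m\in\mathfrak{M}_L/\pi^N\mathfrak{M}_L\;:\;\rho(m)\equiv q_L\bmod\pi^N\mathfrak{Q}_L\,\bigr\}.
\]

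Since each residue class modulo $\pi^N\mathfrak{M}_L$ inside $\mathfrak{M}_L$ has $|\omega_{\mathfrak{M},L}|$-volume $f^{-N\dim\mathfrak{M}}$ (with $\dim\mathfrak{M}=2kn$ and normalization $\int_{\mathfrak{M}_L}|\omega_{\mathfrak{M},L}|=1$), and the set $\{m\in\mathfrak{M}_L:\rho(m)\in q_L+\pi^N\mathfrak{Q}_L\}$ is a union of such residue classes, counting gives
\[
\#\mathrm{O}_{\mathfrak{o}}(L,H_k)(\mathfrak{o}/\pi^N\mathfrak{o})=f^{N\dim\mathfrak{M}}\cdot\mathrm{vol}_{\omega_{\mathfrak{M},L}}\bigl(\{\,m\in\mathfrak{M}_L:\rho(m)\in q_L+\pi^N\mathfrak{Q}_L\,\}\bigr).
\]
For $N$ large the ball $q_L+\pi^N\mathfrak{Q}_L$ consists of non-degenerate forms, so by the argument of Lemma \ref{lemgeneric} every $m$ occurring here is injective, i.e.\ lies in $\mathfrak{M}^{\ast}$, and the constructions of Definition \ref{diff} apply to it.

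Next I would invoke the $p$-adic fibration formula attached to the submersion $\rho\colon\mathfrak{M}^{\ast}\to\mathfrak{Q}$. Because $\omega_{\mathfrak{M},L}|_{\mathfrak{M}^{\ast}}=\rho^{\ast}\omega_{\mathfrak{Q},L}\otimes\omega_L$ with $\omega_L$ restricting to $\omega_y^{\mathrm{ld}}$ on each fiber $\rho^{-1}(y)$ (this is how $\omega_L^{\mathrm{ld}}$ was defined), one has
\[
\mathrm{vol}_{\omega_{\mathfrak{M},L}}\bigl(\{m\in\mathfrak{M}_L:\rho(m)\in q_L+\pi^N\mathfrak{Q}_L\}\bigr)=\int_{y\in q_L+\pi^N\mathfrak{Q}_L}I(y)\,|\omega_{\mathfrak{Q},L}|(y),\qquad I(y):=\int_{\mathfrak{M}_L\cap\rho^{-1}(y)}|\omega_y^{\mathrm{ld}}|.
\]
By the $p$-adic submersion (implicit function) theorem $\rho$ is, locally in the analytic topology, a projection, and $\mathfrak{M}_L$ is open in $\mathfrak{M}(F)$; hence $I(y)$ is locally constant near $q_L$, so for $N$ sufficiently large $I(y)\equiv I(q_L)$ on $q_L+\pi^N\mathfrak{Q}_L$, with $I(q_L)=\int_{\mathrm{O}_{\mathfrak{o}}(L,H_k)(\mathfrak{o})}|\omega_L^{\mathrm{ld}}|$. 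Since $\mathrm{vol}_{\omega_{\mathfrak{Q},L}}(q_L+\pi^N\mathfrak{Q}_L)=f^{-N\dim\mathfrak{Q}}$ with $\dim\mathfrak{Q}=n(n+1)/2$, the last three displays combine, for all large $N$, into
\[
\#\mathrm{O}_{\mathfrak{o}}(L,H_k)(\mathfrak{o}/\pi^N\mathfrak{o})=f^{N(\dim\mathfrak{M}-\dim\mathfrak{Q})}\int_{\mathrm{O}_{\mathfrak{o}}(L,H_k)(\mathfrak{o})}|\omega_L^{\mathrm{ld}}|,
\]
which is the assertion, with the limit stabilizing once $N$ is large and $\dim\mathrm{O}_F(V,W_k)=\dim\mathfrak{M}-\dim\mathfrak{Q}=2kn-(n^2+n)/2$. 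The remaining identity $\dim\mathrm{O}_F(V,W_k)=\dim\mathrm{O}(W_k)-\dim\mathrm{O}(V^{\perp})$ is the direct check $k(2k-1)-(2k-n)(2k-n-1)/2=2kn-(n^2+n)/2$, reflecting Witt's theorem: the isometries $V\hookrightarrow W_k$ form a single $\mathrm{O}(W_k)$-orbit with stabilizer $\mathrm{O}(V^{\perp})$.

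The main obstacle is the ``$I(y)$ is eventually constant'' step: one must make precise that the smooth fibration $\rho$ actually trivializes, compatibly with the integral structure $\mathfrak{M}_L$, over the (finitely many, by compactness of $\mathrm{O}_{\mathfrak{o}}(L,H_k)(\mathfrak{o})$) residue disks meeting $\mathrm{O}_{\mathfrak{o}}(L,H_k)(\mathfrak{o})$, so that all fibers in a small enough ball around $q_L$ carry equal $\omega^{\mathrm{ld}}$-volume. This is exactly the ``$p$-adic volume of an orbit'' computation of Section~3 of \cite{GY} (compare \cite{BLR} and Weil's treatment of Tamagawa measures), and I would either cite it verbatim or reprove it by applying the $p$-adic implicit function theorem at each point of $\mathrm{O}_{\mathfrak{o}}(L,H_k)(\mathfrak{o})$ and patching.
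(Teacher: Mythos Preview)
Your argument is correct and follows essentially the same outline as the paper's: rewrite the right-hand side as the ratio
\[
\frac{\int_{\rho^{-1}(q_L+\pi^N\mathfrak{Q}_L)\cap\mathfrak{M}_L}|\omega_{\mathfrak{M},L}|}{\int_{q_L+\pi^N\mathfrak{Q}_L}|\omega_{\mathfrak{Q},L}|},
\]
apply Fubini along the smooth map $\rho$, and conclude once the fiber integral $I(y)$ is constant on a small enough ball around $q_L$. The one point of genuine difference is how that constancy is obtained. You argue analytically, via the $p$-adic implicit function theorem and local triviality of the submersion, and you correctly flag that the compatibility of this trivialization with the lattice $\mathfrak{M}_L$ needs care. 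The paper instead invokes Durfee's theorem (isomorphism classes of quadratic lattices are locally constant): for $N$ large every $y\in q_L+\pi^N\mathfrak{Q}_L$ is $\mathrm{GL}_n(\mathfrak{o})$-equivalent to $q_L$, and such an equivalence transports $\mathfrak{M}_L\cap\rho^{-1}(q_L)$ to $\mathfrak{M}_L\cap\rho^{-1}(y)$ while matching $\omega_{q_L}^{\mathrm{ld}}$ with $\omega_y^{\mathrm{ld}}$. This algebraic route sidesteps the integral-structure issue you raised, since the change of variables already lives in $\mathrm{GL}_n(\mathfrak{o})$; conversely, your analytic argument is more self-contained (no external theorem on quadratic forms) and would work verbatim for other polynomial maps where no analogue of Durfee is available.
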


The lemma is well-known and a generalization of Lemma 3.4 of \cite{GY}. 
If $\mathfrak{M}$ has a group structure, then the proof  is explained in  page 5 of \cite{Han} or in pages 119-120 of  \cite{Tam}.
The general case is explained in \cite{Yu1}, which is not available on the internet.
Since the proof of the general case is not different from the case handled in  \cite{Han} and  \cite{Tam},
 we sketch a main idea of the proof.

\begin{proof}
The right hand side can be identified with the following:
\[
\lim_{U_q\rightarrow q_L}\frac{\int_{\rho^{-1}(U_q)\cap \mathfrak{M}_L}|\omega_{\mathfrak{M}, L}|}{\int_{U_q}|\omega_{\mathfrak{Q}, L}|},
\]
where $U_q$ is an open neighborhood of $q_L$ of the form $q_L+\pi^N\mathfrak{Q}_L$ for $N> 0$ so that 
$\int_{q_L+\pi^N\mathfrak{Q}_L}|\omega_{\mathfrak{Q}, L}|=f^{-N\cdot dim \mathfrak{Q}}$ and 
$\int_{\rho^{-1}(q_L+\pi^N\mathfrak{Q}_L)\cap \mathfrak{M}_L}|\omega_{\mathfrak{M}, L}|=f^{-N\cdot dim \mathfrak{M}}\cdot \#\mathrm{O}_{\mathfrak{o}}(L, H_k)(\mathfrak{o}/\pi^N \mathfrak{o})$.
It is well known that  isomorphism classes of  quadratic lattices are locally constant. 
In other words, any quadratic form contained in $q_L+\pi^N\mathfrak{Q}_L$, for sufficiently large integer $N$, is isometric to $q_L$ by Theorem 2 of \cite{Dur}. 
Thus Fubini's theorem yields that the fraction of these two integrals with sufficiently large integer $N$ is the same as the left hand side.
\end{proof}

\begin{Def}\label{deflocaldensity}
The local density associated to the pair of two quadratic lattices $L$ and $H_k$, denoted by $\alpha(L, H_k)$, is defined as 
$$\alpha(L, H_k)=\int_{\mathrm{O}_{\mathfrak{o}}(L, H_k)(\mathfrak{o})}|\omega_L^{\mathrm{ld}}|.
$$
\end{Def}

We define the subfunctor $\mathrm{O}_{\mathfrak{o}}^{prim}(L, H_k)$ of $\mathrm{O}_{\mathfrak{o}}(L, H_k)$ such that
 $\mathrm{O}_{\mathfrak{o}}^{prim}(L, H_k)(R)$, the set of $R$-points for a commutative $\mathfrak{o}$-algebra $R$, is the set of elements in $\mathrm{O}_{\mathfrak{o}}(L, H_k)(R)$ whose at least one $n\times n$-minor, as a linear map from 
 $L\otimes_{\mathfrak{o}}R$ to $H_k\otimes_{\mathfrak{o}}R$, is a unit in $R$.
In particular, if $R=\mathfrak{o}$, then 
$\mathrm{O}_{\mathfrak{o}}^{prim}(L, H_k)(\mathfrak{o})$ is the set of elements in $\mathrm{O}_{\mathfrak{o}}(L, H_k)(\mathfrak{o})$ whose reduction modulo $\pi$ is injective   from $L\otimes_{\mathfrak{o}}\kappa$ to $H_k\otimes_{\mathfrak{o}}\kappa$.
Each element in $\mathrm{O}_{\mathfrak{o}}^{prim}(L, H_k)(R)$ is called \textit{a primitive isometry}.
We will show that $\mathrm{O}_{\mathfrak{o}}^{prim}(L, H_k)$ is an open (not necessarily affine) subscheme of $\mathrm{O}_{\mathfrak{o}}(L, H_k)$ (cf. Corollary \ref{corsmooth}) and that  $\mathrm{O}_{\mathfrak{o}}^{prim}(L, H_k)(\mathfrak{o})$ is open in $\mathrm{O}_{\mathfrak{o}}(L, H_k)(\mathfrak{o})$ in terms of inherent $p$-adic topology (cf. Lemma \ref{padicopen}).

Consider a lattice $L'$  in $V$ containing $L$.
Let $q_{L'}$ be the quadratic form defined on $L'$, whose restriction to $L$ is the same as $q_L$.
Here, $q_{L'}$ is abuse of notation since it may not be an integral quadratic form.
We can regard  $\mathrm{O}_{\mathfrak{o}}^{prim}(L', H_k)(\mathfrak{o})$ as a  subset of $\mathrm{O}_{\mathfrak{o}}(L, H_k)(\mathfrak{o})$   induced by the restriction to $L$.
Since any linear map in $\mathrm{O}_{\mathfrak{o}}(L, H_k)(\mathfrak{o})$ is injective (i.e. isometry) by Lemma \ref{lemgeneric}, 
 we have the following stratification on $\mathrm{O}_{\mathfrak{o}}(L, H_k)(\mathfrak{o})$:
\begin{equation}\label{eqldset}
\mathrm{O}_{\mathfrak{o}}(L, H_k)(\mathfrak{o})=\bigsqcup_{L\subseteq L' \subseteq V} \mathrm{O}_{\mathfrak{o}}^{prim}(L', H_k)(\mathfrak{o}).
\end{equation}

In the following lemma, we will show that the norm $N(L')$ of $L'$ should be contained in the ring $\mathfrak{o}$, in order that $\mathrm{O}_{\mathfrak{o}}^{prim}(L', H_k)(\mathfrak{o})$ is nonempty.
This directly implies that the above  is a finite disjoint union.
\begin{Lem}\label{lemnorm}
The condition $N(L')\subseteq \mathfrak{o}$ is equivalent to the existence of a primitive isometry from $(L', q_{L'})$ to $(H_k, q_k)$.
\end{Lem}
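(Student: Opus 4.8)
I will prove the equivalence by establishing both implications; the direction ``a primitive isometry exists $\Rightarrow N(L')\subseteq\mathfrak{o}$'' is immediate, while the converse rests on an explicit hyperbolic embedding. For the first direction, suppose $\varphi$ is a primitive isometry from $(L',q_{L'})$ to $(H_k,q_k)$; in particular $\varphi\colon L'\to H_k$ is $\mathfrak{o}$-linear with $q_k(\varphi(x))=q_{L'}(x)$ for all $x\in L'$. Since $\varphi(x)\in H_k$ and $q_k$ is integral on $H_k$, this forces $q_{L'}(x)\in\mathfrak{o}$ for every $x$, hence $N(L')\subseteq\mathfrak{o}$; note that neither primitivity nor injectivity is needed for this implication.

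For the converse, assume $N(L')\subseteq\mathfrak{o}$, so that $q_{L'}$ is integral and $(L',q_{L'})$ is a genuine quadratic lattice. Since $\mathfrak{o}$ is a discrete valuation ring, $L'$ is free of rank $n$; fix a basis $v_1,\dots,v_n$ with associated half-integral symmetric matrix $B=(b_{ij})$, so that $b_{ii}=q_{L'}(v_i)\in\mathfrak{o}$ and $2b_{ij}\in\mathfrak{o}$ for all $i,j$. Using $k\geq n$, let $e_1,f_1,\dots,e_k,f_k$ be the standard hyperbolic basis of $H_k$, normalized so that $q_k\bigl(\sum_l(a_le_l+c_lf_l)\bigr)=\sum_l a_lc_l$, and define the $\mathfrak{o}$-linear map $\varphi\colon L'\to H_k$ by
\[
\varphi(v_i)=e_i+b_{ii}f_i+\sum_{j>i}2b_{ij}f_j\qquad(1\leq i\leq n).
\]
The coefficients lie in $\mathfrak{o}$ precisely because $B$ is half-integral, and a short computation of $q_k$ and of the induced bilinear form on the vectors $\varphi(v_i)$ recovers exactly the diagonal entries $b_{ii}$ and the off-diagonal entries $b_{ij}$ of $B$, so $\varphi$ preserves the quadratic form; thus $\varphi\in\mathrm{O}_{\mathfrak{o}}(L',H_k)(\mathfrak{o})$.

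It remains to verify primitivity: in the matrix of $\varphi$ with respect to the chosen bases the rows indexed by $e_1,\dots,e_n$ form the identity matrix, so the corresponding $n\times n$ minor is a unit, equivalently the reduction of $\varphi$ modulo $\pi$ is injective on $L'\otimes_{\mathfrak{o}}\kappa$. Hence $\varphi$ is a primitive isometry from $(L',q_{L'})$ to $(H_k,q_k)$, which completes the argument. The only substantive point is the explicit embedding above; it works uniformly in $p$ (including $p=2$) since half-integrality of $B$ is exactly what keeps $b_{ii}$ and $2b_{ij}$ integral, so I expect no genuine obstacle beyond the routine bilinear-form bookkeeping and keeping track of the basis conventions.
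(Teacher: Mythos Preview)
Your proof is correct and takes essentially the same approach as the paper: both construct an explicit primitive embedding of $L'$ into $H_n\subseteq H_k$ using only half-integrality of the Gram matrix $B$. The paper phrases this as a $\mathrm{GL}_{2n}(\mathfrak{o})$-equivalence between $\begin{pmatrix}0&\tfrac12 id_n\\\tfrac12 id_n&B\end{pmatrix}$ and the standard hyperbolic matrix via $\begin{pmatrix}id_n&X\\0&id_n\end{pmatrix}$ with $X+{}^tX+2B=0$; your map $v_i\mapsto e_i+b_{ii}f_i+\sum_{j>i}2b_{ij}f_j$ is precisely the embedding one obtains from the lower-triangular choice of such an $X$ (with the roles of the $e$'s and $f$'s swapped).
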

\begin{proof}
If we have a primitive isometry from $(L', q_{L'})$ to $(H_k, q_k)$,
then the fact  $N(H_k)=\mathfrak{o}$ yields
$N(L')\subseteq \mathfrak{o}$.

Conversely, 
assume that $N(L')\subseteq \mathfrak{o}$.
The symmetric matrix $B'$ of the quadratic lattice $L'$ with respect to any basis is then  half-integral over $\mathfrak{o}$.
We consider the half-integral symmetric matrix $\begin{pmatrix} 0&\frac{1}{2}\cdot id_n \\ \frac{1}{2}\cdot id_n& B' \end{pmatrix}$.
Here, $id_n$ is the $(n \times n)$-identity matrix, where $n$ is the rank of $L'$.
Note that $\begin{pmatrix} 0&\frac{1}{2}\cdot id_n \\ \frac{1}{2}\cdot id_n& 0 \end{pmatrix}$ is an half-integral symmetric matrix of the quadratic lattice $H_n$.
We claim that  $\begin{pmatrix} 0&\frac{1}{2}\cdot id_n \\ \frac{1}{2}\cdot id_n& B' \end{pmatrix}$ is equivalent to the matrix $\begin{pmatrix} 0&\frac{1}{2}\cdot id_n \\ \frac{1}{2}\cdot id_n& 0 \end{pmatrix}$ over $\mathfrak{o}$.
This induces the existence of a primitive isometry from $(L', q_{L'})$ to $(H_k, q_k)$.

Our claim follows from the matrix equation:
\[
\begin{pmatrix} 0&\frac{1}{2}\cdot id_n \\ \frac{1}{2}\cdot id_n& 0 \end{pmatrix}=
\begin{pmatrix} id_n &0 \\ {}^tX& id_n \end{pmatrix}
\begin{pmatrix} 0&\frac{1}{2}\cdot id_n \\ \frac{1}{2}\cdot id_n& B' \end{pmatrix}
\begin{pmatrix} id_n &X \\ 0& id_n \end{pmatrix}.
\]
Here, $X$ is any matrix with entries in $\mathfrak{o}$
such that $X+{}^tX+2B'=0$.
\end{proof}

This lemma, combined with Remark \ref{rgk} induces the following description of the existence of a primitive isometry in terms of the Gross-Keating invariant:
\begin{Cor}\label{primex}
There exists  a primitive isometry from $(L', q_{L'})$ to $(H_k, q_k)$ if and only if
\[\mathrm{GK}(L') \succeq \mathrm{GK}(H_k)^{(n)}=(0, \cdots, 0).\]

\end{Cor}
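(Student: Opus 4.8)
The plan is to deduce Corollary \ref{primex} directly by combining Lemma \ref{lemnorm} with the characterization of $N(L')$ in terms of the Gross-Keating invariant recorded in Remark \ref{rgk}. By Lemma \ref{lemnorm}, the existence of a primitive isometry $(L', q_{L'}) \to (H_k, q_k)$ is equivalent to the condition $N(L') \subseteq \mathfrak{o}$, so it suffices to show that $N(L') \subseteq \mathfrak{o}$ is equivalent to $\mathrm{GK}(L') \succeq (0, \cdots, 0)$.

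First I would recall from Remark \ref{rgk}.(4) that the first integer $a_1'$ of $\mathrm{GK}(L') = (a_1', \cdots, a_n')$ is the exponential order of a generator of the fractional ideal $N(L')$. The point is then purely formal: since $\mathrm{GK}(L')$ consists of a non-decreasing sequence of integers in $\mathbb{Z}$ (the entries of a Gross-Keating invariant are a priori only integers once we allow $L'$ to be a lattice whose quadratic form need not be integral), the lexicographic comparison $\mathrm{GK}(L') \succeq (0, \cdots, 0)$ holds if and only if $a_1' \geq 0$. On the other hand, $N(L') \subseteq \mathfrak{o}$ is exactly the statement that a generator of $N(L')$ has non-negative exponential order, i.e. $a_1' \geq 0$. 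Putting these two equivalences together with Lemma \ref{lemnorm} yields the corollary.

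One subtlety I would be careful to address is that the Gross-Keating invariant as defined in Definition \ref{gkdef} is stated for half-integral symmetric matrices over $\mathfrak{o}$, whereas here $q_{L'}$ may fail to be integral; so I would note that the relevant formulation of Remark \ref{rgk}.(4), taken from Lemma B.1 of \cite{Y}, applies to any quadratic lattice in $V$, with the convention that $\mathrm{GK}(L')$ is a tuple in $\mathbb{Z}^n$ (not necessarily $\mathbb{Z}_{\geq 0}^n$), and that the lexicographic order $\succeq$ extends verbatim to $\mathbb{Z}^n$. With that convention in place the argument above is complete. I do not anticipate a genuine obstacle here — the statement is essentially a repackaging of Lemma \ref{lemnorm} — the only care needed is in making the bookkeeping about integrality versus half-integrality precise, and in observing that since the first entry of $\mathrm{GK}(H_k)^{(n)}$ is $0$, the inequality $\mathrm{GK}(L') \succeq \mathrm{GK}(H_k)^{(n)}$ is governed entirely by the sign of $a_1'$.
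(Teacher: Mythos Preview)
Your proposal is correct and follows essentially the same approach as the paper, which simply says the corollary is induced by Lemma \ref{lemnorm} combined with Remark \ref{rgk}. Your unpacking via Remark \ref{rgk}.(4) is exactly the intended argument.

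One minor simplification: your worry about extending $\mathrm{GK}$ to lattices with $N(L')\not\subseteq\mathfrak{o}$ is unnecessary. In the paper's framework $\mathrm{GK}(L')$ is only defined when the associated symmetric matrix is half-integral, i.e.\ when $N(L')\subseteq\mathfrak{o}$, and in that case $\mathrm{GK}(L')\in\mathbb{Z}_{\geq 0}^n$ automatically satisfies $\mathrm{GK}(L')\succeq(0,\ldots,0)$. So the condition ``$\mathrm{GK}(L')\succeq(0,\ldots,0)$'' is simply shorthand for ``$\mathrm{GK}(L')$ is defined'', which is exactly $N(L')\subseteq\mathfrak{o}$, and Lemma \ref{lemnorm} finishes the job without any need to extend the Gross--Keating invariant beyond its domain of definition.
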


\begin{Lem}\label{padicopen}
Assume that $N(L')\subseteq \mathfrak{o}$.
The set $\mathrm{O}_{\mathfrak{o}}^{prim}(L', H_k)(\mathfrak{o})$ is open in $\mathrm{O}_{\mathfrak{o}}(L, H_k)(\mathfrak{o})$ in terms of the $p$-adic topology.
\end{Lem}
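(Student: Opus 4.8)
The plan is to show that every point of $\mathrm{O}_{\mathfrak{o}}^{prim}(L', H_k)(\mathfrak{o})$ is interior for the $p$-adic topology. I would fix a primitive isometry $\phi \colon L' \to H_k$ and regard it as the point $\phi_0 := \phi|_L \in \mathrm{O}_{\mathfrak{o}}(L, H_k)(\mathfrak{o})$; note that the restriction map on isometries is injective, because an isometry $L' \to H_k$ is the restriction of its unique $F$-linear extension $V \to W_k$ and hence is determined by its values on $L$ (we use $V=L\otimes_{\mathfrak{o}}F$). Setting $b=[L':L]$, so that $L'/L$ has length $b$ and therefore $\pi^{b}L' \subseteq L$, I would take
$U$ to be the set of $\psi \in \mathrm{O}_{\mathfrak{o}}(L, H_k)(\mathfrak{o})$ with $\psi - \phi_0 \in \pi^{b+1}\mathfrak{M}_L$; this is an open neighborhood of $\phi_0$ in the $p$-adic topology, and the goal is to prove $U \subseteq \mathrm{O}_{\mathfrak{o}}^{prim}(L', H_k)(\mathfrak{o})$.

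Given $\psi \in U$, I would let $\psi_F \colon V \to W_k$ be its $F$-linear extension. Since $q_k(\psi(x)) = q_L(x)$ for all $x \in L$ and $F$ has characteristic $0$, polarization makes the associated bilinear forms agree on $L \times L$, hence on $V \times V$, so $\psi_F$ is an isometry of quadratic $F$-spaces. Put $\tilde\psi := \psi_F|_{L'}$. For $x \in L'$ one has $\pi^{b}x \in L$, hence $\pi^{b}\tilde\psi(x) = \psi(\pi^{b}x)$, and since $\psi-\phi_0$ carries $L$ into $\pi^{b+1}H_k$,
\[
\tilde\psi(x) - \phi(x) = \pi^{-b}\bigl(\psi(\pi^{b}x) - \phi(\pi^{b}x)\bigr) \in \pi^{-b}\cdot \pi^{b+1}H_k = \pi H_k .
\]
Thus $\tilde\psi(x) \in \phi(x) + \pi H_k \subseteq H_k$, so $\tilde\psi$ is an $\mathfrak{o}$-linear map $L' \to H_k$; as the restriction of the isometry $\psi_F$ it preserves the quadratic forms, so $\tilde\psi \in \mathrm{O}_{\mathfrak{o}}(L', H_k)(\mathfrak{o})$. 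The same congruence $\tilde\psi \equiv \phi \pmod{\pi}$ shows that $\tilde\psi$ and $\phi$ induce the same map $L'\otimes_{\mathfrak{o}}\kappa \to H_k\otimes_{\mathfrak{o}}\kappa$, which is injective because $\phi$ is primitive; hence $\tilde\psi$ is a primitive isometry whose restriction to $L$ is $\psi$. This gives $U\subseteq \mathrm{O}_{\mathfrak{o}}^{prim}(L', H_k)(\mathfrak{o})$ as subsets of $\mathrm{O}_{\mathfrak{o}}(L, H_k)(\mathfrak{o})$, and the lemma follows.

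The one step that needs care — and the only place the hypotheses really enter — is the integrality of $\tilde\psi$: a priori $\psi_F$ carries $L'$ only into $\pi^{-b}H_k$, and it is the combination of the uniform bound $\pi^{b}L'\subseteq L$ with the proximity $\psi\equiv\phi_0\pmod{\pi^{b+1}}$ that forces the image back into $H_k$ (and simultaneously yields primitivity); the choice of the exponent $b+1$ is dictated precisely by this estimate. I would also note that the hypothesis $N(L')\subseteq\mathfrak{o}$ is used only to guarantee that $(L',q_{L'})$ is a genuine quadratic lattice, so that $\mathrm{O}_{\mathfrak{o}}(L', H_k)$ and $\mathrm{O}_{\mathfrak{o}}^{prim}(L', H_k)$ are defined in the first place; the remaining verifications (injectivity of the restriction, openness of $U$, propagation of the quadratic-form identity) are formal.
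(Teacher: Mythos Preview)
Your proof is correct. The route differs from the paper's, which factors the statement into two independent claims: first, $\mathrm{O}_{\mathfrak{o}}(L', H_k)(\mathfrak{o})$ is open in $\mathrm{O}_{\mathfrak{o}}(L, H_k)(\mathfrak{o})$ (because $\pi^{b}\cdot\mathrm{End}_{\mathfrak{o}}(L,H_k)(\mathfrak{o})\subseteq\mathrm{End}_{\mathfrak{o}}(L',H_k)(\mathfrak{o})$, so the latter has finite index and is open as a submodule, and one then intersects with $\mathrm{O}_{\mathfrak{o}}(L, H_k)(\mathfrak{o})$); second, the primitive locus $\mathrm{O}_{\mathfrak{o}}^{prim}(L', H_k)(\mathfrak{o})$ is open in $\mathrm{O}_{\mathfrak{o}}(L', H_k)(\mathfrak{o})$ (as the preimage, under reduction mod~$\pi$, of the open set of injective maps over~$\kappa$). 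You instead argue pointwise, producing for each primitive $\phi$ an explicit ball $\phi_0+\pi^{b+1}\mathfrak{M}_L$ and checking in one stroke that every $\psi$ in it extends integrally to $L'$ \emph{and} reduces to the same map as $\phi$ mod~$\pi$. The core estimate $\pi^{b}L'\subseteq L$ is the same in both arguments; your version is more self-contained and yields an explicit radius (indeed the choice of $b+1$ rather than $b$ is precisely what buys primitivity on top of integrality), while the paper's decomposition makes visible that the two openness statements are logically separate phenomena.
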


\begin{proof}
It suffices to show that 
 $\mathrm{O}_{\mathfrak{o}}(L', H_k)(\mathfrak{o})$ is open in $\mathrm{O}_{\mathfrak{o}}(L, H_k)(\mathfrak{o})$
 and that  $\mathrm{O}_{\mathfrak{o}}^{prim}(L, H_k)(\mathfrak{o})$ is open in $\mathrm{O}_{\mathfrak{o}}(L, H_k)(\mathfrak{o})$.
 
Let us identify $\mathrm{End}_{\mathfrak{o}}(L, H_k)(\mathfrak{o})$ with the set of $2k\times n$-matrices with entries in $\mathfrak{o}$.
Let $d=[L':L]$. Since $L \subset L' \subset \frac{1}{\pi^d} L$,  $\mathrm{End}_{\mathfrak{o}}(L', H_k)(\mathfrak{o})$ contains
$\mathrm{End}_{\mathfrak{o}}(\frac{1}{\pi^d} L, H_k)(\mathfrak{o})$.
The latter is identified with 
 $\pi^d\cdot \mathrm{End}_{\mathfrak{o}}(L, H_k)(\mathfrak{o})$ as a subset of $\mathrm{End}_{\mathfrak{o}}(L, H_k)(\mathfrak{o})$.
Here, $\pi^d\cdot \mathrm{End}_{\mathfrak{o}}(L, H_k)(\mathfrak{o})$ is defined as  $\{\pi^d X, X\in \mathrm{End}_{\mathfrak{o}}(L, H_k)(\mathfrak{o})\}$.
Therefore, as an $\mathfrak{o}$-module,  $\mathrm{End}_{\mathfrak{o}}(L', H_k)(\mathfrak{o})$ has a finite index in $\mathrm{End}_{\mathfrak{o}}(L, H_k)(\mathfrak{o})$ and so is open.

Since  $\mathrm{O}_{\mathfrak{o}}(L', H_k)(\mathfrak{o})$ is the intersection of $\mathrm{End}_{\mathfrak{o}}(L', H_k)(\mathfrak{o})$ and $\mathrm{O}_{\mathfrak{o}}(L, H_k)(\mathfrak{o})$ inside $\mathrm{End}_{\mathfrak{o}}(L, H_k)(\mathfrak{o})$,
it is open in $\mathrm{O}_{\mathfrak{o}}(L, H_k)(\mathfrak{o})$.

For the second claim, 
since  $\mathrm{O}_{\mathfrak{o}}^{prim}(L, H_k)(\mathfrak{o})$ is the intersection of $\mathrm{End}^{prim}_{\mathfrak{o}}(L, H_k)(\mathfrak{o})$ and $\mathrm{O}_{\mathfrak{o}}(L, H_k)(\mathfrak{o})$ inside $\mathrm{End}_{\mathfrak{o}}(L, H_k)(\mathfrak{o})$,
as in the above case,
 it suffices to show that 
$\mathrm{End}^{prim}_{\mathfrak{o}}(L, H_k)(\mathfrak{o})$ is open in $\mathrm{End}_{\mathfrak{o}}(L, H_k)(\mathfrak{o})$.
Here 
$\mathrm{End}^{prim}_{\mathfrak{o}}(L, H_k)(\mathfrak{o})$ consists of endomorphisms from $L$ to $H_k$ whose at least one $n\times n$-minor is a unit in $\mathfrak{o}$.
We consider the reduction map modulo $\pi$ from $\mathrm{End}_{\mathfrak{o}}(L, H_k)(\mathfrak{o})$
to $\mathrm{End}_{\kappa}(L\otimes \kappa, H_k\otimes \kappa)$.
The induced quotient topology on the latter  is  the discrete topology on a finite set.
Let $\mathrm{End}^{prim}_{\kappa}(L\otimes \kappa, H_k\otimes \kappa)$ be the (open) subset of $\mathrm{End}_{\kappa}(L\otimes \kappa, H_k\otimes \kappa)$ consisting of  endomorphisms from $L\otimes \kappa$ to $H_k\otimes \kappa$ whose at least one $n\times n$-minor is nonzero in $\kappa$. Then 
$\mathrm{End}^{prim}_{\mathfrak{o}}(L, H_k)(\mathfrak{o})$ is the inverse image of $\mathrm{End}^{prim}_{\kappa}(L\otimes \kappa, H_k\otimes \kappa)$ under the reduction map. Thus it is open.
\end{proof}

\begin{Prop}
The local density $\alpha(L, H_k)$ is  expressed as the following sum:
\begin{equation}\label{eqld}
\alpha(L, H_k)
=\sum_{ L \subseteq L' \subseteq V} f^{[L':L]\cdot(n+1-2k)}\int_{\mathrm{O}_{\mathfrak{o}}^{prim}(L', H_k)(\mathfrak{o})}|\omega_{L'}^{\mathrm{ld}}|.
\end{equation}

\end{Prop}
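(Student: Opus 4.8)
The plan is to establish the displayed formula (\ref{eqld}) by combining the stratification (\ref{eqldset}) with a change-of-lattice computation for the volume form $\omega_L^{\mathrm{ld}}$. First I would observe that the integral defining $\alpha(L, H_k)$ decomposes as a sum over the strata $\mathrm{O}_{\mathfrak{o}}^{prim}(L', H_k)(\mathfrak{o})$, using that each such stratum is open in $\mathrm{O}_{\mathfrak{o}}(L, H_k)(\mathfrak{o})$ with respect to the $p$-adic topology (Lemma \ref{padicopen}), that the disjoint union is finite (Lemma \ref{lemnorm} together with the norm constraint), and that the measure $|\omega_L^{\mathrm{ld}}|$ is countably additive; this reduces the claim to comparing, on each stratum, the restriction of $|\omega_L^{\mathrm{ld}}|$ with $|\omega_{L'}^{\mathrm{ld}}|$.

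The core of the argument is therefore the scaling relation
\[
\int_{\mathrm{O}_{\mathfrak{o}}^{prim}(L', H_k)(\mathfrak{o})} |\omega_L^{\mathrm{ld}}|
= f^{[L':L]\cdot(n+1-2k)}\int_{\mathrm{O}_{\mathfrak{o}}^{prim}(L', H_k)(\mathfrak{o})} |\omega_{L'}^{\mathrm{ld}}|.
\]
To prove this, I would trace through Definition \ref{diff}: the differential $\omega_L^{\mathrm{ld}}$ is built from the translation-invariant forms $\omega_{\mathfrak{M}, L}$ and $\omega_{\mathfrak{Q}, L}$, normalized so that $\mathfrak{M}_L$ and $\mathfrak{Q}_L$ have volume $1$. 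Passing from $L$ to $L'$ replaces these by $\omega_{\mathfrak{M}, L'}$ and $\omega_{\mathfrak{Q}, L'}$, which differ from the originals by scalars governed by the indices $[\mathfrak{M}_{L'} : \mathfrak{M}_L]$ and $[\mathfrak{Q}_{L'} : \mathfrak{Q}_L]$ of the corresponding $\mathfrak{o}$-lattices in $\mathfrak{M}$ and $\mathfrak{Q}$ respectively. Concretely, if $d = [L':L]$, then an $\mathfrak{o}$-basis change relating $L$ and $L'$ scales the space of maps $\mathfrak{M}$ in the $n$-dimensional source direction — giving a factor with exponent proportional to $d \cdot \dim W_k = 2kd$ — while it scales the space of quadratic forms $\mathfrak{Q}$ on the source, giving a factor with exponent $d\cdot(n+1)$ since $\dim \mathfrak{Q} = n(n+1)/2$ and the relevant index picks up $(n+1)$ per unit of $d$ (the symmetric-square of an $n$-dimensional change of variables of "size" one step). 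Since $\omega_L^{\mathrm{ld}} = \omega_{\mathfrak{M}, L}/\rho^{\ast}\omega_{\mathfrak{Q}, L}$ and likewise for $L'$, the ratio $|\omega_L^{\mathrm{ld}}|/|\omega_{L'}^{\mathrm{ld}}|$ is the constant $f^{d(n+1)}/f^{2kd} = f^{d(n+1-2k)}$, which is exactly the claimed factor. One should double-check that on the stratum $\mathrm{O}_{\mathfrak{o}}^{prim}(L', H_k)(\mathfrak{o})$ the map actually factors through $L'$ (so that $\omega_{L'}^{\mathrm{ld}}$ is the correct form to integrate there), which is built into the definition of the stratification (\ref{eqldset}) and the identification of $\mathrm{O}_{\mathfrak{o}}^{prim}(L', H_k)(\mathfrak{o})$ as a subset of $\mathrm{O}_{\mathfrak{o}}(L, H_k)(\mathfrak{o})$.

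The main obstacle I anticipate is bookkeeping the two index computations carefully and consistently: one must verify that the translation-invariant form $\omega_{\mathfrak{M}, L'}$ obtained by the normalization $\int_{\mathfrak{M}_{L'}}|\omega_{\mathfrak{M}, L'}| = 1$ relates to $\omega_{\mathfrak{M}, L}$ by precisely $f^{-2kd}$ (equivalently $[\mathfrak{M}_{L'} : \mathfrak{M}_L] = f^{2kd}$, coming from $\mathrm{Hom}(L'/L, H_k)$-type counting — note $L'/L$ has length $d$ and $H_k$ has rank $2k$), and similarly that the symmetric-form space satisfies $[\mathfrak{Q}_{L'} : \mathfrak{Q}_L] = f^{d(n+1)}$, which is the subtler of the two because the quadratic forms on $L'$ that restrict to integral forms on $L$ form a lattice whose index involves both a "$d\cdot n$" contribution from off-diagonal pairings and a "$d$" contribution from the diagonal, totalling $d(n+1)$. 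Once these two local volume comparisons are in hand, the proposition follows immediately by summing (\ref{eqld}) over the finite stratification.
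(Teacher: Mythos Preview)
Your proposal is correct and follows essentially the same route as the paper: decompose the integral along the finite open stratification (\ref{eqldset}) via Lemma \ref{padicopen}, then compare $\omega_L^{\mathrm{ld}}$ with $\omega_{L'}^{\mathrm{ld}}$ through the normalizations of $\omega_{\mathfrak{M},\cdot}$ and $\omega_{\mathfrak{Q},\cdot}$ to extract the factor $f^{d(n+1-2k)}$. One small bookkeeping slip: since $L\subset L'$, restriction gives $\mathfrak{M}_{L'}\subset \mathfrak{M}_L$ and $\mathfrak{Q}_{L'}\subset \mathfrak{Q}_L$, so the indices you want are $[\mathfrak{M}_L:\mathfrak{M}_{L'}]=f^{2kd}$ and $[\mathfrak{Q}_L:\mathfrak{Q}_{L'}]=f^{(n+1)d}$ (you wrote them the other way around), but your final ratio and conclusion are correct.
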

\begin{proof}
Since the set $\mathrm{O}_{\mathfrak{o}}^{prim}(L', H_k)(\mathfrak{o})$ is open in $\mathrm{O}_{\mathfrak{o}}(L, H_k)(\mathfrak{o})$
 by Lemma \ref{padicopen},
Equation (\ref{eqldset}) yields the following identity:
\[
\alpha(L, H_k)=\sum_{L\subseteq L' \subseteq V} \int_{\mathrm{O}_{\mathfrak{o}}^{prim}(L', H_k)(\mathfrak{o})}|\omega_L^{\mathrm{ld}}|.
\]
Let $d=[L':L]$. 
Recall that $\omega_{L}^{\mathrm{ld}}=\omega_{\mathfrak{M}, L}/\rho^{\ast}\omega_{\mathfrak{Q}, L}$ and 
 $\omega_{L'}^{\mathrm{ld}}=\omega_{\mathfrak{M}, L'}/\rho^{\ast}\omega_{\mathfrak{Q}, L'}$ in Definition \ref{diff}.
By the normalizations of $\omega_{\mathfrak{M}, L}$ and $\omega_{\mathfrak{Q}, L}$ given at the beginning of this subsection,
we have
\[
\left\{
  \begin{array}{l}
\omega_{\mathfrak{M}, L}=\pi^{2kd}\cdot \omega_{\mathfrak{M}, L'};\\
\omega_{\mathfrak{Q}, L}=\pi^{d(n+1)}\cdot\omega_{\mathfrak{Q}, L'}.
    \end{array} \right.
\]
Thus, $\omega_{L}^{\mathrm{ld}}=\pi^{2kd-(n+1)d}\omega_{L'}^{\mathrm{ld}}$. 
 Equation (\ref{eqld}) follows from this observation.
\end{proof}

\begin{Def}\label{defdecompositionldpld}
We define the primitive local density associated to the quadratic lattices $L$ and $H_k$, denoted by
$\alpha^{prim}(L, H_k)$, as follows:
\[\alpha^{prim}(L, H_k)=\int_{\mathrm{O}_{\mathfrak{o}}^{prim}(L, H_k)(\mathfrak{o})}|\omega_{L}^{\mathrm{ld}}|.\]
\end{Def}

Thus Equation (\ref{eqld}) is written as follows:
\begin{equation}\label{equationldprimld}
\alpha(L, H_k)=
 \sum_{ L \subseteq L' \subseteq V} f^{[L':L]\cdot(n+1-2k)}\cdot \alpha^{prim}(L', H_k). 
 \end{equation}
Here, the sum runs over all quadratic lattices $L'$ such that $N(L')\subseteq \mathfrak{o}$ and thus is finite.
We remark that this formula is  well-known in  classical literatures using different terminologies (cf. Lemma 3 of \cite{K83}).

\subsection{A formula of the primitive local density}\label{subsectionfpld}

We assume that $N(L)\subseteq \mathfrak{o}$ and thus the set $\mathrm{O}_{\mathfrak{o}}^{prim}(L, H_k)(\mathfrak{o})$ is nonempty.
In this subsection, we describe a formula of  the primitive local density $\alpha^{prim}(L, H_k)$ by proving smoothness of 
$\mathrm{O}^{prim}_{\mathfrak{o}}(L, H_k)$   as a scheme defined over $\mathfrak{o}$.


We first explain another description of  $\mathrm{O}^{prim}_{\mathfrak{o}}(L, H_k)$ in terms of the fiber of a certain morphism between two smooth schemes over $\mathfrak{o}$.

Let $\underline{M}_{\mathfrak{o}}(L, H_k)$ be the functor from the category of flat $\mathfrak{o}$-algebras to the category of sets such that 
$\underline{M}_{\mathfrak{o}}(L, H_k)(R)$, the set of $R$-points for a flat $\mathfrak{o}$-algebra $R$, is the set of  $R$-linear maps from $L\otimes_{\mathfrak{o}}R$
to $H_k\otimes_{\mathfrak{o}}R$ by ignoring the associated quadratic forms.
Then the functor $\underline{M}_{\mathfrak{o}}(L, H_k)$ is uniquely represented by a flat $\mathfrak{o}$-algebra which is a polynomial ring over $\mathfrak{o}$ of $2kn$ variables. 
Thus we can now talk of $\underline{M}_{\mathfrak{o}}(L, H_k)(R)$  for any  (not necessarily flat) $\mathfrak{o}$-algebra $R$.

Let $\underline{M}^{\ast}_{\mathfrak{o}}(L, H_k)$ be the subfunctor of $\underline{M}_{\mathfrak{o}}(L, H_k)$ such that
 $\underline{M}^{\ast}_{\mathfrak{o}}(L, H_k)(R)$, the set of $R$-points for a commutative $\mathfrak{o}$-algebra $R$, is the set of  $R$-linear maps from $L\otimes_{\mathfrak{o}}R$
to $H_k\otimes_{\mathfrak{o}}R$ whose at least one  $n\times n$-minor is a unit in $R$.
Then it is easy to see that $\underline{M}^{\ast}_{\mathfrak{o}}(L, H_k)$ is an open  subscheme of $\underline{M}_{\mathfrak{o}}(L, H_k)$
which yields smoothness of $\underline{M}^{\ast}_{\mathfrak{o}}(L, H_k)$
(cf. Section 3.2 of \cite{C1}).
Note that $\underline{M}^{\ast}_{\mathfrak{o}}(L, H_k)$ is not necessarily affine, but has finite affine covers (given by each $n\times n$-minor).
In particular, if $R=\mathfrak{o}$, then 
$\underline{M}^{\ast}_{\mathfrak{o}}(L, H_k)(\mathfrak{o})$ is the set of  $\mathfrak{o}$-linear maps from  $L$ to $H_k$  whose reduction modulo $\pi$ is injective from 
$L\otimes_{\mathfrak{o}}\kappa$ to $H_k\otimes_{\mathfrak{o}}\kappa$.

Let $\underline{Q}_L$ be the affine space of dimension $n(n+1)/2$ defined over $\mathfrak{o}$ such that
$\underline{Q}_L(R)$, the set of $R$-points for a commutative $\mathfrak{o}$-algebra $R$,
is the set of  quadratic forms on $L\otimes_{\mathfrak{o}}R$ whose 
coefficients are in $R$.
Similarly, we define $\underline{Q}_{H_k}$ as the affine space of dimension $2k(2k+1)/2$ defined over $\mathfrak{o}$.


Let $R$ be a flat $\mathfrak{o}$-algebra. As a matrix, each element of $\underline{Q}_L(R)$ is given by a symmetric matrix $(a_{ij})$
of size $n\times n$ such that each nondiagonal entry $a_{ij}$ with $i\neq j$ is of the form $1/2\cdot a_{ij}'$ for $a_{ij}'\in R$
and each diagonal entry $a_{ii}$ is contained in $R$.

Then we consider the following morphism
\[ \underline{Q}_{H_k} \times \underline{M}^{\ast}_{\mathfrak{o}}(L, H_k) \longrightarrow   \underline{Q}_L, (q, m) \mapsto q\circ m.\]
Here, $q\in \underline{Q}_{H_k}(R)$ and $m\in \underline{M}^{\ast}_{\mathfrak{o}}(L, H_k)(R)$ for a flat $\mathfrak{o}$-algebra $R$.
It is easy to see that the above morphism is well-defined and represented by a  morphism of schemes over $\mathfrak{o}$
(cf. the last paragraph of the proof of Theorem 3.4 in \cite{C2}).
The above  morphism induces the morphism
\[\rho : \underline{M}^{\ast}_{\mathfrak{o}}(L, H_k) \longrightarrow   \underline{Q}_L, m \mapsto q_k\circ m.\]

\begin{Thm}\label{thmsmoothness}
The morphism 
$\rho : \underline{M}^{\ast}_{\mathfrak{o}}(L, H_k) \rightarrow   \underline{Q}_L$
is smooth.
\end{Thm}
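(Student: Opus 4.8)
The plan is to prove smoothness fiberwise over $\mathfrak{o}$ by checking the differential criterion at every closed point, exploiting the open cover of $\underline{M}^{\ast}_{\mathfrak{o}}(L, H_k)$ by the non-vanishing loci of the $n\times n$-minors. Since $\underline{M}^{\ast}_{\mathfrak{o}}(L, H_k)$ and $\underline{Q}_L$ are both smooth (in fact the target is an affine space and the source is an open subscheme of affine space), smoothness of $\rho$ is equivalent to surjectivity of the induced map on Zariski tangent spaces at each point of each fiber, together with the fibers having the expected dimension $2kn-n(n+1)/2$; alternatively one checks the infinitesimal lifting criterion directly. Because the formation of $\rho$ commutes with base change, it suffices to work over the generic fiber $F$ and over the residue field $\kappa$ separately.

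First I would reduce to a local computation: fix a closed point $x$ of $\underline{M}^{\ast}_{\mathfrak{o}}(L, H_k)$ lying over a point of $\underline{Q}_L$, with residue field $k$ (either a finite extension of $F$ or of $\kappa$); after choosing an ordered basis of $L$ we may, by the definition of $\underline{M}^{\ast}$, pass to the affine open where a fixed $n\times n$-minor is a unit, so that the corresponding component $m_0$ of $x$ is an isomorphism $L\otimes k \xrightarrow{\sim} (\text{a rank-}n\text{ summand of } H_k\otimes k)$. Then I would compute the differential of $q \mapsto q_k\circ m$ at $m_0$: for a tangent vector $\delta m \in \mathrm{Hom}(L, H_k)\otimes k$, the directional derivative is the quadratic form $x \mapsto 2\,\beta_k(m_0 x, \delta m\, x)$ on $L\otimes k$, where $\beta_k$ is the symmetric bilinear form attached to $q_k$. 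The task is to show this linear map $\mathrm{Hom}(L,H_k)\otimes k \to \underline{Q}_L\otimes k$ is surjective onto the $n(n+1)/2$-dimensional space of symmetric forms on $L\otimes k$.

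The key point, and where I expect the only real work to lie, is this surjectivity: given an arbitrary symmetric bilinear form $S$ on $L\otimes k$, I must produce $\delta m$ with $\beta_k(m_0 x, \delta m\, y) + \beta_k(m_0 y, \delta m\, x) = S(x,y)$ for all $x,y$. Since $m_0$ identifies $L\otimes k$ with a nondegenerate subspace of the nondegenerate quadratic space $H_k\otimes k$ (nondegeneracy of the image follows because $m_0$ restricted to $L$ is an isometry onto its image for the quadratic form $q_k\circ m_0$, which by hypothesis — this is a point in $\rho^{-1}$ of a fiber, but even for general $m_0\in\underline{M}^\ast$ the form $q_k\circ m_0$ need not be nondegenerate, so one must instead argue directly), one can take $\delta m\, x := \tfrac12\, m_0\,(\text{the } S\text{-dual vector}) $ composed with the isomorphism $L\otimes k \cong \mathrm{Hom}(L\otimes k, k)$ supplied by $S$ — more precisely, write $\delta m = \iota \circ \phi$ where $\phi\colon L\otimes k \to L\otimes k$ is any $k$-linear map with $\beta_k(m_0 x, m_0 \phi y)$ symmetrizing to $\tfrac12 S$, which exists because $\beta_k$ is nondegenerate on $\mathrm{Im}(m_0)$ provided $\mathrm{Im}(m_0)$ is nondegenerate. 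To handle the degenerate case uniformly, instead of composing into $\mathrm{Im}(m_0)$ I would use that $m_0$ has full rank $n$, hence there is a complementary isotropic-free setup: pick any splitting $H_k\otimes k = \mathrm{Im}(m_0)\oplus N$ and use that the pairing $\mathrm{Im}(m_0)\times (H_k\otimes k) \to k$, $(v,w)\mapsto \beta_k(v,w)$, is surjective in the second variable (as $\beta_k$ is nondegenerate on all of $H_k\otimes k$), so the composite $\mathrm{Hom}(L, H_k) \to \mathrm{Bil}(L,L)$, $\delta m \mapsto ((x,y)\mapsto \beta_k(m_0 x, \delta m\, y))$ is already surjective onto all bilinear forms, a fortiori onto symmetric ones after symmetrizing; the symmetrization map on bilinear forms is surjective onto symmetric forms in all characteristics, including $2$, with the usual convention that half-integral symmetric matrices encode the quadratic form. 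This last characteristic-$2$ subtlety is the genuine obstacle: I would verify that the derivative, correctly interpreted via the quadratic-form (not merely bilinear-form) structure on $\underline{Q}_L$, still hits every element, using that $\underline{Q}_L$ parametrizes quadratic forms $q$ with polarization $2$-divisible off the diagonal, and that the derivative of $m \mapsto q_k\circ m$ in the quadratic-form sense sends $\delta m$ to the quadratic form $x \mapsto \beta_k(m_0 x + \delta m\, x,\, m_0 x + \delta m\, x) - q_k(m_0 x)$ truncated to first order, i.e. $x\mapsto \beta_k(m_0 x, \delta m\, x)\cdot 2$, which lands in $2\cdot(\text{anything})$ — and this is exactly the right target because $q_k$ itself takes values so that $\beta_k = $ (polarization) is $2$-valued; I would spell out that the relevant minors being units makes $m_0$ surjective enough for this to cover $\underline{Q}_L(k)$. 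Once surjectivity of the differential is established at all closed points of all fibers, smoothness follows from the Jacobian criterion (EGA IV or \cite{BLR}, Proposition 5), and the fiber dimension equality $2kn-n(n+1)/2 = \dim\mathfrak{M}-\dim\mathfrak{Q}$ from Lemma \ref{volumeform} confirms the expected relative dimension.
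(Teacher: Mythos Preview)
Your overall strategy is exactly the paper's: reduce to checking surjectivity of the differential $\rho_{*,m}$ at each closed point of each fiber, using that $m$ has a unit $n\times n$-minor so that $m^t\cdot q_k$ has full rank. The paper cites Lemma~5.5.1 of \cite{GY} to reduce to the special fiber and then proves exactly this surjectivity (Lemma~\ref{lemsmoothness}).

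However, your treatment of the residue-characteristic-$2$ case has a genuine gap. You write the differential as $x\mapsto 2\,\beta_k(m_0x,\delta m\,x)$ for a symmetric bilinear form $\beta_k$ with $\beta_k(v,v)=q_k(v)$, but over $\kappa$ with $p=2$ no such $\beta_k$ exists, and your subsequent remark that the image ``lands in $2\cdot(\text{anything})$'' suggests the map would be zero, not surjective. Likewise the assertion that ``the symmetrization map on bilinear forms is surjective onto symmetric forms in all characteristics, including~$2$'' is false as stated: in characteristic~$2$ the map $B\mapsto B+{}^tB$ on matrices has image the alternating (zero-diagonal) matrices. What saves the argument is that the target $\underline{Q}_L$ parametrizes \emph{quadratic forms}, not symmetric bilinear forms, and the correct first-order variation is the polarization $x\mapsto b_{q_k}(m_0x,\delta m\,x)$, which is the diagonal restriction $B\mapsto(x\mapsto B(x,x))$ of the bilinear form $B(x,y)=b_{q_k}(m_0x,\delta m\,y)$. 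This diagonal-restriction map \emph{is} surjective onto quadratic forms in every characteristic. The paper makes this precise via a formal $\tfrac12$-bookkeeping device: it introduces the auxiliary functor $T$ of matrices $\tfrac12\cdot(y_{ij})$ and factors $\rho_{*,m}$ as $X\mapsto m^t q_k X\in T(\bar\kappa)$ followed by $Y\mapsto Y+{}^tY\in\underline{Q}_L(\bar\kappa)$, checking separately that each step is onto. You should replace your $2\beta_k$ formula and the symmetrization claim with this two-step factorization; once that is done, your argument coincides with the paper's.
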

\begin{proof}
  The theorem follows from Lemma 5.5.1 of \cite{GY} and the following lemma.
  \end{proof}

      \begin{Lem}\label{lemsmoothness}
      The morphism $\rho \otimes \kappa : \underline{M}^{\ast}_{\mathfrak{o}}(L, H_k)\otimes \kappa \rightarrow \underline{Q}_L\otimes \kappa$
      is smooth.
      \end{Lem}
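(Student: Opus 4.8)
The plan is to verify smoothness of $\rho\otimes\kappa$ by the Jacobian criterion, checking that the differential is surjective at every closed point of $\underline{M}^{\ast}_{\mathfrak{o}}(L,H_k)\otimes\kappa$. Fix a geometric point, i.e. a $\kappa'$-linear map $m: L\otimes\kappa'\to H_k\otimes\kappa'$ for some field extension $\kappa'/\kappa$, which by the defining condition of $\underline{M}^{\ast}$ has some $n\times n$-minor a unit; equivalently $m$ is injective. Its image $m(L\otimes\kappa')$ is an $n$-dimensional subspace $U\subseteq H_k\otimes\kappa'$. The differential of $\rho\otimes\kappa$ at $m$ sends a tangent vector $\delta m\in\mathrm{Hom}_{\kappa'}(L\otimes\kappa', H_k\otimes\kappa')$ to the quadratic form $x\mapsto B_k(m(x),\delta m(x))$ on $L\otimes\kappa'$, where $B_k$ is the bilinear form attached to $q_k$ (and one must be slightly careful in characteristic $2$, since $q_k$ is half-integral and its reduction $\bar q_k$ carries more information than $\bar B_k$; this is exactly where the $\frac12$'s in the definition of $\underline{Q}_L$ over a flat base enter, and why the statement is phrased as the reduction of a morphism defined integrally). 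So it suffices to show: for every symmetric bilinear (resp. quadratic) form $t$ on $L\otimes\kappa'$ there exists $\delta m$ with $B_k(m(x),\delta m(y))+B_k(m(y),\delta m(x))$ realizing $t$; equivalently, pairing against $U$ is surjective onto the space of forms on $L\otimes\kappa'$.

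\textbf{Key steps.} First I would reduce to a linear-algebra statement over the field $\kappa'$: the map $\delta m\mapsto (x\mapsto B_k(m(x),\delta m(x)))$, or its polarization, is surjective onto $\underline{Q}_L\otimes\kappa'$ as soon as the restriction of $B_k$ to $U$ is nondegenerate, since then $H_k\otimes\kappa' = U\oplus U^{\perp}$ and one can build $\delta m$ coordinate-by-coordinate against a basis of $U$. Second, since $U$ need not be nondegenerate (its radical can be nontrivial, especially when $p=2$), I would instead invoke the hypothesis $k\geq n$: because $H_k$ is a sum of hyperbolic planes of rank $2k\geq 2n$, any $n$-dimensional subspace $U$ of $W_k\otimes\kappa'$ embeds into a nondegenerate subspace of dimension $\le 2n\le 2k$ that still sits inside $H_k\otimes\kappa'$, so one has enough room in $U^{\perp}$ to span all of $\mathrm{Hom}$ needed. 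Concretely, choosing coordinates so that $m$ sends the $i$-th basis vector of $L$ into the $i$-th hyperbolic plane, one sees the differential hits every entry of the symmetric matrix independently. Third, this is precisely the content of Lemma 5.5.1 of \cite{GY} invoked in the proof of Theorem \ref{thmsmoothness}; so the honest task here is to transcribe that computation in the present notation and to double-check the characteristic-$2$ bookkeeping (the half-integrality convention on $\underline{Q}_L$ is designed so that the formula $q\circ m$ makes sense and has surjective differential even when $2=0$). Finally, surjectivity of the differential at all closed points, together with flatness of $\underline{M}^{\ast}_{\mathfrak{o}}(L,H_k)$ and $\underline{Q}_L$ over $\mathfrak{o}$ and finite type, gives smoothness of $\rho\otimes\kappa$.

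\textbf{Main obstacle.} I expect the only real subtlety to be the case $p=2$: over a field of characteristic $2$ a quadratic form is not determined by its associated bilinear form, the "diagonal" directions in $\underline{Q}_L$ behave differently from the "off-diagonal" ones, and the radical of $B_k|_U$ can be large. The resolution is to use the ambient hyperbolic structure of $H_k$ together with $k\ge n$: there is always enough complementary space to independently prescribe both the polarization part and the diagonal (squared-norm) part of the target form, so the differential is still surjective. Once this is checked, the rest is formal. One could also phrase the whole argument more cleanly by noting that $\underline{M}^{\ast}_{\mathfrak{o}}(L,H_k)$ is, Zariski-locally, a product of an affine space of "free" parameters with the functor of isometries into a nondegenerate sublattice, reducing to the well-known smoothness of the scheme of primitive isometries into a nondegenerate quadratic lattice; but the direct Jacobian computation, following \cite{GY}, is the most economical.
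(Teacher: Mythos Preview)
Your approach via the Jacobian criterion and surjectivity of the differential is exactly the paper's, and your identification of the tangent map as $\delta m\mapsto\bigl(x\mapsto B_k(m(x),\delta m(x))\bigr)$ is correct. However, the detour through the restriction $B_k|_U$ and the embedding of $U$ into a larger nondegenerate subspace is unnecessary, and your ``concrete'' coordinate choice (sending $e_i$ into the $i$-th hyperbolic plane) is not always available. The point you are missing is that the polar form $B_k$ is nondegenerate on the \emph{entire} space $H_k\otimes\kappa'$ in every characteristic: its Gram matrix is the block sum of copies of $\left(\begin{smallmatrix}0&1\\1&0\end{smallmatrix}\right)$, hence unimodular. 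Since $\delta m$ is allowed to range over all of $\mathrm{Hom}(L\otimes\kappa',H_k\otimes\kappa')$, only this global nondegeneracy matters: injectivity of $m$ makes the functionals $B_k(m(e_i),-)$ on $H_k\otimes\kappa'$ linearly independent, so one may prescribe all the values $B_k(m(e_j),\delta m(e_i))$ freely and hit any target in $\underline{Q}_L(\kappa')$. The paper packages this as a two-step factorization through an auxiliary affine scheme $T$ of ``half-integral'' $n\times n$ matrices: first $X\mapsto m^t q_k X$, surjective onto $T(\bar\kappa)$ because $m^t(2q_k)$ has full row rank; then the symmetrization $\tfrac12 Y\mapsto \tfrac12(Y+{}^tY)$, trivially surjective onto $\underline{Q}_L(\bar\kappa)$. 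This second step is where the characteristic-$2$ bookkeeping you worried about is absorbed cleanly, and it replaces your geometric digression with two lines of linear algebra.
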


\begin{proof}
 The proof is based on Lemma 5.5.2 in \cite{GY}.
    It suffices to show that, for any $m \in \underline{M}^{\ast}_{\mathfrak{o}}(L, H_k)(\bar{\kappa})$,
    the induced map on the Zariski tangent space $\rho_{\ast, m}:T_m \rightarrow T_{\rho(m)}$ is surjective.
Here,  $\bar{\kappa}$ is the algebraic closure of  $\kappa$.

We introduce     another functor on the category of  flat $\mathfrak{o}$-algebras.
   Define $T(R)$ to be the set of  $(n \times n)$-matrices $y$ such that each entry is of the form $1/2\cdot y_{ij}$ with $y_{ij}\in R$.
Then this functor is represented by an affine space over $\mathfrak{o}$.

We now compute the map $\rho_{\ast, m}$ explicitly.
If we identify $T_m$ with $\underline{M}_{\mathfrak{o}}(L, H_k)(\bar{\kappa})$ and $T_{\rho(m)}$ with $\underline{Q}_L(\bar{\kappa})$, then
 $$\rho_{\ast, m} : T_m \longrightarrow T_{\rho(m)}, X\mapsto m^t\cdot q_k \cdot X+X^t\cdot q_k \cdot m.$$
We explain how to compute  $X\mapsto m^t\cdot q_k \cdot X+X^t\cdot q_k \cdot m$ explicitly.
Firstly, we formally compute $X\mapsto m^t\cdot q_k \cdot X$.
It is of the form $1/2\cdot Y\in T(\bar{\kappa})$, where $Y$ is an $n\times n$-matrix with entries in $\bar{\kappa}$. 
Then we formally compute $1/2\cdot Y+1/2\cdot {}^tY$.
It is of the form $Z$, whose  diagonal entries, denoted by $z_{ii}$'s,  are in $\bar{\kappa}$ and
whose nondiagonal entries are of the form $1/2\cdot z_{ij}$ with $z_{ij}\in \bar{\kappa}$ such that $z_{ij}=z_{ji}$.
More precisely, if we write $Y=(y_{ij})$, then $z_{ii}=y_{ii}$ and $z_{ij}=y_{ij}+y_{ji}$ for $i\neq j$.
Thus $Z$ is an element of $\underline{Q}_L(\bar{\kappa})$.

 To prove the surjectivity of $\rho_{\ast, m} : T_m \longrightarrow T_{\rho(m)}$, it suffices to show the following two statements:
       \begin{itemize}
   \item[(1)] $X \mapsto  m^t\cdot q_k \cdot X$ defines a surjection
   $\underline{M}_{\mathfrak{o}}(L, H_k)(\bar{\kappa}) \rightarrow T(\bar{\kappa})$;
   \item[(2)] $Y \mapsto {}^t Y + Y$ defines a surjection $T(\bar{\kappa}) \rightarrow \underline{Q}_L(\bar{\kappa})$.
\end{itemize}

These two arguments are direct from the construction of $T(\bar{\kappa})$.
\end{proof}

\begin{Cor}\label{corsmooth}
The scheme $\mathrm{O}^{prim}_{\mathfrak{o}}(L, H_k)$ is an open subscheme $\mathrm{O}_{\mathfrak{o}}(L, H_k)$.
In addition, it is   smooth over $\mathfrak{o}$.
\end{Cor}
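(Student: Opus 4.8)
The plan is to deduce both assertions formally from Theorem~\ref{thmsmoothness}, by realizing $\mathrm{O}^{prim}_{\mathfrak{o}}(L, H_k)$ simultaneously as an open subscheme of $\mathrm{O}_{\mathfrak{o}}(L, H_k)$ and as a fiber of the smooth morphism $\rho:\underline{M}^{\ast}_{\mathfrak{o}}(L, H_k)\to\underline{Q}_L$. First I would note that, by construction, $\mathrm{O}_{\mathfrak{o}}(L, H_k)$ is the closed subscheme of the affine space $\underline{M}_{\mathfrak{o}}(L, H_k)$ cut out by the equations $q_k\circ m = q_L$; equivalently, it is the fiber over the $\mathfrak{o}$-point $q_L\in\underline{Q}_L(\mathfrak{o})$ of the morphism $\underline{M}_{\mathfrak{o}}(L, H_k)\to\underline{Q}_L$, $m\mapsto q_k\circ m$, which is defined by the same polynomial formulas as $\rho$ and restricts to $\rho$ on $\underline{M}^{\ast}_{\mathfrak{o}}(L, H_k)$. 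Since $\underline{M}^{\ast}_{\mathfrak{o}}(L, H_k)$ is an open subscheme of $\underline{M}_{\mathfrak{o}}(L, H_k)$ (each $n\times n$-minor being a unit defines an affine open, and $\underline{M}^{\ast}_{\mathfrak{o}}(L, H_k)$ is the union of these), the scheme-theoretic intersection
\[
\mathrm{O}^{prim}_{\mathfrak{o}}(L, H_k)=\mathrm{O}_{\mathfrak{o}}(L, H_k)\cap\underline{M}^{\ast}_{\mathfrak{o}}(L, H_k)
\]
is open in $\mathrm{O}_{\mathfrak{o}}(L, H_k)$. A functor-of-points check confirms this scheme has the correct $R$-points for every $\mathfrak{o}$-algebra $R$: an $R$-point is an $R$-linear map $m:L\otimes R\to H_k\otimes R$ with $q_k\circ m=q_L\otimes 1$ and at least one invertible $n\times n$-minor, i.e. exactly a primitive isometry in the sense of the definition preceding this corollary. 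This gives the first assertion.

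For smoothness, I would combine the two descriptions: $\mathrm{O}^{prim}_{\mathfrak{o}}(L, H_k)$ equals the scheme-theoretic fiber $\rho^{-1}(q_L)$, i.e. the base change of $\rho:\underline{M}^{\ast}_{\mathfrak{o}}(L, H_k)\to\underline{Q}_L$ along the morphism $q_L:\mathrm{Spec}\,\mathfrak{o}\to\underline{Q}_L$ determined by the integral quadratic form $q_L$ on $L$. Since $\rho$ is smooth by Theorem~\ref{thmsmoothness} and smoothness is stable under arbitrary base change, the structure morphism $\mathrm{O}^{prim}_{\mathfrak{o}}(L, H_k)\to\mathrm{Spec}\,\mathfrak{o}$ is smooth, which is the second assertion.

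I do not anticipate a genuine obstacle here: the argument is purely formal once Theorem~\ref{thmsmoothness} is available, and the only points demanding a little care are that the two functor-of-points identifications are valid over \emph{all} $\mathfrak{o}$-algebras and not merely flat ones --- which holds because $\underline{M}_{\mathfrak{o}}(L, H_k)$, $\underline{M}^{\ast}_{\mathfrak{o}}(L, H_k)$, $\underline{Q}_L$ and $\mathrm{O}_{\mathfrak{o}}(L, H_k)$ are all representable --- and that $q_L$, being an honest integral quadratic form on $L$, indeed defines an $\mathfrak{o}$-point of $\underline{Q}_L$ and hence a morphism $\mathrm{Spec}\,\mathfrak{o}\to\underline{Q}_L$ along which one may base change.
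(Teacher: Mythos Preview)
Your proof is correct and follows essentially the same approach as the paper: identify $\mathrm{O}^{prim}_{\mathfrak{o}}(L, H_k)$ as the intersection of $\mathrm{O}_{\mathfrak{o}}(L, H_k)$ with the open subscheme $\underline{M}^{\ast}_{\mathfrak{o}}(L, H_k)$ inside $\underline{M}_{\mathfrak{o}}(L, H_k)$ to get openness, and then observe it is the fiber $\rho^{-1}(q_L)$ of the smooth morphism $\rho$ of Theorem~\ref{thmsmoothness}, so smoothness over $\mathfrak{o}$ follows by base change. Your write-up is slightly more careful about the functor-of-points verification and the representability issues, but the argument is the same.
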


\begin{proof}
The scheme $\mathrm{O}^{prim}_{\mathfrak{o}}(L, H_k)$ is defined as the fiber of $q_L$ along the smooth morphism $\rho$.
Thus $\mathrm{O}^{prim}_{\mathfrak{o}}(L, H_k)$ is the intersection of $\mathrm{O}_{\mathfrak{o}}(L, H_k)$ and $\underline{M}^{\ast}_{\mathfrak{o}}(L, H_k)$ inside $\underline{M}_{\mathfrak{o}}(L, H_k)$.
The fact that $\underline{M}^{\ast}_{\mathfrak{o}}(L, H_k)$ is an open  subscheme of $\underline{M}_{\mathfrak{o}}(L, H_k)$ yields the first statement. 
We note that $\mathrm{O}^{prim}_{\mathfrak{o}}(L, H_k)$ has  finite affine covers 
given by $n\times n$-minors, each of which is an open subscheme of  $\mathrm{O}_{\mathfrak{o}}(L, H_k)$ as well.

For the second statement,
since  smoothness is stable under  base change, $\mathrm{O}^{prim}_{\mathfrak{o}}(L, H_k)$ is a smooth scheme over $\mathfrak{o}$.
\end{proof}

The  special fiber of $\mathrm{O}^{prim}_{\mathfrak{o}}(L, H_k)$ is $\mathrm{O}_{\kappa}^{prim}(\bar{q}_L, \bar{q}_k)$,
where $\bar{q}_L~\left(= q_L ~mod~\pi\right)$ is a quadratic form defined on $L\otimes_{\mathfrak{o}}\kappa$ and $\bar{q}_k ~\left(= q_k~ mod~\pi\right)$ is a quadratic form defined on $H_k\otimes_{\mathfrak{o}}\kappa$.
In particular, $\mathrm{O}^{prim}_{\mathfrak{o}}(L, H_k)(\kappa)$ is  the set of isometries from the quadratic space $(L\otimes_{\mathfrak{o}}\kappa, \bar{q}_L)$
to the quadratic space $(H_k\otimes_{\mathfrak{o}}\kappa,  \bar{q}_k)$.

We finally state the following formula of the primitive local density. 

\begin{Thm}\label{tpld}
Assume that $N(L)\subseteq \mathfrak{o}$.
Then  the primitive local density $\alpha^{prim}(L, H_k)$ is described as follows:
\[\alpha^{prim}(L, H_k)=f^{-dim \mathrm{O}_F(V, W_k)}\cdot \#\mathrm{O}^{prim}_{\kappa}(\bar{q}_L, \bar{q}_k)(\kappa).  \]
Here, $dim \mathrm{O}_F(V, W_k)=2kn-(n^2+n)/2$
and $\#\mathrm{O}_{\kappa}^{prim}(\bar{q}_L, \bar{q}_k)(\kappa)$
stands for the cardinality of the set $\mathrm{O}_{\kappa}^{prim}(\bar{q}_L, \bar{q}_k)(\kappa)$.
 
\end{Thm}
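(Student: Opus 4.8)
The plan is to deduce Theorem \ref{tpld} from the smoothness of $\mathrm{O}^{prim}_{\mathfrak{o}}(L, H_k)$ over $\mathfrak{o}$ (Corollary \ref{corsmooth}) together with the integration formula of Lemma \ref{volumeform}, applied now to the open subscheme $\mathrm{O}^{prim}_{\mathfrak{o}}(L,H_k)$ rather than to $\mathrm{O}_{\mathfrak{o}}(L,H_k)$ itself. First I would note that since $\mathrm{O}^{prim}_{\mathfrak{o}}(L,H_k)(\mathfrak{o})$ is open in $\mathrm{O}_{\mathfrak{o}}(L,H_k)(\mathfrak{o})$ in the $p$-adic topology (Lemma \ref{padicopen} with $L'=L$), the restriction of the volume form $\omega_L^{\mathrm{ld}}$ makes sense and $\alpha^{prim}(L,H_k)=\int_{\mathrm{O}^{prim}_{\mathfrak{o}}(L,H_k)(\mathfrak{o})}|\omega_L^{\mathrm{ld}}|$ by Definition \ref{defdecompositionldpld}. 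The same argument as in the proof of Lemma \ref{volumeform} then identifies this integral with the stabilized limit $\lim_{N\to\infty} f^{-N\cdot \dim\mathrm{O}_F(V,W_k)}\#\mathrm{O}^{prim}_{\mathfrak{o}}(L,H_k)(\mathfrak{o}/\pi^N\mathfrak{o})$; here one uses that $\dim\mathrm{O}^{prim}_{\mathfrak{o}}(L,H_k)$, being an open subscheme of $\mathrm{O}_{\mathfrak{o}}(L,H_k)$ with smooth generic fiber $\mathrm{O}_F(V,W_k)$, has the same relative dimension $2kn-(n^2+n)/2$ over $\mathfrak{o}$.

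Next I would invoke smoothness of $\mathrm{O}^{prim}_{\mathfrak{o}}(L,H_k)$ over $\mathfrak{o}$ to collapse the limit to a single term. For a smooth $\mathfrak{o}$-scheme $Z$ of relative dimension $\delta$, Hensel's lemma (surjectivity of $Z(\mathfrak{o}/\pi^{N+1})\to Z(\mathfrak{o}/\pi^N)$ with each fiber of size $f^{\delta}$, and $Z(\mathfrak{o})\twoheadrightarrow Z(\kappa)$) gives $\#Z(\mathfrak{o}/\pi^N\mathfrak{o})=f^{(N-1)\delta}\cdot\#Z(\kappa)$ for all $N\geq 1$. Applying this with $Z=\mathrm{O}^{prim}_{\mathfrak{o}}(L,H_k)$ and $\delta=\dim\mathrm{O}_F(V,W_k)$, the quantity $f^{-N\delta}\#\mathrm{O}^{prim}_{\mathfrak{o}}(L,H_k)(\mathfrak{o}/\pi^N\mathfrak{o})$ equals $f^{-\delta}\cdot\#\mathrm{O}^{prim}_{\mathfrak{o}}(L,H_k)(\kappa)$ already for $N=1$, hence the limit stabilizes immediately at this value. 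Finally I would identify the special fiber: as recorded just before the statement, $\mathrm{O}^{prim}_{\mathfrak{o}}(L,H_k)\otimes\kappa=\mathrm{O}^{prim}_\kappa(\bar q_L,\bar q_k)$, so $\#\mathrm{O}^{prim}_{\mathfrak{o}}(L,H_k)(\kappa)=\#\mathrm{O}^{prim}_\kappa(\bar q_L,\bar q_k)(\kappa)$, which yields the claimed formula.

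The main obstacle, such as it is, lies in the bookkeeping of dimensions and in making the ``volume form on an open subscheme'' argument fully rigorous: one must check that $\mathrm{O}^{prim}_{\mathfrak{o}}(L,H_k)$, which is not affine but admits finite affine covers by the $n\times n$-minor loci (as noted in Corollary \ref{corsmooth}), behaves well with respect to the point-counting/integration dictionary, and that the Jacobian-type normalization implicit in $\omega_L^{\mathrm{ld}}$ is the one compatible with counting $\mathfrak{o}/\pi^N$-points of the smooth model — this is exactly the content of Lemma \ref{volumeform}, and the transition to the open subscheme is harmless because integration and point-counting are both local in the $p$-adic topology. Given that $N(L)\subseteq\mathfrak{o}$ is assumed so that $\mathrm{O}^{prim}_{\mathfrak{o}}(L,H_k)(\mathfrak{o})$ is nonempty (Lemma \ref{lemnorm}), no degenerate case arises, and the proof is essentially a one-line application of Lemma \ref{volumeform} plus Corollary \ref{corsmooth} once these identifications are in place.
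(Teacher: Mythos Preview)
Your proposal is correct and rests on the same foundation as the paper's proof: the smoothness of $\mathrm{O}^{prim}_{\mathfrak{o}}(L,H_k)$ from Corollary~\ref{corsmooth}. The packaging differs slightly. The paper constructs an explicit integral top form $\omega_L^{\mathrm{can}}=\omega_{\underline{M},L}/\rho^*\omega_{\underline{Q},L}$ on the smooth model whose reduction on the special fiber is nowhere zero, checks that $\omega_L^{\mathrm{can}}$ and $\omega_L^{\mathrm{ld}}$ determine the same measure on $F$-points (because the integral normalizations of $\omega_{\underline{M},L},\omega_{\underline{Q},L}$ agree with those of $\omega_{\mathfrak{M},L},\omega_{\mathfrak{Q},L}$), and then invokes Weil's Theorem~2.2.5 directly. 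You instead extend the limit identity of Lemma~\ref{volumeform} to the open primitive locus and evaluate the limit by the Hensel fiber-count $\#Z(\mathfrak{o}/\pi^N)=f^{(N-1)\delta}\#Z(\kappa)$. These are equivalent: your Hensel step is exactly the content of Weil's theorem, and your extension of Lemma~\ref{volumeform} to the open set $\underline{M}^*_{\mathfrak{o}}(L,H_k)(\mathfrak{o})\cap\rho^{-1}(q_L)$ is what the paper's comparison $\omega_L^{\mathrm{ld}}=\omega_L^{\mathrm{can}}$ makes precise. The paper's route has the minor advantage of making explicit \emph{why} the measure $|\omega_L^{\mathrm{ld}}|$ is compatible with point-counting on the smooth model (namely, it extends integrally with nowhere-zero reduction), while yours is a little more self-contained.
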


\begin{proof}
 Theorem \ref{thmsmoothness} yields the following short exact sequence of locally free sheaves on $\underline{M}^{\ast}_{\mathfrak{o}}(L, H_k)$ as in Definition \ref{diff} (cf. Proposition 5 of \cite{BLR}):
\[ 0\longrightarrow \rho^{\ast}\Omega_{\underline{Q}_L/\mathfrak{o}} \longrightarrow \Omega_{\underline{M}^{\ast}_{\mathfrak{o}}(L, H_k)/\mathfrak{o}}
\longrightarrow \Omega_{\underline{M}^{\ast}_{\mathfrak{o}}(L, H_k)/\underline{Q}_L} \longrightarrow 0. \]

Let $\omega_{\underline{M},L}$ (respectively $\omega_{\underline{Q},L}$) be a differential of top degree on $\underline{M}_{\mathfrak{o}}(L, H_k)$ (respectively $\underline{Q}_L$) over $\mathfrak{o}$, whose reduction on the special fiber is nowhere zero. 
Since $\underline{M}^{\ast}_{\mathfrak{o}}(L, H_k)$ is an open subscheme of $\underline{M}_{\mathfrak{o}}(L, H_k)$,
the restriction of $\omega_{\underline{M},L}$ to $\underline{M}^{\ast}_{\mathfrak{o}}(L, H_k)$
 is a differential of top degree on $\underline{M}^{\ast}_{\mathfrak{o}}(L, H_k)$ whose reduction on the special fiber is nowhere zero.


We define $\omega_{L}^{\mathrm{can}}$ to be $\omega_{\underline{M},L}/\rho^{\ast}\omega_{\underline{Q},L}$.
Here,  we refer to Definition \ref{diff} for the notion of $\omega_{\underline{M},L}/\rho^{\ast}\omega_{\underline{Q},L}$.
Then  $\omega_{L}^{\mathrm{can}}$ is a differential  of top degree on $\mathrm{O}^{prim}_{\mathfrak{o}}(L, H_k)$ over $\mathfrak{o}$, whose reduction on the special fiber is nowhere zero.

Since $\underline{M}_{\mathfrak{o}}(L, H_k)(\mathfrak{o})=\mathfrak{M}_L$ and $\underline{Q}_L(\mathfrak{o})=\mathfrak{Q}_L$,
 the differentials $\omega_{\mathfrak{M}, L}$  (respectively $\omega_{\mathfrak{Q}, L}$) and $\omega_{\underline{M},L}$ (respectively $\omega_{\underline{Q},L}$) determine the same measure on $\mathfrak{M}$ (respectively $\mathfrak{Q}$). 
 Here, see the beginning of subsection \ref{subsectionldpld} for the notions of $\omega_{\mathfrak{M}, L}$, $\omega_{\mathfrak{Q}, L}$, $\mathfrak{M}_L$, $\mathfrak{M}$, $\mathfrak{Q}_L$, $\mathfrak{Q}$.
Thus the description of $\omega_{L}^{\mathrm{ld}}$ given in Definition \ref{diff} shows that $\omega_{L}^{\mathrm{ld}}$ and $\omega_{L}^{\mathrm{can}}$ determine the same measure on $\mathrm{O}_{F}(V, W_k)(F)$.
This is parallel to Section 7.1 of \cite{GY}.
Note that a differential of top degree on  $\mathrm{O}^{prim}_{\mathfrak{o}}(L, H_k)$, whose reduction on the special fiber is nowhere zero,  is not unique.
But such two forms determine the same measure on $\mathrm{O}^{prim}_{\mathfrak{o}}(L, H_k)(F)$ ($=\mathrm{O}_{F}(V, W_k)(F)$ by Lemma \ref{lemgeneric}) due to smoothness of the scheme $\mathrm{O}^{prim}_{\mathfrak{o}}(L, H_k)$. 
For more explanation about this, see Section 5 of Chapter 3  in \cite{Pop}.

The primitive local density is the volume of $\mathrm{O}^{prim}_{\mathfrak{o}}(L, H_k)(\mathfrak{o})$ with respect to the measure $|\omega_{L}^{\mathrm{ld}}|$ (cf. Definition \ref{defdecompositionldpld}).
Since $\mathrm{O}^{prim}_{\mathfrak{o}}(L, H_k)(\mathfrak{o})$ is the set of $\mathfrak{o}$-points of the smooth scheme $\mathrm{O}^{prim}_{\mathfrak{o}}(L, H_k)$ and 
$\alpha^{prim}(L, H_k)(\kappa)=\mathrm{O}^{prim}_{\kappa}(\bar{q}_L, \bar{q}_k)(\kappa)$, the formula
 directly follows from Theorem 2.2.5 of \cite{Weil} or Theorem 5.3 of Chapter 3 of \cite{Pop}.
\end{proof}


\subsection{Reformulation  of the local density}\label{subsectionrld}

Using Theorem \ref{tpld} and Corollary \ref{primex}, we can reformulate Equation (\ref{eqld}) of the local density as follows:
\begin{equation}\label{eqld'}
\alpha(L, H_k)=
f^{-2kn+(n^2+n)/2}\cdot \sum_{\substack{L \subseteq L' \subseteq V, \\ \mathrm{GK}(L')\succeq (0, \cdots, 0)}}
f^{[L':L]\cdot(n+1-2k)}\cdot \#\mathrm{O}_{\kappa}^{prim}(\bar{q}_{L'}, \bar{q}_k)(\kappa).
\end{equation}

Recall that $\mathrm{O}_{\kappa}^{prim}(\bar{q}_{L'}, \bar{q}_k)(\kappa)$ is the set of
isometries from the quadratic space $(L'\otimes\kappa, \bar{q}_{L'})$
to the quadratic space $(H_k\otimes\kappa, \bar{q}_k)$.
Let $L'\otimes\kappa=\bar{L}'_0\bot \bar{L}'_1$, where $\bar{L}'_1 \left(= \mathrm{Rad}\left(L'\otimes\kappa\right)\right)$ is the radical of $L'\otimes\kappa$
so that the restriction of the quadratic form $\bar{q}_{L'}$ on $\bar{L}'_0$ is nonsingular.
We assign the following notion $a^{\pm}$ with an integer $a$ to $L'$ with respect to  $\bar{L}'_0$:
\begin{equation}\label{a}
\left\{
  \begin{array}{l l}
  a^+    & \quad  \textit{if $a=dim~\bar{L}'_0$ is even and $\bar{L}'_0$ is split};\\
  a^-    & \quad  \textit{if $a=dim~\bar{L}'_0$ is even and $\bar{L}'_0$ is nonsplit};\\
    a (=a^+=a^-)   & \quad  \textit{if $a=dim~\bar{L}'_0$ is odd}.
    \end{array} \right.
\end{equation}

Here, $dim~\bar{L}'_0$ is the dimension of $\bar{L}'_0$ as a $\kappa$-vector space.
If $a=0$, then we understand $0^+=0^-$.
By Exercise 4 in Section 5.6 of \cite{Kit}, 
  $\#\mathrm{O}_{\kappa}^{prim}(\bar{q}_{L'}, \bar{q}_k)(\kappa)$  is completely determined by three ingredients,  $a^{\pm}$, $n$, and $2k$.
Thus we can denote it by $\#\mathrm{O}^{prim}_{\kappa}(a^{\pm}, n, 2k)$.
Here,
\[
\left\{
  \begin{array}{l}
\textit{$2k$ is the dimension of the nondegenerate split quadratic  space $(H_k\otimes \kappa, \bar{q}_{\kappa})$};\\
\textit{$n$ is the dimension of the (possibly degenerate) quadratic  space $(L'\otimes \kappa, \bar{q}_{L'})$};\\
\textit{$a^{\pm}$ is as explained above such that $a$ is the dimension of a maximal nonsingular subspace of $L'\otimes \kappa$}.
    \end{array} \right.
\]
Note that $n$ is the rank of $L$.
In the following proposition, we will describe the integer $a$ in terms of $\mathrm{GK}(L)$.

\begin{Prop}\label{propzero}
The integer $a$, which is defined to be the dimension of  $\bar{L}'_0$ as a $\kappa$-vector space,
is the same as the number of $0$'s in $\mathrm{GK}(L')$.
\end{Prop}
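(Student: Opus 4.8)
The plan is to connect the dimension of the nonsingular part of $\bar q_{L'}$ to the number of zeros in $\mathrm{GK}(L')$ by passing through a reduced form and using the characterization of $\mathrm{GK}$ in terms of diagonalizability modulo $\pi$. First I would recall from Remark \ref{rmk1}.(2) that, up to equivalence, $L'$ may be represented by a reduced form $B' = (b_{ij})$ of GK-type $(\mathrm{GK}(L'), \sigma)$ for some $\mathrm{GK}(L')$-admissible involution $\sigma$; since reduced forms are optimal (Theorem \ref{thm5.1}), $\mathrm{GK}(B') = \mathrm{GK}(L') = (a_1, \dots, a_n)$, and the quantity $a$ as well as $n_0 := \#\{i : a_i = 0\}$ are invariants of the equivalence class. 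Write $\underline a = (0, \dots, 0, a_{n_0+1}, \dots, a_n)$ with the first $n_0$ entries zero and $a_{n_0+1} \geq 1$. The goal is to show $a = n_0$.

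The key observation is that the conditions defining a reduced form (Definition \ref{def3.2}) force a block structure modulo $\pi$. For indices $i$ with $a_i = 0$: if $i \in \mathcal P^0$, then $\mathrm{ord}(b_{ii}) = a_i = 0$, so $b_{ii}$ is a unit; if $i \in \mathcal P^\pm$ with $j = \sigma(i)$, then (since $a_i = 0$ forces $a_j = 0$ by admissibility, as $a_j > a_i$ would make $a_j \geq 1$ but the $2\times2$ GK-invariant is $(0,0)$) we get $\mathrm{ord}(2b_{ij}) = (a_i + a_j)/2 = 0$, so $2b_{ij}$ is a unit. Meanwhile, for any $i$ with $a_i = 0$ and any $j \neq i, \sigma(i)$, condition (3) gives $\mathrm{ord}(2 b_{ij}) > (a_i + a_j)/2 \geq 0$, so $b_{ij} \equiv 0 \pmod \pi$; and for indices $i$ with $a_i \geq 1$ we have $\mathrm{ord}(b_{ii}) \geq a_i \geq 1$ and (using $\mathrm{ord}(2b_{ij}) \geq (a_i + a_j)/2$ together with the reduced-form inequalities) all entries in that part vanish modulo $\pi$ — more precisely, the off-diagonal entries $b_{ij}$ with $i$ or $j$ having positive $a$-value satisfy $\mathrm{ord}(2b_{ij}) \geq (a_i + a_j)/2 > 0$ unless $\{i,j\}$ is a $\sigma$-pair with $a_i = a_j = 0$. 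Hence modulo $\pi$ the matrix $\bar B'$ is block-diagonal: an $(n_0 \times n_0)$ block coming from the zero-entries of $\mathrm{GK}(L')$, which is nonsingular over $\kappa$ (it is a sum of $1\times1$ unit blocks from $\mathcal P^0$ and $2\times2$ hyperbolic-type unit blocks $\left(\begin{smallmatrix} 0 & * \\ * & 0\end{smallmatrix}\right)$ or $\left(\begin{smallmatrix} u & * \\ * & *\end{smallmatrix}\right)$ with $*$ a unit, hence of nonzero determinant), and an $((n - n_0) \times (n - n_0))$ block that is entirely zero modulo $\pi$.

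It then follows that $\mathrm{Rad}(L' \otimes \kappa)$ contains the span of the last $n - n_0$ basis vectors, so $\dim \bar L_0' \leq n_0$, while the nonsingular $n_0 \times n_0$ block exhibits an $n_0$-dimensional nonsingular subspace, giving $\dim \bar L_0' \geq n_0$; therefore $a = n_0$. The main obstacle I anticipate is the bookkeeping in the case $p = 2$ (where $e = \mathrm{ord}(2) > 0$): one must check carefully that the reduced-form conditions still yield unit entries in the zero-block and vanishing entries elsewhere modulo $\pi$ — in particular that $\mathrm{ord}(2b_{ij}) = 0$ really forces $b_{ij}$ to be a unit when $e > 0$ (it does not directly; rather one uses the equivalent formulation in Definition \ref{def3.2}.(1) for $p=2$ and Proposition 2.3 of \cite{IK1}), and that the $2\times2$ blocks of GK-invariant $(0,0)$ are genuinely nonsingular mod $\pi$ even in residue characteristic $2$. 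This is precisely where I would invoke the $p=2$ specifics of \cite{IK1} recorded in Definition \ref{def3.2} rather than reasoning with $\mathrm{ord}$ alone. For $p$ odd the argument collapses to the diagonal case of Remark \ref{rgk}.(1), where it is immediate that exactly the unit diagonal entries (those with $a_i = 0$) survive modulo $\pi$.
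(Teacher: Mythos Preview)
Your approach---pass to a reduced form and read off the block structure of $\bar q_{L'}$ modulo $\pi$---is exactly what the paper intends (its proof is the single line ``This directly follows from observation of a reduced form''). The odd-$p$ case and the overall shape are fine, but your case analysis in the zero block for residue characteristic $2$ contains a genuine error that matters.

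You assert that for $i$ with $a_i=0$ and $i\in\mathcal P^{\pm}$, admissibility forces $a_{\sigma(i)}=0$. This is false: by definition $i\in\mathcal P^{-}$ means $a_i<a_{\sigma(i)}$, so $a_{\sigma(i)}\ge 2$ (same parity as $a_i$). You have swapped two cases. The $\sigma$-pairs lying entirely in the zero block are those with $i\neq\sigma(i)$ and $a_i=a_{\sigma(i)}=0$; such $i$ lie in \emph{none} of $\mathcal P^0,\mathcal P^+,\mathcal P^-$, and for these the reduced-form condition gives $\mathrm{ord}(2b_{i\sigma(i)})=0$, yielding your nondegenerate $2\times2$ blocks. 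The case $i\in I_1\cap\mathcal P^-$ (so $\sigma(i)$ lies in the nonzero block) is an additional case you omit: here the $p=2$ clause of Definition~\ref{def3.2}(1) gives $\mathrm{ord}(b_{ii})=a_i=0$, so $e_i$ contributes a further $1\times1$ singleton with $\bar q(e_i)$ a unit, while $\mathrm{ord}(2b_{i\sigma(i)})=a_{\sigma(i)}/2\ge1$ kills the cross term.

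This omission is not harmless: in residue characteristic $2$ an orthogonal sum of \emph{two} one-dimensional quadratic spaces with trivial bilinear form is never nonsingular over a perfect field $\kappa$ (if $\bar q(e)=u$ and $\bar q(e')=u'$ then $\sqrt{u'}\,e+\sqrt{u}\,e'$ lies in the radical). So to conclude that the zero block is nonsingular you must rule out two singletons. This is precisely admissibility condition~(2) of Definition~\ref{def3.1}: it gives $\#(I_1\cap\mathcal P^-)+\#(I_1\cap\mathcal P^0)\le 1$, hence at most one singleton in the zero block. With this correction your argument goes through.
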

\begin{proof}
This directly follows from observation of   a reduced form associated to the quadratic lattice $L$.
\end{proof}

Since each  summand in  Equation (\ref{eqld'}) is  determined by the number of $0$'s in $\mathrm{GK}(L')$ (with the signature $\pm$) and $[L':L]$, 
 we explain bounds of  these two objects in this paragraph.
Let $[L':L]=b$ and let $n_0$ be the number of $0$'s in $\mathrm{GK}(L)=(a_1, \cdots, a_n)$.
Remark \ref{rgk} yields that $|\mathrm{GK}(L')|=|\mathrm{GK}(L)|-2b$.
The integer $b$ is then nonnegative and at most $|\mathrm{GK}(L)|/2$
since $\mathrm{GK}(L')\succeq (0, \cdots, 0)$ (cf. Remark \ref{rgk}).
Let $\mathrm{GK}(L')=(a_1', \cdots, a_n')$
and let $a$ be the number of $0$'s in $\mathrm{GK}(L')$
(cf. Proposition \ref{propzero}).
If $b$ is positive, then the integer $a$ is at least $max\left\{n_0, n- |\mathrm{GK}(L')|\right\}$.


We introduce a new notion  $\mathcal{S}_{(L, a^{\pm}, b)}$
to denote the set of  all quadratic lattices $L'$ including $L$ for which the summands in Equation (\ref{eqld'}) are  equal.
More precisely,

\begin{equation}\label{sab}
\left\{
  \begin{array}{l}
\mathcal{S}_{(L, a^{+}, b)}=\left\{L' \left(\supseteq L\right) | \mathrm{GK}(L')\succeq (0, \cdots, 0), [L':L]=b,
\textit{$a^+$ is assigned to $L'$}\right\};\\
\mathcal{S}_{(L, a^{-}, b)}=\{L' \left(\supseteq L\right) | \mathrm{GK}(L')\succeq (0, \cdots, 0), [L':L]=b,
\textit{$a^-$ is assigned to $L'$}\}.
    \end{array} \right.
\end{equation}
If $a$ is odd or $0$, then $\mathcal{S}_{(L, a^{+}, b)}=\mathcal{S}_{(L, a^{-}, b)}$.
 Note that $\mathcal{S}_{(L, a^{\pm}, b)}$ is empty if $b > \frac{|\mathrm{GK}(L)|}{2}$ by Remark \ref{rgk}.
Therefore Equation (\ref{eqld'}) is now reformulated as follows:
\begin{equation}\label{eqld''}
\alpha(L, H_k)=
f^{-2kn+(n^2+n)/2}\cdot
\sum_{\substack{0\leq b\leq \frac{|\mathrm{GK}(L)|}{2},\\ n_0\leq a\leq n}} f^{b\cdot(n+1-2k)}\cdot \#\mathcal{S}_{(L, a^{\pm}, b)}\cdot
\#\mathrm{O}_{\kappa}^{prim}(a^{\pm}, n, 2k).
\end{equation}

Here, if $a$ is odd or $0$, then we ignore one of $\mathcal{S}_{(L, a^{+}, b)}$ or $\mathcal{S}_{(L, a^{-}, b)}$.
If $a$ is even and positive, then we count the summands involving $\mathcal{S}_{(L, a^{+}, b)}$ and $\mathcal{S}_{(L, a^{-}, b)}$ separately.

In the above equation, the number $\#\mathrm{O}_{\kappa}^{prim}(a^{\pm}, n, 2k)$ is already well-known as follows
(cf. Exercise 4 in Section 5.6 of \cite{Kit}):
\begin{equation}\label{eq37}
\mathrm{O}_{\kappa}^{prim}(a^{\pm}, n, 2k)=f^{2kn-(n^2+n)/2}(1-f^{-k})(1+\chi(a^{\pm})f^{n-a/2-k})\Pi_{1\leq i < n-a/2}(1-f^{2i-2k}).
\end{equation}
Here,
\[
\chi(a^{\pm})=\left\{
  \begin{array}{l l}
  0    & \quad  \textit{if $a$ is odd};\\
  1    & \quad  \textit{if $a$ is even and $a^+$ is assigned or if $a=0$};\\
   -1   & \quad  \textit{if $a(>0)$ is even and $a^-$ is assigned}.
    \end{array} \right.
\]

Using Equation (\ref{eqld''}) combined with the above description of $\#\mathrm{O}_{\kappa}^{prim}(a^{\pm}, n, 2k)$,
we finally obtain the  local density formula as follows:

\begin{Thm}\label{eqldf}
For a quadratic lattice $L$, we have 
\begin{multline*}
\alpha(L, H_k)=\\
(1-f^{-k})\cdot
\sum_{\substack{0\leq b\leq \frac{|\mathrm{GK}(L)|}{2},\\ n_0\leq a\leq n}} \left(\#\mathcal{S}_{(L, a^{\pm}, b)}\cdot
f^{b\cdot(n+1-2k)}\cdot (1+\chi(a^{\pm})f^{n-a/2-k})\prod_{1\leq i < n-a/2}(1-f^{2i-2k})\right).
\end{multline*}
Here, if $a$ is odd or $0$, then we ignore one of $\mathcal{S}_{(L, a^{+}, b)}$ or $\mathcal{S}_{(L, a^{-}, b)}$.
If $a$ is even and positive, then we count the summands involving $\mathcal{S}_{(L, a^{+}, b)}$ and $\mathcal{S}_{(L, a^{-}, b)}$ separately.

\end{Thm}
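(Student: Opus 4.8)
The plan is to derive Theorem~\ref{eqldf} as a direct consequence of Equation~(\ref{eqld''}) by substituting the explicit value of $\#\mathrm{O}_{\kappa}^{prim}(a^{\pm}, n, 2k)$ recorded in Equation~(\ref{eq37}) and then simplifying. First I would write out Equation~(\ref{eqld''}),
\[
\alpha(L, H_k)=
f^{-2kn+(n^2+n)/2}\cdot
\sum_{\substack{0\leq b\leq \frac{|\mathrm{GK}(L)|}{2},\\ n_0\leq a\leq n}} f^{b\cdot(n+1-2k)}\cdot \#\mathcal{S}_{(L, a^{\pm}, b)}\cdot
\#\mathrm{O}_{\kappa}^{prim}(a^{\pm}, n, 2k),
\]
and then substitute $\#\mathrm{O}_{\kappa}^{prim}(a^{\pm}, n, 2k)=f^{2kn-(n^2+n)/2}(1-f^{-k})(1+\chi(a^{\pm})f^{n-a/2-k})\prod_{1\leq i < n-a/2}(1-f^{2i-2k})$. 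The prefactor $f^{-2kn+(n^2+n)/2}$ cancels precisely against the leading factor $f^{2kn-(n^2+n)/2}$ coming from each summand (note this leading factor does not depend on $a$ or $b$, so it pulls out of the sum cleanly), leaving the claimed formula with the common factor $(1-f^{-k})$ extracted to the front.

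The key steps, in order, are: (1) isolate the $a,b$-independent constant $f^{2kn-(n^2+n)/2}$ in Equation~(\ref{eq37}) and pull it out of the sum; (2) observe it is exactly the reciprocal of the global prefactor in Equation~(\ref{eqld''}), so the two cancel; (3) pull the remaining $a,b$-independent factor $(1-f^{-k})$ out of the sum to the front; (4) retain inside the sum precisely $\#\mathcal{S}_{(L, a^{\pm}, b)}\cdot f^{b\cdot(n+1-2k)}\cdot (1+\chi(a^{\pm})f^{n-a/2-k})\prod_{1\leq i < n-a/2}(1-f^{2i-2k})$, matching the statement; and (5) carry along verbatim the bookkeeping convention about when $a$ is odd or $0$ versus even and positive, which is inherited unchanged from the discussion preceding Equation~(\ref{eqld''}). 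Since the bounds $0\leq b\leq \frac{|\mathrm{GK}(L)|}{2}$ and $n_0\leq a\leq n$ on the index set were already established in the paragraphs before Equation~(\ref{eqld''}) (using Remark~\ref{rgk} and Proposition~\ref{propzero}), no new range analysis is needed here.

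Honestly, there is essentially no obstacle: this is a purely formal bookkeeping step, combining two displayed identities already proved. The only point requiring a moment of care is checking that the exponent arithmetic in the cancellation is correct — that the power of $f$ appearing as the overall prefactor in Equation~(\ref{eqld''}) is indeed the exact inverse of the $a$- and $b$-independent power of $f$ in the formula~(\ref{eq37}) for $\#\mathrm{O}_{\kappa}^{prim}(a^{\pm}, n, 2k)$, namely $f^{-2kn+(n^2+n)/2}\cdot f^{2kn-(n^2+n)/2}=1$ — and that nothing of the $b$-dependence gets absorbed incorrectly (it does not: the $f^{b\cdot(n+1-2k)}$ factor sits outside $\#\mathrm{O}_{\kappa}^{prim}$ in Equation~(\ref{eqld''}) and simply survives into the final sum). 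I would present this as a two-line computation: substitute, cancel, factor, done.
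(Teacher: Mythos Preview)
Your proposal is correct and follows exactly the paper's approach: the paper simply states that the theorem follows by substituting the explicit formula~(\ref{eq37}) for $\#\mathrm{O}_{\kappa}^{prim}(a^{\pm}, n, 2k)$ into Equation~(\ref{eqld''}), with the cancellation of the $f^{\pm(2kn-(n^2+n)/2)}$ factors and the extraction of $(1-f^{-k})$ being immediate. There is nothing more to it than the bookkeeping you describe.
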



\begin{Def}\label{defse}
For a  quadratic lattice $L$, the Siegel series is defined to be the polynomial $\mathcal{F}_L(X)$ of $X$ such that
\[
\mathcal{F}_L(f^{-k})=\alpha(L, H_k)
\]
for all integer $k$ such that  $k\geq n$, where $n$ is the rank of $L$.
\end{Def}

\begin{Cor}\label{rmkse}
From the  formula of Theorem \ref{eqldf},
we have the following description of the Siegel series:
\begin{multline*}
\mathcal{F}_L(X)=\\
(1-X)\cdot
\sum_{\substack{0\leq b\leq \frac{|\mathrm{GK}(L)|}{2},\\ n_0\leq a\leq n}} \left(\#\mathcal{S}_{(L, a^{\pm}, b)}\cdot
f^{b\cdot(n+1)}X^{2b}\cdot (1+\chi(a^{\pm})f^{n-a/2}X)\prod_{1\leq i < n-a/2}(1-f^{2i}X^2)\right).
\end{multline*}

\end{Cor}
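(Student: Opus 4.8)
The plan is to derive Corollary \ref{rmkse} as an essentially immediate consequence of Theorem \ref{eqldf} together with the definition of the Siegel series in Definition \ref{defse}. The key point is that Theorem \ref{eqldf} expresses $\alpha(L, H_k)$ as a polynomial expression in $f^{-k}$, with all the other data ($\#\mathcal{S}_{(L, a^{\pm}, b)}$, $f$, $n$, $a$, $b$, $\chi(a^{\pm})$) independent of $k$. So I would first substitute $X = f^{-k}$ formally into the right-hand side of Theorem \ref{eqldf}, reading off the dictionary: $1 - f^{-k} \mapsto 1-X$; the factor $f^{b\cdot(n+1-2k)} = f^{b(n+1)}\cdot f^{-2bk} = f^{b(n+1)}X^{2b}$; the factor $1 + \chi(a^{\pm})f^{n-a/2-k} = 1 + \chi(a^{\pm})f^{n-a/2}X$; and each factor $1 - f^{2i-2k} = 1 - f^{2i}X^2$ in the product over $1 \le i < n - a/2$. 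This yields exactly the claimed polynomial, which I would call $P_L(X)$.

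Next I would check that $P_L(X)$ is genuinely a polynomial in $X$ (not merely a Laurent expression): every exponent of $X$ appearing is non-negative, since $b \ge 0$ and $i \ge 1$, so there is no issue. Then the identity $P_L(f^{-k}) = \alpha(L, H_k)$ holds for every integer $k \ge n$ by Theorem \ref{eqldf}. Since $\{f^{-k} : k \ge n\}$ is an infinite set of distinct points in $F$ (indeed in $\mathbb{Q}$), and both $P_L(X)$ and the Siegel series polynomial $\mathcal{F}_L(X)$ are polynomials agreeing at infinitely many points, they must be equal as polynomials. This uses nothing beyond the fact that a nonzero polynomial over a field (or over $\mathbb{Q}$) has finitely many roots. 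Hence $\mathcal{F}_L(X) = P_L(X)$, which is the assertion of the corollary.

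I do not anticipate any real obstacle here: the content is entirely in Theorem \ref{eqldf}, and the corollary is a bookkeeping step. The only thing worth stating carefully is the well-definedness part — namely that Definition \ref{defse} asserts the \emph{existence} of such a polynomial $\mathcal{F}_L(X)$, and the computation above simultaneously confirms that existence and identifies the polynomial. One should also remark, as in the statement of Theorem \ref{eqldf}, that when $a$ is odd or $0$ one of $\mathcal{S}_{(L, a^+, b)}$, $\mathcal{S}_{(L, a^-, b)}$ is to be dropped from the sum, and when $a$ is even and positive both contribute separately; this convention carries over verbatim. With these remarks in place the proof is complete after a single line invoking Theorem \ref{eqldf} and substituting $X = f^{-k}$.
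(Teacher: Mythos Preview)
Your proposal is correct and matches the paper's approach: the corollary is stated as an immediate consequence of Theorem \ref{eqldf} by substituting $X=f^{-k}$, and the paper does not even write out a separate proof. Your additional care in checking that the resulting expression is a genuine polynomial and that agreement at infinitely many points forces equality with $\mathcal{F}_L(X)$ is appropriate and fills in the routine details the paper leaves implicit.
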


Thus, each coefficient of the polynomial $\mathcal{F}_L(X)$ is determined by 
 the order of the set $\mathcal{S}_{(L, a^{\pm}, b)}$.

\section{Inductive formulas of the Siegel series}\label{sectionifss}
A main goal of  this section is to find  an inductive formula of the Siegel series, by detailed investigation of the set of lattices $\mathcal{S}_{(L, a^{\pm}, b)}$.

For a quadratic lattice $L$ with $\mathrm{GK}(L)=(a_1, \cdots, a_n) \left(\neq (0, \cdots, 0)\right)$,
we choose a basis  $(e_1, \cdots, e_n)$  of $L$ 
such that with respect to this basis the symmetric matrix of the quadratic lattice $L$ is optimal.
Let $d$ be the integer such that $a_{n-d}< \underbrace{a_{n-d+1}=\cdots =a_n}_{d}$.
If $a_1=\cdots =a_n$, then we let $d=n$. 
We denote the  lattice $\left(\supset L\right)$ having a basis 
$$(e_1, \cdots, e_{n-d}, \underbrace{\frac{1}{\pi}\cdot e_{n-d+1}, \cdots, \frac{1}{\pi}\cdot e_n}_{d})$$ by 
\[
\left\{
  \begin{array}{l l}
 L^{(d, n)}   & \quad  \textit{if $d>1$};\\
 L^{(n)} ~(\textit{or $L^{(1, n)}$})  & \quad  \textit{if $d=1$}.
    \end{array} \right.
\]
 
We assume that the quadratic form on $L^{(d, n)}$, naturally induced by the quadratic form on $L$, is an integral quadratic form, that is,  $L^{(d, n)}$ is a quadratic lattice.
For example, if $a_n\geq 2$, then $L^{(d, n)}$ is a quadratic lattice.
Although it is not required that  $a_n\geq 2$, the integers $a_i$'s should satisfy the condition   that $a_n=\cdots = a_{n-d+1}\geq 1$, in order that $L^{(d, n)}$ is a quadratic lattice.


Let $\bar{V}_{L, d}=L^{(d, n)}/L$ be a $\kappa$-vector space of dimension $d$. 
Then  each lattice between $L$ and $L^{(d, n)}$ bijectively corresponds to each subspace of $\bar{V}_{L, d}$.
More precisely,  the set of all lattices $L'$ between $L$ and $L^{(d, n)}$ with degree $[L':L]=m$, where $0\leq m \leq d$,
equals the set of subspaces of $\bar{V}_{L, d}$ of dimension $m$.
We denote the former set of lattices by $\mathcal{G}_{L, d, m}$.
For example, $\mathcal{G}_{L, d, 0}=\{L\}$ and 
$\mathcal{G}_{L, d, d}=\{L^{(d, n)}\}$.
The latter set is   the Grassmannian,  denoted by  $G(m, d)$, whose cardinality is well known to be $\binom{d}{m}_f$.
Here, 
\[\binom{d}{m}_f=\frac{[(d)!]_f}{[m!]_f[(d-m)!]_f},\]
where \[[m!]_f=\prod_{t=1}^{m}\frac{f^t-1}{f-1}, \textit{    for any positive integer $m$}.\]
We write that $[0!]_f=1$ so that $\binom{d}{d}_f=\binom{d}{0}_f=1$.
For example, if $m=1$, then $\binom{d}{1}_f=\frac{f^{d}-1}{f-1}$.
Thus we have the following formula:
\begin{equation}\label{eqgrass}
\#\mathcal{G}_{L, d, m}=\binom{d}{m}_f.
\end{equation}

In the following lemma, we explain a property of a lattice including $L$, but not an element of $\mathcal{G}_{L, d, m}$.

\begin{Lem}\label{l1}
Consider a lattice  $L'$  in $V$ containing $L$.
If $L'$ does not contain any lattice in $\mathcal{G}_{L, d, m}$ for $1\leq m \leq d$ (equivalently $L'$ does not contain any lattice in $\mathcal{G}_{L, d, 1}$),
then there exists a direct summand $M'$ of $L'$ such that $L'=M'\oplus \underbrace{\mathfrak{o} e_{n-d+1}\oplus \cdots \oplus \mathfrak{o} e_n}_{d}$
and $L=M\oplus \underbrace{\mathfrak{o} e_{n-d+1}\oplus \cdots \oplus \mathfrak{o} e_n}_{d}$, where $M=L\cap M'$.
\end{Lem}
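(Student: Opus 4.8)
The plan is to argue entirely on the level of $\mathfrak{o}$-modules; the quadratic form plays no role. Write $P=\mathfrak{o}e_{n-d+1}\oplus\cdots\oplus\mathfrak{o}e_n$, so that $P\subseteq L\subseteq L'$ and $L=N\oplus P$ with $N=\mathfrak{o}e_1\oplus\cdots\oplus\mathfrak{o}e_{n-d}$. Set $\bar{L}'=L'/P$ and $\bar{L}=L/P$; the latter is free of rank $n-d$ and sits inside $\bar{L}'$. The claim will follow once one shows that $\bar{L}'$ is torsion-free, so the real content is to match ``$\bar L'$ torsion-free'' with the hypothesis.

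First I would translate the hypothesis. Since $\bar{L}'$ is finitely generated over the discrete valuation ring $\mathfrak{o}$, it has torsion if and only if it has $\pi$-torsion, i.e.\ if and only if there is $x\in L'\setminus P$ with $\pi x\in P$. Writing $v=\pi x\in P$, this says $\tfrac1\pi v\in L'$ with $v\in P\setminus\pi P$; and for such $v$ the lattice $L+\mathfrak{o}\cdot\tfrac1\pi v$ lies between $L$ and $L^{(d,n)}=N\oplus\tfrac1\pi P$, has index $1$ over $L$, and is contained in $L'$, hence is a lattice of $\mathcal{G}_{L,d,1}$ contained in $L'$. Conversely any lattice of $\mathcal{G}_{L,d,1}$ inside $L'$ has this shape and produces $\pi$-torsion in $\bar{L}'$. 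Thus $\bar{L}'$ has torsion if and only if $L'$ contains a lattice of $\mathcal{G}_{L,d,1}$. For the parenthetical equivalence I would invoke the inclusion-preserving bijection between lattices of $\mathcal{G}_{L,d,m}$ and $m$-dimensional subspaces of $\bar{V}_{L,d}$: a lattice of $\mathcal{G}_{L,d,m}$ with $m\ge 1$ contains a lattice of $\mathcal{G}_{L,d,1}$ (pick a line in the corresponding subspace), so the two forms of the hypothesis coincide.

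Now assume $\bar{L}'$ is torsion-free. Being finitely generated over $\mathfrak{o}$ it is then free, necessarily of rank $n-d$. Choose a basis, lift it to $g_1,\dots,g_{n-d}\in L'$, and put $M'=\mathfrak{o}g_1\oplus\cdots\oplus\mathfrak{o}g_{n-d}$. The composite $M'\hookrightarrow L'\twoheadrightarrow\bar{L}'$ is then an isomorphism whose kernel in $L'$ is $P$, so $L'=M'\oplus P$; in particular $M'$ is a direct summand of $L'$. Finally set $M=L\cap M'$. Then $M\cap P\subseteq M'\cap P=0$, and $M$ surjects onto $\bar{L}$: given $\bar\ell\in\bar{L}\subseteq\bar{L}'$, the unique $m'\in M'$ lying above $\bar\ell$ satisfies $m'\equiv\ell\pmod{P}$ for some $\ell\in L$, and $P\subseteq L$ forces $m'\in L$, so $m'\in M$. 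Hence $L=M\oplus P$, which is the asserted decomposition.

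I expect the only mildly delicate step to be the first one, namely pinning down precisely the equivalence ``$L'/P$ has torsion $\Longleftrightarrow$ $L'$ contains a lattice of $\mathcal{G}_{L,d,1}$'', together with the index-one and containment checks for $L+\mathfrak{o}\cdot\tfrac1\pi v$. Everything afterwards is routine structure theory of finitely generated modules over a discrete valuation ring.
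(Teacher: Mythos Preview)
Your argument is correct. The route differs from the paper's only in which quotient of $L'$ you analyze. The paper works in $L'/\pi L'$: it observes that the hypothesis forces the images $\bar e_{n-d+1},\dots,\bar e_n$ to be $\kappa$-linearly independent there, extends to a basis, lifts, and invokes Nakayama's lemma to obtain a basis $(e'_1,\dots,e'_{n-d},e_{n-d+1},\dots,e_n)$ of $L'$; the splitting of $L$ is then read off from the short exact sequence $0\to L\cap M'\to L\to L'/M'\to 0$ via the obvious section $e_i\mapsto e_i$. You instead work in $L'/P$ and show it is torsion-free, hence free, and lift a basis; the splitting of $L$ then follows by pulling back along the isomorphism $M'\simeq L'/P$. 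The two conditions ``$\bar e_{n-d+1},\dots,\bar e_n$ are independent in $L'/\pi L'$'' and ``$L'/P$ is torsion-free'' are equivalent (each says $P\cap\pi L'=\pi P$), so the arguments are dual reformulations of the same module-theoretic fact. Your version makes the role of the hypothesis slightly more transparent (it is literally the torsion-freeness of $L'/P$), while the paper's Nakayama approach produces an explicit basis of $L'$ extending $(e_{n-d+1},\dots,e_n)$, which feeds directly into the subsequent Lemma describing $M'$ via a matrix $\begin{pmatrix} id_{n-d}\\ x\end{pmatrix}$.
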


\begin{proof}
Let $l=n-d$.
For such $L'$, we denote the image of $e_i$, with $l+1\leq i \leq n$,   in  $L'/\pi L'$  by $\bar{e}_i$.
Let $\bar{V}_{d}'$ be the subspace of $L'/\pi L'$, as a $\kappa$-vector space,  spanned by $\bar{e}_{l+1}, \cdots,  \bar{e}_{n}$.
Since  $L'$ does not contain any lattice in $\mathcal{G}_{L, d, m}$ for $1\leq m \leq d$, 
the vectors $(\bar{e}_{l+1}, \cdots,  \bar{e}_{n})$ are linearly independent  
and thus 
the dimension of the vector space $\bar{V}_d'$ is  $d$.

Thus  there are $l$-vectors $(\bar{e}'_1, \cdots, \bar{e}_{l}')$ in $L'/\pi L'$ having
$(\bar{e}'_1, \cdots, \bar{e}_{l}', \bar{e}_{l+1}, \cdots,  \bar{e}_{n})$ as a basis.
Choose $(e'_1, \cdots, e_{l}')$ in $L'$ as  preimages of $(\bar{e}'_1, \cdots, \bar{e}_{l}')$.
By Nakayama's lemma, $(e'_1, \cdots, e_{l}', e_{l+1}, \cdots, e_n)$ is a basis of $L'$ as an $\mathfrak{o}$-module.

Let $M'$ be the submodule of $L'$ spanned by $(e'_1, \cdots, e_{l}')$ so that $L'/M'=\mathfrak{o}e_{l+1}\oplus \cdots \oplus \mathfrak{o}e_n$.
We consider the following short exact sequence:
\[1\rightarrow L\cap M' \rightarrow L \rightarrow L'/M'\rightarrow 1.\]
This short exact sequence  splits since there exists a section from $L'/M'$ to $L$
such that $e_{i}$  maps to $e_{i}$ with $l+1\leq i \leq n$.
Thus $L\cong (L\cap M')\oplus (\mathfrak{o}e_{l+1}\oplus \cdots \oplus \mathfrak{o}e_n)$ as $\mathfrak{o}$-modules.
Since this isomorphism is induced from the inclusions, we can identify $L$ with  $(L\cap M')\oplus (\mathfrak{o}e_{l+1}\oplus \cdots \oplus \mathfrak{o}e_n)$ 
as submodules of $L'$.
Let $M=L\cap M'$.
This completes the proof.
\end{proof}

\begin{Lem}\label{l2}
For a basis $(e_1, \cdots, e_n)$ of $L$,
consider a direct summand $M^{\dag}$ of $L$ such that $L=M^{\dag}\oplus (\underbrace{\mathfrak{o} e_{n-d+1}\oplus \cdots \oplus \mathfrak{o} e_n}_d)$.
Then there is a basis of $M^{\dag}$ consisting of the column vectors of the matrix
$\begin{pmatrix} id_{n-d}\\x \end{pmatrix}$, where $id_{n-d}$ is the identity matrix of size $n-d$ and $x\in \mathrm{M}_{d\times (n-d)}(\mathfrak{o})$.
\end{Lem}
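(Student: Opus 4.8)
The plan is to produce a basis of $M^{\dag}$ of the desired shape by starting from an arbitrary basis and correcting it by an element of $\mathrm{GL}_{n-d}(\mathfrak{o})$. Set $l=n-d$ and write $N=\underbrace{\mathfrak{o}e_{n-d+1}\oplus\cdots\oplus\mathfrak{o}e_n}_{d}$, so that $L=M^{\dag}\oplus N$ by hypothesis. First I would observe that the composite $\iota\colon M^{\dag}\hookrightarrow L\twoheadrightarrow L/N$ is an isomorphism of free $\mathfrak{o}$-modules of rank $l$, where $L/N$ has the basis given by the images $\bar e_1,\ldots,\bar e_l$ of $e_1,\ldots,e_l$.

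Next, choose any basis $(v_1,\ldots,v_l)$ of $M^{\dag}$ and write $v_j=\sum_{i=1}^{n}c_{ij}e_i$ with $c_{ij}\in\mathfrak{o}$; let $C=(c_{ij})\in\mathrm{M}_{n\times l}(\mathfrak{o})$, and let $C_1=(c_{ij})_{1\le i,j\le l}$ be its top $l\times l$ block. Since $\iota(v_j)=\sum_{i=1}^{l}c_{ij}\bar e_i$, the matrix $C_1$ represents the isomorphism $\iota$ once we identify $M^{\dag}$ with $\mathfrak{o}^l$ via $(v_1,\ldots,v_l)$ and $L/N$ with $\mathfrak{o}^l$ via $(\bar e_1,\ldots,\bar e_l)$; hence $C_1\in\mathrm{GL}_l(\mathfrak{o})$.

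Then I would replace the basis $(v_1,\ldots,v_l)$ by $(v_1',\ldots,v_l'):=(v_1,\ldots,v_l)C_1^{-1}$, which is again a basis of $M^{\dag}$ because $C_1^{-1}\in\mathrm{GL}_l(\mathfrak{o})$. Expressing the $v_k'$ in the basis $(e_1,\ldots,e_n)$ amounts to replacing the coefficient matrix $C$ by $CC_1^{-1}$, whose top $l\times l$ block is $C_1C_1^{-1}=id_l$. Writing $x\in\mathrm{M}_{d\times(n-d)}(\mathfrak{o})$ for the bottom $d\times l$ block of $CC_1^{-1}$, the vectors $(v_1',\ldots,v_l')$ are exactly the column vectors of $\begin{pmatrix}id_{n-d}\\ x\end{pmatrix}$, which proves the lemma.

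As for difficulty, this is essentially a change-of-basis computation, so there is no serious obstacle; the only point requiring care is the justification that the top block $C_1$ is invertible over $\mathfrak{o}$ (and not merely over $F$), which is precisely where the hypothesis $L=M^{\dag}\oplus N$ is used, as opposed to $M^{\dag}$ being merely a sublattice complementary to $N$ after tensoring with $F$.
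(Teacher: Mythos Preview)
Your proof is correct and follows essentially the same approach as the paper: both pick an arbitrary basis of $M^{\dag}$, express it in the $(e_1,\ldots,e_n)$-coordinates, observe that the top $(n-d)\times(n-d)$ block is invertible over $\mathfrak{o}$, and right-multiply by its inverse to normalize. The only cosmetic difference is that the paper packages the basis of $M^{\dag}$ together with $(e_{n-d+1},\ldots,e_n)$ into an $n\times n$ block-triangular matrix and deduces invertibility of the top block from invertibility of the whole, whereas you deduce it from the isomorphism $M^{\dag}\xrightarrow{\sim}L/N$; these are the same observation.
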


\begin{proof}
A basis of $M^{\dag}\oplus (\mathfrak{o} e_{n-d+1}\oplus \cdots \oplus \mathfrak{o} e_n)$ is given by the column vectors of the matrix
$\begin{pmatrix} x_1&0 \\ x_2&id_{d} \end{pmatrix}$ with entries in $\mathfrak{o}$, where $x_1$ is a square matrix of size $n-d$.
Since this matrix is invertible, $x_1$ is invertible  over $\mathfrak{o}$ as well.
Thus we can choose another basis for  $L=M^{\dag}\oplus (\mathfrak{o} e_{n-d+1}\oplus \cdots \oplus \mathfrak{o} e_n)$, given by the column vectors  of the matrix
$\begin{pmatrix} x_1&0 \\x_2&id_{d} \end{pmatrix}
\begin{pmatrix} x_1^{-1}&0 \\0&id_{d} \end{pmatrix}
=\begin{pmatrix} id_{n-d}&0 \\x_2x_1^{-1}&id_{d} \end{pmatrix}$.
Let $x=x_2x_1^{-1}$.
This completes the proof.
\end{proof}

\begin{Rmk}\label{rlx}
\begin{enumerate}
\item In the situation of Lemma \ref{l1}, a direct summand $M$ of $L$ has a basis given by the column vectors of a matrix $\begin{pmatrix} id_{n-d}\\x \end{pmatrix}$
with $x\in \mathrm{M}_{d\times (n-d)}(\mathfrak{o})$ by Lemma \ref{l2}.
Such a lattice $M$ is denoted by  
 $L^{(d, n)}_x$,  in order to emphasize both roles of $x$ and  $(d, n)$, so that
\[
L=L^{(d, n)}_x\oplus \underbrace{\mathfrak{o} e_{n-d+1}\oplus \cdots \oplus \mathfrak{o} e_n}_{d}.
\] 

 If $d=1$, then we sometimes write $L^{(n)}_x$ to denote $L^{(1,n)}_x$.

 The simplest case of $L^{(d, n)}_x$ is when $x$ is the zero vector.
 In this case,
\[
L^{(d, n)}_0=\mathfrak{o} e_{1}\oplus \cdots \oplus \mathfrak{o} e_{n-d}.
\] 
The lattice  $L^{(d, n)}_0$ will be crucially used 
in our inductive formula of the Siegel series (cf. Theorems \ref{mainthm} and \ref{indfaniso3}).

\item A basis of $L$ such that with respect to this basis the symmetric matrix of $L$ is  optimal (reduced, respectively) is called  \textit{an optimal basis} (\textit{a  reduced basis}, respectively).

\item
If $(e_1, \cdots, e_n)$ is an optimal basis (resp. reduced basis) of $L$, then
Theorem 0.2 (resp. Lemma 4.6) of \cite{IK1} yields that
$$L^{(d, n)}_x\oplus \underbrace{\mathfrak{o} e_{n-d+1}\oplus \cdots \oplus \mathfrak{o} e_n}_{d}$$ forms an optimal basis  (resp. reduced basis). 



\item Ikeda and Katsurada imposed extra invariant to a quadratic lattice $L$  in addition to the Gross-Keating invariant and called it `Extended Gross-Keating datum', denoted by $\mathrm{EGK}(L)$ in \cite{IK1}.
They defined $\mathrm{EGK}(L)$ based on an optimal form of $L$, which turns to be independent of the choice of an optimal form (cf. Theorem 0.4 and Definition 6.3 of \cite{IK1}).
For an explicit description of $\mathrm{EGK}(L)$, we refer to Definition 6.3 of \cite{IK1}.
A main contribution of their another paper \cite{IK2} is to prove that  the Siegel series $\mathcal{F}_L(X)$ is completely determined by $\mathrm{EGK}(L)$ (cf.  Theorem 1.1 of \cite{IK2}).

\item Since column vectors of the matrix 
$\begin{pmatrix} id_{n-d}&0 \\x &id_{d}\end{pmatrix}$ form an optimal basis of
 $L$,
 Theorem 0.3 of \cite{IK1} yields  that $\mathrm{GK}(L^{(d, n)}_0)=\mathrm{GK}(L^{(d, n)}_x)$. 
In addition, by Definition 6.3 of \cite{IK1}, one can easily see that $\mathrm{EGK}(L^{(d, n)}_0)=\mathrm{EGK}(L^{(d, n)}_x)$ 
since 
$\mathrm{EGK}(L)$ is independent from the  choice of an optimal basis.
Therefore, the argument of  the above (4) implies that 
 $$\mathcal{F}_{L^{(d, n)}_0}(X)=\mathcal{F}_{L^{(d, n)}_x}(X).$$\newline
\end{enumerate}
\end{Rmk}


From now on until the end of this section, we work with the following choice of a basis of a quadratic lattice $L$:
\[
\textit{$(e_1, \cdots, e_n)$ is } \left\{
  \begin{array}{l l}
 \textit{an optimal basis} & \quad  \textit{if $p$ is odd};\\
 \textit{a reduced basis} & \quad  \textit{if $p$ is even}.
    \end{array} \right.
\]
Before analyzing $\#\mathcal{S}_{(L, a^{\pm}, b)}$ in Proposition \ref{prop3},
we will introduce one conjecture regarding quadratic forms modulo $\pi$ in Conjecture  \ref{conj4}, and prove it when $p$ is odd or when $L$ is anisotropic over $\mathbb{Z}_2$ in Lemmas \ref{l4}-\ref{l5}.

We write $L=M\oplus N$, where $M$ (respectively $N$) is spanned by $(e_{1}, \cdots, e_{n-d})$ (respectively $(\underbrace{e_{n-d+1}, \cdots, e_n}_{d})$).
We consider a lattice $L'$ containing $L$ in $L\otimes_{\mathfrak{o}}F$ of the form $L'=M'\oplus N$,
where $M'$ is a lattice containing $M$ in $M\otimes_{\mathfrak{o}}F$.
Let $q_{L'}$ be the quadratic form on $L'$ which is naturally induced from $q_L$ on $L$ such that $q_{L'}|_{L}=q_L$.
Similarly, we define  the quadratic form $q_{M'}$ defined on $M'$.

In the case that $q_{M'}$  is integral (equivalently, $\mathrm{GK}(M')\succeq (0, \cdots, 0)$), 
we define $\bar{M}'=M'/\pi M'$ to be the quadratic space over $\kappa$ having  the quadratic form $q_{M'}$ modulo $\pi$.
Similarly, in the case that  $q_{L'}$ on $L'$  is integral,  we define the quadratic space $\bar{L}'=L'/\pi L'$.

If $q_{L'}$ is integral, then $q_{M'}$ is also integral. Conversely, we propose the following conjecture.

\begin{Conj}\label{conj4}
If $q_{M'}$ on $M'$  is integral, then $q_{L'}$ on $L'$  is integral (thus they are equivalent).
In this situation,
the dimension of  $\bar{L}'$ modulo the radical, is the same as the dimension of  $\bar{M}'$ modulo the radical.
In other words,  the number of $0$'s in $\mathrm{GK}(L')$ is the same as 
the number of $0$'s in $\mathrm{GK}(M')$ (cf. Proposition \ref{propzero}).
\end{Conj}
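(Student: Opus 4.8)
The plan is to reduce both assertions of the conjecture to a single estimate on the ``off-block'' Gram entries, and then to prove that estimate. Write $L'=M'\oplus N$ with $N=\mathfrak o e_{n-d+1}\oplus\cdots\oplus\mathfrak o e_n$, so that $\mathrm{GK}(N)=(a_n,\dots,a_n)$ and $a_n\ge 1$ (this last is exactly the standing hypothesis that $L^{(d,n)}$ is a quadratic lattice). In a reduced basis the Gram entries among the indices in the top block $I_r=\{n-d+1,\dots,n\}$ all have positive order: the diagonal ones because $\mathrm{ord}(b_{jj})\ge a_j=a_n\ge 1$, and the off-diagonal ones because Definition \ref{def3.2}.(1),(3) give $\mathrm{ord}(2b_{ij})\ge(a_i+a_j)/2=a_n\ge 1$. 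Hence $\bar q_N\equiv 0$, i.e.\ $\overline N:=N/\pi N$ is totally isotropic. So if one knows
\[
\mathrm{ord}\bigl(2B_L(x,e_j)\bigr)\ \ge\ 1\qquad\text{for all }x\in M',\ \ n-d+1\le j\le n,
\]
then (i) the Gram matrix of $L'$ is half-integral — its $M'$-block is (since $q_{M'}$ is integral), its $N$-block is (since $L$ is a quadratic lattice), and the cross entries lie in $\pi\mathfrak o\subset\mathfrak o$ — so $q_{L'}$ is integral; and (ii) the reduction $\bar q_{L'}$ does not involve the coordinates of $\overline N$, so $\overline N\subseteq\mathrm{Rad}(\bar q_{L'})$ and $\dim\overline L'/\mathrm{Rad}=\dim\overline M'/\mathrm{Rad}$, which by Proposition \ref{propzero} is precisely the statement that $\mathrm{GK}(L')$ and $\mathrm{GK}(M')$ have the same number of zeros. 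Thus the whole conjecture comes down to the displayed estimate.

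To prove the estimate I would first pin down $\mathrm{ord}(2b_{kj})$ for $k\le n-d$, $j\in I_r$, using the reduced-form conditions together with the admissibility conditions of Definition \ref{def3.1}: for $j\ne\sigma(k)$, Definition \ref{def3.2}.(3) gives $\mathrm{ord}(2b_{kj})>(a_k+a_n)/2$; for $j=\sigma(k)$ one has $k\in\mathcal P^-$, Definition \ref{def3.2}.(1) gives $\mathrm{ord}(2b_{kj})=(a_k+a_n)/2$ exactly, and Definition \ref{def3.1}.(2) forces there to be at most one such index $k_0\in\{1,\dots,n-d\}$ (its partner is the unique element of $\mathcal P^+\cap I_r$), with $a_{k_0}\equiv a_n\pmod 2$ and $a_{k_0}\le a_n-2$. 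One then needs a matching lower bound on the orders of the coordinates of an element $x=\sum_{k\le n-d}c_k e_k$ of the integral over-lattice $M'$: since $e_m\in M\subseteq M'$ for every $m$, one gets $q_{L'}(x+e_m)\in\mathfrak o$, hence $2B_L(x,e_m)\in\mathfrak o$ for all $m\le n-d$, so $M'$ is contained in the ``maximal integral dual'' $\{x\in M\otimes F:\ q(x)\in\mathfrak o,\ 2B_L(x,M)\subseteq\mathfrak o\}$ of the reduced lattice $M$. Combining the two inputs — a finite bookkeeping over the parities of $a_k,a_n$ and over whether $k=k_0$ — should then give the estimate.

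Two cases are easy. When $p$ is odd a reduced form is diagonal (Remark \ref{rgk}.(1)), all off-diagonal $b_{ij}$ vanish, $L'=M'\perp N$ is an orthogonal sum, and there is nothing to prove. When $(L,q_L)$ is anisotropic over $\mathbb Z_2$ the sublattice $M$ has the same (anisotropic) generic fibre as $L$, which is an anisotropic quadratic space over $\mathbb Q_2$ and hence of dimension $\le 4$; the reduced forms of such $M$ lie on a short explicit list, and since every integral over-lattice of $M$ has the same anisotropic generic fibre, the maximal integral dual is tightly constrained and the estimate can be checked by inspection. These are the cases recorded in Lemmas \ref{l4}--\ref{l5}.

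The hard part — and the reason the statement is only a conjecture — is the general $p=2$ situation: once $L$ is isotropic, $M$ can have large rank, its reduced form is genuinely non-diagonal, and the $\underline a$-admissible involution can carry an index of a lower block into the top block $I_r$. Controlling the coordinates of the maximal integral dual of such an $M$ sharply enough to beat the orders $\mathrm{ord}(2b_{kj})$ — in particular for the exceptional index $k_0$, where $\mathrm{ord}(2b_{k_0,\sigma(k_0)})$ is as small as possible — seems to require a coordinate-wise refinement of Remark \ref{rgk}.(5) / Theorem 0.1 of \cite{IK1}, which controls only $|\mathrm{GK}(M')|$ and says nothing about how $M'$ sits inside the dilates $\pi^{-j}M$ relative to a reduced basis. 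Producing that refined description of the over-lattices of a reduced quadratic lattice is the missing step.
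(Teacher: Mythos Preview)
Your overall reduction --- that both assertions follow from the single estimate $\mathrm{ord}(2B_L(x,e_j))\ge 1$ for $x\in M'$ and $j\in I_r$ --- is correct and is precisely the skeleton the paper uses in Lemmas~\ref{l4} and~\ref{l5}. The paper likewise leaves the general $p=2$ case open, for essentially the obstruction you name.

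Where your handling of the two established cases departs from the paper:

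\emph{The case $p$ odd.} Your claim that ``a reduced form is diagonal'' misreads Remark~\ref{rgk}.(1), which only gives the converse. More to the point, the conjecture is invoked in Proposition~\ref{prop3} for the basis whose first $n-d$ columns are $\begin{pmatrix}\mathrm{id}_{n-d}\\ x\end{pmatrix}$; this basis is optimal by Remark~\ref{rlx}.(3) but is not diagonal when $x\ne 0$, and in it $M=L^{(d,n)}_x$ is \emph{not} orthogonal to $N$. So ``$L'=M'\perp N$, nothing to prove'' does not cover the generality actually required. The paper's Lemma~\ref{l4} handles an arbitrary optimal basis: it diagonalizes only the $M$-block (invoking Theorems~0.2--0.3 of \cite{IK1} to keep the full matrix optimal), then contains $M'$ in the ordinary dual lattice of $M$ --- spanned by $\pi^{-a_k}e_k$ --- and finally checks $\mathrm{ord}(2\pi^{-a_k}b_{kj})\ge 1$ from the optimality bound $\mathrm{ord}(2b_{kj})\ge(a_k+a_n)/2$ together with $a_k<a_n$.

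\emph{The anisotropic $\mathbb{Z}_2$ case.} Your sketch (``short explicit list \ldots\ checked by inspection'') does not isolate the organizing idea. The paper's Lemma~\ref{l5} uses that an anisotropic space carries a \emph{unique} maximal integral lattice (\cite{O}, Theorem~91:1), so it suffices to treat that single $M'$; and this maximal $M'$ is explicitly the lattice obtained from the reduced basis of $M$ by $e_k\mapsto \pi^{-\lfloor a_k/2\rfloor}e_k$, since the resulting Gross--Keating invariant has all entries in $\{0,1\}$ and such anisotropic $\mathbb{Z}_p$-lattices are maximal (cf.\ Proposition~\ref{parity}). The estimate $\mathrm{ord}(2\pi^{-\lfloor a_k/2\rfloor}b_{kj})\ge 1$ is then a one-line check from Definition~\ref{def3.2}. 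Your ``maximal integral dual'' bound $2B_L(x,M)\subseteq\mathfrak o$ by itself is not sharp enough over $\mathbb{Z}_2$ to pin down the coordinates of $x$. (Also, $M$ does not have ``the same generic fibre as $L$'' --- it has smaller rank; you presumably meant that $M\otimes F$, being a subspace of an anisotropic space, is itself anisotropic.)
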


We think that the conjecture is true in the general case. 
In the following, we will prove it in two cases, when $p$ is odd or when 
$(L, q_L)$ is an anisotropic $\mathbb{Z}_2$-lattice.

\begin{Lem}\label{l4}
The conjecture is true for an odd prime $p$.
\end{Lem}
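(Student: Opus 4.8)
Since $p$ is odd, every quadratic lattice is diagonalizable, and for an optimal basis $(e_1,\dots,e_n)$ of $L$ we may take the associated matrix to be $\diag(u_1\pi^{a_1},\dots,u_n\pi^{a_n})$ with $u_i\in\mathfrak{o}^\times$ and $(a_1,\dots,a_n)=\mathrm{GK}(L)$ (Remark \ref{rgk}.(1)). The plan is to reduce the statement to an explicit computation with such a diagonal form. Writing $L=M\oplus N$ as in the setup, with $M$ spanned by $(e_1,\dots,e_{n-d})$ and $N$ by $(e_{n-d+1},\dots,e_n)$, the crucial point is that the splitting $L'=M'\oplus N$ is \emph{orthogonal}: since the Gram matrix of $L$ is diagonal, $M$ and $N$ are orthogonal, hence $M'$ and $N$ are orthogonal inside $L\otimes_{\mathfrak{o}}F$, and $q_{L'}=q_{M'}\perp q_N$.

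First I would observe that $q_N$ is the diagonal form $\diag(u_{n-d+1}\pi^{a_n},\dots,u_n\pi^{a_n})$ with $a_n\geq 1$ (recall the standing assumption that $L^{(d,n)}$ is a quadratic lattice forces $a_{n-d+1}=\dots=a_n\geq 1$), so $q_N$ is integral and its reduction modulo $\pi$ is identically zero; thus $N/\pi N$ lies in the radical of $\bar L'$ whenever $q_{L'}$ is integral. Next, since $q_{L'}=q_{M'}\perp q_N$ as a form with values in $F$, integrality of $q_{M'}$ immediately gives integrality of $q_{L'}$ — there is nothing to add because the off-diagonal blocks between $M'$ and $N$ vanish. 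This already settles the first assertion of the conjecture in the odd case. For the statement about the number of zeros: by Proposition \ref{propzero} the number of $0$'s in $\mathrm{GK}(L')$ equals $\dim\bar L'_0$, the dimension of a maximal nonsingular subspace of $\bar L'=\bar M'\perp \bar N$, and likewise for $M'$. Because $\bar N$ is totally isotropic (in fact identically zero as a quadratic form), adding it cannot enlarge a maximal nonsingular subspace, so $\dim\bar L'_0=\dim\bar M'_0$, which is the number of $0$'s in $\mathrm{GK}(M')$.

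I would then write this up cleanly: pick the diagonal optimal basis, note the orthogonal decomposition persists for any $L'=M'\oplus N$, verify $q_N$ reduces to $0$ mod $\pi$, conclude integrality of $q_{L'}$ from that of $q_{M'}$, and finish with the radical computation via Proposition \ref{propzero}. I do not expect a genuine obstacle in the odd case — the whole point is that diagonalizability makes the $M'$–$N$ interaction trivial. The only mild care needed is to confirm that the hypothesis ``$L^{(d,n)}$ is a quadratic lattice'' (equivalently $a_n\geq 1$) is what guarantees $\bar N=0$; without $a_n\geq 1$ one of the $\pi^{a_n}$ factors would be a unit and $\bar N$ would contribute a nonzero nonsingular summand, but then the conclusion about equality of the number of zeros would also change accordingly, so everything remains consistent. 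The genuine difficulties of Conjecture \ref{conj4} are deferred to the case $p=2$ and $L$ anisotropic, treated in the subsequent lemma.
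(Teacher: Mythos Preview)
Your argument is clean and correct when the optimal basis $(e_1,\dots,e_n)$ is chosen so that the Gram matrix of $L$ is diagonal: then $M\perp N$, hence $L'=M'\perp N$ orthogonally, and both integrality of $q_{L'}$ and the radical count follow at once. However, the lemma (equivalently Conjecture~\ref{conj4}) must hold for an \emph{arbitrary} optimal basis, not only a diagonal one. This is not a pedantic point: in the proof of Proposition~\ref{prop3} the conjecture is re-applied with the decomposition $L=L^{(d,n)}_x\oplus N$ coming from the basis given by the columns of $\begin{pmatrix} \mathrm{id}_{n-d}&0\\ x&\mathrm{id}_d\end{pmatrix}$. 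That basis is optimal by Remark~\ref{rlx}.(3) but is \emph{not} diagonal when $x\neq 0$, and then $L^{(d,n)}_x$ and $N$ are not orthogonal, so your orthogonality shortcut is unavailable and the cross-term between $M'$ and $N$ must be controlled directly.

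The paper's proof handles exactly this missing piece. Starting from a general optimal form $B=\begin{pmatrix} a&b\\ {}^t b&c\end{pmatrix}$, with $M'$ determined by some $x\in\mathrm{GL}_{n-d}(F)$, one must show every entry of $2\cdot{}^t x\cdot b$ has order $\geq 1$. The key odd-$p$ observation is that $q_{M'}$ integral forces $M'$ to lie in the dual lattice of $M$; after diagonalizing the $a$-block (which keeps the full matrix optimal by Theorems~0.2--0.3 of \cite{IK1}) one may write $x=\mathrm{diag}(\pi^{-a_1},\dots,\pi^{-a_{n-d}})\cdot x'$ with $x'\in M_{n-d}(\mathfrak{o})$, and then the optimality bound $\mathrm{ord}(2b_{ij})\geq(a_i+a_n)/2$ together with $a_i<a_n$ for $i\leq n-d$ gives the required estimate. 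Once you insert this cross-term control, your proof becomes complete and essentially coincides with the paper's.
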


\begin{proof}
Let $B=\begin{pmatrix}
a&b\\{}^tb&c
\end{pmatrix}$ be an optimal form with respect to $(e_1, \cdots, e_n)$, where
the size of $a$ is $(n-d)\times (n-d)$ and the size of $c$ is $d\times d$.
A symmetric matrix of $L'$ with respect to the decomposition $L'=M'\oplus N$ such that a basis of $N$ is fixed by $(\underbrace{e_{n-d+1}, \cdots, e_n}_{d})$ is of the form
$\begin{pmatrix}
{}^tx&0\\0&id
\end{pmatrix}\cdot
\begin{pmatrix}
a&b\\{}^tb&c
\end{pmatrix}\cdot
\begin{pmatrix}
x&0\\0&id
\end{pmatrix}=
\begin{pmatrix}
{}^tx\cdot a\cdot x &{}^tx\cdot b\\{}^tb\cdot x&c
\end{pmatrix}$ for certain $x\in \mathrm{GL}_{n-d}(F)$. 
Here, ${}^tx\cdot a\cdot x$ is a symmetric matrix of $M'$. 
Thus it suffices to show that the exponential order of each entry of $2\cdot {}^tx\cdot b$ is at least $1$.

Since $p$ is odd, we can choose another basis of $M$ given by the column vectors of $y\in \mathrm{GL}_{n-d}(\mathfrak{o})$ such that ${}^ty\cdot a\cdot y$ is an optimal and diagonal matrix by Remark \ref{rgk}.(1).
Then by Theorems 0.2-0.3 of \cite{IK1}, the symmetric matrix 
$\begin{pmatrix}
{}^ty\cdot a\cdot y &{}^ty\cdot b\\{}^tb\cdot y&c
\end{pmatrix}$ is also optimal with respect to a basis  forming   $L=M\oplus N$.
Thus we may and do assume that $a$ is a diagonal matrix with the $i$-th diagonal entry $u_i\pi^{a_i}$, where $u_i$ is a unit in $\mathfrak{o}$.

Since we assumed that  $M'$ is a quadratic lattice containing $M$ in $M\otimes_{\mathfrak{o}}F$ with $p$ odd,
the lattice $M'$ is contained in the dual lattice of $M$, which is defined to be the set $\{v\in M\otimes_{\mathfrak{o}}F | b_{q_M}(v, L)\in \mathfrak{o}\}$.
Here, $b_{q_M}$ is the symmetric bilinear form associated to the  quadratic form $q_M$ such that $b_{q_M}(v,v)=q_M(v)$.
The dual lattice of $M$ is spanned by $(\pi^{-a_1}e_1, \cdots, \pi^{-a_{n-d}}e_{n-d})$.
Thus, if $x_{n-d}$ is the diagonal matrix of size $n-d$ whose $i$-th diagonal entry is  $\pi^{-a_i}$, then a matrix $x$ determining $M'$ is of the form $x_{n-d}\cdot x'$, where $x'\in \mathrm{GL}_{n-d}(F)\cap \mathrm{M}_{n-d}(\mathfrak{o})$.

On the other hand, the exponential order of each entry of $2\cdot {}^tx_{n-d}\cdot b$ is at least $1$ since $B$ is optimal and $a_i<a_{n-d+1}=a_n$ for any $i \left(\leq n-d\right)$.
Therefore, the exponential order of each entry of $2\cdot {}^t(x')\cdot {}^tx_{n-d}\cdot b$ is at least $1$ since $x'\in  \mathrm{M}_{n-d}(\mathfrak{o})$.
This completes the proof.
\end{proof}

\begin{Lem}\label{l5}
The conjecture is true for an anisotropic quadratic $\mathbb{Z}_p$-lattice with any $p$.
\end{Lem}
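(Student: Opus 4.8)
The plan is to reduce at once to $p=2$, since for odd $p$ the statement is contained in Lemma \ref{l4}. So suppose $\mathfrak{o}=\mathbb{Z}_2$ and that $V=L\otimes\mathbb{Q}_2$ is anisotropic, so that $n=\dim_{\mathbb{Q}_2}V\le 4$. The structural input from anisotropy is that, among all $\mathfrak{o}$-lattices of $M\otimes F$ on which the quadratic form is $\mathfrak{o}$-valued (here $M=\mathfrak{o}e_1\oplus\cdots\oplus\mathfrak{o}e_{n-d}$ as in the setup of Conjecture \ref{conj4}), there is a largest one $\widehat M$, a classical property of anisotropic quadratic spaces over local fields; hence $\{M':M\subseteq M'\subseteq M\otimes F,\ q_{M'}\ \text{integral}\}=\{M':M\subseteq M'\subseteq\widehat M\}$. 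It therefore suffices to verify Conjecture \ref{conj4} when $M'=\widehat M$: on the one hand, integrality of $q_{\widehat M\oplus N}$ is inherited by every $M'\oplus N$ with $M\subseteq M'\subseteq\widehat M$; on the other hand, the equality of the numbers of zeros will follow for all such $M'$ once the integrality is upgraded to the containment $b_L(\widehat M,N)\subseteq\pi\mathfrak{o}$. Indeed $q(\widehat M)\subseteq\mathfrak{o}$ and $q(N)\subseteq\mathfrak{o}$ already hold, so $\widehat M\oplus N$ is a quadratic lattice as soon as $b_L(\widehat M,N)\subseteq\mathfrak{o}$, and if moreover $b_L(\widehat M,N)\subseteq\pi\mathfrak{o}$ then $\bar{q}_{M'\oplus N}\cong\bar{q}_{M'}\perp\bar{q}_N$ over $\kappa$ for every such $M'$. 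Since $a_n\ge 1$ (the standing hypothesis that $L^{(d,n)}$ is a quadratic lattice), every entry of the block of a reduced form of $L$ indexed by $N$, namely each $b_{jj}=q(e_j)$ and each $2b_{jk}$ with $j,k>n-d$, has positive order, so $\bar{q}_N\equiv 0$; hence the nonsingular part of $\bar{q}_{M'\oplus N}$ has dimension $(n-d)-\dim\mathrm{Rad}(\bar{q}_{M'})$, the same as that of $\bar{q}_{M'}$, which by Proposition \ref{propzero} is exactly the asserted equality of the numbers of zeros of $\mathrm{GK}(M'\oplus N)$ and $\mathrm{GK}(M')$.

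It remains to prove $b_L(\widehat M,N)\subseteq\pi\mathfrak{o}$. Fix a reduced basis $(e_1,\dots,e_n)$ of $L$ and write the associated reduced form as $B=\begin{pmatrix}a&b\\ {}^t b&c\end{pmatrix}$, with $a$ the $(n-d)\times(n-d)$ block attached to $M$. As $B$ is optimal (Theorem \ref{thm5.1}), every cross entry satisfies $\mathrm{ord}(2b_{ij})\ge(a_i+a_j)/2=(a_i+a_n)/2$ for $i\le n-d<j$. So it is enough to bound the coordinates of a vector of $\widehat M$ in the basis $(e_1,\dots,e_{n-d})$: I claim that $v=\sum_{i\le n-d}c_ie_i\in\widehat M$ forces $\mathrm{ord}(c_i)\ge-\lfloor a_i/2\rfloor$ for every $i$, that is, $\widehat M\subseteq\sum_{i\le n-d}\pi^{-\lfloor a_i/2\rfloor}\mathfrak{o}e_i$. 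Granting this, $\mathrm{ord}(c_i\cdot 2b_{ij})\ge-\lfloor a_i/2\rfloor+(a_i+a_n)/2\ge a_n/2\ge\tfrac{1}{2}$, and being an integer this is $\ge 1$; summing over $i$ gives $b_L(v,e_j)=\sum_i c_i(2b_{ij})\in\pi\mathfrak{o}$ for all $j>n-d$, as required.

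The coordinate bound is proved index by index from the shape of the reduced form of $M$. For an index $i$ in a $1\times1$ block ($i\in\mathcal{P}^0$) or in the lower slot of a $2\times2$ block ($i\in\mathcal{P}^-$) one has $\mathrm{ord}(b_{ii})=a_i$ by Definition \ref{def3.2}, so $q(\pi^{-m}e_i)\notin\mathfrak{o}$ once $m>\lfloor a_i/2\rfloor$; anisotropy of $q$ then prevents the resulting lowest-order term of $q(v)$ from being cancelled by the remaining terms, which forces $\mathrm{ord}(c_i)\ge-\lfloor a_i/2\rfloor$. The upper slot of a $2\times2$ block is handled by treating the block as a unit and bounding directly the largest integral lattice containing that binary anisotropic sublattice. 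The hard part is exactly this last step: when $p=2$ the reduced form of $M$ need not be diagonal, so pinning down $\widehat M$ --- equivalently, establishing the coordinate bound for the binary-block types --- requires a careful but finite case analysis, in which the $2$-adic subtleties of the Gross-Keating invariant (for instance, that $\langle 1,4\rangle$ over $\mathbb{Z}_2$ has Gross-Keating invariant $(0,3)$, one larger than the naive $(0,2)$) both complicate and ultimately sustain the estimate. The number of cases is small because $n\le 4$; alternatively one can bypass the case analysis by invoking the results of \cite{IK1} on reduced bases of overlattices to produce a basis of $\widehat M$ adapted to $(e_1,\dots,e_{n-d})$.
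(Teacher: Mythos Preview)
Your overall strategy coincides with the paper's: both reduce to the unique maximal integral lattice $\widehat M$ in $M\otimes F$, and both aim to show that the off-diagonal block between $\widehat M$ and $N$ lies in $\pi\mathfrak{o}$. The reduction to $p=2$ via Lemma~\ref{l4} is valid but unnecessary; the paper treats all $p$ uniformly.

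The gap is in your justification of the coordinate bound $\widehat M\subseteq\sum_{i\le n-d}\pi^{-\lfloor a_i/2\rfloor}\mathfrak{o}e_i$. The sentence ``anisotropy of $q$ then prevents the resulting lowest-order term of $q(v)$ from being cancelled by the remaining terms'' is not a proof: anisotropy only says $q(v)\neq 0$, not that $\mathrm{ord}(q(v))$ is controlled by the worst individual coordinate. Cancellation among terms of $q(v)$ is exactly what happens in general, and ruling it out requires real input about the specific shape of a reduced form. You acknowledge this yourself by deferring the $\mathcal P^+$ case to an unperformed ``careful but finite case analysis''; but the $\mathcal P^0$ and $\mathcal P^-$ cases are equally unfinished, since the terms you have not yet bounded can interfere with the one you are examining.

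The paper closes this gap from the opposite direction, and much more cheaply: rather than bounding $\widehat M$ from above, it shows that $M^{\sharp}:=\sum_{i\le n-d}\pi^{-\lfloor a_i/2\rfloor}\mathfrak{o}e_i$ is already integral with $\mathrm{GK}(M^{\sharp})\subset\{0,1\}$ (this is Proposition~3.2 of \cite{IK1} applied to the reduced form $a$), and then observes that over $\mathbb{Z}_p$ an anisotropic lattice whose Gross--Keating invariant lies in $\{0,1\}$ is automatically maximal, by the parity constraint of Proposition~\ref{parity}. Hence $M^{\sharp}=\widehat M$ exactly, and the verification that each entry of $2\cdot{}^t x_{n-d}\cdot b$ has order $\ge 1$ is then the same easy inequality you wrote down. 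Your closing remark that one could ``bypass the case analysis by invoking the results of \cite{IK1}'' is precisely the paper's route; what you are missing is the recognition that maximality follows immediately from $\mathrm{GK}\subset\{0,1\}$ via the parity obstruction, which makes the case analysis unnecessary.
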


\begin{proof}
Let $L$ be an anisotropic quadratic lattice over $\mathbb{Z}_p$ and let $\mathrm{GK}(L)=(a_1, \cdots, a_n)$ so that $n$ is at most $4$.
It is well known that there is a unique maximal quadratic lattice in $L\otimes_{\mathfrak{o}}F$ by Theorem 91:1 of \cite{O}.
Thus it suffices to prove the conjecture when $M'$ is   the maximal quadratic lattice inside $M\otimes_{\mathfrak{o}}F$.

We can easily prove  that any anisotropic quadratic lattice whose Gross-Keating invariant consists of $0$ and $1$ is maximal
since only two among $a_i$'s have the same parity (cf.  Proposition \ref{parity}).

As in the proof of Lemma \ref{l4}, 
let $B=\begin{pmatrix}
a&b\\{}^tb&c
\end{pmatrix}$ be a reduced form with respect to $(e_1, \cdots, e_n)$, 
where
the size of $a$ is $(n-d)\times (n-d)$ and the size of $c$ is $d\times d$.
Let $x_{n-d}$ be the diagonal matrix of size $n-d$ whose $i$-th diagonal entry is  $\pi^{-[a_i/2]}$. 
The proof of Proposition 3.2 in \cite{IK1} implies that 
the Gross-Keating invariant of  ${}^tx_m\cdot a \cdot x_m$  consists of $0$ and $1$  
 so that the associated quadratic lattice is maximal.
A simple calculation shows that the exponential order of each entry of $2\cdot {}^tx_m\cdot b$ is at least $1$. This completes the proof.
\end{proof}

\begin{Rmk}\label{rmk6}
In the proof of the above lemma, the only place to use the assumption of $\mathfrak{o}=\mathbb{Z}_p$ is that any anisotropic quadratic lattice whose Gross-Keating invariant consists of $0$ and $1$ is maximal which follows from Proposition \ref{parity}.
If this is true for a general $\mathfrak{o}$, then the proof of the lemma works so that the conjecture is true for an anisotropic quadratic lattice over $\mathfrak{o}$.
\end{Rmk}

Using Lemmas \ref{l1}-\ref{l2} and assuming Conjecture \ref{conj4}, we will  explain  an inductive formula of $\# \mathcal{S}_{(L, a^{\pm}, b)}$ in the following proposition.

\begin{Prop}\label{prop3}
Assume that Conjecture \ref{conj4} is true. 
Let $b$ be an integer such that
$0\leq b\leq \frac{|\mathrm{GK}(L)|}{2}$.
Then for any integer $b'$ with  $b'\geq b$, we have the following formula:
\begin{multline*}
\# \mathcal{S}_{(L, a^{\pm}, b)}=\\
\sum_{m=1}^{d} \left(c_m \cdot \sum_{L'\in \mathcal{G}_{L, d, m}}\# \mathcal{S}_{(L', a^{\pm}, b-m)}\right)+
f^{d(b-(n-d)b')}\sum_{x\in M_{d\times (n-d)}(\mathfrak{o}/\pi^{b'}\mathfrak{o})}\# \mathcal{S}_{(L^{(d, n)}_x, a^{\pm}, b)},
\end{multline*}
where 
$c_m=(-1)^{m-1}f^{m(m-1)/2}$.
Here, if $b-m<0$, then we understand $\# \mathcal{S}_{(L', a^{\pm}, b-m)}=0$.
And $L^{(d, n)}_x$ for $x\in M_{d\times (n-d)}(\mathfrak{o}/\pi^{b'}\mathfrak{o})$ stands for a lattice associated to any representative of $x$ in $M_{d\times (n-d)}$. It depends on such representative but  
$\#\mathcal{S}_{(L^{(d, n)}_x, a^{\pm}, b)}$ does not.

If $\# \mathcal{S}_{(L, a^{\pm}, b)}=0$, then each $\# \mathcal{S}_{(L', a^{\pm}, b-m)}$ and  $\#\mathcal{S}_{(L^{(d, n)}_x, a^{\pm}, b)}$ are zero as well.

\end{Prop}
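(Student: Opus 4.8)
The plan is to split $\mathcal{S}_{(L,a^{\pm},b)}$ into the lattices $L'$ that contain some lattice of $\mathcal{G}_{L,d,1}$ and those that do not, treating the first family by inclusion--exclusion and the second by a direct--summand and lattice--counting argument.

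\textbf{First family.} The lattices of $\mathcal{G}_{L,d,1}$ correspond to the lines of the $d$-dimensional $\kappa$-space $\bar{V}_{L,d}=L^{(d,n)}/L$, and for a nonempty family $S$ of such lines the lattice $\widetilde{L}_S:=\sum_{L''\in S}L''$ lies in $\mathcal{G}_{L,d,m}$ with $m=\dim\langle S\rangle$. Since any $\widetilde{L}\in\mathcal{G}_{L,d,m}$ is a quadratic lattice (it sits between $L$ and $L^{(d,n)}$) with $[\widetilde{L}:L]=m$, a direct comparison of the definitions $(\ref{sab})$, using Remark \ref{rgk} and Proposition \ref{propzero}, gives $\{L'\in\mathcal{S}_{(L,a^{\pm},b)}:L'\supseteq\widetilde{L}\}=\mathcal{S}_{(\widetilde{L},a^{\pm},b-m)}$ (empty if $b-m<0$). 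Inclusion--exclusion over $\mathcal{G}_{L,d,1}$, then regrouping by $\widetilde{L}$, shows that $\#\{L'\in\mathcal{S}_{(L,a^{\pm},b)}:L'\text{ contains an element of }\mathcal{G}_{L,d,1}\}$ equals $\sum_{m=1}^{d}\gamma_m\sum_{\widetilde{L}\in\mathcal{G}_{L,d,m}}\#\mathcal{S}_{(\widetilde{L},a^{\pm},b-m)}$, where, for a fixed $m$-dimensional $W$, $\gamma_m=\sum_{\langle S\rangle=W}(-1)^{|S|+1}$. By Möbius inversion on the lattice of subspaces of $\kappa^m$ one gets $\gamma_m=-(-1)^{m}f^{m(m-1)/2}=c_m$ (minus the Möbius number of that lattice); combined with $\#\mathcal{G}_{L,d,m}=\binom{d}{m}_f$ this is exactly the first term.

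\textbf{Second family.} Let $L'\in\mathcal{S}_{(L,a^{\pm},b)}$ contain no element of $\mathcal{G}_{L,d,1}$. By Lemma \ref{l1} we may write $L'=M'\oplus N$ with $M:=L\cap M'$ a direct complement of $N$ in $L$; by Lemma \ref{l2}, $M=L^{(d,n)}_x$ for a unique $x\in\mathrm{M}_{d\times(n-d)}(\mathfrak{o})$, and $[M':M]=b$. Integrality of $q_{L'}$ forces that of $q_{M'}$; conversely, assuming Conjecture \ref{conj4} (which applies to the decomposition $L=L^{(d,n)}_x\oplus N$, the corresponding basis of $L$ being again optimal/reduced by Remark \ref{rlx}), integrality of $q_{M'}$ forces that of $q_{L'}$ and equality of the numbers of $0$'s in $\mathrm{GK}(M')$ and $\mathrm{GK}(L')$. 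Moreover, $M'$ being a direct summand of $L'$, the reduction $\overline{M'}$ injects isometrically into $\overline{L'}$; hence its nonsingular part $\overline{M'}_0$ is a nonsingular subspace of $\overline{L'}$ of dimension $\dim\overline{L'}_0$ (by the previous sentence and Proposition \ref{propzero}), so it is isometric to $\overline{L'}_0$, and the sign $\pm$ of $L'$ and of $M'$ agree. Therefore $L'\in\mathcal{S}_{(L,a^{\pm},b)}\iff M'\in\mathcal{S}_{(L^{(d,n)}_x,a^{\pm},b)}$.

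Now fix $b'\geq b$ and a system of representatives for $\mathrm{M}_{d\times(n-d)}(\mathfrak{o})/\pi^{b'}\mathrm{M}_{d\times(n-d)}(\mathfrak{o})$, and form the map $(x,M')\mapsto M'\oplus N$ from $\bigsqcup_{x}\mathcal{S}_{(L^{(d,n)}_x,a^{\pm},b)}$ (over the representatives) to the second family; it is well defined by the previous paragraph, using $L^{(d,n)}_x\oplus N=L$ and that $M'\oplus N$ meets $N\otimes_{\mathfrak{o}}F$ only in $N$. The key point is that every fibre has size $f^{d((n-d)b'-b)}$. Indeed, $M'\mapsto L\cap M'$ is injective on direct complements of $N$ in $L'$ (two with the same intersection with $L$ differ by a homomorphism killing that intersection, hence factoring a length-$b$ torsion module through a free $\mathfrak{o}$-module, hence zero), so each representative $x$ contributes at most one $M'$, and does so precisely when $x$ lies in a fixed coset $x_1+H$, where $H$ is the image of the restriction map $\mathrm{Hom}_{\mathfrak{o}}(M'_1,N)\to\mathrm{Hom}_{\mathfrak{o}}(M_1,N)$ for a reference complement $M'_1\supseteq M_1=L^{(d,n)}_{x_1}$. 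One computes $\mathrm{Hom}_{\mathfrak{o}}(M_1,N)/H\cong\mathrm{Hom}_{\mathfrak{o}}(M'_1/M_1,(N\otimes_{\mathfrak{o}}F)/N)$, of cardinality $f^{bd}$, and checks $\pi^{b'}\mathrm{Hom}_{\mathfrak{o}}(M_1,N)\subseteq\pi^{b}\mathrm{Hom}_{\mathfrak{o}}(M_1,N)\subseteq H$ (here $b'\geq b$ is used); hence $x_1+H$ is a union of $f^{d(n-d)b'}/f^{bd}=f^{d((n-d)b'-b)}$ residue classes mod $\pi^{b'}$, proving the claim. Consequently the second family has cardinality $f^{d(b-(n-d)b')}\sum_{x}\#\mathcal{S}_{(L^{(d,n)}_x,a^{\pm},b)}$, which --- using Remark \ref{rlx} and Corollary \ref{rmkse} to see $\#\mathcal{S}_{(L^{(d,n)}_x,a^{\pm},b)}$ is independent of the chosen representative --- is the second term. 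Adding the two families proves the identity; the stated conventions are immediate ($\mathcal{S}_{(\widetilde{L},a^{\pm},b-m)}=\emptyset$ for $b-m<0$ by degree, and any element of $\mathcal{S}_{(\widetilde{L},a^{\pm},b-m)}$ or of $\mathcal{S}_{(L^{(d,n)}_x,a^{\pm},b)}$ produces, via containment or $\oplus N$, an element of $\mathcal{S}_{(L,a^{\pm},b)}$).

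The main obstacle is the second family: choosing the parametrization so that the natural map has \emph{constant} fibres, and carrying out the $\mathrm{Hom}$/index bookkeeping that yields the normalization $f^{d(b-(n-d)b')}$ --- this is exactly where the auxiliary integer $b'$ is needed and where Conjecture \ref{conj4} is invoked, to transfer the conditions defining $\mathcal{S}$ through $L'=M'\oplus N$. The first family is routine once the signed count of line-spanning sets is identified with the Möbius function of the subspace lattice.
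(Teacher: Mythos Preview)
Your proof is correct and follows essentially the same strategy as the paper: split $\mathcal{S}_{(L,a^{\pm},b)}$ according to whether a lattice contains some member of $\mathcal{G}_{L,d,1}$, handle the first part by inclusion--exclusion over the subspace lattice of $\bar V_{L,d}$ (your M\"obius identification of $c_m$ is equivalent to the paper's $q$-binomial computation), and handle the second part via Lemmas~\ref{l1}--\ref{l2}, Remark~\ref{rlx}(3), and Conjecture~\ref{conj4}, followed by a fibre count. Your $\mathrm{Hom}/\mathrm{Ext}$ bookkeeping for the fibre size $f^{d((n-d)b'-b)}$ and your explicit argument that the $\pm$ sign transfers from $M'$ to $L'=M'\oplus N$ are in fact more detailed than the paper's treatment; the one cosmetic point is that independence of $\#\mathcal{S}_{(L^{(d,n)}_x,a^{\pm},b)}$ from the choice of representative follows directly from your own fibre analysis (two lifts of the same class lie in the same coset $x_1+H$) rather than from Remark~\ref{rlx} and Corollary~\ref{rmkse}.
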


\begin{proof}

Note that $\mathcal{S}_{(L', a^{\pm}, b-m)} \subseteq  \mathcal{S}_{(L, a^{\pm}, b)}$ for $L'\in \mathcal{G}_{L, d, m}$.
We choose a lattice 
$$L^{\dag}\in \mathcal{S}_{(L, a^{\pm}, b)} \setminus \bigcup_{L'\in \mathcal{G}_{L, d, 1}} \mathcal{S}_{(L', a^{\pm}, b-1)},$$
if the right hand side is nonempty.
By Lemmas \ref{l1}-\ref{l2} and Remark  \ref{rlx}.(1),
there exists a direct summand $L^{(d, n)}_x$ of $L$ such that 
\[
\left\{
  \begin{array}{l}
    L=L^{(d, n)}_x\oplus \underbrace{\mathfrak{o} e_{n-d+1}\oplus \cdots \oplus \mathfrak{o} e_n}_d;\\
L^{\dag}=(L^{(d, n)}_x)^{\dag}\oplus \underbrace{\mathfrak{o} e_{n-d+1}\oplus \cdots \oplus \mathfrak{o} e_n}_d,
    \end{array} \right.
\]
as an $\mathfrak{o}$-lattice
(not as a quadratic $\mathfrak{o}$-lattice).
Here, $(L^{(d, n)}_x)^{\dag}$ is a direct summand of $L^{\dag}$ satisfying the condition that 
$L\cap (L^{(d, n)}_x)^{\dag}=L^{(d, n)}_x$.
Note that if $(e_1, \cdots, e_n)$ is an optimal basis (resp. reduced basis) of $L$, then the above basis for $L$ is also optimal (resp. reduced) by Remark \ref{rlx}.(3).
Thus we can use Conjecture \ref{conj4} with $L=L^{(d, n)}_x\oplus \underbrace{\mathfrak{o} e_{n-d+1}\oplus \cdots \oplus \mathfrak{o} e_n}_d$. 
More precisely, 
since $[L^{\dag}:L]=[(L^{(d, n)}_x)^{\dag}:L^{(d, n)}_x]=b$,
the lattice  $(L^{(d, n)}_x)^{\dag}$ is contained in $\mathcal{S}_{(L^{(d, n)}_x, a^{\pm}, b)}$ by Conjecture \ref{conj4}.
Thus by Conjecture \ref{conj4} again, we have that  

\begin{equation}\label{eqsla}
\mathcal{S}_{(L, a^{\pm}, b)} \setminus \bigcup_{L'\in \mathcal{G}_{L, d, 1}} \mathcal{S}_{(L', a^{\pm}, b-1)} =
\bigcup_{x\in M_{d\times n-d}(\mathfrak{o})} \mathcal{S}_{(L^{(d, n)}_x, a^{\pm}, b)}\oplus \underbrace{\mathfrak{o} e_{n-d+1}\oplus \cdots \oplus \mathfrak{o} e_n}_d.
\end{equation}

Here, $\mathcal{S}_{(L^{(d, n)}_x, a^{\pm}, b)}\oplus \underbrace{\mathfrak{o} e_{n-d+1}\oplus \cdots \oplus \mathfrak{o} e_n}_d$  is the set of  lattices  $\{M'\oplus \underbrace{\mathfrak{o} e_{n-d+1}\oplus \cdots \oplus \mathfrak{o} e_n}_d | M'\in \mathcal{S}_{(L^{(d, n)}_x, a^{\pm}, b)}\}.$
The above equation holds even when the left hand side is empty, by assuming Conjecture \ref{conj4}.
This confirms the last sentence of the proposition.

Since $[M':L^{(d, n)}_x]=b$ for $M'\in \mathcal{S}_{(L^{(d, n)}_x, a^{\pm}, b)}$, we can see that 
$$\mathcal{S}_{(L^{(d, n)}_x, a^{\pm}, b)}\oplus \underbrace{\mathfrak{o} e_{n-d+1}\oplus \cdots \oplus \mathfrak{o} e_n}_d
=\mathcal{S}_{(L^{(d, n)}_y, a^{\pm}, b)}\oplus \underbrace{\mathfrak{o} e_{n-d+1}\oplus \cdots \oplus \mathfrak{o} e_n}_d
\textit{ if } x\equiv y \textit{ mod $\pi^b$}.$$
Thus Equation (\ref{eqsla}) can be expressed as follows:

\begin{equation}\label{eqsla2}
\mathcal{S}_{(L, a^{\pm}, b)} \setminus \bigcup_{L'\in \mathcal{G}_{L, d, 1}} \mathcal{S}_{(L', a^{\pm}, b-1)} =
\bigcup_{x\in M_{d\times (n-d)}(\mathfrak{o}/\pi^{b'}\mathfrak{o})} \mathcal{S}_{(L^{(d, n)}_x, a^{\pm}, b)}\oplus \underbrace{\mathfrak{o} e_{n-d+1}\oplus \cdots \oplus \mathfrak{o} e_n}_d
\end{equation}
for any integer $b'$ such that $b' \geq b$.
Here, $L^{(d, n)}_x$ for $x\in M_{d\times (n-d)}(\mathfrak{o}/\pi^{b'}\mathfrak{o})$ stands for a lattice associated to any representative of $x$ in $M_{d\times (n-d)}(\mathfrak{o})$. It depends on such representative but  
$\mathcal{S}_{(L^{(d, n)}_x, a^{\pm}, b)}\oplus \underbrace{\mathfrak{o} e_{n-d+1}\oplus \cdots \oplus \mathfrak{o} e_n}_d$ does not.

In order to compute the cardinality of the right hand side of Equation (\ref{eqsla2}),
we compare it with 
$$\sum_{x\in M_{d\times (n-d)}(\mathfrak{o}/\pi^{b'}\mathfrak{o})}\#\left( \mathcal{S}_{(L^{(d, n)}_x, a^{\pm}, b)}\oplus \underbrace{\mathfrak{o} e_{n-d+1}\oplus \cdots \oplus \mathfrak{o} e_n}_d\right).$$
In the following, we will see how many times a given lattice is counted in this sum.

We choose a lattice $L_x^{\dag}$ in $\mathcal{S}_{(L^{(d, n)}_x, a^{\pm}, b)}\oplus \underbrace{\mathfrak{o} e_{n-d+1}\oplus \cdots \oplus \mathfrak{o} e_n}_d$.
The cardinality of the set
$\{y\in M_{d\times (n-d)}(\mathfrak{o}/\pi^{b'}\mathfrak{o}) |  L_x^{\dag}\in \mathcal{S}_{(L_y^{(d, n)}, a^{\pm}, b)}\oplus \underbrace{\mathfrak{o} e_{n-d+1}\oplus \cdots \oplus \mathfrak{o} e_n}_d\}$
is then
$$\underbrace{\#(\mathfrak{o}/\pi^{b'-b}\mathfrak{o})\cdot \cdots \cdot \#(\mathfrak{o}/\pi^{b'-b}\mathfrak{o})}_{d}\cdot \underbrace{\#(\mathfrak{o}/\pi^{b'}\mathfrak{o})\cdot \cdots \cdot \#(\mathfrak{o}/\pi^{b'}\mathfrak{o})}_{d(n-d-1)}
=f^{d(n-d)b'-db}.$$
Note that the above number is independent of the choice of $x$.
Since 
$$\#\left(\mathcal{S}_{(L^{(d, n)}_x, a^{\pm}, b)}\oplus \underbrace{\mathfrak{o} e_{n-d+1}\oplus \cdots \oplus \mathfrak{o} e_n}_d\right)=\#\left(\mathcal{S}_{(L^{(d, n)}_x, a^{\pm}, b)}\right),$$
 we have the following equation:

\[\# \left(\mathcal{S}_{(L, a^{\pm}, b)} \setminus \bigcup_{L'\in \mathcal{G}_{L, d, 1}} \mathcal{S}_{(L', a^{\pm}, b-1)}  \right)=
f^{db-d(n-d)b'}\cdot\sum_{x\in M_{d\times (n-d)}(\mathfrak{o}/\pi^{b'}\mathfrak{o})}\# \mathcal{S}_{(L^{(d, n)}_x, a^{\pm}, b)}.\]

Thus to complete the proof, it suffices to show that
\begin{equation}\label{eqsla3}
\#\left(\bigcup_{L'\in \mathcal{G}_{L, d, 1}} \mathcal{S}_{(L', a^{\pm}, b-1)}\right)=
\sum_{m=1}^{d} \left(c_m\cdot \sum_{L'\in \mathcal{G}_{L, d, m}}\# \mathcal{S}_{(L', a^{\pm}, b-m)}\right),
\end{equation}
where $c_m=-\left(\binom{m}{1}_f\cdot c_1+\binom{m}{2}_f\cdot c_2+\cdots +\binom{m}{m-1}_f\cdot c_{m-1} \right)+1$ if $m>1$ and $c_1=1$.

This follows from inclusion-exclusion principle using the counting argument of the Grassmannian given at the beginning of this section.

For the proof\footnote{This proof  was informed by the referee.} of $c_m=(-1)^{m-1}f^{m(m-1)/2}$, we consider the $q$-binomial formula
\[
\prod^{m-1}_{k=0}(1+f^kt)=\sum_{k=0}^{m}f^{k(k-1)/2}\binom{m}{k}_ft^k.
\]
Putting $t=-1$, we have 
\[
\sum_{k=0}^{m}(-1)^kf^{k(k-1)/2}\binom{m}{k}_f=0.
\]
Then by induction, we can easily see that $c_m=(-1)^{m-1}f^{m(m-1)/2}$.
\end{proof}




We now state our main theorem of this section,   an  inductive formula of the Siegel series 
$\mathcal{F}_L(X)$.
\begin{Thm}\label{mainthm}
Assume that Conjecture \ref{conj4} is true. 
Then we have the following inductive formula, with respect to the Gross-Keating invariant, of the Siegel series $\mathcal{F}_L(X)$:


\begin{multline*}
\mathcal{F}_L(X)=\sum_{m=1}^{d} \left( c_m \cdot f^{\left(n+1\right)m}\cdot X^{2m}\cdot\sum_{L'\in \mathcal{G}_{L, d, m}}\mathcal{F}_{L'}(X)\right)+\\
\displaystyle (1-X)(1-f^{d}X)^{-1}\cdot \left(\prod_{i=1}^{d}(1-f^{2i}X^2)\right)\cdot \mathcal{F}_{L_0^{(d, n)}}(f^{d}X),
\end{multline*}
where $c_m=(-1)^{m-1}f^{m(m-1)/2}$.
Here, for $L'\in \mathcal{G}_{L, d, m}$, 
\[
\left\{
  \begin{array}{l}
 \mathrm{GK}(L) \succ \mathrm{GK}(L');\\
|\mathrm{GK}(L')|=|\mathrm{GK}(L)|-2m;\\
 \mathrm{GK}(L_0^{(d, n)})=\mathrm{GK}(L)^{(n-d)}.
    \end{array} \right.
\]

Note that notion of 
$L^{(d, n)}$, $\mathcal{G}_{L, d, m}$, and $\binom{m}{k}_f$ 
can be found at the beginning of this section.
Notion of  $L_0^{(d, n)}$  can be found at Remark \ref{rlx}.(1).
\end{Thm}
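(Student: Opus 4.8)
The plan is to combine the closed formula for $\mathcal{F}_L(X)$ from Corollary \ref{rmkse} with the inductive identity for $\#\mathcal{S}_{(L,a^{\pm},b)}$ from Proposition \ref{prop3}. Fix an optimal basis (a reduced basis when $p=2$) $(e_1,\ldots,e_n)$ of $L$, let $d$ be the multiplicity of $a_n$ in $\mathrm{GK}(L)$, and write $n'=n-d$ for the rank of $L_0^{(d,n)}$. In Proposition \ref{prop3}, since $\#M_{d\times n'}(\mathfrak{o}/\pi^{b'}\mathfrak{o})=f^{dn'b'}$ and $\#\mathcal{S}_{(L_x^{(d,n)},a^{\pm},b)}$ is independent of $x$ — which holds because $L_x^{(d,n)}$ and $L_0^{(d,n)}$ have identical extended Gross--Keating data (Remark \ref{rlx}.(5)) and these counting numbers depend only on that datum — the second summand of Proposition \ref{prop3} equals $f^{d(b-n'b')}\cdot f^{dn'b'}\cdot\#\mathcal{S}_{(L_0^{(d,n)},a^{\pm},b)}=f^{db}\cdot\#\mathcal{S}_{(L_0^{(d,n)},a^{\pm},b)}$ for any admissible $b'\ge b$. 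Hence
\[
\#\mathcal{S}_{(L,a^{\pm},b)}=\sum_{m=1}^{d}\Bigl(c_m\sum_{L'\in\mathcal{G}_{L,d,m}}\#\mathcal{S}_{(L',a^{\pm},b-m)}\Bigr)+f^{db}\cdot\#\mathcal{S}_{(L_0^{(d,n)},a^{\pm},b)} .
\]
Substitute this into Corollary \ref{rmkse}; it is harmless to let the sum there run over all $b\ge 0$ and $0\le a\le n$ with the convention that $\#\mathcal{S}_{(\cdot,a^{\pm},b)}=0$ outside the admissible range, which removes every range-matching issue below. This splits $\mathcal{F}_L(X)$ into two contributions, treated separately.

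For the first contribution, fix $m$ and $L'\in\mathcal{G}_{L,d,m}$ and reindex $b\mapsto b-m$ in the $b$-sum (the terms with $b<m$ drop out). Because $L'$ again has rank $n$ and $|\mathrm{GK}(L')|=|\mathrm{GK}(L)|-2m$ by Remark \ref{rgk}.(5), the factorization $f^{b(n+1)}X^{2b}=f^{m(n+1)}X^{2m}\cdot f^{(b-m)(n+1)}X^{2(b-m)}$ pulls a monomial out and the remaining double sum over $(b-m,a)$ is precisely the formula of Corollary \ref{rmkse} for $\mathcal{F}_{L'}(X)$. Summing over $L'$ and $m$ against $c_m$ gives the first term $\sum_{m=1}^{d}c_m f^{(n+1)m}X^{2m}\sum_{L'\in\mathcal{G}_{L,d,m}}\mathcal{F}_{L'}(X)$.

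For the second contribution one must reconcile the rank-$n$ Siegel kernel of Corollary \ref{rmkse} with the rank-$n'$ one attached to $L_0^{(d,n)}$. Writing Corollary \ref{rmkse} for $\mathcal{F}_{L_0^{(d,n)}}$ and substituting $X\mapsto f^{d}X$, the exponents recombine: $f^{b(n'+1)}(f^{d}X)^{2b}=f^{b(n+d+1)}X^{2b}$; the Witt factor becomes $1+\chi(a^{\pm})f^{n'-a/2}f^{d}X=1+\chi(a^{\pm})f^{n-a/2}X$; and the product becomes $\prod_{1\le i<n'-a/2}(1-f^{2i+2d}X^2)=\prod_{d+1\le j<n-a/2}(1-f^{2j}X^2)$. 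Since $\#\mathcal{S}_{(L_0^{(d,n)},a^{\pm},b)}=0$ unless $a\le n'$, one has $n-a/2>d$ on every surviving term when $d<n$, so that $\prod_{i=1}^{d}(1-f^{2i}X^2)\cdot\prod_{d+1\le j<n-a/2}(1-f^{2j}X^2)=\prod_{1\le i<n-a/2}(1-f^{2i}X^2)$. Consequently, multiplying $\mathcal{F}_{L_0^{(d,n)}}(f^{d}X)$ by $(1-X)(1-f^{d}X)^{-1}\prod_{i=1}^{d}(1-f^{2i}X^2)$ clears the prefactor $(1-f^{d}X)$ produced by the substitution, the factor $\prod_{i=1}^{d}(1-f^{2i}X^2)$ merges with the inner product, and $f^{b(n+d+1)}=f^{db}\cdot f^{b(n+1)}$ produces the outstanding power $f^{db}$; what remains is verbatim the second contribution $(1-X)\sum_{b,a}f^{db}\#\mathcal{S}_{(L_0^{(d,n)},a^{\pm},b)}f^{b(n+1)}X^{2b}(1+\chi(a^{\pm})f^{n-a/2}X)\prod_{1\le i<n-a/2}(1-f^{2i}X^2)$. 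Adding the two contributions yields the asserted identity. The accompanying relations on Gross--Keating invariants are then immediate: $|\mathrm{GK}(L')|=|\mathrm{GK}(L)|-2m$ is Remark \ref{rgk}.(5); $\mathrm{GK}(L)\succ\mathrm{GK}(L')$ follows from the strict inclusion $L\subsetneq L'$ (\cite{IK1}); and $\mathrm{GK}(L_0^{(d,n)})=\mathrm{GK}(L)^{(n-d)}$ follows from Remark \ref{rlx}.(3), since $L_0^{(d,n)}=\mathfrak{o}e_1\oplus\cdots\oplus\mathfrak{o}e_{n-d}$ is cut out by a diagonal sub-block of a reduced (optimal) form of $L$.

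It remains to treat the degenerate case $d=n$ (equivalently $a_1=\cdots=a_n\ge 1$), where $L_0^{(n,n)}$ is the zero lattice and, by Definition \ref{defse}, $\mathcal{F}_{L_0^{(n,n)}}(X)\equiv 1$ (this is where the rank-$0$ convention replaces the Corollary \ref{rmkse} formula). Here $\#\mathcal{S}_{(L_0^{(n,n)},a^{\pm},b)}$ equals $1$ for $(a,b)=(0,0)$ and $0$ otherwise, so the second contribution is $(1-X)(1+f^{n}X)\prod_{i=1}^{n-1}(1-f^{2i}X^2)$, while the claimed second term is $(1-X)(1-f^{n}X)^{-1}\prod_{i=1}^{n}(1-f^{2i}X^2)$; these agree via the telescoping $(1-f^{n}X)^{-1}\prod_{i=1}^{n}(1-f^{2i}X^2)=(1+f^{n}X)\prod_{i=1}^{n-1}(1-f^{2i}X^2)$. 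The main obstacle is the verification just sketched for the second contribution — juggling the two Siegel kernels of different ranks and the rational prefactor $(1-f^{d}X)^{-1}$ so that everything cancels — together with, and most delicately, the input that $\#\mathcal{S}_{(L_x^{(d,n)},a^{\pm},b)}$ is independent of $x$; it is there that the extended Gross--Keating machinery of \cite{IK1}, \cite{IK2} (and Conjecture \ref{conj4}, already used in Proposition \ref{prop3}) genuinely enters.
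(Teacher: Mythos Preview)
Your approach is essentially that of the paper --- substitute Proposition~\ref{prop3} into the closed expression for the Siegel series and regroup --- but there is one genuine gap. You assert that $\#\mathcal{S}_{(L_x^{(d,n)},a^{\pm},b)}$ is independent of $x$ because ``these counting numbers depend only on'' the extended Gross--Keating datum. This is not established anywhere: Remark~\ref{rlx}.(5) (via Theorem~1.1 of \cite{IK2}) gives only that the \emph{Siegel series} $\mathcal{F}_{L_x^{(d,n)}}(X)$ is determined by $\mathrm{EGK}$, not that the individual lattice counts $\#\mathcal{S}_{(L_x^{(d,n)},a^{\pm},b)}$ are. These counts depend on the isometry class of $L_x^{(d,n)}$, and over $\mathbb{Z}_2$ the EGK does not in general determine the isometry class, so the inference you make is unjustified (and quite possibly false).

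The fix is simple and is exactly what the paper does: do \emph{not} collapse the $x$-sum before substituting. Keep the second summand of Proposition~\ref{prop3} as $f^{d(b-n'b')}\sum_x \#\mathcal{S}_{(L_x^{(d,n)},a^{\pm},b)}$, plug into Theorem~\ref{eqldf} (or equivalently Corollary~\ref{rmkse}), and for each fixed $x$ perform your rank-reconciliation computation; this packages the inner $(a,b)$-sum into $\mathcal{F}_{L_x^{(d,n)}}(f^dX)$ times the prefactor $(1-X)(1-f^dX)^{-1}\prod_{i=1}^d(1-f^{2i}X^2)$. Only now invoke Remark~\ref{rlx}.(5) to replace each $\mathcal{F}_{L_x^{(d,n)}}$ by $\mathcal{F}_{L_0^{(d,n)}}$, after which the remaining sum $f^{-dn'b'}\sum_x 1=1$ disappears. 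The rest of your write-up (the first contribution, the exponent bookkeeping for the second, the $d=n$ degenerate case) is fine.
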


\begin{proof}


If we plug  the  formula of Proposition \ref{prop3} into the formula of  Theorem \ref{eqldf},
then we obtain
\begin{multline*}
\alpha(L, H_k)=
\sum_{m=1}^{d} \left(c_m \cdot f^{\left(n-2k+1\right)m}\sum_{L'\in \mathcal{G}_{L, d, m}}\alpha(L', H_k)\right)+\\
\displaystyle f^{-d(n-d)b'}(1-f^{-k})(1-f^{-(k-d)})^{-1}\cdot \left(\prod_{i=1}^{d}(1-f^{2i-2k})\right)\cdot
\sum_{x\in M_{d\times (n-d)}(\mathfrak{o}/\pi^{b'}\mathfrak{o})}
\alpha(L^{(d, n)}_x, H_{k-d}),
\end{multline*}
where $c_m=-\left(\binom{m}{1}_f\cdot c_1+\binom{m}{2}_f\cdot c_2+\cdots +\binom{m}{m-1}_f\cdot c_{m-1} \right)+1$ if $m>1$ and $c_1=1$.

On the other hand, 
as mentioned at Remark \ref{rlx}.(5),
we have that
$$\alpha(L^{(d, n)}_x, H_{k-d})=\alpha(L^{(d, n)}_{0}, H_{k-d})$$
for any $x \in M_{d\times (n-d)}(\mathfrak{o}/\pi^{b'}\mathfrak{o})$.
This completes the proof of the inductive formula.
The rest follows from Theorems 0.1 and 0.3 of \cite{IK1}.
\end{proof}


\begin{Cor}\label{cor411}
Assume that Conjecture \ref{conj4} is true. 
If $a_{n-1}<a_n$ for $\mathrm{GK}(L)=\left(a_1, \cdots, a_n \right)$ so that $d=1$, then the above inductive formula turns to be
\begin{equation*}
\mathcal{F}_L(X)=f^{n+1}\cdot X^2\cdot \mathcal{F}_{L^{(n)}}(X)+
\displaystyle (1-X)(1+fX)\cdot \mathcal{F}_{L_0^{(n)}}(fX).
\end{equation*}
Note that notion of $L^{(n)}$
can be found at the beginning of this section and that 
notion of  $L_0^{(n)}$  can be found at Remark \ref{rlx}.(1).

\end{Cor}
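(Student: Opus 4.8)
The plan is simply to specialize Theorem \ref{mainthm} to the case $d=1$ and simplify. First I would observe that when $a_{n-1}<a_n$, the defining condition $a_{n-d}<a_{n-d+1}=\cdots=a_n$ forces $d=1$, so the hypotheses of Theorem \ref{mainthm} apply (granting Conjecture \ref{conj4}) and the lattice $L^{(1,n)}=L^{(n)}$ is the one spanned by $(e_1,\dots,e_{n-1},\frac{1}{\pi}e_n)$. The outer sum $\sum_{m=1}^{d}$ then collapses to the single term $m=1$, with $c_1=(-1)^{0}f^{0}=1$, and $\mathcal{G}_{L,1,1}=\{L^{(1,n)}\}=\{L^{(n)}\}$ is a singleton (its cardinality is $\binom{1}{1}_f=1$). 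Hence the first term of the formula in Theorem \ref{mainthm} becomes exactly $f^{n+1}X^2\cdot\mathcal{F}_{L^{(n)}}(X)$.

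Next I would simplify the second term. With $d=1$, the product $\prod_{i=1}^{d}(1-f^{2i}X^2)$ is just $1-f^{2}X^2=(1-fX)(1+fX)$, so the factor $(1-f^{d}X)^{-1}=(1-fX)^{-1}$ cancels against $(1-fX)$, leaving $(1-X)(1+fX)$. The remaining factor is $\mathcal{F}_{L_0^{(d,n)}}(f^{d}X)=\mathcal{F}_{L_0^{(n)}}(fX)$, where $L_0^{(n)}=L_0^{(1,n)}$ is the sublattice $\mathfrak{o}e_1\oplus\cdots\oplus\mathfrak{o}e_{n-1}$ from Remark \ref{rlx}.(1). Combining the two simplified terms gives precisely
\[
\mathcal{F}_L(X)=f^{n+1}X^2\cdot\mathcal{F}_{L^{(n)}}(X)+(1-X)(1+fX)\cdot\mathcal{F}_{L_0^{(n)}}(fX),
\]
as claimed.

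There is essentially no obstacle here: the corollary is a pure specialization, so the only things to check are bookkeeping items — that $d=1$ under the stated hypothesis, that $c_1=1$, that $\mathcal{G}_{L,1,1}$ is a singleton, and that the telescoping of $(1-fX)$ against the product is valid. I would also remark that the auxiliary Gross-Keating relations listed in Theorem \ref{mainthm} ($\mathrm{GK}(L)\succ\mathrm{GK}(L^{(n)})$, $|\mathrm{GK}(L^{(n)})|=|\mathrm{GK}(L)|-2$, and $\mathrm{GK}(L_0^{(n)})=\mathrm{GK}(L)^{(n-1)}$) are inherited directly, which is why the identification of $L^{(n)}$ and $L_0^{(n)}$ at the beginning of the section and in Remark \ref{rlx}.(1) suffices to make the statement meaningful. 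The proof is therefore one line: apply Theorem \ref{mainthm} with $d=1$ and simplify.
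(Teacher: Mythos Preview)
Your proposal is correct and matches the paper's approach exactly: the paper gives no separate proof for this corollary, treating it as an immediate specialization of Theorem~\ref{mainthm} with $d=1$, which is precisely what you do. Your bookkeeping checks ($c_1=1$, $\mathcal{G}_{L,1,1}=\{L^{(n)}\}$, and the cancellation $(1-f^2X^2)(1-fX)^{-1}=1+fX$) are all accurate.
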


\begin{Rmk}
We note that Katsurada described several interesting inductive formulas in   Theorem 2.6 of \cite{Kat}.
Theorem 2.6.(1) in loc. cit. is the same as 
Corollary \ref{cor411} when $p$ is odd. More generally, our inductive formula in Theorem \ref{mainthm} is  directly followed from   Proposition 2.2 combined with Proposition 2.5.(1) in loc. cit., when $p$ is odd. 
On the other hand, when $p=2$, the inductive formula of Theorem \ref{mainthm} is sharper than that of Theorem 2.6 in \cite{Kat}, due to restriction in Proposition 2.5.(1) of loc. cit. 
Thus our result is a more refined version of Katsurada's formula stated in Theorem 2.6 of \cite{Kat}.
\end{Rmk}

The following lemma  is used in the main theorems \ref{indfaniso}-\ref{indfaniso3} of the next section. 

\begin{Lem}\label{lemro}
 Let $B=\begin{pmatrix} B'&C\\{}^tC&d  \end{pmatrix}$ be a reduced form with   $\mathrm{GK}(B)=(a_1, \cdots, a_{n-1}, a_n)$.
Here, $B'$ is of size $(n-1)\times (n-1)$ and $d\in \mathfrak{o}$.
We assume
that $B'$ is also a reduced form with $\mathrm{GK}(B')=(a_1, \cdots, a_{n-1})$
  and
 that $B$ satisfies  one of the followings:
\[
\left\{
  \begin{array}{l }
 \textit{$a_n>a_{n-1}$};\\
 \textit{$a_n=a_{n-1}>a_{n-2}$, $\sigma(n-1)=n$, $\mathrm{ord}(d)=a_n$, and $\mathfrak{o}=\mathbb{Z}_2$}.
    \end{array} \right.
\]

Let $B_x={}^t\begin{pmatrix} id_{n-1}\\x \end{pmatrix}\cdot B \cdot \begin{pmatrix} id_{n-1}\\x \end{pmatrix}$, where 
 $x\in \mathrm{M}_{1\times (n-1)}(\mathfrak{o})$.
Let $\mathcal{F}_{B_x}(X)$ be the Siegel series of the quadratic lattice associated to the symmetric matrix $B_x$.
Then we have 
\[
\mathcal{F}_{B_0}(X)=\mathcal{F}_{B_x}(X)
\]
for any $x$.

\end{Lem}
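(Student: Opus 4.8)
The plan is to mimic the argument behind Remark \ref{rlx}.(5). Write $L=(\mathfrak{o}^n,B)$ with the given reduced basis $(e_1,\dots,e_n)$, put $T_x=\begin{pmatrix}\id_{n-1}&0\\ x&1\end{pmatrix}\in\mathrm{GL}_n(\mathfrak{o})$, and set $v_i=e_i+x_ie_n$ for $i\le n-1$ and $v_n=e_n$. Then $(v_1,\dots,v_n)$ is a basis of $L$ whose Gram matrix is $B[T_x]=\begin{pmatrix}B_x&\ast\\ \ast&d\end{pmatrix}$; in particular $L=\langle v_1,\dots,v_{n-1}\rangle\oplus\mathfrak{o}e_n$, the matrix $B_x$ is the Gram matrix of the direct summand $\langle v_1,\dots,v_{n-1}\rangle$ of $L$ (the submatrix of $B[T_x]$ cut out by the first $n-1$ rows and columns), and $B_0=B'$. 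Hence, \emph{once one knows that $(v_1,\dots,v_n)$ is again an optimal basis of $L$} (a reduced basis, when $p=2$), the lemma follows exactly as in Remark \ref{rlx}.(5): $B_0$ and $B_x$ are the $(n-1)$-truncations of two optimal forms of the \emph{same} lattice $L$, so $\mathrm{GK}(B_x)=\mathrm{GK}(L)^{(n-1)}=\mathrm{GK}(B_0)$ by Theorem 0.3 of \cite{IK1}, $\mathrm{EGK}(B_x)=\mathrm{EGK}(B_0)$ by Definition 6.3 of \cite{IK1}, and therefore $\mathcal{F}_{B_x}(X)=\mathcal{F}_{B_0}(X)$ by Theorem 1.1 of \cite{IK2}. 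Thus the whole content is the claim that $T_x$ carries the reduced basis $(e_1,\dots,e_n)$ to an optimal (resp. reduced) basis.

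In the case $a_n>a_{n-1}$ there is nothing new: then $d=1$ in the notation of Section \ref{sectionifss} and $a_n\ge 1$, the summand $\langle v_1,\dots,v_{n-1}\rangle$ is precisely the lattice $L^{(n)}_x=L^{(1,n)}_x$ of Remark \ref{rlx}.(1), the basis $(v_1,\dots,v_n)$ is optimal (reduced when $p=2$) by Remark \ref{rlx}.(3), and the assertion is the special case $d=1$ of Remark \ref{rlx}.(5) verbatim.

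In the remaining case $a_n=a_{n-1}>a_{n-2}$, $\sigma(n-1)=n$, $\mathrm{ord}(d)=a_n$, $\mathfrak{o}=\mathbb{Z}_2$, the matrix $T_x$ is not of the shape covered by Remark \ref{rlx}, so I would verify the conditions of Definition \ref{def3.2} for $B[T_x]$ by hand, factoring $T_x$ as the substitution $e_i\mapsto e_i+x_ie_n$ $(i\le n-2)$, leaving $e_{n-1},e_n$ fixed, followed by the substitution $e_{n-1}\mapsto e_{n-1}+x_{n-1}e_n$. For the first step, $\mathrm{ord}(q(e_n))=a_n=\max_k a_k$ and, since $i\ne n=\sigma(n-1)$ for $i\le n-2$, condition (3) of Definition \ref{def3.2} gives $\mathrm{ord}(2b_{in})>\tfrac{a_i+a_n}{2}$; consequently each entry of the Gram matrix changes only by terms whose order strictly exceeds the threshold that conditions (1)--(3) attach to its position, and the $\{n-1,n\}$-block is untouched; since being a reduced form of a fixed GK-type is a statement about valuation patterns only, all three conditions persist. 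For the second step, the only nontrivial point is that the binary block $\begin{pmatrix}b_{n-1,n-1}&b_{n-1,n}\\ b_{n-1,n}&d\end{pmatrix}$, a reduced binary form of GK-type $((a_n,a_n),\mathrm{swap})$ with $\mathrm{ord}(d)=a_n$, is transformed by a unipotent element of $\mathrm{GL}_2(\mathbb{Z}_2)$ into another reduced binary form of the same GK-type: one has $\mathrm{ord}(2(b_{n-1,n}+x_{n-1}d))=a_n$ because $\mathrm{ord}(2b_{n-1,n})=a_n$ while $\mathrm{ord}(2x_{n-1}d)\ge a_n+1$, and the diagonal entry changes only by $2x_{n-1}b_{n-1,n}+x_{n-1}^2d$, which has order $\ge a_n+1$ thanks to $\mathrm{ord}(2b_{n-1,n})=\mathrm{ord}(d)=a_n$; meanwhile the entries $b_{i,n-1}$ $(i\le n-2)$ get altered by multiples of $b_{in}$, still of order $\mathrm{ord}(2\,\cdot)>\tfrac{a_i+a_n}{2}$. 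Hence $B[T_x]$ is reduced of GK-type $(\underline a,\sigma)$, and in particular $(v_1,\dots,v_n)$ is a reduced basis of $L$.

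I expect the main obstacle to be exactly this last verification: it is the only place the extra hypotheses $\sigma(n-1)=n$, $\mathrm{ord}(d)=a_n$ and $\mathfrak{o}=\mathbb{Z}_2$ enter, and one must check with some care that ``breaking'' the binary block $\{e_{n-1},e_n\}$ by the unipotent substitution disturbs none of the order inequalities in Definition \ref{def3.2} (while the valuation of $q(e_{n-1})$ after substitution rises by at least one, so it cannot fall below $a_{n-1}$). Everything else is either a direct citation or the purely formal observation that $B_x$ is a truncation of $B[T_x]$, with $B[T_x]$ and $B$ optimal forms of the same lattice $L$.
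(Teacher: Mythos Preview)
Your approach is correct but takes a genuinely different route from the paper. The paper never passes through the ambient $n\times n$ matrix $B[T_x]$: it works directly with the $(n-1)\times(n-1)$ matrices, writing
\[
B_x=B'+\bigl(Cx+{}^t(Cx)+d\cdot{}^txx\bigr),
\]
checks that the $(i,j)$-entry of the perturbation satisfies $\mathrm{ord}(2c_ix_i+dx_i^2)>a_i$ and $\mathrm{ord}(2(c_ix_j+c_jx_i+dx_ix_j))>\tfrac{a_i+a_j}{2}$, and then invokes Theorems~3.3 and~1.1 of \cite{IK2} to conclude $\mathcal{F}_{B_x}=\mathcal{F}_{B_0}$. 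Your route instead verifies that $B[T_x]$ is again a reduced form of GK-type $(\underline a,\sigma)$ and then applies the truncation argument of Remark~\ref{rlx}.(5). The paper's path is shorter (it handles only the $(n-1)\times(n-1)$ block and outsources the conclusion to a single perturbation lemma in \cite{IK2}); yours is more explicit and stays entirely within the framework already developed in the paper, at the price of also checking the entries in the last row and column of $B[T_x]$.

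One small imprecision: in the second case you claim that the change $2x_{n-1}b_{n-1,n}+x_{n-1}^2d$ to the $(n-1,n-1)$ entry has order $\ge a_n+1$. In fact each summand has order only $\ge a_n$ (take $x_{n-1}\in\mathbb{Z}_2^\times$). This is harmless, since all that is needed for $\underline a\in S(B[T_x])$ at that spot is $\mathrm{ord}((B[T_x])_{n-1,n-1})\ge a_{n-1}=a_n$, and condition~(1) of Definition~\ref{def3.2} imposes no equality there because $n-1\notin\mathcal{P}^-\cup\mathcal{P}^0$. So the conclusion stands; just adjust the stated bound.
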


\begin{proof}
We write 
$$
B_x=\begin{pmatrix} id_{n-1}&{}^tx  \end{pmatrix}\cdot \begin{pmatrix} B'&C\\{}^tC&d  \end{pmatrix}\cdot \begin{pmatrix} id_{n-1}\\x  \end{pmatrix}
=B'+\left( Cx+{}^t(Cx)+d\cdot {}^txx \right).
$$
The $(i, j)$-th entry of $\left( Cx+{}^t(Cx)+d\cdot {}^txx \right)$ is $c_ix_j+c_jx_i+dx_ix_j$.
Here, we write ${}^tC=(c_1, \cdots, c_{n-1})$.
Then we have the following:\[
 \left\{
  \begin{array}{l }
\mathrm{ord}(c_ix_i+c_ix_i+dx_ix_i)>a_i;\\
\mathrm{ord}(2(c_ix_j+c_jx_i+dx_ix_j))>\frac{a_i+a_j}{2} \textit{ if $i<j$}.
    \end{array} \right.
\]
This, combined with  \footnote{Indeed Ikeda and Katsurada assume that $p=2$ in Theorem 3.3, loc. cit. 
But this theorem also holds for $p>2$ and it was explained in the initial version of their paper posted on arXiv.}Theorems 3.3 and 1.1 of \cite{IK2}, completes the proof.
\end{proof}

\section{On the Siegel series of anisotropic quadratic lattices}\label{sectionssaql}
In this section, we will explain a more refined inductive formula of the Siegel series of  anisotropic quadratic lattices defined over $\mathbb{Z}_p$, so as to compare it with an inductive formula of local intersection multiplicities  of \cite{GK1} in Sections \ref{reGK}-\ref{sectionapp}.
Thus throughout this section, let $(L, q_L)$ be an anisotropic quadratic lattice over $\mathbb{Z}_p$.
Here, we say  that  $(L, q_L)$  is anisotropic if $q_L(x)\neq 0$ for any nonzero element $x\in L$.
Then the rank of $L$ is at most $4$.
We first list a few necessary facts about $(L, q_L)$.

\begin{Rmk}\label{rmk5.1}

\begin{enumerate}
\item Let $D$ be the unique quaternion division algebra over $\mathbb{Q}_p$.
Let  $q_D$ be   the associated quadratic form,
which is defined to be the reduced norm on $D$.
There is a unique maximal order of $D$, denoted by $O_D$.
The maximal order $O_D$ is characterized as follows:
\[O_D=\{v\in D | q_D(v)\in \mathbb{Z}_p\}.\]
\item
The pair $(O_D, q_D)$ is an anisotropic quadratic lattice of rank $4$ over $\mathbb{Z}_p$.
By Exercise 3 of Section 5.2 in \cite{Kit}, there is a suitable basis of $O_D$
such that with respect to this basis the symmetric matrix of $O_D$ is described as follows:
\[\left\{
  \begin{array}{l l}
  (1)\perp (-\delta)\perp (p)\perp (-\delta p)   & \quad    \text{if $p\neq 2$};\\
   \begin{pmatrix} 1&1/2 \\ 1/2 & 1\end{pmatrix} \perp \begin{pmatrix} 2&1 \\ 1 & 2\end{pmatrix}  & \quad    \text{if $p=2$}.
    \end{array} \right.\]
Here, $\delta$ is a unit in $\mathbb{Z}_p$ (with $p\neq 2$) such that $\delta$ modulo $p$ is not a square.
These symmetric matrices are reduced forms. Thus we can see that 
\[
\mathrm{GK}(O_D)=(0,0,1,1).
\]

\item Any anisotropic quadratic lattice over $\mathbb{Z}_p$ of rank $n \left(\leq 4 \right)$ is always embedded into $(O_D, q_D)$, which can be shown by using Theorem 3.5.1 and Corollary 3.5.4 of \cite{Kit}.
Thus we may regard $(L, q_L)$ as a sublattice of $(O_D, q_D)$.
\end{enumerate}
\end{Rmk}

The Gross-Keating invariants of anisotropic quadratic lattices over $\mathbb{Z}_p$
with rank $\leq 3$ are well explained in \cite{B}.
In the next subsection, we will explain the Gross-Keating invariants and some properties of anisotropic quadratic lattices of rank $4$.

\subsection{On an anisotropic quadratic lattice of rank $4$}\label{subsectionlgkaql}

\begin{Lem}\label{lemtype2}
Consider an anisotropic quadratic lattice of rank $2$ over $\mathbb{Z}_2$.
Choose a basis $(e_1, e_2)$ such that
with respect to this basis the half-integral symmetric matrix is $X=\begin{pmatrix} a&b \\ b&c \end{pmatrix}$.
Assume that $\mathrm{GK}(X)=(a_1, a_1+2t)$ for $t\geq 0$.
If $X$ is an  optimal form, then we have the following: 
\[\mathrm{ord}(a)=a_1, \mathrm{ord}(2b)=a_1+t, \textit{ and } \mathrm{ord}(c)=a_1+2t.\]
Thus any optimal form of an anisotropic quadratic lattice of rank $2$ over $\mathbb{Z}_2$ is a reduced form.
\end{Lem}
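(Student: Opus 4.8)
The plan is to work directly with the defining conditions for the Gross–Keating invariant from Definition \ref{gkdef} applied to the $2\times 2$ matrix $X$, combined with the fact that an anisotropic rank-$2$ lattice over $\mathbb{Z}_2$ is determined up to scaling by its discriminant being a nonsquare unit times an appropriate power of $2$. Since $X$ is optimal, by definition $\mathrm{GK}(X)=(a_1,a_1+2t)\in S(X)$, which immediately gives the inequalities $\mathrm{ord}(a)\geq a_1$, $\mathrm{ord}(c)\geq a_1+2t$, and $\mathrm{ord}(2b)\geq a_1+t$. So the content is to prove the reverse inequalities (equalities). First I would normalize: after scaling (which shifts all three orders and $a_1$ by the same amount, and does not affect optimality or the reduced-form property) we may assume $a_1=\mathrm{ord}(a)\leq \mathrm{ord}(c)$ or handle the two cases by symmetry under swapping $e_1,e_2$; the point is that the minimum of $\mathrm{ord}(a),\mathrm{ord}(c)$ equals $a_1$, since otherwise $a_1$ could be increased, contradicting that $\mathrm{GK}(X)$ is the \emph{greatest} element of $\mathbf{S}(\{X\})$ — here one uses Remark \ref{rgk}.(4), that $a_1$ is the order of a generator of $N(X)$, together with the fact that $N(X)$ is generated by $a$, $c$, and $a+c$ (the values $q(e_1),q(e_2),q(e_1+e_2)$), whose orders are controlled by $\mathrm{ord}(a),\mathrm{ord}(c)$ when $\mathrm{ord}(2b)$ is large enough; anisotropy over $\mathbb{Z}_2$ forces $2b$ not to be too small relative to $a,c$ in a way I would make precise.

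The key step is to pin down $\mathrm{ord}(c)=a_1+2t$ exactly. I would argue by contradiction: suppose $\mathrm{ord}(c)>a_1+2t$ (the case $\mathrm{ord}(c)<a_1+2t$ being excluded by $\mathrm{GK}(X)\in S(X)$ only giving $\geq$, so really the only escape is $\mathrm{ord}(c)$ strictly larger, or $\mathrm{ord}(a)>a_1$). Then, using that $\det X = ac-b^2$ and that the order of the discriminant is an invariant of the equivalence class, I would compute $\mathrm{ord}(\det X)$ two ways. On one hand, for an anisotropic binary $\mathbb{Z}_2$-lattice with $\mathrm{GK}=(a_1,a_1+2t)$, the determinant has a prescribed valuation (this is exactly the kind of statement recorded in \cite{B} and consistent with Remark \ref{rmk5.1}.(2) for the model case); on the other hand $\mathrm{ord}(ac-b^2)\geq \min(\mathrm{ord}(a)+\mathrm{ord}(c),\,2\,\mathrm{ord}(b))$ with equality unless the two terms have equal order. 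Matching these forces $\mathrm{ord}(a)+\mathrm{ord}(c)$ and $2\,\mathrm{ord}(b)=2(a_1+t)-2e$ to interact in a way that leaves only $\mathrm{ord}(a)=a_1$, $\mathrm{ord}(c)=a_1+2t$, $\mathrm{ord}(2b)=a_1+t$ possible. Once these three equalities are in hand, the statement that $X$ is a reduced form is essentially immediate: with $n=2$ and $\mathrm{GK}(X)=(a_1,a_1+2t)$, the admissible involution $\sigma$ is either the identity (when $a_1\not\equiv a_1+2t$, impossible since they differ by $2t$, even) — so $\sigma$ is the transposition $(1\,2)$, $\mathcal{P}^0=\emptyset$, and the single condition of Definition \ref{def3.2} to check is $\mathrm{GK}\left(\begin{pmatrix} a&b\\ b&c\end{pmatrix}\right)=(a_1,a_1+2t)$, which holds by hypothesis, together with $\mathrm{ord}(2b)=(a_1+(a_1+2t))/2=a_1+t$, which we have just established; condition (3) is vacuous for $n=2$.

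The main obstacle I anticipate is the exact determination of $\mathrm{ord}(2b)$ when $p=2$: the $2$-adic subtlety is that "optimal" a priori only guarantees $\mathrm{ord}(2b_{ij})\geq (a_i+a_j)/2$, and showing it is an \emph{equality} for an anisotropic binary form is where the hypothesis of rank $2$, anisotropy, and $\mathbb{Z}_2$ all get used together — an isotropic or higher-rank lattice can have the off-diagonal order strictly larger. I would handle this by invoking the classification of binary anisotropic quadratic $\mathbb{Z}_2$-lattices (each is, up to scaling, a norm form of a ramified or unramified quadratic extension, equivalently a $2$-dimensional subspace of $(O_D,q_D)$ as in Remark \ref{rmk5.1}.(3)), reading off the possible $(\mathrm{ord}(a),\mathrm{ord}(2b),\mathrm{ord}(c))$ patterns for an optimal representative from Proposition 2.3 of \cite{IK1} (the $p=2$ analysis of GK-type for $2\times 2$ matrices), and checking that in every case the pattern is exactly $(a_1,a_1+t,a_1+2t)$. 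This reduces the lemma to a short finite case-check rather than a general argument.
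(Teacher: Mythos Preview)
Your outline is plausible but diverges from the paper's argument, which is shorter and avoids the classification-based case-check you ultimately fall back on. The paper proceeds as follows. It first establishes that any \emph{diagonalizable} half-integral $2\times 2$ matrix over $\mathbb{Z}_2$ has $a_1<a_2$ in its Gross--Keating invariant (by a short change-of-basis computation); hence when $t=0$, so that $\mathrm{GK}(X)=(a_1,a_1)$, the optimal matrix $X$ is non-diagonalizable, and the three equalities $\mathrm{ord}(a)=\mathrm{ord}(c)=\mathrm{ord}(2b)=a_1$ then follow from Lemma~3.2.(b) of \cite{B}. For $t>0$ the paper observes that the matrix with respect to the rescaled basis $(e_1,\,2^{-t}e_2)$ is again optimal with $\mathrm{GK}=(a_1,a_1)$ (verified via Remark~\ref{rgk}.(4)--(5)); applying the $t=0$ case to this rescaled matrix and unwinding yields $\mathrm{ord}(a)=a_1$, $\mathrm{ord}(2b)=a_1+t$, $\mathrm{ord}(c)=a_1+2t$.

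Your plan instead attacks all $t$ simultaneously via the norm and determinant, then defers the delicate $2$-adic step to an enumeration at the end. The norm argument gives $\mathrm{ord}(a)=a_1$ only when $t>0$; for $t=0$ it does not by itself force either diagonal entry to have order exactly $a_1$ (the norm could in principle be realised only at $e_1+e_2$). Your determinant argument is left imprecise: at $p=2$ one has $\mathrm{ord}(b)=\mathrm{ord}(2b)-1$, so the comparison of $\mathrm{ord}(ac)$ and $\mathrm{ord}(b^2)$ is not as clean as you suggest, and you never specify what $\mathrm{ord}(\det X)$ should be for an anisotropic binary $\mathbb{Z}_2$-lattice with prescribed $\mathrm{GK}$. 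The eventual appeal to classification together with Proposition~2.3 of \cite{IK1} can be made to work, but it amounts to reproving the lemma by enumeration. The paper's rescaling-to-$t=0$ trick is the structural insight you are missing: it isolates all the $2$-adic difficulty in a single already-documented case and avoids any case analysis for $t>0$.
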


\begin{proof}
Assume that $X$ with $\mathrm{GK}(X)=(a_1, a_2)$ is diagonalizable.
We claim that $a_1<a_2$. 
 Let $X'=\begin{pmatrix}
x&0\\0&y
\end{pmatrix}$, where $\mathrm{ord}(x)\leq \mathrm{ord}(y)$, be a diagonal matrix isometric to $X$.
The norm of $X$ is $(2^{\mathrm{ord}(x)})$ so that $a_1=\mathrm{ord}(x)$ by Remark \ref{rgk}.(4).
Assume that  $\mathrm{ord}(y)>\mathrm{ord}(x)$.
Since 
$\mathrm{GK}(X)=(a_1, a_2)\succeq (\mathrm{ord}(x), \mathrm{ord}(y))$,
we have that 
$a_2 \geq \mathrm{ord}(y)>\mathrm{ord}(x)=a_1$ and thus
 $a_2>a_1$.
Assume that $\mathrm{ord}(x)=\mathrm{ord}(y)$.
Consider the following matrix equation:
\[
\begin{pmatrix}
1&0\\1&1
\end{pmatrix}\begin{pmatrix}
x&0\\0&y
\end{pmatrix}\begin{pmatrix}
1&1\\0&1
\end{pmatrix}=\begin{pmatrix}
x&x\\x&x+y
\end{pmatrix}.
\] 
Since $\mathrm{ord}(x+y)$ is at least $a_1+1$, $\mathrm{GK}(X)\succeq (a_1, a_1+1)$.
Thus we have that  $a_1<a_2$.

We first assume that  $t=0$.
Since  $\mathrm{GK}(X)=(a_1, a_1)$, 
  $X$ is not diagonalizable by  the above claim.
 The lemma then   follows from Lemma 3.2.(b) of  \cite{B}.

For $t>0$,
it is easy to see that the symmetric matrix with respect to the basis $(e_1, \frac{1}{2^t}\cdot e_2)$ is an optimal form with $\mathrm{GK}=(a_1, a_1)$ by using Remark \ref{rgk}.(4)-(5).
Using the result of  the case $t=0$ completes the proof.
\end{proof}

If we write $(a_1, a_2, a_3)$ to be the Gross-Keating invariant of an anisotropic quadratic lattice  of rank $3$ over $\mathbb{Z}_p$ for any $p$,
then only two of $a_i$'s have the same parity by Lemma 5.3 of \cite{B}.
In the following proposition, we prove the same statement in the case of  $n=4$.

\begin{Prop}\label{parity}
Let $\mathrm{GK}(L)=(a_1, a_2, a_3, a_4)$.
Then only two of $a_i$'s have the same parity.
This holds for any prime $p$.
\end{Prop}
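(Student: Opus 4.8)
The plan is to exploit the rigidity of the ambient space. By Remark \ref{rmk5.1}.(3) any anisotropic quadratic $\mathbb{Z}_p$-lattice $M$ of rank $4$ admits an isometric embedding into $(O_D,q_D)$, and since both have rank $4$ the image is a full-rank sublattice of $O_D$; combining $\mathrm{GK}(O_D)=(0,0,1,1)$ (Remark \ref{rmk5.1}.(2)) with Remark \ref{rgk}.(5) this gives
\[
|\mathrm{GK}(M)|=|\mathrm{GK}(O_D)|+2\,[O_D:M]=2+2\,[O_D:M].
\]
Hence $|\mathrm{GK}(M)|$ is even and, what will be decisive, $|\mathrm{GK}(M)|\ge 2$ for every such $M$.

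I would then fix an optimal basis $(e_1,\dots,e_4)$ of $L$ (which exists by Remark \ref{rgk}.(1) for $p$ odd and Remark \ref{rmk1}.(2) for $p=2$), with Gram matrix $B_L=(b_{ij})$, so $\mathrm{ord}(b_{ii})\ge a_i$ and $\mathrm{ord}(2b_{ij})\ge(a_i+a_j)/2$ for $i\le j$, and set
\[
M=\sum_{i=1}^{4}\mathbb{Z}_p\cdot p^{-\lfloor a_i/2\rfloor}e_i\ \subseteq\ L\otimes\mathbb{Q}_p .
\]
Since the diagonal entries of the Gram matrix $B_M$ of $M$ have order $\ge a_i-2\lfloor a_i/2\rfloor\ge 0$ and twice its off-diagonal entries have order $\ge(a_i+a_j)/2-\lfloor a_i/2\rfloor-\lfloor a_j/2\rfloor\ge 0$, the lattice $M$ is a quadratic lattice containing $L$ with $[M:L]=\sum_i\lfloor a_i/2\rfloor$; Remark \ref{rgk}.(5) then yields
\[
|\mathrm{GK}(M)|=|\mathrm{GK}(L)|-2\sum_{i}\lfloor a_i/2\rfloor=\sum_{i}\bigl(a_i-2\lfloor a_i/2\rfloor\bigr)=r ,
\]
where $r:=\#\{i\mid a_i\ \text{is odd}\}$. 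Comparing with the first display forces $r=2+2[O_D:M]$, so $r$ is even and $r\ge 2$; as there are only four entries, $r\in\{2,4\}$, and in particular the ``all even'' case is already excluded.

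It then remains to rule out $r=4$, i.e.\ all $a_i$ odd. In that case the order estimates above improve to: every diagonal entry of $B_M$, and twice every off-diagonal entry, has order $\ge 1$; hence $(1,1,1,1)\in S(B_M)$, so $\mathrm{GK}(M)\succeq(1,1,1,1)$, and since $|\mathrm{GK}(M)|=r=4$ we get $\mathrm{GK}(M)=(1,1,1,1)$. Then $N(M)=(p)$ by Remark \ref{rgk}.(4), so $p^{-1}q_M$ is an integral form and $(M,p^{-1}q_M)$ is again an anisotropic quadratic $\mathbb{Z}_p$-lattice of rank $4$. A short manipulation of the sets $S(\,\cdot\,)$ — if $(c_i)\in S(C)$ has all $c_i\ge 1$ then $(c_i-1)\in S(p^{-1}C)$, while $\mathrm{GK}(C)=\mathrm{GK}\bigl(p\cdot p^{-1}C\bigr)\succeq\mathrm{GK}(p^{-1}C)+(1,\dots,1)$ — shows $\mathrm{GK}(M,p^{-1}q_M)=(0,0,0,0)$, so $|\mathrm{GK}(M,p^{-1}q_M)|=0$, contradicting the lower bound $|\mathrm{GK}|\ge 2$ established above. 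Therefore $r=2$: exactly two of $a_1,a_2,a_3,a_4$ are odd (and two are even), which is the assertion.

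The crux is the lower bound $|\mathrm{GK}|\ge 2$ on quadratic lattices in the anisotropic quaternary space, rather than the mere ``$|\mathrm{GK}|$ even'' a crude parity count would give; once that is in hand, the ``divide out the common parity'' lattice $M$ and the rescaling step for the all-odd case finish things, all remaining steps being routine bookkeeping with orders. I expect the edge case $r=4$, with its extra rescaling argument, to be the only mildly delicate point. For $p$ odd there is also a self-contained alternative: diagonalize $L$ and split $L\otimes\mathbb{Q}_p\cong V_0\perp pV_1$ into its even- and odd-order parts; anisotropy forces $\dim V_0\le 2$ and $\dim V_1\le 2$, because a quadratic form over $\mathbb{Q}_p$ (with $p$ odd) having unit coefficients and rank $\ge 3$ is isotropic, whence $\dim V_0=\dim V_1=2$.
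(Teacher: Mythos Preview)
Your proof is correct and takes a genuinely different route from the paper's.

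The paper argues by reduction to rank $3$: since $|\mathrm{GK}(L)|\equiv|\mathrm{GK}(O_D)|=2\pmod 2$, either exactly two or all four of the $a_i$ share a parity; assuming the latter, one chooses an optimal (for $p$ odd) or reduced (for $p=2$) basis and shows that the span of $(e_1,e_2,e_3)$ is an anisotropic rank-$3$ sublattice with $\mathrm{GK}=(a_1,a_2,a_3)$, contradicting the already-known rank-$3$ parity constraint (Lemma~5.3 of \cite{B}). For $p=2$ this requires the auxiliary Lemma~\ref{lemtype2} to guarantee $\mathrm{ord}(b_{ii})=a_i$, so that the $3\times 3$ minor is again reduced with the correct Gross--Keating invariant.

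Your argument stays entirely in rank $4$ and never invokes the rank-$3$ result. The key observation is sharper than mere parity: from the embedding into $O_D$ you extract the \emph{lower bound} $|\mathrm{GK}(M)|\ge 2$ for every anisotropic quaternary $\mathbb{Z}_p$-lattice $M$, and then the scaled lattice $M=\sum\mathbb{Z}_p\,p^{-\lfloor a_i/2\rfloor}e_i$ has $|\mathrm{GK}(M)|=r$. This immediately kills $r=0$, and the rescaling $(M,p^{-1}q_M)$ kills $r=4$ by producing a lattice with $|\mathrm{GK}|=0$. What you gain is a uniform, self-contained proof that does not split into cases according to $p$ and does not depend on \cite{B}; what the paper's approach gains is brevity once the rank-$3$ statement is in hand, and a closer tie to the structural Proposition~\ref{propaniso} that follows. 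Your closing alternative for $p$ odd (splitting $L\otimes\mathbb{Q}_p$ into unit and $p$-times-unit pieces and using that ternary unimodular forms over $\mathbb{Q}_p$ are isotropic) is yet a third, quite direct, route in that case.
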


\begin{proof}
Since $L$ is a sublattice of $O_D$ with the same rank $4$,
the parity of $|\mathrm{GK}(L)|$ is the same as that of $|\mathrm{GK}(O_D)|$ by Remark \ref{rgk}.(5).
Since  $\mathrm{GK}(O_D)=(0, 0, 1, 1)$ and $|\mathrm{GK}(O_D)|=2$,
either   only two of $a_i$'s have the same parity or all  of $a_i$'s have the same parity.
Assume that all of  $a_i$'s have the same parity.

If $p\neq 2$, then we choose a  basis $(e_1, e_2, e_3, e_4)$ for $L$, whose
associated symmetric matrix is a diagonal matrix having 
$u_ip^{a_i}$ as the $i$-th diagonal entry, where $u_i$ is a unit in $\mathbb{Z}_p$, by Remark \ref{rgk}.(1).
Thus   the Gross-Keating invariant of the sublattice $L'$ spanned by $(e_1, e_2, e_3)$ is $(a_1, a_2, a_3)$.
Note that $L'$ is anisotropic with $\mathrm{GK}(L')=(a_1, a_2, a_3)$.
This is a contradiction since $a_1, a_2, a_3$ have the same parity.

If $p=2$, then we choose a reduced basis $(e_1, e_2, e_3, e_4)$ of $(L, q_L)$. 
Since $a_i$'s have the same parity, we can choose an involution $\sigma$ attached to $(a_1, a_2, a_3, a_4)$ such that 
$\sigma(1)=2, \sigma(3)=4$ (cf. Definition \ref{def3.1}).
Note that $\sigma$ is unique if $a_2<a_3$. 
Let $B=\begin{pmatrix}
b_{ij}
\end{pmatrix}$ be the associated symmetric matrix. 
The definition of a reduced form (cf. Defintion \ref{def3.2}) then tells us that
two matrices of size $2$ associated to $(e_1, e_2)$ and $(e_3, e_4)$ are also reduced forms. 
  Lemma \ref{lemtype2} yields that $\mathrm{ord}(b_{ii})=a_i$.
Thus  the symmetric matrix associated to a basis $(e_1, e_2, e_3)$ is also a reduced form whose Gross-Keating invariant is $(a_1, a_2, a_3)$, where the associated involution permutes $1$ and $2$.
This is a contradiction to the fact that the lattice spanned by $(e_1, e_2, e_3)$ is anisotropic, since $a_1, a_2, a_3$ have the same parity.
\end{proof}

\begin{Prop}\label{propaniso}
Let $B$ be a reduced form of an anisotropic quadratic lattice $L$ of rank $4$ with $\mathrm{GK}(L)=(a_1, a_2, a_3, a_4)$.
Let $(e_1, e_2, e_3, e_4)$ be a basis of a reduced form $B$.
If $p$ is odd, then we consider $B$ as a diagonal matrix.

Then any $(3\times 3)$-submatrix of the matrix $B$, with respect to a basis $(e_i, e_j, e_k)$ among $(e_1, e_2, e_3, e_4)$, is a reduced form whose Gross-Keating invariant is $(a_i, a_j, a_k)$, where $i<j<k$.

Similarly, any $(2\times 2)$-submatrix of the matrix $B$, with respect to a basis $(e_i, e_j)$ among $(e_1, e_2, e_3, e_4)$, is a reduced form whose Gross-Keating invariant is $(a_i, a_j)$, where $i<j$.
\end{Prop}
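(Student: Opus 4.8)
The plan is to reduce the rank-$4$ statement to the already-known rank-$3$ statement (Lemma 5.3 of \cite{B} and the structure of reduced forms for $n\le 3$) by directly inspecting the reduced form $B$ and using the uniqueness and restriction properties of reduced forms from \cite{IK1}, \cite{IK2}. The point is that a reduced form of GK-type $(\underline a,\sigma)$ restricts well to coordinate subspaces, so the combinatorics is essentially bookkeeping about the admissible involution $\sigma$ and the parity constraint from Proposition \ref{parity}.

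First I would treat the case $p$ odd. Here $B=\mathrm{diag}(u_1\pi^{a_1},\dots,u_4\pi^{a_4})$ with $a_1\le a_2\le a_3\le a_4$ and each $u_i\in\mathfrak o^{\times}$, and by Remark \ref{rgk}.(1) any diagonal matrix with non-decreasing orders on the diagonal is a reduced form whose Gross-Keating invariant is the sequence of those orders. Hence every principal submatrix indexed by $\{i,j,k\}$ (resp. $\{i,j\}$) is again diagonal with non-decreasing diagonal orders $a_i\le a_j\le a_k$ (resp. $a_i\le a_j$), so it is a reduced form with $\mathrm{GK}=(a_i,a_j,a_k)$ (resp. $(a_i,a_j)$). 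This disposes of $p\ne 2$ with no further work.

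Now take $p=2$. Let $\sigma$ be a $\mathrm{GK}(L)$-admissible involution with $B$ a reduced form of GK-type $((a_1,a_2,a_3,a_4),\sigma)$. The key structural input is Proposition \ref{parity}: exactly two of $a_1,a_2,a_3,a_4$ share a parity. Using the admissibility conditions of Definition \ref{def3.1} together with the parity pattern, I would pin down $\sigma$ up to the equivalence of Remark \ref{rmk1}.(3): $\sigma$ pairs the two indices of equal parity to each other (or, in the degenerate sub-cases where the two equal-parity entries are also equal and $\sigma$ fixes one of them, a bounded list of possibilities), and the remaining two indices lie in $\mathcal P^0$ or are paired as dictated by condition (3) of Definition \ref{def3.1}; in any case $\#(I_s\cap\mathcal P^{\pm})$ and $\#(I_s\cap\mathcal P^0)$ are forced. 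Given this, condition (3) of Definition \ref{def3.2} (the off-$\sigma$-support entries have strictly large order) is inherited verbatim by any principal submatrix, condition (2) ($\mathrm{ord}(b_{ii})=a_i$ for $i\in\mathcal P^0$) is inherited, and condition (1) for a surviving pair $\{i,\sigma(i)\}$ is inherited because the relevant $2\times 2$ block is literally a sub-block of $B$. The only thing that can go wrong is when a pair $\{i,\sigma(i)\}$ is \emph{broken} by the restriction (exactly one of $i,\sigma(i)$ is kept): then I must verify that the restricted form is still reduced for the restricted sequence, with a possibly different admissible involution on the smaller index set. Here I would invoke Lemma \ref{lemtype2} (for a broken pair in the $2\times2$ case, the single retained diagonal entry $b_{ii}$ has $\mathrm{ord}(b_{ii})=a_i$ since the $2\times 2$ block $\begin{pmatrix}b_{ii}&b_{i\sigma(i)}\\ b_{i\sigma(i)}&b_{\sigma(i)\sigma(i)}\end{pmatrix}$ was reduced of GK-type $(a_i,a_{\sigma(i)})$, and Lemma \ref{lemtype2} records $\mathrm{ord}$ of each entry of such a block) and the rank-$3$ reduced-form classification from \cite{B} to conclude that the retained $3\times3$ or $2\times2$ submatrix is a reduced form with the asserted Gross-Keating invariant. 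By Theorem \ref{thm5.1} reducedness then yields $\mathrm{GK}=(a_i,a_j,a_k)$ respectively $(a_i,a_j)$, which is the claim.

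The main obstacle I anticipate is precisely the bookkeeping for $p=2$ when a $\sigma$-pair straddles the kept/dropped index sets: one must check, case by case through the admissibility conditions and using Proposition \ref{parity} to limit the parity configurations, that the retained entries still satisfy \ref{def3.2} for \emph{some} admissible involution on the smaller set. Everything else — the odd case, and the inheritance of conditions (2), (3) and of condition (1) on unbroken pairs — is immediate, and the degenerate sub-cases ($a_2=a_3$, or two equal entries meeting $\mathcal P^0$) are a short finite check using Lemma \ref{lemtype2}.
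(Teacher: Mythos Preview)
Your proposal is correct and takes essentially the same approach as the paper: the odd-$p$ case is immediate from the diagonal form, and for $p=2$ one uses the parity constraint of Proposition \ref{parity} together with Lemma \ref{lemtype2} to control $\mathrm{ord}(b_{ii})$ and then verifies that the reduced-form conditions of Definition \ref{def3.2} pass to principal submatrices. The paper's own proof simply says ``the proof is similar to the argument used in the proof of the above proposition'' (i.e.\ Proposition \ref{parity}) and omits all details; your write-up supplies exactly the bookkeeping the paper skips.
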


\begin{proof}
If $p$ is odd, then it is clear.
When $p=2$, the proof is similar to the argument used in the proof of the above proposition.
Thus we skip it.
\end{proof}

\begin{Rmk}\label{redrmk}
Let 
$\mathrm{GK}(L)=(a_1, a_2, a_3, a_4)$ with $a_4\geq 2$ and let $p=2$.
We choose  a reduced basis  $(e_1, e_2, e_3, e_4)$ with an involution $\sigma$.
Let $\sigma(4)=i$ and $\sigma(j)=k$ so that $j, k$ are odd and $\{1,2,3,4\}=\{4,i,j,k\}$. 
To ease notation, we say $\sigma(a_4)=a_i$ and $\sigma(a_j)=a_k$ if there is no confusion.
Then by using a similar argument used in the proof of  Proposition \ref{parity}, we can easily see that
$(e_1, e_2, e_3, 1/p\cdot e_4)$ is a reduced basis of $L^{(4)}$, up to a permutation,  such that $\mathrm{GK}(\mathrm{L^{(4)}})=(a_1, a_2, a_3)\cup (a_4-2)$.
A key point of the proof is to use the fact that $\mathrm{ord}(q_L(e_i))=a_i$.
The associated involution to $L^{(4)}$ is compatible with $\sigma$
so as to exchange $a_4-2$ and $a_i$, and exchange $a_j$ and $a_k$.
\end{Rmk}

If $p$ is odd, then we can choose a diagonal matrix as a reduced form of $L$. Then the Gross-Keating invariant consists of  orders of each diagonal entries (cf. Remark \ref{rgk}.(1)).

We assume that $p=2$.
Let $B$ be an half-integral symmetric matrix associated to $(L, q_L)$ of rank $4$.
By using Theorem 2.4 of \cite{C1} and Lemma 3.2.(c) of \cite{B},
there are three types of $B$, up to equivalence, as follows:
\[
\left\{
  \begin{array}{l}
  \textit{Case (I) : }B=2^i \begin{pmatrix}1&1/2 \\ 1/2&1  \end{pmatrix}\perp 2^j\begin{pmatrix}1&1/2 \\ 1/2&1  \end{pmatrix};\\
  \textit{Case (II) : }B=2^i\begin{pmatrix}1&1/2 \\ 1/2&1  \end{pmatrix}\perp (u_12^{\mu_1})\perp (u_22^{\mu_2});\\
  \textit{Case (III) : }B=(u_12^{\mu_1})\perp (u_22^{\mu_2})\perp (u_32^{\mu_3})\perp (u_42^{\mu_4}).
      \end{array} \right.
\]
Here,
\[
\left\{
  \begin{array}{l}
  \textit{in Case (I), $i\leq  j$};\\
  \textit{in Case (II), $u_i \equiv 1$ mod $2$ and $\mu_1 \leq \mu_2$};\\
  \textit{in Case (III), $u_i \equiv 1$ mod $2$, $\mu_i \leq \mu_j$ if $i<j$, $\mu_1<\mu_3$, and $\mu_2<\mu_4$}.
      \end{array} \right.
\]

In the following theorem, we explain the Gross-Keating invariant of each case.

\begin{Thm}\label{classification}
The determinant of $B$ is a square. The Gross-Keating invariant of $B$ is described as follows:
\begin{enumerate}
\item In Case (I), we have
$$\mathrm{GK}(B)=(i, i, j, j),$$
where  $i$ and $j$  have different parities.

\item In case (II), $\mu_1$ and $\mu_2$ have the same parity,   $u_1+u_2\equiv 0$ mod $4$,
and
$$\mathrm{GK}(B)=(i, i)\cup (\mu_1, \mu_2+2).$$
Here,  $i$ and $\mu_1$ have different parities.

\item In case (III),
\begin{enumerate}
\item Assume $\mu_1\not\equiv \mu_2$ mod $2$.
Then $\mu_3$ and $\mu_4$ have different parities
and
  \[GK(B)=(\mu_1, \mu_2, \mu_3+2, \mu_4+2).\]

\item Assume that $\mu_1\equiv \mu_2$ mod 2 and that
  $u_1+u_2\equiv 2$ mod $4$ or $\mu_2=\mu_3$.
Then  $\mu_3$ and $\mu_4$ have the same parity
and
  \[GK(B)=(\mu_1, \mu_2+1, \mu_3+1, \mu_4+2).\]

\item Assume that $\mu_1\equiv \mu_2$ mod 2, $u_1+u_2\equiv 0$ mod 4,
and $\mu_3 \geq \mu_2+1$.

Then   \[GK(B)=(\mu_1, \mu_2+2) \cup (\mu_3, \mu_4+2).\]
Here, $\mu_3$ and $\mu_4$ have the same parity, $\mu_1$ and $\mu_3$  have different parities, and  $u_3+u_4\equiv 0$ mod $4$.
\end{enumerate}
\end{enumerate}

\end{Thm}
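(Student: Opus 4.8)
The plan is to treat the three cases separately: in each case I first record the arithmetic constraints forced by anisotropy, then exhibit a matrix equivalent to $B$ that is manifestly a reduced form, so that Theorem \ref{thm5.1} reads off $\mathrm{GK}(B)$, and finally deduce the parity assertions from Proposition \ref{parity}. The starting point is that an anisotropic quadratic space of rank $4$ over $\mathbb{Q}_2$ represents every nonzero scalar (a rank-$5$ form over $\mathbb{Q}_2$ is isotropic), hence is isometric to the reduced-norm form $q_D$ of $D$ from Remark \ref{rmk5.1}; in particular $\mathrm{disc}(B)$ equals the discriminant of $q_D$, which is a square. Equivalently, a rank-$4$ form over $\mathbb{Q}_2$ is anisotropic iff it has trivial discriminant and Hasse--Witt invariant $(-1,-1)_2=-1$. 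Computing the relevant Hilbert symbols of the diagonalizations occurring in Cases (I)--(III) --- using that $\begin{pmatrix}1&1/2\\1/2&1\end{pmatrix}\cong\langle 1,3\rangle$ --- then yields: in Case (I), $i\not\equiv j\bmod 2$; in Case (II), $\mu_1\equiv\mu_2\bmod 2$, $u_1+u_2\equiv 0\bmod 4$, and $i\not\equiv\mu_1\bmod 2$; and in Case (III) the separation into the three declared subcases together with the stated congruences on the $u_s$ and parities among the $\mu_s$. Several of these parity statements may instead be deferred to the end and read off from Proposition \ref{parity}, since the entries of $\mathrm{GK}(B)$ will turn out to have the same parities as $(\mu_1,\dots,\mu_4)$.

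In Cases (I) and (II), after this normalization $B$ is an orthogonal sum of two binary blocks, and I would compute the GK invariant blockwise. The block $2^{\ell}\begin{pmatrix}1&1/2\\1/2&1\end{pmatrix}$ has norm $(2^{\ell})$ and is not diagonalizable, so by Lemma \ref{lemtype2} (equivalently Lemma 3.2 of \cite{B}) it is a reduced form with GK invariant $(\ell,\ell)$; an anisotropic binary diagonal block $\langle u_12^{\mu_1},u_22^{\mu_2}\rangle$ with $\mu_1\equiv\mu_2$ and $u_1+u_2\equiv 0\bmod 4$ is, by Lemma \ref{lemtype2}, anisotropic with GK invariant $(\mu_1,\mu_2+2)$, and one brings it to a reduced form by an explicit change of basis over $\mathbb{Z}_2$ introducing an off-diagonal entry of order $(\mu_1+\mu_2+2)/2$. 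Since all cross entries between the two blocks vanish, condition (3) of Definition \ref{def3.2} holds automatically, so it only remains to pick an admissible involution $\sigma$ pairing the two indices inside each block (and fixing the two indices of a diagonal block of distinct orders) and to verify the conditions of Definition \ref{def3.1} against the parities established above. Theorem \ref{thm5.1} then gives $\mathrm{GK}(B)=(i,i,j,j)$ in Case (I) and $\mathrm{GK}(B)=(i,i)\cup(\mu_1,\mu_2+2)$ in Case (II).

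Case (III) is the crux: here $B$ is diagonal and is in general \emph{not} itself a reduced form, because a binary sub-block $\langle u2^{\mu},u'2^{\mu'}\rangle$ with $\mu\equiv\mu'\bmod 2$ and $u+u'\equiv 0\bmod 4$ is anisotropic but improvable, with reduced form of GK-type $(\mu,\mu'+2)$ by Lemma \ref{lemtype2}. I would perform the corresponding unimodular change of basis on the pair(s) of coordinates dictated by the parity pattern of $(\mu_1,\dots,\mu_4)$ and by the congruences on the $u_s$: in subcase (a) the improvable pair is $\{3,4\}$, so those two orders shift by $2$ and $\sigma$ pairs $\{3,4\}$ while fixing $1,2$; in subcase (b) the hypothesis $u_1+u_2\equiv 2\bmod 4$ or $\mu_2=\mu_3$ forces $\sigma$ to pair $\{2,3\}$, producing $(\mu_2+1,\mu_3+1)$ with one fixed index of each parity; in subcase (c) both $\{1,2\}$ and $\{3,4\}$ are improvable, giving $(\mu_1,\mu_2+2)\cup(\mu_3,\mu_4+2)$. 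After these replacements the matrix is block-diagonal with reduced binary blocks and vanishing cross terms, so Definition \ref{def3.2} is satisfied with the evident $\sigma$; after checking its admissibility via Definition \ref{def3.1}, Theorem \ref{thm5.1} yields the asserted $\mathrm{GK}(B)$, and the remaining parity claims follow from Proposition \ref{parity}.

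The main obstacle I anticipate lies in subcases (b) and (c) of Case (III): one must check that the several coordinate changes can be carried out simultaneously without disturbing the orders of the untouched coordinates or creating forbidden cross terms, and --- more subtly --- that the congruences on the $u_s$ modulo $4$ (versus modulo $8$) \emph{force}, not merely permit, the recorded pairing pattern of the admissible involution. This is where the interplay between $2$-adic square classes and the GK formalism is most delicate, and where I would lean hardest on Lemma 3.2 of \cite{B}, Proposition 2.3 of \cite{IK1}, and Lemma \ref{lemtype2}.
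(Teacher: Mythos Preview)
Your approach differs substantially from the paper's. The paper's proof is essentially a citation: after noting that $\det B$ is a square because $L$ sits as a full-rank sublattice of $O_D$, it invokes Lemma 2.8 and Theorem 3.1 of \cite{CIKY2} (or Theorems 3.5--3.8 of \cite{CIKY1}), which give general formulas for the Gross--Keating invariant of $2$-adic forms in terms of the decomposition into unary and binary pieces, and then reads off the parity constraints from Proposition \ref{parity}. There is no construction of a reduced form in the paper itself.

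Your strategy of building an explicit reduced form and applying Theorem \ref{thm5.1} is a legitimate alternative and would yield a self-contained argument, but the execution in Case (III) has a genuine gap. You assert that after your coordinate changes ``the matrix is block-diagonal with reduced binary blocks and vanishing cross terms,'' and in subcase (a) you say ``the improvable pair is $\{3,4\}$, so those two orders shift by $2$.'' Neither statement is correct. Your own notion of ``improvable'' applies to a binary block $\langle u2^{\mu},u'2^{\mu'}\rangle$ with $\mu\equiv\mu'$ and $u+u'\equiv 0\bmod 4$, and it shifts \emph{one} order by $2$, giving GK $(\mu,\mu'+2)$; it cannot shift both. In subcase (a) the theorem asserts $\mu_3\not\equiv\mu_4$, so the $\{3,4\}$ block is not improvable in your sense at all. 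The actual mechanism producing $(\mu_1,\mu_2,\mu_3+2,\mu_4+2)$ is that one replaces $e_3$ by $e_3+c\,e_k$ and $e_4$ by $e_4+c'\,e_{k'}$, where $k,k'\in\{1,2\}$ are chosen so that $\mu_k\equiv\mu_3$ and $\mu_{k'}\equiv\mu_4$; this raises $\mathrm{ord}(b_{33})$ and $\mathrm{ord}(b_{44})$ but unavoidably creates nonzero off-diagonal entries $b_{k3},b_{k'4}$, so the resulting matrix is \emph{not} block-diagonal. One must then check that these new cross terms satisfy condition (3) of Definition \ref{def3.2}, that the required congruences on $c,c'$ are solvable (this is where anisotropy and the square-discriminant constraint on the $u_s$ enter), and that the resulting involution is admissible. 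The same issue recurs in subcases (b) and (c): the $+1$ shifts in (b) and the double improvement in (c) likewise require mixing across the apparent blocks and do not preserve block-diagonality.

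In short, your outline for Cases (I) and (II) is sound, but in Case (III) the reduced form you need is not block-diagonal, and the casework required to produce it and verify Definitions \ref{def3.1}--\ref{def3.2} is precisely the content of the results in \cite{CIKY1,CIKY2} that the paper cites.
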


\begin{proof}
The determinant of $B$ is a square since the determinant of the matrix associated to $O_D$ given at the beginning of this section is a square.
Using this, the proof easily follows from Lemma 2.8 and Theorem 3.1 of \cite{CIKY2} (or Theorems 3.5-3.8 of \cite{CIKY1}) and Proposition \ref{parity}.
\end{proof}

In the next proposition, we compute the local density $\alpha(L, O_D)$ for an anisotropic quadratic lattice $(L, q_L)$ of rank $4$.
The definition (and normalization) of the local density follows from Section 5 of \cite{IK2}.

\begin{Prop}\label{propld}
The local density $\alpha(L, O_D)$ is
\[
\alpha(L, O_D)=[O_D:L]\cdot  \cdot p^{3}\cdot p^{-2}(2(p+1))^2.
\]

Here, $[O_D:L]$  equals to $(\mathrm{GK}(L)-2)/2$.
\end{Prop}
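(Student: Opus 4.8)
The plan is to show that the anisotropy of $(L,q_L)$ forces the computation of $\alpha(L,O_D)$ to collapse onto the single self-density $\alpha(O_D,O_D)$, and then to evaluate the latter directly. First I would set up the reduction. By Remark \ref{rmk5.1}.(3) we may view $L$ as a full-rank sublattice of $O_D$, and by Theorem 91:1 of \cite{O} the lattice $O_D$ is the unique maximal lattice in the anisotropic space $V=O_D\otimes\Zp$. The argument producing the stratification (\ref{eqldset}) uses only that every $\Zp$-linear isometry out of a full-rank lattice is injective; since $q_D$ is anisotropic, hence nondegenerate, the proof of Lemma \ref{lemgeneric} applies verbatim with $O_D$ in place of $H_k$, giving $\mathrm{O}_{\Zp}(L,O_D)(\Zp)=\bigsqcup_{L\subseteq L'\subseteq V}\mathrm{O}^{prim}_{\Zp}(L',O_D)(\Zp)$. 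Rescaling the volume forms $\omega^{\mathrm{ld}}$ exactly as in the proof of (\ref{eqld})--(\ref{equationldprimld}), now with the target of dimension $4$ and $n=4$ so that the rescaling exponent $n+1-\dim W$ equals $4+1-4=1$, this yields
\[
\alpha(L,O_D)=\sum_{L\subseteq L'\subseteq V} f^{[L':L]}\cdot\alpha^{prim}(L',O_D).
\]

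Next I would show that only the term $L'=O_D$ survives. If $\mathrm{O}^{prim}_{\Zp}(L',O_D)(\Zp)\neq\emptyset$, choose a primitive isometry $\phi\colon L'\to O_D$; as $L'$ and $O_D$ both have rank $4$, the reduction $\phi\bmod\pi$ is an injective endomorphism of a $4$-dimensional space over the residue field, hence an isomorphism, so $\phi$ is an isomorphism by Nakayama's lemma. Thus $L'\cong O_D$ as quadratic lattices; since maximality is intrinsic, $L'$ is maximal, and by uniqueness of the maximal lattice $L'=O_D$. Because $O_D$ is maximal it is also the unique overlattice of itself in $V$ with integral norm, so $\alpha^{prim}(O_D,O_D)=\alpha(O_D,O_D)$. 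Combining, $\alpha(L,O_D)=f^{[O_D:L]}\cdot\alpha(O_D,O_D)$, and by Remark \ref{rgk}.(5) together with $\mathrm{GK}(O_D)=(0,0,1,1)$ (Remark \ref{rmk5.1}.(2)) the length is $[O_D:L]=(|\mathrm{GK}(L)|-2)/2$, so $f^{[O_D:L]}=p^{(|\mathrm{GK}(L)|-2)/2}=\#(O_D/L)$, which is the index factor in the statement.

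It remains to evaluate $\alpha(O_D,O_D)$ in the normalization of Section 5 of \cite{IK2}; this is the heart of the matter, and the obstacle is that Theorem \ref{tpld} and Corollary \ref{corsmooth} are unavailable here. Indeed $\mathrm{O}^{prim}_{\Zp}(O_D,O_D)$ is not smooth over $\Zp$: its special fiber is the orthogonal group of the degenerate reduction $\bar q_{O_D}$, which, having a $2$-dimensional nonsplit anisotropic part and a $2$-dimensional radical (read off from the reduced forms in Remark \ref{rmk5.1}.(2)), has dimension $9$, whereas the generic fiber $\mathrm{O}_4(\Qp)$ has dimension $6$. So the clean ``$f^{-\dim}\cdot\#(\text{special fiber})$'' recipe would give the wrong answer, and one must instead compute the integral $\int_{\mathrm{O}(O_D)(\Zp)}|\omega^{\mathrm{ld}}|$ genuinely.

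I would carry out this last computation in one of two ways. (i) Via the Jordan splitting $O_D\cong N\perp\pi N$ with $N$ the unimodular binary nonsplit form of Remark \ref{rmk5.1}.(2), applying Kitaoka's local-density formula for a lattice with prescribed Jordan decomposition (\cite{Kit}): each nonsplit binary block contributes a factor $p^{-1}\cdot\#\mathrm{O}^{-}_2(\mathbb{F}_p)=p^{-1}\cdot 2(p+1)$, and the remaining power of $p$ is the interaction/normalization term, producing $p^{3}\cdot p^{-2}(2(p+1))^2$. (ii) Via the exceptional isogeny $\mathrm{GSpin}_4\to\mathrm{SO}_4$, which identifies $\mathrm{SO}(O_D,q_D)$ with $(O_D^{\times}\times O_D^{\times})/\Zp^{\times}$ acting on $D$ by $(a,b)\cdot x=axb^{-1}$, reducing the integral to the volume of the compact group $O_D^{\times}$ computed through its congruence filtration $O_D^{\times}\supset 1+\mathfrak{P}\supset\cdots$ with $O_D^{\times}/(1+\mathfrak{P})\cong\mathbb{F}_{p^2}^{\times}$, the factor of $2$ coming from $\mathrm{O}$ versus $\mathrm{SO}$. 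For $p$ odd both routes are routine; for $p=2$ the only real care is in the bookkeeping for quadratic (rather than merely bilinear) forms over $\mathbb{F}_2$, which is handled by the classification of binary $\Zp$-forms and the Gross--Keating invariants recorded before Theorem \ref{classification}.
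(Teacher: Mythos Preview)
Your proposal is correct, and the two-step strategy (reduce to the self-density $\alpha(O_D,O_D)$, then evaluate it) coincides with the paper's. The implementations differ, however. For the reduction, the paper simply invokes Hilfssatz~17 of \cite{Sie} to obtain $\alpha(L,O_D)=\#(O_D/L)\cdot\alpha(O_D,O_D)$, whereas you re-derive this by transporting the stratification of Section~\ref{subsectionldpld} to the anisotropic target $O_D$ and using uniqueness of the maximal lattice; this is a pleasant observation that Siegel's formula in this case is a direct consequence of the paper's own machinery. For the self-density, the paper does not attempt a direct computation but cites the smooth-group-scheme local density formulas of \cite{GY} (Proposition~6.2.3 and Theorem~7.3 for $p$ odd) and \cite{C1} (Theorems~4.12 and~5.2 for $p=2$), which construct a genuinely smooth integral model of the orthogonal group and read off $\alpha(O_D,O_D)=p^{3}\cdot(p^{-1}\cdot 2(p+1))^2$; your routes via Kitaoka's Jordan-splitting formula or the $\mathrm{GSpin}_4$ isogeny would reach the same value but require more work to make precise, especially at $p=2$. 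Your diagnosis that Theorem~\ref{tpld} is unavailable here because $\mathrm{O}^{prim}_{\Zp}(O_D,O_D)$ is not smooth is correct, and this is exactly why the paper appeals to the external references rather than its Section~\ref{subsectionfpld}.
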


\begin{proof}
By Hilfssatz 17 of \cite{Sie}, we have
\[\alpha(L, O_D)= [O_D:L]\cdot \alpha(O_D, O_D),  \]
where $[O_D:L]$ is the index of $L$ in $O_D$.
The local density $\alpha(O_D, O_D)$ for a single quadratic lattice $O_D$ is fully studied in \cite{C1} $(p=2)$ and \cite{GY} $(p\neq 2)$.

By using the matrix description of the quadratic lattice $(O_D, q_D)$ given at Remark \ref{rmk5.1}.(2),
 the local density $\alpha(O_D, O_D)$ (cf. Proposition 6.2.3 and Theorem 7.3 of \cite{GY} when $p\neq 2$, Theorems 4.12 and 5.2 of \cite{C1} when $p=2$) is
\[
\alpha(O_D, O_D)=  p^{3}\cdot (p^{-1}2(p+1))^2.
\] 
This completes the proof.
\end{proof}

\begin{Rmk}
In the above proof, indeed Theorems 4.12 and 5.2 of \cite{C1} when $p=2$ yield that 
\[
\alpha(O_D, O_D)= 1/2\cdot p^{-3} \cdot p^{-6}\cdot p^4(2(p+1))^2.
\]
But, using the normalization of the local density explained in Section 5 of \cite{IK2}, we need to multiply $p^6$ and to ignore $1/2$ in the above formula. 
\end{Rmk}

\subsection{The Siegel series of anisotropic quadratic lattice}\label{subsectionssaqlr4}

In this subsection, we will explain a more refined inductive formula of the Siegel series of anisotropic quadratic lattices over $\mathbb{Z}_p$. 
 Let $(L, q_L)$ be an anisotropic quadratic lattice over $\mathbb{Z}_p$ of rank $4$.
By Remark \ref{rmk5.1}.(3), we may consider it as a sublattice of $(O_D, q_D)$.

We will work with an exclusively chosen basis $(e_1, e_2, e_3, e_4)$ of $L$  as follows until the end of this subsection:
\[
\textit{$(e_1, \cdots, e_n)$ is } \left\{
  \begin{array}{l l}
 \textit{diagonal and optimal} & \quad  \textit{if $p$ is odd};\\
 \textit{reduced} & \quad  \textit{if $p=2$}.
    \end{array} \right.
\]

Let $L^{(4)}$ be the lattice spanned by $(e_1, e_2, e_{3}, 1/p\cdot e_4)$
and let $L_0^{(4)}$ be the lattice spanned by $(e_1, e_2, e_{3})$.
If $a_3<a_4$, then these notions are the same as those introduced at the beginning of Section \ref{sectionifss} and in Remark \ref{rlx}.(1).
We keep using them even in the case of $a_3=a_4$.

\begin{Lem}\label{nonmax}
Assume that $(L, q_L)$ is not maximal.
Then $L^{(4)}$ is contained in   $(O_D, q_D)$.
\end{Lem}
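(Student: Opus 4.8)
The plan is to show that $L^{(4)}$ is an integral quadratic lattice with norm contained in $\mathbb{Z}_p$, since an anisotropic integral quadratic lattice of rank $4$ over $\mathbb{Z}_p$ is automatically embedded into $O_D$ by Remark \ref{rmk5.1}.(3). So the real content is: \emph{if $L$ is not maximal, then the lattice $L^{(4)}$ spanned by $(e_1,e_2,e_3,\tfrac1p e_4)$ is still an integral quadratic lattice}, equivalently $N(L^{(4)})\subseteq \mathbb{Z}_p$. First I would record what ``not maximal'' buys us. Since $L$ is anisotropic of rank $4$, by Theorem 91:1 of \cite{O} there is a unique maximal lattice $L^{\max}$ in $L\otimes F$ containing $L$, and $L$ is maximal iff $L=L^{\max}$; so $L$ not maximal means $L\subsetneq L^{\max}$, i.e. $[L^{\max}:L]\geq 1$. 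By Remark \ref{rgk}.(5), $|\mathrm{GK}(L^{\max})|=|\mathrm{GK}(L)|-2[L^{\max}:L]\leq |\mathrm{GK}(L)|-2$, and since $\mathrm{GK}(L^{\max})\succeq(0,0,0,0)$ we get $|\mathrm{GK}(L)|\geq 2$. Combined with Proposition \ref{parity} (only two of $a_1,a_2,a_3,a_4$ share a parity), this forces $a_4\geq 1$: indeed if $a_4=0$ then all $a_i=0$, contradicting the parity statement unless $|\mathrm{GK}(L)|=0$, which is ruled out. More carefully, I would argue $a_4\geq 2$ or handle $a_4=1$ separately — the point being that $\tfrac1p e_4$ has $q_L(\tfrac1p e_4)=p^{-2}q_L(e_4)$ with $\mathrm{ord}(q_L(e_4))=a_4$ (using that the chosen basis is diagonal/optimal so $\mathrm{ord}(q_L(e_i))=a_i$), so integrality of this diagonal entry needs $a_4\geq 2$.

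Next I would split on the parity case. If $p$ is odd, the reduced form is diagonal with entries $u_ip^{a_i}$, and $L^{(4)}$ has diagonal matrix $\mathrm{diag}(u_1p^{a_1},u_2p^{a_2},u_3p^{a_3},u_4p^{a_4-2})$, which is integral precisely when $a_4\geq 2$; so I only need $a_4\geq 2$, which follows from the parity count: $|\mathrm{GK}(L)|\geq 2$ and not all $a_i$ equal forces the largest to be $\geq 2$ (since if $a_4\in\{0,1\}$ the multiset is among $\{0,0,0,0\},\{0,0,0,1\},\{0,0,1,1\},\dots$ all of which violate Proposition \ref{parity} or have $|\mathrm{GK}|<2$, except $(0,0,1,1)=\mathrm{GK}(O_D)$, which is the maximal case — excluded). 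For $p=2$, I would use a reduced basis $(e_1,e_2,e_3,e_4)$ with involution $\sigma$ as in Remark \ref{redrmk}: the diagonal entries satisfy $\mathrm{ord}(q_L(e_i))=a_i$ (by the reduced-form conditions, Definition \ref{def3.2}, and Lemma \ref{lemtype2} applied to the $2\times 2$ blocks), so the $(4,4)$-entry of $L^{(4)}$ has order $a_4-2\geq 0$; the off-diagonal entries $b_{i4}$ get divided by $2$ (not $4$), so $\mathrm{ord}(2\cdot p^{-1}b_{i4})=\mathrm{ord}(2b_{i4})-1$, and I need this $\geq 0$. Here Remark \ref{redrmk} is the key input: it says $(e_1,e_2,e_3,\tfrac1p e_4)$ is a reduced basis of $L^{(4)}$ with $\mathrm{GK}(L^{(4)})=(a_1,a_2,a_3)\cup(a_4-2)$, which in particular asserts integrality of $L^{(4)}$ — so essentially this lemma is a corollary of Remark \ref{redrmk} once $a_4\geq 2$ is established. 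I would restate and invoke that.

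Having shown $L^{(4)}$ is an integral (half-integral-symmetric-matrix) quadratic lattice of rank $4$, I then observe it is still anisotropic: $L^{(4)}\subset L\otimes F$ which is an anisotropic quadratic space, and anisotropy passes to sublattices (and to all lattices in the space). By Remark \ref{rmk5.1}.(3) — which uses Theorem 3.5.1 and Corollary 3.5.4 of \cite{Kit} — any anisotropic quadratic lattice over $\mathbb{Z}_p$ of rank $\leq 4$ embeds isometrically into $(O_D,q_D)$; applying this to $L^{(4)}$ gives the claimed containment $L^{(4)}\subseteq O_D$ (as lattices in $D\cong L\otimes F$, once we fix the isometry).

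The main obstacle I expect is the $p=2$ bookkeeping on the off-diagonal entries $b_{i4}$ of the reduced form: I need $\mathrm{ord}(2b_{i4})\geq 1$ for all $i<4$ so that dividing by $2$ keeps $2b_{i4}/2$ with $\mathrm{ord}\geq 0$. For $i=\sigma(4)$ this follows from the reduced-form condition $\mathrm{ord}(2b_{i4})=\tfrac{a_i+a_4}2\geq 1$ (using $a_4\geq 1$ and $a_i\geq 0$, with care when $a_i=a_4=0$ impossible here since $a_4\geq 2$ or the $2\times 2$-block analysis of Lemma \ref{lemtype2} applies); for $i\neq\sigma(4)$ the condition $\mathrm{ord}(2b_{i4})>\tfrac{a_i+a_4}2$ gives even more room. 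But the edge cases when $a_3=a_4$ (so $d>1$ in the notation of Section \ref{sectionifss}) and when the involution pairs $4$ with an index $\leq 3$ require invoking Remark \ref{redrmk} precisely; I would lean on that remark rather than re-deriving it, and treat the odd-$p$ case (trivial, diagonal) and the reduction to ``$a_4\geq 2$'' via Proposition \ref{parity} as the clean backbone of the argument.
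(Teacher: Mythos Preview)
Your proposal is correct and follows essentially the same approach as the paper: establish $a_4\geq 2$ from non-maximality together with the parity constraint of Proposition \ref{parity} (ruling out all tuples with $a_4\leq 1$ except $(0,0,1,1)=\mathrm{GK}(O_D)$), then deduce integrality of $L^{(4)}$ directly from the diagonal form for $p$ odd and from Remark \ref{redrmk} for $p=2$. The only cosmetic difference is the final step: the paper concludes $L^{(4)}\subseteq O_D$ via the characterization $O_D=\{v\in D:q_D(v)\in\mathbb{Z}_p\}$ of Remark \ref{rmk5.1}.(1) (so integrality immediately gives containment), whereas you route through the abstract embedding of Remark \ref{rmk5.1}.(3) --- both work, but the former is more direct since $L$ is already viewed inside $O_D$.
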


\begin{proof}
It suffices to show that $L^{(4)}$ is a quadratic lattice. 
Let $\mathrm{GK}(L)=(a_1, a_2, a_3, a_4)$.
Since  $(L, q_L)$ is not maximal,   $\mathrm{GK}(O_D)=(0,0,1,1)$, and only two of $a_i$'s have the same parity,
we have that $a_4\geq 2$.
Using  Remark \ref{redrmk} when $p=2$, we can see that  $\mathrm{GK}(L^{(4)})=(a_1, a_2, a_3)\cup (a_4-2)\succeq (0,0,0,0)$.
This holds when $p\neq 2$, which 
 completes the proof by Lemma \ref{lemnorm} and Corollary \ref{primex}. 
\end{proof}

Recall that $\mathcal{F}_L(X)$ is the Siegel series associated to the quadratic lattice $(L, q_L)$ such that 
$\mathcal{F}_L(f^{-k})=\alpha(L, H_k)$ (cf. Definition \ref{defse}).
In the following theorem, we will explain an inductive formula of the Siegel series $\mathcal{F}_L(X)$ for an anisotropic quadratic lattice of rank $4$ over $\mathbb{Z}_2$. 
This formula is much simpler than  that of Theorem \ref{mainthm}.
 
\begin{Thm}\label{indfaniso}
Assume that $(L, q_L)$ is not maximal.
Let $\mathrm{GK}(L)=(a_1, a_2, a_3, a_4)$.
Then we have the following inductive formula:
\[\mathcal{F}_L(X)=p^{5}\cdot X^2\cdot \mathcal{F}_{L^{(4)}}(X)+(1-X)(1+pX)\cdot \mathcal{F}_{L^{(4)}_0}(pX).\]
Here, 
\[
\left\{
  \begin{array}{l}
 \textit{$\mathrm{GK}(L^{(4)})=(a_1, a_2, a_3)\cup (a_4-2)$};\\
 \textit{$\mathrm{GK}(L^{(4)}_0)=(a_1, a_2, a_3)$}.
    \end{array} \right.
\]
\end{Thm}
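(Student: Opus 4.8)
The plan is to split into the two cases $a_3 < a_4$ and $a_3 = a_4$; by Proposition \ref{parity} these are exhaustive, since $a_2 = a_3 = a_4$ would force three of the $a_i$ to have the same parity. Throughout one uses $f = p = 2$, so that $f^{n+1} = 2^5 = p^5$ for $n = 4$ and $1+fX = 1+pX$. The two assertions about Gross--Keating invariants are immediate from the machinery already in place: $\mathrm{GK}(L_0^{(4)}) = (a_1,a_2,a_3)$ because by Proposition \ref{propaniso} the $(3\times 3)$-submatrix of a reduced form of $L$ attached to $(e_1,e_2,e_3)$ is again a reduced form with that invariant, while $\mathrm{GK}(L^{(4)}) = (a_1,a_2,a_3)\cup(a_4-2)$ follows from Remark \ref{redrmk}, whose hypothesis $a_4\ge 2$ holds because $L$ is not maximal (the argument in the proof of Lemma \ref{nonmax}). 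Finally Lemma \ref{nonmax} gives $L^{(4)}\subseteq O_D$, so $L^{(4)}$ is a quadratic lattice, and Conjecture \ref{conj4} holds for $L$ by Lemma \ref{l5}.

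If $a_3 < a_4$, then $d = 1$ in the sense of Section \ref{sectionifss}, and the asserted identity is exactly Corollary \ref{cor411} specialized to $n = 4$, $f = p = 2$; there is nothing further to do.

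The substantive case is $a_3 = a_4$, where the canonical $d$ of Section \ref{sectionifss} equals $2$. Rather than expanding Theorem \ref{mainthm} for $d=2$ and collapsing the resulting $\binom{2}{1}_f$-fold sum, I would re-run the proof of Corollary \ref{cor411} (that is, of Theorem \ref{mainthm} in its ``$d=1$'' shape) with the non-standard one-variable splitting $L = L_0^{(4)}\oplus\mathbb{Z}_2 e_4$, so that $M = L_0^{(4)} = \langle e_1,e_2,e_3\rangle$, $N = \mathbb{Z}_2 e_4$, and $L^{(4)} = \langle e_1,e_2,e_3,\tfrac1p e_4\rangle$. The key observation is that each ingredient of that proof survives the substitution although $a_4$ is not strictly larger than $a_3$: Lemmas \ref{l1} and \ref{l2} are purely lattice-theoretic and use only the chosen basis; the instance of Conjecture \ref{conj4} needed in Proposition \ref{prop3} for this decomposition is supplied by the argument of Lemma \ref{l5}, since one reduces to $M'$ the maximal lattice in $M\otimes F$ and the orders of the cross terms $b_{14},b_{24},b_{34}$ of a reduced form of $L$ — controlled through $\sigma(3)=4$ and Lemma \ref{lemtype2} — are large enough, using $a_4\ge 2$; and the invariance $\mathcal{F}_{(L^{(4)}_0)_x}(X) = \mathcal{F}_{(L^{(4)}_0)_0}(X)$ that replaces the use of Remark \ref{rlx}.(5) is precisely Lemma \ref{lemro}, whose second alternative hypothesis ($a_4 = a_3 > a_2$, $\sigma(3)=4$, $\mathrm{ord}(q_L(e_4))=a_4$, $\mathfrak{o}=\mathbb{Z}_2$, and $B'$ a reduced form with $\mathrm{GK}(B')=(a_1,a_2,a_3)$) is verified by Proposition \ref{parity} (yielding $a_2 < a_3$), by Lemma \ref{lemtype2} applied to the anisotropic binary block on $(e_3,e_4)$ (yielding $\mathrm{ord}(q_L(e_4))=a_4$), and by Proposition \ref{propaniso}. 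Feeding these through Proposition \ref{prop3} and Theorem \ref{eqldf}/Definition \ref{defse} exactly as in the proof of Theorem \ref{mainthm} gives $\mathcal{F}_L(X) = f^{5}X^2\mathcal{F}_{L^{(4)}}(X) + (1-X)(1+fX)\mathcal{F}_{L^{(4)}_0}(fX)$ with $f = 2$, which is the claim.

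The main obstacle is this middle case: certifying that the stratification-and-counting machinery of Section \ref{sectionifss}, stated for the canonical $d$ attached to $\mathrm{GK}(L)$, stays valid for the forced one-variable splitting when $a_3 = a_4$. This is exactly the configuration Lemma \ref{lemro} was designed to handle, so the work reduces to checking its hypotheses — the reduced-form structure of the relevant sub-blocks via Proposition \ref{propaniso} and Lemma \ref{lemtype2}, and a re-reading of the proof of Lemma \ref{l5} in this setting — after which the algebra is identical to the $d=1$ case. As an alternative route one may instead apply Theorem \ref{mainthm} directly with $d=2$ and then eliminate: all $\binom{2}{1}_f = 3$ lattices of $\mathcal{G}_{L,2,1}$ have Siegel series $\mathcal{F}_{L^{(4)}}$ by the EGK-invariance (Theorem 1.1 of \cite{IK2}) combined with Lemma \ref{lemro}, one has $L^{(2,4)} = (L^{(4)})^{(4)}$ and $L_0^{(2,4)} = (L_0^{(4)})_0^{(3)}$, and applying Corollary \ref{cor411} to $L^{(4)}$ and to $L_0^{(4)}$ and substituting back collapses everything to the stated two-term formula — the collapse succeeding precisely because $p^6 = 2p^5$, i.e. because $p = 2$.
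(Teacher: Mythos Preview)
Your argument restricts to $p = 2$ throughout: you write ``$f = p = 2$'', ``$\mathbb{Z}_2 e_4$'', ``$\mathfrak{o} = \mathbb{Z}_2$'', and your alternative route explicitly needs $p^6 = 2p^5$. But the theorem is for anisotropic lattices over $\mathbb{Z}_p$ with $p$ arbitrary --- the formula is stated with $p^5$ and $pX$, and the paper's own proof contains a separate paragraph beginning ``We now assume that $p\neq 2$.'' (The sentence just before the theorem mentioning $\mathbb{Z}_2$ is a slip in the paper.)

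For $p = 2$ your approach is essentially the paper's: re-run the $d=1$ machinery of Proposition \ref{prop3} and Theorem \ref{mainthm} with the forced splitting off $e_4$, and obtain the invariance $\mathcal{F}_{B_x} = \mathcal{F}_{B_0}$ from the second alternative of Lemma \ref{lemro}, whose hypotheses you correctly verify via Propositions \ref{parity}, \ref{propaniso} and Lemma \ref{lemtype2}.

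The genuine gap is the case $p$ odd with $a_3 = a_4$. Here the second alternative of Lemma \ref{lemro} is unavailable (it requires $\mathfrak{o} = \mathbb{Z}_2$), so you have no argument for $\mathcal{F}_{B_x}(X) = \mathcal{F}_{B_0}(X)$. The paper fills this in by a direct computation: with $B = \mathrm{diag}(u_i p^{a_i})$ diagonal, one checks that $B_x = B_0 + u_4 p^{a_4}\,{}^t x x$ is again a reduced form with $\mathrm{GK}(B_x) = (a_1,a_2,a_3)$ provided its $(3,3)$-entry $(u_3 + u_4 x_3^2)p^{a_3}$ keeps order $a_3$, i.e.\ $u_3 + u_4 x_3^2 \in \mathbb{Z}_p^\times$. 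This is precisely where anisotropy is used: were $u_3 + u_4 x_3^2$ not a unit, the Legendre symbol of $-u_3 u_4$ would equal $1$, contradicting anisotropy of the sublattice $\langle e_3, e_4\rangle$ (Lemma 2.8 of \cite{B}). With $\mathrm{GK}(B_x)$ pinned down, Remark \ref{rmk512} gives $\mathcal{F}_{B_x} = \mathcal{F}_{B_0}$ and the $d=1$ derivation concludes as in the even case. Your alternative $d=2$ route does not rescue odd $p$ either, since the collapse you describe genuinely depends on $p = 2$.
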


\begin{proof}
If $a_3<a_4$, then the formula follows from Theorem \ref{mainthm}.
Assume that $a_3=a_4$.
Since $a_2$ and $a_3$ should have different parities, we have that $a_2<a_3$.

From our choice of $(e_1, e_2, e_3, e_4)$,
the symmetric matrix with a basis  
$(e_1, e_2, e_3, \frac{1}{p}\cdot e_4)$ (resp. $(e_1, e_2, e_3)$) is a reduced form whose associated Gross-Keating invariant is $(a_1, a_2, a_3)\cup (a_4-2)$ (resp. $(a_1, a_2, a_3)$) (cf. Proposition \ref{propaniso} and Remark \ref{redrmk}).
Using the argument explained in  the proofs of Lemma \ref{l5} and Proposition \ref{prop3},
we have the following formula:
\[
\# \mathcal{S}_{(L, a^{\pm}, b)}=\# \mathcal{S}_{(L^{(4)}, a^{\pm}, b-1)}+
f^{b-3b'}\cdot\sum_{x\in M_{1\times 3}(\mathfrak{o}/\pi^{b'}\mathfrak{o})}\# \mathcal{S}_{(L^{(4)}_x, a^{\pm}, b)}.
\]
Here $b'$ is any integer with $b'\geq b$
and $L^{(4)}_x$ is a lattice of rank $3$ constructed by using the same argument (with $n=4$ and $d=1$) given in Remark \ref{rlx}.(1)
and Proposition \ref{prop3}.

Assume that  $p=2$.
We consider a reduced form $B$ with respect to a basis $(e_1, e_2, e_3, e_4)$  of $(L, q_L)$.
Then $B$ satisfies the assumption of Lemma \ref{lemro}, by Proposition \ref{propaniso} and Lemma \ref{lemtype2}.
Thus, if we plug  the above formula into  the formula of  Theorem \ref{eqldf}  using the result of Lemma \ref{lemro}, then we obtain the desired inductive formula.

We now assume that $p\neq 2$.
We consider a diagonal matrix $B=(u_1p^{a_1})\perp (u_2p^{a_2})\perp (u_3p^{a_3})\perp (u_4p^{a_4})$ with respect to a basis $(e_1, e_2, e_3, e_4)$, where $u_i\in\mathbb{Z}_p$ is a unit.
Then  for $x=(x_1, x_2, x_3)\in \mathbb{Z}_p^3$, we have

\[B_x=\begin{pmatrix} id_{3}&{}^tx  \end{pmatrix}\cdot B\cdot \begin{pmatrix} id_{3}\\x  \end{pmatrix}=
\begin{pmatrix}
u_1p^{a_1} &0&0 \\
 0&u_2p^{a_2} &0 \\
0&0&u_3p^{a_3} 
\end{pmatrix}
+u_4p^{a_4}\begin{pmatrix} x_1x_1&x_1x_2&x_1x_3\\
x_2x_1&x_2x_2&x_2x_3\\
x_3x_1&x_3x_2&x_3x_3
  \end{pmatrix}.
\]

If $\mathrm{ord}((u_3+u_4x_3^2)p^{a_3})=a_3$, 
where $(u_3+u_4x_3^2)p^{a_3}$ is the  $(3, 3)$-th entry of $B_x$,  then the matrix $B_x$ is a reduced form with $\mathrm{GK}(B_x)=(a_1, a_2, a_3)$ by using Theorem 3.3 of \cite{IK2}
for the $2\times 2$-minor involving $u_1p^{a_1}$ and $u_2p^{a_2}$ and the definition of a reduced form given in Definition \ref{def3.2}. 

As explained in Remark \ref{rlx}.(4), the Siegel series is completely determined by the Extended Gross-Keating datum. 
In our case of anisotropic quadratic lattices, the extended Gross-Keating datum is the same as the Gross-Keating invariant.
Thus if $\mathrm{GK}(B_x)=(a_1, a_2, a_3)$ for any $x$, then the associated Siegel series's are all equal.  
Using a similar argument used in the case $p=2$,
we have the desired formula.

Thus, it suffices to prove that $\mathrm{ord}((u_3+u_4x_3^2)p^{a_3})=a_3$, equivalently that $u_3+u_4x_3^2$ is a unit.
For $a\in \mathbb{Z}_p$, let $\begin{pmatrix} \frac{a}{p} \end{pmatrix}$ be the Legendre symbol.
If $u_3+u_4x_3^2$ is not a unit, then $\begin{pmatrix} \frac{-u_3}{p} \end{pmatrix}=\begin{pmatrix} \frac{u_4x_3^2}{p} \end{pmatrix}$
so that $\begin{pmatrix} \frac{-u_3u_4}{p} \end{pmatrix}=\begin{pmatrix} \frac{u_4x_3}{p} \end{pmatrix}^2=1$.
Since the lattice spanned by $e_3$ and $e_4$ is anisotropic, we have that  $\begin{pmatrix} \frac{-u_3u_4}{p} \end{pmatrix}=-1$
 by Lemma 2.8 of \cite{B}.
This is a contradiction.
Thus we conclude that $u_3+u_4x_3^2$ is  a unit.
\end{proof}

The proof of the above theorem also holds for any anisotropic quadratic lattice of rank $n\leq 3$.
We  state it as the following theorem:
\begin{Thm}\label{indfaniso3}
In this theorem, let $(L, q_L)$ be an anisotropic quadratic lattice over $\mathbb{Z}_p$ of rank $n$.
Let $\mathrm{GK}(L)=(a_1, \cdots, a_n)$.
Assume that $(L, q_L)$ is not maximal.
Then we have the following inductive formula for $\mathcal{F}_L(X)$:
\[
\mathcal{F}_L(X)=\left\{
  \begin{array}{l l}
  p^{n+1}\cdot X^2\cdot \mathcal{F}_{L^{(n)}}(X)+(1-X)(1+pX)\cdot \mathcal{F}_{L^{(n)}_0}(pX)   & \quad    \text{if $2\leq n \leq 4$};\\
  p^{2}\cdot X^2\cdot \mathcal{F}_{L^{(1)}}(X)+(1-X)(1+pX)  & \quad    \text{if $n=1$}.
    \end{array} \right.\]

Here, $L^{(n)}$ is spanned by $(e_1, \cdots, e_{n-1}, 1/p\cdot e_n)$ 
and $L^{(n)}_0$ is spanned by $(e_1, \cdots, e_{n-1})$ so that 

\[
\left\{
  \begin{array}{l}
 \textit{$\mathrm{GK}(L^{(n)})=(a_1, \cdots, a_{n-1})\cup (a_n-2)$};\\
 \textit{$\mathrm{GK}(L^{(n)}_0)=(a_1, \cdots, a_{n-1})$}.
    \end{array} \right.
\]

\end{Thm}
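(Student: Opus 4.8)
The plan is to reduce the statement for arbitrary rank $n \le 4$ to the rank-$4$ case already treated in Theorem \ref{indfaniso}, and more precisely to re-run the argument of that proof in the lower-rank setting, paying attention to the degenerate case $a_{n-1} = a_n$. First I would observe that, exactly as in Remark \ref{rmk5.1} and Lemma \ref{nonmax}, the non-maximality hypothesis on $(L, q_L)$ forces $a_n \ge 2$ (using that only two of the $a_i$ share a parity, by Lemma 5.3 of \cite{B} for $n = 3$ and the rank-$2$ diagonalizability argument inside the proof of Lemma \ref{lemtype2} for $n = 2$, together with $\mathrm{GK}$ of the ambient maximal lattice), so that $L^{(n)}$ spanned by $(e_1,\dots,e_{n-1},\tfrac1p e_n)$ is again a quadratic lattice with $\mathrm{GK}(L^{(n)}) = (a_1,\dots,a_{n-1}) \cup (a_n - 2)$ and $L^{(n)}_0$ spanned by $(e_1,\dots,e_{n-1})$ has $\mathrm{GK}(L^{(n)}_0) = (a_1,\dots,a_{n-1})$; these identities follow from Remark \ref{rgk}.(4)--(5) when $p$ is odd and from the reduced-form bookkeeping of Remark \ref{redrmk} (adapted to rank $n$) when $p = 2$.

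Next, when $a_{n-1} < a_n$ we have $d = 1$, and the inductive formula is literally Corollary \ref{cor411} (which holds since Conjecture \ref{conj4} is verified for anisotropic lattices by Lemma \ref{l5}), giving $\mathcal{F}_L(X) = f^{n+1} X^2 \mathcal{F}_{L^{(n)}}(X) + (1-X)(1+fX)\mathcal{F}_{L^{(n)}_0}(fX)$ with $f = p$; for $n = 1$ the lattice $L^{(1)}_0$ is the zero lattice and its Siegel series is $1$, which produces the second branch. When $a_{n-1} = a_n$, the parity constraint forces $a_{n-2} < a_{n-1}$ (for $n \ge 3$; for $n = 2$ this case does not arise since anisotropic rank-$2$ lattices over $\Z_2$ with $a_1 = a_2$ are maximal by the remark following Proposition \ref{parity}), so I would run the counting identity for $\#\mathcal{S}_{(L, a^\pm, b)}$ exactly as in the $a_3 = a_4$ portion of the proof of Theorem \ref{indfaniso}: namely
\[
\# \mathcal{S}_{(L, a^{\pm}, b)}=\# \mathcal{S}_{(L^{(n)}, a^{\pm}, b-1)}+ f^{b-(n-1)b'}\sum_{x\in M_{1\times (n-1)}(\mathfrak{o}/\pi^{b'}\mathfrak{o})}\# \mathcal{S}_{(L^{(n)}_x, a^{\pm}, b)},
\]
then invoke Theorem \ref{eqldf} to pass to local densities and Siegel series. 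The key input is that $\mathcal{F}_{L^{(n)}_x}(X) = \mathcal{F}_{L^{(n)}_0}(X)$ for every $x$: for $p = 2$ this is Lemma \ref{lemro} applied to the reduced form of $L$, whose hypotheses hold by Proposition \ref{propaniso} and Lemma \ref{lemtype2}; for $p$ odd it follows from the Legendre-symbol computation in the proof of Theorem \ref{indfaniso} showing the $(n-1,n-1)$-entry of $B_x$ stays a unit (using anisotropy of the span of $e_{n-1}, e_n$ via Lemma 2.8 of \cite{B}), so that $\mathrm{GK}(B_x) = (a_1,\dots,a_{n-1})$ and, the Extended Gross-Keating datum of an anisotropic lattice being determined by its $\mathrm{GK}$-invariant, the Siegel series coincide.

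The main obstacle I anticipate is the bookkeeping for small $n$ and for the degenerate case: specifically, confirming that the arguments of Proposition \ref{propaniso} and Remark \ref{redrmk}, which are stated in rank $4$, carry over verbatim to ranks $2$ and $3$ (they should, since the proofs only use that $\mathrm{ord}(q_L(e_i)) = a_i$ for reduced bases and the uniqueness of the involution when consecutive $a_i$ are strictly increasing), and handling the boundary of the induction --- the lattice $L^{(1)}_0 = 0$ with $\mathcal{F} \equiv 1$, and checking the exponents $p^{n+1}$ versus $p^2$ match the general $p^{(n+1)m}X^{2m}$ pattern of Theorem \ref{mainthm} specialized to $m = d = 1$. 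None of this is conceptually new beyond Theorem \ref{indfaniso}; the theorem is essentially a uniform restatement, so the proof reduces to checking that each step of the rank-$4$ argument was rank-agnostic.
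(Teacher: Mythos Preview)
Your approach is correct and is precisely what the paper does: it states that the proof of Theorem \ref{indfaniso} carries over verbatim to ranks $n\le 3$, and your proposal spells out that reduction in detail.

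There is, however, one misstatement. You claim that for $n=2$ the case $a_1=a_2$ does not arise because such lattices are maximal. This is false: by Remark \ref{01max} an anisotropic rank-$2$ lattice is maximal if and only if $a_2\le 1$, so $\mathrm{GK}(L)=(a,a)$ with $a\ge 2$ gives a non-maximal lattice for every prime $p$. Fortunately this is harmless, since the very machinery you describe for the $a_{n-1}=a_n$ case covers it. For $p$ odd, $B_x=p^{a_1}(u_1+u_2x^2)$ and anisotropy forces $-u_1u_2$ to be a non-square, so $u_1+u_2x^2$ is a unit and $\mathrm{GK}(B_x)=(a_1)$; for $p=2$, Lemma \ref{lemtype2} gives $\mathrm{ord}(b_{11})=\mathrm{ord}(2b_{12})=\mathrm{ord}(b_{22})=a_1$, whence $B_x=2^{a_1}(u_1+u_{12}x+u_2x^2)$ with the bracket congruent to $1+x+x^2\equiv 1\pmod 2$, again a unit. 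Thus Lemma \ref{lemro} applies (the hypothesis $a_{n-1}>a_{n-2}$ being vacuous for $n=2$) and $\mathcal{F}_{L^{(2)}_x}(X)=\mathcal{F}_{L^{(2)}_0}(X)$ as needed. Simply drop the sentence dismissing this case and let your general argument absorb it.
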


\begin{Rmk}\label{rmk512}
As mentioned in the  proof of Theorem \ref{indfaniso}, for an anisotropic quadratic $\mathbb{Z}_p$-lattice, 
it is easy to see from Definition 6.3 of \cite{IK1} that the extended Gross-Keating datum is the same as the Gross-Keating invariant.
Thus, the Siegel series for an anisotropic quadratic $\mathbb{Z}_p$-lattice is completely determined by the Gross-Keating invariant by 
Remark \ref{rlx}.(4).
\end{Rmk}

\begin{Rmk}\label{01max}
Let $\mathrm{GK}(L)=(a_1, \cdots, a_n)$. 
Lemma \ref{nonmax} and its proof imply that $L$ of rank $4$ is maximal if and only if $a_4\leq 1$. 
We claim that this statement holds for any anisotropic quadratic lattice of any rank over $\mathbb{Z}_p$.
This implies that
if $(L, q_L)$ is not maximal, then $L^{(n)}$ is integral in Theorem \ref{indfaniso3}.

If the rank is $1$, then it is obvious by Remark \ref{rgk}.(4)-(5).
If the rank is $3$, then it follows from the fact that only two integers among $a_i$'s have the same parity.
If the rank is $2$, then it suffices to show that the quadratic lattice $M$ with $\mathrm{GK}(M)=(1,1)$ is maximal.

Let $(e_1, e_2, e_3, e_4)$ be a basis of $(O_D, q_D)$ given in  Remark \ref{rmk5.1}.(2).
Let $M'$ be a sublattice of $O_D$ spanned by $(e_3, e_4)$ so that $\mathrm{GK}(M')=(1,1)$.
If $M'$ is not maximal, then we should be able to find a quadratic lattice $\tilde{M}$ of rank $2$ containing $M'$ so that $\mathrm{GK}(\tilde{M})=(0,0)$. 
Thus, the lattice spanned by $(e_1,e_2)$ and $\tilde{M}$ is also integral whose Gross-Keating invariant is $(0,0,0,0)$, which is a contraction to the fact that $(O_D, q_D)$ is maximal.
Thus $M'$ is maximal.

Let's go back to the case $M$ with $\mathrm{GK}(M)=(1,1)$. 
Thus the Siegel series $\mathcal{F}_M(X)$ should be the same as $\mathcal{F}_{M'}(X)$ by Remark \ref{rmk512}.
Assume that $M$ is not maximal. 
Since $M'$ is maximal, the local density associated to $\mathcal{F}_{M'}(X)$ is the same as the primitive local density for $M'$, whereas the local density associated to $\mathcal{F}_M(X)$ should be the sum of the primitive local density for $M$ and those for  quadratic lattices including $M$.
But, the prmitive local densities for $M$ and $M'$ are equal  and the primitive local density for a bigger lattice including $M$ is not trivial (cf. Theorem \ref{tpld} and Equation (\ref{eq37})). 
Thus, $\mathcal{F}_M(X)$ cannot be the same as $\mathcal{F}_{M'}(X)$, which is a contradiction.
\end{Rmk}

\section{Comparison: the Siegel series and the local intersection multiplicities}\label{reGK}

Gross and Keating computed the   local intersection multiplicities in \cite{GK1} and Kudla confirmed that it is the same as the derivative of the Siegel series of an anisotropic quadratic lattice of rank $3$ at $p^{-2}$ (cf.\cite{A}).  
 The method used to show the equality between these two objects is to compute both sides independently, and then to compare them directly. 
The calculation of the local intersection multiplicities in \cite{GK1} is based on an inductive formula given in Lemma 5.6 in loc. cit.

In this section, we will compare the inductive formula of \cite{GK1} with 
our inductive formula of Theorem \ref{indfaniso3}.
Then we will show that these two are essentially equal, beyond matching values.
In addition to that, we will explain a newly discovered equality between the local intersection multiplicity on the special fiber and the derivative of another Siegel series in Theorem \ref{thmaniso1}. 
This observation had been missed in both of Siegel series and intersection numbers.


Let us restrict the following situation exclusively in this section:
\[
\left\{
  \begin{array}{l}
 \textit{$L$ : anisotropic quadratic lattice over $\mathbb{Z}_p$ of rank $3$};\\
 \textit{$M$ : anisotropic quadratic lattice over $\mathbb{Z}_p$ of rank $2$};\\
 \textit{$N$ : anisotropic quadratic lattice over $\mathbb{Z}_p$ of rank $1$}.
    \end{array} \right.
\]

Since $\mathcal{F}_L(X)$ is determined by $\mathrm{GK}(L)=(a_1, a_2, a_3)$ (cf. Remark \ref{rmk512}), 
we can write $\mathcal{F}_L(X)=\mathcal{F}_{(a_1, a_2, a_3)}(X)$.
Similarly, we can write $\mathcal{F}_M(X)=\mathcal{F}_{(a_1, a_2)}(X)$ with $\mathrm{GK}(M)=(a_1, a_2)$
and $\mathcal{F}_N(X)=\mathcal{F}_{(a_1)}(X)$ with $\mathrm{GK}(N)=(a_1)$.


As in Section \ref{subsectionssaqlr4}, a basis of each lattice, consisting of  $e_i$'s, is chosen to be
\[
 \left\{
  \begin{array}{l l}
 \textit{diagonal and optimal} & \quad  \textit{if $p$ is odd};\\
 \textit{reduced} & \quad  \textit{if $p=2$}.
    \end{array} \right.
\]

In the following two lemmas, we list the initial values of the Siegel series and its derivative, in order to compare both inductive formulas.

\begin{Lem}\label{lemaniso1}
We have
\[\mathcal{F}_L(1/p^2)=\mathcal{F}_{M}(1/p)=\mathcal{F}_{N}(1)=0.\]
\end{Lem}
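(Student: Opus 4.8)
The plan is to compute each value directly from the explicit formula for the Siegel series in Corollary \ref{rmkse}, exploiting the fact that every factor $\prod_{1\le i< n-a/2}(1-f^{2i}X^2)$ appearing in the sum acquires a zero at the relevant point once the ranks are small. Concretely, for the rank-$1$ lattice $N$ we have $n=1$, so $1\le a\le 1$, i.e. $a=1$; then $n-a/2 = 1/2 < 1$ so the product $\prod_{1\le i<1/2}$ is empty, hence equals $1$, and likewise $\chi(a^\pm)=0$ since $a$ is odd. Thus every summand is of the form $\#\mathcal{S}_{(N,a^\pm,b)}\cdot f^{2b}X^{2b}$ and the whole sum is multiplied by the prefactor $(1-X)$, which vanishes at $X=1$. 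Therefore $\mathcal{F}_N(1)=0$.

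For the rank-$2$ lattice $M$ with $\mathrm{GK}(M)=(a_1,a_2)$, since $M$ is anisotropic over $\mathbb{Z}_p$ the two entries $a_1,a_2$ have different parities (by Lemma 5.3 of \cite{B}, as quoted above for rank $3$, applied here to rank $2$; alternatively this is classical), so $\mathrm{GK}(M)\ne(0,0)$ and in particular $n_0 = 0$ does not force $a=2$. The key point: for $n=2$ we have $n-a/2\in\{2,1,0\}$ according to $a\in\{0,1,2\}$. When $a=0$ or $a=1$ the product $\prod_{1\le i<n-a/2}$ runs over $1\le i<2$ or $1\le i<3/2$, so in the case $a=0$ it is $(1-f^2X^2)$ and in the case $a=1$ it is empty. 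At $X=1/p=1/f$... wait — here $f$ is the residue cardinality and for $\mathbb{Z}_p$ we have $f=p$, so $X=1/p$ gives $1-f^2X^2 = 1-1 = 0$, killing the $a=0$ summands. For $a=1$ (odd) one instead uses the prefactor $(1-X)=1-1/p\ne 0$, so one must check these summands vanish by another mechanism; but in fact for $M$ anisotropic of rank $2$ the only values of $a$ that occur in the stratification are governed by Proposition \ref{propzero}, and one shows that $a$ cannot be odd here — or more robustly, one observes that $a\le 2$ and checks the remaining cases $a=2$ directly, where $n-a/2=1$ gives an empty product but $\chi(2^\pm)=\pm1$ and the factor $(1+\chi f^{1}X)=(1\pm f X)$ need not vanish; here one uses that $\#\mathcal{S}_{(M,2^\pm,b)}$ is controlled and the contributions cancel. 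Honestly, the cleanest route is: $\mathcal{F}_M$ for anisotropic rank $2$ is known explicitly (the lattice sits inside $O_D$), and one simply substitutes $X=1/p$; alternatively invoke that $\mathcal{F}_M(X)$ has $(1-pX)$ or $(1-X)$ as a factor forcing the zero. The analogous statement for $L$ of rank $3$ at $X=1/p^2$ is handled the same way: for $n=3$, $n-a/2$ takes values making $\prod_{1\le i<n-a/2}(1-f^{2i}X^2)$ contain the factor $(1-f^4X^2)=(1-p^4X^2)$ which vanishes at $X=1/p^2$, and the residual odd-$a$ cases again vanish using parity constraints on anisotropic lattices together with the $(1-X)$ prefactor being irrelevant — instead one tracks that $a$ ranges over at most two parity classes and the surviving term has $n-a/2\ge 2$.

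I expect the main obstacle to be the bookkeeping of exactly which strata $\mathcal{S}_{(L,a^\pm,b)}$ are nonempty for anisotropic $L$ — one must rule out (or separately cancel) the strata where $n-a/2<2$ so that the vanishing factor $(1-f^{2(n-1)}X^2)$ or similar is not present in that summand. The resolution is to use Proposition \ref{parity} (only two of the $a_i$ share a parity) which pins down the number of $0$'s in $\mathrm{GK}(L')$ for lattices $L'\supseteq L$ tightly, hence pins down the admissible values of $a$; combined with the identity $|\mathrm{GK}(L')|=|\mathrm{GK}(L)|-2b$ of Remark \ref{rgk}.(5), one sees every surviving summand carries a factor vanishing at the prescribed point. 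A slicker alternative, which I would present as the actual proof, is simply to note that these are the well-known values: $\mathcal{F}_N(1)=0$ is immediate from the $(1-X)$ prefactor; $\mathcal{F}_M(1/p)=0$ and $\mathcal{F}_L(1/p^2)=0$ follow either from the classical functional equation / explicit formulas for Siegel series of anisotropic forms, or inductively from Theorem \ref{indfaniso3} by evaluating the right-hand side (each term $(1-X)(1+pX)\mathcal{F}_{L_0^{(n)}}(pX)$ and $p^{n+1}X^2\mathcal{F}_{L^{(n)}}(X)$ at the relevant point, using the lower-rank cases as the induction hypothesis and the base case $\mathcal{F}_N(1)=0$). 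This inductive route via Theorem \ref{indfaniso3} is probably the intended argument and avoids the stratum bookkeeping entirely.
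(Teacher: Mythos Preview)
The paper's one-line proof is exactly your first approach: read off the vanishing from Corollary \ref{rmkse}. Your rank-$1$ argument via the prefactor $(1-X)$ is correct and complete. For ranks $2$ and $3$, however, your case analysis contains concrete errors that leave a genuine gap.

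First, two factual slips. For $n=2$, $a=1$ the product $\prod_{1\le i<3/2}(1-f^{2i}X^2)$ is \emph{not} empty: it contains $i=1$, hence equals $(1-f^2X^2)$, which vanishes at $X=1/p$; so your worry about the $a=1$ stratum is misplaced. Second, anisotropic binary lattices over $\mathbb{Z}_p$ do \emph{not} have Gross--Keating entries of opposite parity in general (the parity constraint you cite holds only in ranks $3$ and $4$; indeed $(0,0)$ and $(1,1)$ both appear in Lemma \ref{lemaniso2}). The substantive gap is the stratum $a=2$ (for both $n=2$ and $n=3$), where the product is too short to furnish a vanishing factor and one is left with $(1+\chi(2^{\pm})f^{\,n-1}X)$. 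The missing observation is this: any lattice $L'\supseteq M$ (resp.\ $L'\supseteq L$) lies in the same anisotropic quadratic space, so if $\mathrm{GK}(L')$ begins with $(0,0,\dots)$ then the rank-$2$ nondegenerate part of $\bar L'$ is the reduction of an anisotropic unimodular binary lattice, hence is the nonsplit plane over $\kappa$. Thus $\chi(2^{-})=-1$ is forced, and $(1-f^{\,n-1}X)$ vanishes at $X=1/p$ (for $n=2$) and at $X=1/p^2$ (for $n=3$). For $n=3$ the remaining case $a=3$ is excluded by the parity constraint on $\mathrm{GK}$ of anisotropic ternary lattices (Lemma 5.3 of \cite{B}), as you surmised. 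With these two points supplied, the direct argument from Corollary \ref{rmkse} is complete and matches the paper.

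Your inductive fallback via Theorem \ref{indfaniso3} is also valid and non-circular (that theorem is proved before Lemma \ref{lemaniso1}), but note that the induction does not bottom out at rank $1$: Theorem \ref{indfaniso3} applies only to non-maximal lattices, so the base cases are the \emph{maximal} anisotropic lattices in each rank, namely $\mathrm{GK}(M)\in\{(0,0),(0,1),(1,1)\}$ and $\mathrm{GK}(L)\in\{(0,0,1),(0,1,1)\}$. These still require a direct check, which is immediate from the explicit formulas recorded in the proofs of Lemma \ref{lemaniso2} and Theorem \ref{thmaniso2}.
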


\begin{proof}
These directly follow from 
Corollary \ref{rmkse}.
\end{proof}

\begin{Lem}\label{lemaniso2}
Special values of the derivative of the Siegel series are given as follows:
\[
\mathcal{F}_{M}'(1/p)=\left\{
  \begin{array}{l l}
 -(p-1)  & \quad  \textit{if $(a_1, a_2)=(0, 0)$};\\
 -2(p-1)(p+1)  & \quad  \textit{if $(a_1, a_2)=(1, 1)$};\\
 -2(p-1)  & \quad  \textit{if $(a_1, a_2)=(0, 1)$},
    \end{array} \right.
    \]
\[
\mathcal{F}_{N}'(1)=\left\{
  \begin{array}{l l}
 -1  & \quad  \textit{if $(a_1)=(0)$};\\
 -(p+1)  & \quad  \textit{if $(a_1)=(1)$}.
    \end{array} \right.
\]
\end{Lem}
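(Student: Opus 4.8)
The plan is to compute each of the six special values directly from the closed formula for the Siegel series in Corollary \ref{rmkse}, using the fact (Remark \ref{rmk512}) that for an anisotropic quadratic $\Z_p$-lattice the Siegel series is completely determined by the Gross-Keating invariant. So for each prescribed $\mathrm{GK}$-datum I would write down a concrete anisotropic lattice $L$ (a sublattice of $O_D$) realizing it, identify the sets $\mathcal{S}_{(L,a^{\pm},b)}$ for the relevant $b$, and plug in.

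First I would handle the rank-$1$ cases, which are the cleanest. For $\mathrm{GK}(N)=(0)$, the lattice is already maximal, so $\mathcal{S}_{(N,a^{\pm},0)}=\{N\}$ with $a=1$ (odd, hence $\chi=0$), and the only term in Corollary \ref{rmkse} is $b=0$, giving $\mathcal{F}_N(X)=1-X$; differentiating gives $\mathcal{F}_N'(1)=-1$. For $\mathrm{GK}(N)=(1)$, we have $|\mathrm{GK}(N)|=1$, so $b$ ranges over $\{0\}$ only (since $b\le 1/2$ forces $b=0$), $n_0=0$, and $a$ ranges over $\{0,1\}$; the $a=1$ term contributes $\mathcal{S}_{(N,1,0)}=\emptyset$ (a rank-$1$ lattice with $\mathrm{GK}=(1)$ reduced mod $\pi$ is totally isotropic, so $a=0$) while $a=0=n_0$ gives $\mathcal{S}_{(N,0^{\pm},0)}=\{N\}$ with $\chi(0^{\pm})=1$ and $n-a/2=1$, so the product $\prod_{1\le i<1}$ is empty and $\mathcal{F}_N(X)=(1-X)(1+fX)$ with $f=p$; differentiating at $1$ gives $-(p+1)$. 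Alternatively one can just invoke the inductive formula of Theorem \ref{indfaniso3} with $n=1$: $\mathcal{F}_{(1)}(X)=p^2X^2\mathcal{F}_{(-1)}(X)+(1-X)(1+pX)$ where $\mathcal{F}_{(-1)}$ is interpreted via the maximal lattice of norm $(p^{-1})$; a short check via Corollary \ref{rmkse} again, or via the known value $\mathcal{F}_{(1)}(X)=(1-X)(1+pX)$, yields the claim.

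Next, for the rank-$2$ cases I would argue similarly but also exploit the inductive formula of Theorem \ref{indfaniso3}: for $n=2$ not maximal, $\mathcal{F}_L(X)=p^3X^2\mathcal{F}_{L^{(2)}}(X)+(1-X)(1+pX)\mathcal{F}_{L_0^{(2)}}(pX)$. For $(a_1,a_2)=(0,0)$ the lattice is maximal (Remark \ref{01max}), $\mathcal{S}_{(M,a^{\pm},0)}=\{M\}$ with $a=2$, and whether $a^+$ or $a^-$ is assigned depends on whether $\overline{M}$ is split; since $M$ is anisotropic over $\Z_p$ with $\mathrm{GK}=(0,0)$, $\overline M$ is the nonsplit (anisotropic) binary form, so $\chi=-1$, $n-a/2=1$, and $\mathcal{F}_M(X)=(1-X)(1-pX)$, giving $\mathcal{F}_M'(1/p)=-(p-1)$. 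For $(a_1,a_2)=(0,1)$: $|\mathrm{GK}(M)|=1$ so again only $b=0$, $n_0=0$, and $a\in\{0,1\}$; here $\overline M$ has a one-dimensional radical and a one-dimensional nonsingular part so $a=1$ (odd, $\chi=0$), giving $\mathcal{F}_M(X)=(1-f^2X^2)=(1-p^2X^2)$ after checking $n-a/2=3/2$ so the single-term product $\prod_{1\le i<3/2}(1-f^{2i}X^2)=1-p^2X^2$; then $\mathcal{F}_M'(1/p)=-2p^2\cdot(1/p)=-2p$ — wait, this must be reconciled with the stated $-2(p-1)$, so in fact the correct analysis gives two $b$-values or a correction factor, and I would recompute carefully, most safely by using Theorem \ref{indfaniso3}: $\mathcal{F}_{(0,1)}(X)=p^3X^2\mathcal{F}_{(0,-1)}(X)+(1-X)(1+pX)\mathcal{F}_{(0)}(pX)$, with $\mathcal{F}_{(0)}(X)=1-X$, giving $(1-X)(1+pX)(1-pX)=(1-X)(1-p^2X^2)$, so $\mathcal{F}_M'(1/p)=(1-p^2X^2)'|_{1/p}\cdot 0 + (1-X)\cdot(-2p^2X)|_{1/p}=(1-1/p)(-2p)=-2(p-1)$. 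For $(a_1,a_2)=(1,1)$, the lattice is not maximal (Remark \ref{01max}), and Theorem \ref{indfaniso3} gives $\mathcal{F}_{(1,1)}(X)=p^3X^2\mathcal{F}_{(1,-1)}(X)+(1-X)(1+pX)\mathcal{F}_{(1)}(pX)$; using $\mathcal{F}_{(1)}(X)=(1-X)(1+pX)$ one gets $(1-X)(1+pX)(1-pX)(1+p^2X)$, whence $\mathcal{F}_{(1,1)}'(1/p)=(1-1/p)\cdot\frac{d}{dX}[(1+pX)(1-pX)(1+p^2X)]|_{1/p}$, and evaluating the bracket's derivative at $1/p$ (where $1-pX=0$) leaves only the term $(1+pX)(1+p^2X)\cdot(-p)|_{1/p}=2\cdot(1+p)\cdot(-p)=-2p(p+1)$, so $\mathcal{F}_{(1,1)}'(1/p)=(1-1/p)(-2p)(p+1)=-2(p-1)(p+1)$, as claimed.

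The main obstacle I expect is not any single hard step but rather the bookkeeping: correctly determining for each $\mathrm{GK}$-datum the sets $\mathcal{S}_{(L,a^{\pm},b)}$ (their cardinalities and the signature $a^\pm$ via split/nonsplit of the nonsingular part of $L'\otimes\kappa$), and in particular confirming that no overlattices $L'\supsetneq L$ contribute — which for these small anisotropic lattices follows from Remark \ref{01max} and the classification of anisotropic forms over $\Z_p$, but must be stated. I would therefore present the argument preferentially through the clean inductive formula of Theorem \ref{indfaniso3} together with the rank-$1$ base cases, reducing everything to the identities $\mathcal{F}_{(0)}(X)=1-X$ and $\mathcal{F}_{(1)}(X)=(1-X)(1+pX)$ (each verified once from Corollary \ref{rmkse}), after which all six values follow by an elementary differentiation, using each time the vanishing of $\mathcal{F}_L$ at the relevant point (Lemma \ref{lemaniso1}) to kill cross terms.
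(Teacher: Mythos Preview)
Your overall approach --- compute the explicit Siegel series polynomials from Corollary \ref{rmkse} and then differentiate --- is exactly what the paper does. The rank-$1$ computations are fine. However, your rank-$2$ argument has a genuine gap.

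You appeal to Theorem \ref{indfaniso3} for $(a_1,a_2)=(0,1)$ and $(1,1)$, but that theorem is stated under the hypothesis that $L$ is \emph{not} maximal. By Remark \ref{01max}, an anisotropic $\Z_p$-lattice is maximal precisely when $a_n\le 1$, so both $(0,1)$ and $(1,1)$ are maximal and the inductive formula does not apply. (Concretely, your ``$L^{(2)}$'' would have $q(e_2/p)$ of order $-1$, so is not a quadratic lattice; the symbols $\mathcal{F}_{(0,-1)}$ and $\mathcal{F}_{(1,-1)}$ you write are undefined.) Your assertion that $(1,1)$ is not maximal is simply incorrect.

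The fix is to return to your first attempt and carry it through. For $(0,1)$ you correctly found $a=1$, $\chi=0$, $n-a/2=3/2$, giving the single factor $1-p^2X^2$; but you dropped the overall $(1-X)$ prefactor from Corollary \ref{rmkse}. With it, $\mathcal{F}_{(0,1)}(X)=(1-X)(1-p^2X^2)$ and the derivative at $1/p$ gives $-2(p-1)$ directly. For $(1,1)$ you likewise apply Corollary \ref{rmkse}: here $|\mathrm{GK}|=2$ so $b\in\{0,1\}$. For $b=0$ the lattice $M$ has $\bar M$ totally isotropic, so $a=0$, $\chi=1$, $n-a/2=2$, contributing $(1+p^2X)(1-p^2X^2)$; for $b=1$ the unique overlattice is the maximal one with $\mathrm{GK}=(0,0)$, so $a=2^{-}$ (anisotropic binary form), $\chi=-1$, contributing $p^3X^2(1-pX)$. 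Summing and multiplying by $(1-X)$ gives $(1-X)(1+p^2X)(1-p^2X^2)$, and differentiation at $1/p$ yields $-2(p-1)(p+1)$. This is precisely the paper's route.
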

\begin{proof}
We write
 $\mathcal{F}_{(a_1, a_2)}(X)$ for $\mathcal{F}_M(X)$  with $\mathrm{GK}(M)=(a_1, a_2)$.
In our situation, $a_i$ is either $0$ or $1$. Thus $M$ is a maximal quadratic lattice (cf. Remark \ref{01max}). 
Using Corollary \ref{rmkse}, the Siegel series for a maximal quadratic lattice is described explicitly as follows:
\[
\left\{
  \begin{array}{l}
\mathcal{F}_{(0, 0)}(X)=(1-X)(1-pX);\\
\mathcal{F}_{(1, 1)}(X)=(1-X)(1+p^2X)(1-p^2X^2);\\
\mathcal{F}_{(0, 1)}(X)=(1-X)(1-p^2X^2).
    \end{array} \right.
    \]

Thus 
\[
\left\{
  \begin{array}{l}
\mathcal{F}'_{(0, 0)}(1/p)=-(p-1);\\
\mathcal{F}'_{(1,1)}(1/p)=-2(p-1)(p+1);\\
\mathcal{F}'_{(0, 1)}(1/p)=-2(p-1).
    \end{array} \right.
    \]

Similarly, 
we write 
$\mathcal{F}_{(a_1)}(X)$ for $\mathcal{F}_N(X)$  with $\mathrm{GK}(N)=(a_1)$.
Then 
\[
\left\{
  \begin{array}{l}
\mathcal{F}_{(0)}(X)=1-X;\\
\mathcal{F}_{(1)}(X)=(1-X)(1+pX).
    \end{array} \right.
    \]
Thus
\[
\left\{
  \begin{array}{l}
\mathcal{F}'_{(0)}(1)=-1;\\
\mathcal{F}'_{(1)}(1)=-(p+1).
    \end{array} \right.
    \]
\end{proof}

Let $\mathrm{GK}(L)=(a_1, a_2, a_3)$ and let   $(e_1, e_2, e_3)$ be a chosen basis of $L$ described at the beginning of this section.
Let 
 $M$ be $L^{(3)}_0$  spanned by $(e_1, e_2)$ so that $\mathrm{GK}(M)=(a_1, a_2)$.
We have the following inductive formula:

\begin{Prop}\label{propaniso1}
Let $\alpha(L, O_D)$ be the local density of the pair of quadratic lattices $(L, q_L)$ and $(O_D, q_D)$.
Assume that $L$ is not maximal.
Then we have the following inductive formula:
\[
\frac{\mathcal{F}'_L(1/p^2)}{\alpha(L, O_D)}=\frac{\mathcal{F}'_{L^{(3)}}(1/p^2)}{\alpha(L^{(3)}, O_D)}+\frac{p-1}{2p}\cdot \mathcal{F}'_M(1/p).
\]
Here, $L^{(3)}$ is a lattice spanned by $(e_1, e_2, \frac{1}{p}\cdot e_3)$.
\end{Prop}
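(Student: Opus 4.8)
The plan is to extract the statement from the refined inductive formula of Theorem \ref{indfaniso3} with $n=3$, specialized to the point $X=1/p^2$, by differentiating and using the vanishing data in Lemma \ref{lemaniso1}. Recall that Theorem \ref{indfaniso3} gives, for a non-maximal anisotropic $L$ of rank $3$,
\[
\mathcal{F}_L(X)=p^{4}\cdot X^2\cdot \mathcal{F}_{L^{(3)}}(X)+(1-X)(1+pX)\cdot \mathcal{F}_{L^{(3)}_0}(pX),
\]
where $L^{(3)}_0=M$ with $\mathrm{GK}(M)=(a_1,a_2)$. First I would differentiate both sides in $X$ and evaluate at $X=1/p^2$. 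On the left we get $\mathcal{F}'_L(1/p^2)$. On the right, the term $p^4 X^2 \mathcal{F}_{L^{(3)}}(X)$ contributes $2p^4 X \mathcal{F}_{L^{(3)}}(X) + p^4 X^2 \mathcal{F}'_{L^{(3)}}(X)$; at $X=1/p^2$ this is $2p^2 \mathcal{F}_{L^{(3)}}(1/p^2) + \mathcal{F}'_{L^{(3)}}(1/p^2)$. Here the key input is Lemma \ref{lemaniso1}: $\mathcal{F}_{L^{(3)}}(1/p^2)=0$ (applying the lemma to the rank-$3$ anisotropic lattice $L^{(3)}$), so this reduces simply to $\mathcal{F}'_{L^{(3)}}(1/p^2)$. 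For the second term, $(1-X)(1+pX)\mathcal{F}_M(pX)$, differentiating and evaluating at $X=1/p^2$ gives $-\tfrac{1}{p}(1-\tfrac1{p^2})\mathcal{F}_M(1/p) + (1-\tfrac1{p^2})(1+\tfrac1p)\cdot p \cdot \mathcal{F}'_M(1/p)$; again by Lemma \ref{lemaniso1} we have $\mathcal{F}_M(1/p)=0$, killing the first piece, and the surviving factor simplifies to $p(1-\tfrac1{p^2})(1+\tfrac1p) = \tfrac{(p-1)(p+1)^2}{p^2}$. So at this stage
\[
\mathcal{F}'_L(1/p^2)=\mathcal{F}'_{L^{(3)}}(1/p^2)+\frac{(p-1)(p+1)^2}{p^2}\cdot \mathcal{F}'_M(1/p).
\]

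Next I would divide through by $\alpha(L,O_D)$ and match the coefficient against the claimed formula. Using Proposition \ref{propld}, $\alpha(L,O_D)=[O_D:L]\cdot p^{3}\cdot p^{-2}(2(p+1))^2 = [O_D:L]\cdot p \cdot 4(p+1)^2$, and similarly $\alpha(L^{(3)},O_D)=[O_D:L^{(3)}]\cdot p\cdot 4(p+1)^2$. Since $L^{(3)}\supset L$ with $[L^{(3)}:L]=1$, we have $[O_D:L^{(3)}]=[O_D:L]-1$; more to the point, $\alpha(L^{(3)},O_D)/\alpha(L,O_D) = [O_D:L^{(3)}]/[O_D:L]$. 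The term $\mathcal{F}'_{L^{(3)}}(1/p^2)/\alpha(L,O_D)$ in the displayed identity above is therefore $\bigl(\mathcal{F}'_{L^{(3)}}(1/p^2)/\alpha(L^{(3)},O_D)\bigr)\cdot \bigl(\alpha(L^{(3)},O_D)/\alpha(L,O_D)\bigr)$; I will need the ratio $\alpha(L^{(3)},O_D)/\alpha(L,O_D)$ to be $1$, which holds precisely because the refined formula in Theorem \ref{indfaniso3} has leading constant $p^{n+1}$ exactly calibrated so that the $X^2$-coefficient already absorbs the index jump — this is the content I would double-check carefully, reconciling the normalization of $\alpha(\cdot,O_D)$ in Proposition \ref{propld} with $\mathcal{F}_L(f^{-k})=\alpha(L,H_k)$. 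Granting this, the identity becomes
\[
\frac{\mathcal{F}'_L(1/p^2)}{\alpha(L,O_D)}=\frac{\mathcal{F}'_{L^{(3)}}(1/p^2)}{\alpha(L^{(3)},O_D)}+\frac{(p-1)(p+1)^2}{p^2\,\alpha(L,O_D)}\cdot \mathcal{F}'_M(1/p),
\]
and it remains to check $\dfrac{(p-1)(p+1)^2}{p^2\,\alpha(L,O_D)}=\dfrac{p-1}{2p}$, i.e. $\alpha(L,O_D)=\dfrac{2(p+1)^2}{p}$. This is exactly $\alpha(L,O_D)$ when $[O_D:L]=1$, but $L$ is non-maximal so $[O_D:L]\ge 1$ — here I would invoke that for the rank-$3$ reduction the relevant bookkeeping is carried by the $\mathcal{F}'_M(1/p)$ term weighted correctly, so I must instead express everything consistently: the cleanest route is to avoid Proposition \ref{propld}'s absolute value and use only ratios, i.e. prove the proposition by dividing the differentiated Theorem \ref{indfaniso3} identity by $\alpha(L,O_D)$ and substituting $\alpha(L^{(3)},O_D)/\alpha(L,O_D)$ and the constant $\frac{(p-1)(p+1)^2}{p^2 \alpha(L,O_D)}$ in a form that telescopes.

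The main obstacle I anticipate is precisely this normalization matching: reconciling the constant $p^{n+1}=p^4$ and the factor $(1-X)(1+pX)$ from Theorem \ref{indfaniso3} with the local-density normalization $\alpha(L,O_D)=[O_D:L]\cdot \alpha(O_D,O_D)$ of Proposition \ref{propld}, so that the two ratios $\alpha(L^{(3)},O_D)/\alpha(L,O_D)$ and the numerical constant collapse to give exactly $1$ and $\tfrac{p-1}{2p}$ respectively. Everything else — differentiation, evaluation at $1/p^2$, and the two vanishing substitutions from Lemma \ref{lemaniso1} — is routine. I would therefore organize the proof as: (i) differentiate Theorem \ref{indfaniso3}$|_{n=3}$ and evaluate at $X=1/p^2$, using $\mathcal{F}_{L^{(3)}}(1/p^2)=\mathcal{F}_M(1/p)=0$; (ii) record the resulting clean identity relating $\mathcal{F}'_L(1/p^2)$, $\mathcal{F}'_{L^{(3)}}(1/p^2)$, and $\mathcal{F}'_M(1/p)$; (iii) divide by $\alpha(L,O_D)$, insert the index ratio $\alpha(L^{(3)},O_D)/\alpha(L,O_D)$ and the normalization constant from Proposition \ref{propld}, and verify the arithmetic identities $p^4$-coefficient $\leftrightarrow$ index jump and numerical constant $\leftrightarrow \tfrac{p-1}{2p}$.
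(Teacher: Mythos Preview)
Your differentiation step is exactly right and matches the paper: differentiating Theorem~\ref{indfaniso3} at $n=3$ and evaluating at $X=1/p^2$, with the two vanishing substitutions from Lemma~\ref{lemaniso1}, gives
\[
\mathcal{F}'_L(1/p^2)=\mathcal{F}'_{L^{(3)}}(1/p^2)+\Bigl(1-\tfrac{1}{p^2}\Bigr)(p+1)\cdot\mathcal{F}'_M(1/p),
\]
which is your $\frac{(p-1)(p+1)^2}{p^2}$ written differently.

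The gap is in step~(iii). You invoke Proposition~\ref{propld} to compute $\alpha(L,O_D)$, but that proposition is stated for anisotropic lattices $L$ of rank~$4$; the index $[O_D:L]$ there is the index of two rank-$4$ lattices inside $D$. In the present proposition $L$ has rank~$3$, so $[O_D:L]$ is not even defined, and the formula $\alpha(L,O_D)=[O_D:L]\cdot p\cdot 4(p+1)^2$ has no meaning here. This is not a normalization issue to be ``reconciled'': you are simply quoting the wrong result, which is why you end up chasing a phantom index ratio $\alpha(L^{(3)},O_D)/\alpha(L,O_D)$.

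What the paper actually uses is that for \emph{every} anisotropic rank-$3$ lattice $L$ over $\Z_p$ one has the constant value
\[
\alpha(L,O_D)=\frac{2(p+1)^2}{p},
\]
citing Theorem~1.1 of \cite{Wed1}. Once you know this, both of your worries evaporate simultaneously: since $L^{(3)}$ is also anisotropic of rank~$3$, $\alpha(L^{(3)},O_D)=\alpha(L,O_D)$ automatically, and the remaining coefficient is
\[
\frac{(p-1)(p+1)^2}{p^2}\cdot\frac{1}{\alpha(L,O_D)}=\frac{(p-1)(p+1)^2}{p^2}\cdot\frac{p}{2(p+1)^2}=\frac{p-1}{2p},
\]
exactly as claimed. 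So replace your appeal to Proposition~\ref{propld} by the rank-$3$ density computation from \cite{Wed1}, and the proof goes through in two lines.
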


\begin{proof}
By differentiating the formula of Theorem \ref{indfaniso3} at $p^{-2}$ using Lemma \ref{lemaniso1}, we have
\[
\mathcal{F}'_L(1/p^2)=\mathcal{F}'_{L^{(3)}}(1/p^2)+(1-\frac{1}{p^2})(p+1)\cdot\mathcal{F}'_M(1/p).
\]

On the other hand, for any anisotropic quadratic lattice $L$ of rank $3$, we have
\[\alpha(L, O_D)=2(p+1)^2p^{-1}\]
 by Theorem 1.1 of \cite{Wed1}.
The desired inductive formula follows from these two.
\end{proof}


Let $N$ be $M^{(2)}_0$  spanned by $\left(e_1\right)$ so that $\mathrm{GK}(N)=(a_1)$.

\begin{Prop}\label{propaniso2}
Assume that $M$ is not maximal.
Then we have the following inductive formulas:
\[
\left\{
  \begin{array}{l}
 \mathcal{F}'_M(1/p)=p\cdot \mathcal{F}'_{M^{(2)}}(1/p)+
2(p-1)\cdot \mathcal{F}'_N(1);\\
\mathcal{F}'_N(1)=p^2\cdot\mathcal{F}'_{N^{(1)}}(1)-(p+1).
    \end{array} \right.
\]
 Here, $M^{(2)}$ is a lattice spanned by $(e_1,  \frac{1}{p}\cdot e_2)$ and 
$N^{(1)}$ is a lattice spanned by $(\frac{1}{p}\cdot e_1)$.
In the second equation, we assume that $N$ is not maximal.
\end{Prop}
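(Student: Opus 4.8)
The plan is to deduce both inductive formulas from the refined inductive formula for the Siegel series in Theorem \ref{indfaniso3} applied with $n=2$ and $n=1$, together with the vanishing and derivative values recorded in Lemmas \ref{lemaniso1} and \ref{lemaniso2}, exactly mirroring the proof of Proposition \ref{propaniso1}. So first I would treat the equation for $\mathcal{F}'_M(1/p)$. Since $M$ is assumed not maximal, Theorem \ref{indfaniso3} with $n=2$ gives
\[
\mathcal{F}_M(X)=p^{3}X^2\,\mathcal{F}_{M^{(2)}}(X)+(1-X)(1+pX)\,\mathcal{F}_{M^{(2)}_0}(pX),
\]
where $M^{(2)}$ is spanned by $(e_1,\tfrac1p e_2)$ and $M^{(2)}_0=N$ is spanned by $(e_1)$. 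Differentiating in $X$, evaluating at $X=1/p$, and using $\mathcal{F}_{M^{(2)}}(1/p)=0$ from Lemma \ref{lemaniso1} (so the product-rule term hitting the undifferentiated $\mathcal{F}_{M^{(2)}}$ survives only through its derivative, while the $p^3X^2$ factor contributes $p^3\cdot p^{-2}=p$ at $X=1/p$) kills the boundary contribution of the first term except for $p\cdot \mathcal{F}'_{M^{(2)}}(1/p)$; and since $(1-X)|_{X=1/p}$ times the derivative of $\mathcal{F}_{N}(pX)$ at $1/p$ equals $(1-\tfrac1p)(1+1)\cdot p\cdot \mathcal{F}'_N(1)=2(p-1)\mathcal{F}'_N(1)$ (the $(1-X)$ factor ensures the term where the derivative lands on $(1+pX)$ drops out because... wait, one must also check $\mathcal{F}_N(pX)|_{X=1/p}=\mathcal{F}_N(1)$, which does not vanish — so I should be more careful here and instead observe $(1+pX)|_{X=1/p}=2$, $\tfrac{d}{dX}\mathcal{F}_N(pX)|_{X=1/p}=p\mathcal{F}'_N(1)$, and the terms where the derivative hits $(1-X)$ or $(1+pX)$ get multiplied by $\mathcal{F}_N(1)$ which is generally nonzero, so I must verify that these extra terms cancel against the first term's contribution using the explicit value $\mathcal{F}_N(1)=0$ when $a_1=0$ and the relation between the two when $a_1=1$).

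More cleanly, I would avoid the ambiguity by using $\mathcal{F}_N(1)=0$: from Lemma \ref{lemaniso1} (or directly Corollary \ref{rmkse}) one has $\mathcal{F}_N(1)=0$ for any anisotropic rank-$1$ lattice, and likewise $\mathcal{F}_{M^{(2)}}(1/p)=0$. With these two vanishings, differentiating the $n=2$ formula at $X=1/p$ leaves only
\[
\mathcal{F}'_M(1/p)=p\cdot\mathcal{F}'_{M^{(2)}}(1/p)+\bigl(1-\tfrac1p\bigr)\bigl(1+p\cdot\tfrac1p\bigr)\cdot p\cdot\mathcal{F}'_N(1)=p\cdot\mathcal{F}'_{M^{(2)}}(1/p)+2(p-1)\cdot\mathcal{F}'_N(1),
\]
which is the first asserted identity. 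For the second identity, I apply Theorem \ref{indfaniso3} with $n=1$ (valid since $N$ is assumed not maximal), which reads $\mathcal{F}_N(X)=p^{2}X^2\,\mathcal{F}_{N^{(1)}}(X)+(1-X)(1+pX)$. Differentiating and evaluating at $X=1$: the first term contributes $p^2\cdot 1\cdot\mathcal{F}'_{N^{(1)}}(1)$ plus $2p\cdot X|_{X=1}\cdot\mathcal{F}_{N^{(1)}}(1)$; but $\mathcal{F}_{N^{(1)}}(1)=0$ again since $N^{(1)}$ is an anisotropic rank-$1$ lattice, so only $p^2\mathcal{F}'_{N^{(1)}}(1)$ survives; and $\tfrac{d}{dX}\bigl[(1-X)(1+pX)\bigr]\big|_{X=1}=(-1)(1+p)+(1-1)p=-(p+1)$. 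This gives $\mathcal{F}'_N(1)=p^2\mathcal{F}'_{N^{(1)}}(1)-(p+1)$, as claimed.

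The only genuinely delicate point — and the one I would flag as the main obstacle — is bookkeeping the boundary terms in the product rule: one must be sure that every term in which the derivative falls on a factor other than the "inductive" Siegel series is killed by one of the vanishing statements $\mathcal{F}_N(1)=0$, $\mathcal{F}_{M^{(2)}}(1/p)=0$, $\mathcal{F}_{N^{(1)}}(1)=0$ from Lemma \ref{lemaniso1}. This is exactly the mechanism already used in Proposition \ref{propaniso1}, so the argument is structurally routine once those vanishings are invoked; the normalization constants $p=p^3\cdot p^{-2}$, $p^2=p^2\cdot 1$, and $(1-\tfrac1p)(1+1)p=2(p-1)$ are then just arithmetic. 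I would also remark that the hypotheses "$M$ not maximal" and "$N$ not maximal" are precisely what is needed for $\mathcal{F}_{M^{(2)}}$, respectively $\mathcal{F}_{N^{(1)}}$, to be an honest Siegel series of a quadratic (integral) lattice, via Remark \ref{01max}, so that Theorem \ref{indfaniso3} applies.
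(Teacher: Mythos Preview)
Your proof is correct and follows exactly the same approach as the paper: differentiate the inductive formula of Theorem \ref{indfaniso3} (with $n=2$ and $n=1$) and use the vanishing statements of Lemma \ref{lemaniso1} to kill the boundary terms in the product rule. Your momentary hesitation about whether $\mathcal{F}_N(1)$ vanishes is unnecessary---Lemma \ref{lemaniso1} states $\mathcal{F}_N(1)=0$ directly---so the ``more cleanly'' paragraph is the right argument and the preceding detour can simply be deleted.
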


\begin{proof}
The formulas directly follow  by
 differentiating the formulas of Theorem \ref{indfaniso3} using Lemma \ref{lemaniso1}.
 \end{proof}

We write $\mathcal{F}'_{(a_1)}(1)=\mathcal{F}'_N(1)$ with $\mathrm{GK}(N)=(a_1)$.
The above inductive formula yields the explicit value of $\mathcal{F}'_{(a_1)}(1)$ as follows. 

\begin{Lem}\label{ll1}
We have that
\[\mathcal{F}'_{(a_1)}(1)=-(1+p+p^2+\cdots +p^{a_1}).\]
    
\end{Lem}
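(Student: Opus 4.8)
The plan is to prove the formula by induction on $a_1$, using the second inductive formula of Proposition \ref{propaniso2} as the engine of the recursion. First I would dispose of the base cases $a_1=0$ and $a_1=1$: these are exactly the two values $\mathcal{F}'_{(0)}(1)=-1$ and $\mathcal{F}'_{(1)}(1)=-(p+1)$ recorded in Lemma \ref{lemaniso2}, and they coincide with $-(1+p+\cdots+p^{a_1})$ for $a_1=0,1$.

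For the inductive step, suppose $a_1\geq 2$ and that the formula holds for all smaller indices. The rank-one anisotropic lattice $N$ with $\mathrm{GK}(N)=(a_1)$ is \emph{not} maximal, since a rank-one anisotropic quadratic lattice over $\mathbb{Z}_p$ is maximal precisely when $a_1\leq 1$ (cf. Remark \ref{01max}); hence the second formula of Proposition \ref{propaniso2} applies and gives
\[
\mathcal{F}'_N(1)=p^2\cdot\mathcal{F}'_{N^{(1)}}(1)-(p+1),
\]
where $N^{(1)}$ is the lattice spanned by $\frac{1}{p}\cdot e_1$. Since replacing the generator $e_1$ by $\frac{1}{p}\cdot e_1$ lowers the order of its norm by $2$, we have $\mathrm{GK}(N^{(1)})=(a_1-2)$ (this is also the description of $\mathrm{GK}(L^{(n)})$ in Theorem \ref{indfaniso3} specialized to $n=1$). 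Writing $\mathcal{F}'_{(a_1)}(1)=\mathcal{F}'_N(1)$ and $\mathcal{F}'_{(a_1-2)}(1)=\mathcal{F}'_{N^{(1)}}(1)$ and invoking the induction hypothesis,
\[
\mathcal{F}'_{(a_1)}(1)=p^2\cdot\bigl(-(1+p+\cdots+p^{a_1-2})\bigr)-(p+1)=-(p^2+p^3+\cdots+p^{a_1})-(1+p)=-(1+p+\cdots+p^{a_1}),
\]
which completes the induction.

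There is essentially no genuine obstacle here; the only point that needs to be checked carefully is the applicability of Proposition \ref{propaniso2} at each stage, namely that $N$ is non-maximal whenever $a_1\geq 2$, so that $N^{(1)}$ is again an integral quadratic lattice on which the Siegel series is defined. This is precisely the content of Remark \ref{01max}, so once that is cited the argument is a one-line computation.
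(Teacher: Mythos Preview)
Your proof is correct and takes essentially the same approach as the paper: both use the base values from Lemma~\ref{lemaniso2} together with the rank-one recursion $\mathcal{F}'_{(a_1)}(1)=p^2\mathcal{F}'_{(a_1-2)}(1)-(p+1)$ from Proposition~\ref{propaniso2}. The paper unrolls the recursion into a closed form split by the parity of $a_1$ and then simplifies, whereas you phrase it as a direct induction; these are the same argument.
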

\begin{proof}
Since $\mathrm{GK}(N^{(1)})=(a_1-2)$, 
by Proposition \ref{propaniso2} and Lemma \ref{lemaniso2},
we have that
\[
\mathcal{F}'_{(a_1)}(1)=\left\{
  \begin{array}{l l}
  -p^{a_1}-(p+1)(1+p^2+\cdots +p^{a_1-2})   & \quad    \text{if $a_1$ is even};\\
  -(p+1)p^{a_1-1}-(p+1)(1+p^2+\cdots +p^{a_1-3})   & \quad    \text{if $a_1$ is odd}.
    \end{array} \right.
    \]
    This completes the proof.
\end{proof}

For an anisotropic lattice $M$ with $\mathrm{GK}(M)=(a_1, a_2)$, we define 
\begin{equation}\label{eqta}
\displaystyle\mathcal{T}_{a_1, a_2}=\sum_{x=0}^{a_1}\sum_{y=0}^{a_2}p^{\mathrm{min}\{a_1-x+y, a_2-y+x\}}.
\end{equation}

The number $\displaystyle\mathcal{T}_{a_1, a_2}$ is indeed the local intersection multiplicity on the special fiber, defined by  Equations (5.3) and (5.16) and Lemma 5.6 of \cite{GK1}.
In the following proposition, we will explain an inductive formula of $\displaystyle\mathcal{T}_{a_1, a_2}$, motivated by an inductive formula of $ \mathcal{F}'_M(1/p)$ in Proposition \ref{propaniso2} as they are supposed to match each other.
Later in Theorem \ref{thmaniso1},
$\displaystyle\mathcal{T}_{a_1, a_2}$ will be compared with the derivative of the Siegel series associated to the quadratic lattice $M$.

\begin{Prop}\label{propaniso3}
If $a_2\geq 2$, then 
\[\mathcal{T}_{a_1, a_2}=p\cdot \mathcal{T}_{a_1, a_2-2}-2\mathcal{F}'_{(a_1)}(1).\]
\end{Prop}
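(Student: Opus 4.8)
The plan is to prove the identity $\mathcal{T}_{a_1, a_2}=p\cdot \mathcal{T}_{a_1, a_2-2}-2\mathcal{F}'_{(a_1)}(1)$ by a direct manipulation of the explicit double sum defining $\mathcal{T}_{a_1,a_2}$ in Equation (\ref{eqta}), together with the closed form $\mathcal{F}'_{(a_1)}(1)=-(1+p+\cdots+p^{a_1})$ from Lemma \ref{ll1}. First I would write
\[
\mathcal{T}_{a_1,a_2}=\sum_{x=0}^{a_1}\sum_{y=0}^{a_2}p^{\min\{a_1-x+y,\,a_2-y+x\}},
\]
and observe that the summand depends on $y$ and on $a_2$ only through the quantities $a_2-y$ and $y$ in a way that suggests reindexing the inner sum. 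The natural idea is to split the inner sum over $y\in\{0,\dots,a_2\}$ by comparing it with the inner sum over $y\in\{0,\dots,a_2-2\}$; the ``shift by $2$'' in $a_2$ should, after the substitution $y\mapsto y$ on the overlapping range, produce a factor of $p$ (since $\min\{a_1-x+y,\,a_2-y+x\}$ and $\min\{a_1-x+y',\,(a_2-2)-y'+x\}$ differ by $1$ under the appropriate matching $y'=y-1$ when both minima are achieved by the same branch), leaving two ``boundary'' terms from $y$ near $0$ and $y$ near $a_2$ that are not accounted for.

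The key steps, in order, are: (1) fix $x$ and analyze the inner sum $S_x(a_2):=\sum_{y=0}^{a_2}p^{\min\{a_1-x+y,\,a_2-y+x\}}$ as a function of $a_2$, identifying for each $y$ which branch of the minimum is active (the crossover happens near $y=\tfrac{a_2-a_1}{2}+x$); (2) establish a recursion $S_x(a_2)=p\cdot S_x(a_2-2)+(\text{boundary contributions})$ by matching terms; (3) sum over $x=0,\dots,a_1$ and check that $\sum_x p\cdot S_x(a_2-2)=p\cdot\mathcal{T}_{a_1,a_2-2}$ while the total boundary contribution equals $-2\mathcal{F}'_{(a_1)}(1)=2(1+p+\cdots+p^{a_1})$. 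Step (3) is where Lemma \ref{ll1} enters: the boundary terms should collapse, after summing over $x$, precisely to twice the geometric sum $1+p+\cdots+p^{a_1}$, the factor $2$ reflecting the two ends $y\approx 0$ and $y\approx a_2$, and the symmetry $x\leftrightarrow a_1-x$ converting the $x$-dependence into the geometric series.

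The main obstacle I anticipate is the careful bookkeeping of the boundary terms and the crossover point in step (2): when $a_2-a_1$ is small or when $a_2$ and $a_1$ have the same or different parity, the location where $\min$ switches branches interacts with the endpoints $y=0,1$ and $y=a_2-1,a_2$, so one must verify that no terms are double-counted or dropped, and that the ``extra'' terms on each end are exactly $\sum_{x=0}^{a_1}p^{a_1-x}$ and $\sum_{x=0}^{a_1}p^{x}$ respectively, each equal to $1+p+\cdots+p^{a_1}$. An alternative, possibly cleaner, route that avoids these parity subtleties is to prove the identity by a purely formal/generating-function argument: both sides of the claimed recursion are determined by Proposition \ref{propaniso2} (which gives $\mathcal{F}'_M(1/p)=p\cdot\mathcal{F}'_{M^{(2)}}(1/p)+2(p-1)\mathcal{F}'_N(1)$) together with the known base cases $\mathcal{T}_{a_1,0}$, $\mathcal{T}_{a_1,1}$ from Lemma \ref{lemaniso2}, so one could instead verify the recursion holds on base cases and propagate it — but since the statement is phrased directly in terms of $\mathcal{T}$, the direct double-sum computation is the honest proof, and I would carry it out as sketched above.
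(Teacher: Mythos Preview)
Your proposal is correct and follows essentially the same approach as the paper: the paper writes out $\mathcal{T}_{a_1,a_2}$ and $p\cdot\mathcal{T}_{a_1,a_2-2}$ as double sums, reindexes the latter via $y\mapsto y+1$ (your $y'=y-1$), splits each at the crossover $y=x+\tfrac{a_2-a_1}{2}$ where the minimum switches branches, and subtracts to obtain exactly the two boundary sums $\sum_{x=0}^{a_1}p^{a_1-x}$ and $\sum_{x=0}^{a_1}p^{x}$ you predicted, whose total is $-2\mathcal{F}'_{(a_1)}(1)$ by Lemma~\ref{ll1}. The parity concern you raise is not an actual obstruction---the fractional crossover index is just shorthand for the inequality $a_1-x+y\le a_2-y+x$, and the telescoping goes through uniformly.
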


\begin{proof}
We write 
\[
\left\{
  \begin{array}{l}
\displaystyle\mathcal{T}_{a_1, a_2}=\sum_{x=0}^{a_1}\sum_{y=0}^{a_2}p^{\mathrm{min}\{a_1-x+y, a_2-y+x\}};\\
p\cdot \displaystyle\mathcal{T}_{a_1, a_2-2}=\sum_{x=0}^{a_1}\sum_{y=0}^{a_2-2}p^{\mathrm{min}\{a_1-x+(y+1), a_2-(y+1)+x\}}.
    \end{array} \right.
\]
We rewrite the above sums as follows:
\[
\left\{
  \begin{array}{l}
\displaystyle \mathcal{T}_{a_1, a_2}=
\sum_{x=0}^{a_1}\sum_{y=0}^{x+\frac{a_2-a_1}{2}}p^{a_1-x+y}+\sum_{x=0}^{a_1}\sum_{x+\frac{a_2-a_1}{2}<y}^{a_2}p^{a_2-y+x};\\
\displaystyle p\cdot \displaystyle\mathcal{T}_{a_1, a_2-2}=
\sum_{x=0}^{a_1}\sum_{y=0}^{x+\frac{a_2-a_1}{2}-1}p^{a_1-x+(y+1)}+\sum_{x=0}^{a_1}\sum_{x+\frac{a_2-a_1}{2}-1<y}^{a_2-2}p^{a_2-(y+1)+x}.
    \end{array} \right.
\]
Then we have 
\[
\left\{
  \begin{array}{l}
\displaystyle 
\sum_{x=0}^{a_1}\sum_{y=0}^{x+\frac{a_2-a_1}{2}}p^{a_1-x+y}-\sum_{x=0}^{a_1}\sum_{y=0}^{x+\frac{a_2-a_1}{2}-1}p^{a_1-x+(y+1)}
=\sum_{x=0}^{a_1}p^{a_1-x};\\
\displaystyle 
\sum_{x=0}^{a_1}\sum_{x+\frac{a_2-a_1}{2}<y}^{a_2}p^{a_2-y+x}-
\sum_{x=0}^{a_1}\sum_{x+\frac{a_2-a_1}{2}-1<y}^{a_2-2}p^{a_2-(y+1)+x}
=\sum_{x=0}^{a_1}p^{a_2-a_2+x}.
    \end{array} \right.
\]
Thus we have 
\[\mathcal{T}_{a_1, a_2}-p\cdot \mathcal{T}_{a_1, a_2-2}
=2\sum_{x=0}^{a_1}p^x=-2\mathcal{F}'_{(a_1)}(1).\]
\end{proof}

We now compare the local intersection multiplicity $\displaystyle\mathcal{T}_{a_1, a_2}$ on the special fiber, with the derivative of the Siegel series associated to the lattice $M$ in the following theorem.

\begin{Thm}\label{thmaniso1}
We have the following equality:
\[
\mathcal{T}_{a_1, a_2}=\frac{-1}{p-1}\cdot \mathcal{F}'_M(1/p).
\]
In addition, both sides satisfy the same inductive formula.
\end{Thm}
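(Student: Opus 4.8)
The plan is to prove the identity by induction on $a_1+a_2$, playing the two recursions already in hand against each other. Write $G_{a_1,a_2}:=\frac{-1}{p-1}\cdot\mathcal{F}'_M(1/p)$ for an anisotropic rank-$2$ lattice $M$ with $\mathrm{GK}(M)=(a_1,a_2)$. By Remark \ref{rmk512} the Siegel series of an anisotropic $\mathbb{Z}_p$-lattice is determined by its Gross--Keating invariant, so $G_{a_1,a_2}$ depends only on the multiset $\{a_1,a_2\}$; and $\mathcal{T}_{a_1,a_2}$ is manifestly symmetric in $a_1,a_2$ from \eqref{eqta}. Thus both quantities are functions of the unordered pair, and the goal is to show $\mathcal{T}_{a_1,a_2}=G_{a_1,a_2}$ while observing that they obey the same recursion.

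First I would settle the base cases, which are exactly the maximal lattices, i.e. $(a_1,a_2)\in\{(0,0),(0,1),(1,1)\}$ (Remark \ref{01max}). A direct evaluation of \eqref{eqta} gives $\mathcal{T}_{0,0}=1$, $\mathcal{T}_{0,1}=2$, $\mathcal{T}_{1,1}=2(p+1)$, while Lemma \ref{lemaniso2} gives $\mathcal{F}'_M(1/p)=-(p-1)$, $-2(p-1)$, and $-2(p-1)(p+1)$ respectively, so that $G_{a_1,a_2}=1$, $2$, $2(p+1)$. Hence the identity holds on all base cases, and nothing further is needed there.

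For the inductive step I would take $\max(a_1,a_2)\ge 2$ and, by symmetry, assume $a_2\ge 2$, so that $M$ is not maximal (Remark \ref{01max}). Proposition \ref{propaniso3} then gives $\mathcal{T}_{a_1,a_2}=p\,\mathcal{T}_{a_1,a_2-2}-2\,\mathcal{F}'_{(a_1)}(1)$. On the Siegel side, the first formula of Proposition \ref{propaniso2} gives $\mathcal{F}'_M(1/p)=p\,\mathcal{F}'_{M^{(2)}}(1/p)+2(p-1)\,\mathcal{F}'_N(1)$ with $\mathrm{GK}(M^{(2)})=(a_1)\cup(a_2-2)$ and $\mathrm{GK}(N)=(a_1)$; dividing by $-(p-1)$ rewrites this as $G_{a_1,a_2}=p\,G_{a_1,a_2-2}-2\,\mathcal{F}'_{(a_1)}(1)$, using $\mathcal{F}'_N(1)=\mathcal{F}'_{(a_1)}(1)$ and the multiset-invariance of $G$ (so that $G$ evaluated at $(a_1)\cup(a_2-2)$ is $G_{a_1,a_2-2}$). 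These two displayed recursions are literally identical, which already establishes the "same inductive formula" clause of the theorem; and since $a_1+(a_2-2)<a_1+a_2$, the induction hypothesis $\mathcal{T}_{a_1,a_2-2}=G_{a_1,a_2-2}$ combined with the two recursions forces $\mathcal{T}_{a_1,a_2}=G_{a_1,a_2}$, closing the induction.

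The argument is essentially mechanical once Propositions \ref{propaniso2} and \ref{propaniso3} are available; the one point that needs a little care is the bookkeeping of the Gross--Keating invariant as a non-decreasing sequence. One must check that the reductions $(a_1,a_2)\mapsto(a_1)\cup(a_2-2)$ occurring on the two sides agree after re-sorting (in particular when $a_2-2<a_1$), and that the descent terminates precisely at the maximal lattices enumerated in the base case. Beyond this indexing hygiene I do not expect any genuine obstacle.
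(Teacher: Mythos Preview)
Your proposal is correct and follows essentially the same approach as the paper: both reduce the identity to matching initial values at the maximal lattices $(0,0)$, $(0,1)$, $(1,1)$ after observing, via Propositions \ref{propaniso2} and \ref{propaniso3}, that $\mathcal{T}_{a_1,a_2}$ and $\frac{-1}{p-1}\mathcal{F}'_M(1/p)$ obey the identical recursion $X_{a_1,a_2}=p\,X_{a_1,a_2-2}-2\mathcal{F}'_{(a_1)}(1)$. Your explicit induction on $a_1+a_2$ and the remark on re-sorting when $a_2-2<a_1$ simply make the paper's one-line appeal to those propositions more transparent.
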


\begin{proof}
By Propositions \ref{propaniso2} and \ref{propaniso3},
it suffices to prove that both sides have the same initial values.

The initial values of $\mathcal{T}_{a_1, a_2}$ can be computed directly from its definition as follows:
\[
\mathcal{T}_{a_1, a_2}=\left\{
  \begin{array}{l l}
 1  & \quad  \textit{if $(a_1, a_2)=(0, 0)$};\\
 2(p+1)  & \quad  \textit{if $(a_1, a_2)=(1, 1)$};\\
 2  & \quad  \textit{if $(a_1, a_2)=(0, 1)$},
    \end{array} \right.
    \]

On the other hand, by Lemma \ref{lemaniso2}, we have
\[
\mathcal{F}'_M(1/p)=\left\{
  \begin{array}{l l}
 -(p-1)  & \quad  \textit{if $(a_1, a_2)=(0, 0)$};\\
 -2(p-1)(p+1)  & \quad  \textit{if $(a_1, a_2)=(1, 1)$};\\
 -2(p-1)  & \quad  \textit{if $(a_1, a_2)=(0, 1)$},
    \end{array} \right.
    \]

Therefore, both $\mathcal{T}_{a_1, a_2}$ and $\frac{-1}{p-1}\cdot \mathcal{F}'_M(1/p)$ have the same initial values.
\end{proof}

For an anisotropic quadratic $\mathbb{Z}_p$-lattice $L$ of rank 3 with ${\rm GK}(L)=(a_1,a_2,a_3)$, put $\alpha_p(a_1,a_2,a_3):=\alpha_p(L)$ which is the local intersection multiplicity defined in Equation 
(5.3) of \cite{GK1}. 
We finally compare it with the derivative of the Siegel series associated to the quadratic lattice $L$ in the following theorem.

\begin{Thm}\label{thmaniso2}
Let $\mathrm{GK}(L)=(a_1, a_2, a_3)$.
Then we have the following equality:
\[
\alpha_p(a_1,a_2,a_3)=\frac{-2p}{(p-1)^2}\cdot\frac{\mathcal{F}'_L(1/p^2)}{\alpha(L, O_D)}.
\]
Moreover, both sides satisfy the same inductive formula.
\end{Thm}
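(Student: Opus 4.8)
The plan is to prove both assertions simultaneously by induction on $|\mathrm{GK}(L)|=a_1+a_2+a_3$, using Propositions \ref{propaniso1} and \ref{propaniso3} and Theorem \ref{thmaniso1} for the inductive step, and a direct evaluation of the ternary Siegel series for the two terminal cases. Write $\beta_p(a_1,a_2,a_3):=\frac{-2p}{(p-1)^2}\cdot\frac{\mathcal{F}'_L(1/p^2)}{\alpha(L,O_D)}$ for an anisotropic rank-$3$ lattice $L$ with $\mathrm{GK}(L)=(a_1,a_2,a_3)$; this depends only on the Gross-Keating invariant, since $\mathcal{F}_L$ does by Remark \ref{rmk512}, while $\alpha(L,O_D)=2(p+1)^2p^{-1}$ is constant by Theorem 1.1 of \cite{Wed1}. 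Since $\alpha_p(a_1,a_2,a_3)$ also depends only on the Gross-Keating invariant (by (3.18) of \cite{GK1}), it suffices to show that $\alpha_p$ and $\beta_p$, viewed as functions on the set of Gross-Keating invariants of anisotropic ternary $\mathbb{Z}_p$-lattices, obey the same recursion and agree at the terminal values.

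First I would establish the recursion for $\beta_p$. Assuming $L$ is not maximal — equivalently $a_3\geq 2$ by Remark \ref{01max} — the lattice $L^{(3)}$ is again an anisotropic quadratic lattice of rank $3$, with $\mathrm{GK}(L^{(3)})=(a_1,a_2)\cup(a_3-2)$ and $|\mathrm{GK}(L^{(3)})|=|\mathrm{GK}(L)|-2$. Multiplying the identity of Proposition \ref{propaniso1} by $-2p(p-1)^{-2}$, using $\frac{-2p}{(p-1)^2}\cdot\frac{p-1}{2p}=\frac{-1}{p-1}$, and invoking $\frac{-1}{p-1}\mathcal{F}'_M(1/p)=\mathcal{T}_{a_1,a_2}$ from Theorem \ref{thmaniso1} (where $M=L_0^{(3)}$ has $\mathrm{GK}(M)=(a_1,a_2)$), I obtain
\[
\beta_p(a_1,a_2,a_3)=\beta_p\bigl((a_1,a_2)\cup(a_3-2)\bigr)+\mathcal{T}_{a_1,a_2}.
\]
This is exactly the Gross-Keating inductive formula (\ref{eqintro1}) (Lemma 5.6 of \cite{GK1}), up to re-sorting the triple $(a_1,a_2)\cup(a_3-2)$, which is harmless since both sides depend only on the Gross-Keating invariant; it also manifestly parallels the recursion for $\mathcal{T}_{a_1,a_2}$ in Proposition \ref{propaniso3}. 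This settles the ``moreover'' clause.

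Next I would treat the base cases. If $L$ is maximal then $a_3\leq 1$ by Remark \ref{01max}, and since exactly two of $a_1\leq a_2\leq a_3$ have the same parity (Lemma 5.3 of \cite{B}), the only possibilities are $\mathrm{GK}(L)=(0,0,1)$ and $\mathrm{GK}(L)=(0,1,1)$, i.e. $|\mathrm{GK}(L)|\in\{1,2\}$. For these two invariants Corollary \ref{rmkse} gives an explicit polynomial $\mathcal{F}_L(X)$ — obtained by the same bookkeeping of the sets $\mathcal{S}_{(L,a^{\pm},b)}$ that produced Lemma \ref{lemaniso2} — and dividing $\mathcal{F}'_L(1/p^2)$ by $\alpha(L,O_D)=2(p+1)^2p^{-1}$ yields $\beta_p(0,0,1)$ and $\beta_p(0,1,1)$, which I would match against the terminal values of the Gross-Keating multiplicity recorded in \cite{GK1}. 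Finally, any $L$ with $|\mathrm{GK}(L)|\geq 3$ is automatically non-maximal (the maximal cases account for all invariants of total weight $\leq 2$), so the recursion for $\beta_p$ applies; combining it with the inductive hypothesis $\beta_p(\mathrm{GK}(L^{(3)}))=\alpha_p(\mathrm{GK}(L^{(3)}))$ and with (\ref{eqintro1}) gives $\beta_p(\mathrm{GK}(L))=\alpha_p(\mathrm{GK}(L))$, closing the induction.

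The hard part will be the base-case verification rather than the induction: one must compute the Siegel series of the two maximal anisotropic ternary lattices from Corollary \ref{rmkse} and, more delicately, reconcile the normalization of $\alpha_p$ in \cite{GK1} (coming from their (3.18)) with the factor $-2p(p-1)^{-2}$ and with the normalization of the local density $\alpha(L,O_D)$ fixed in Section 5 of \cite{IK2}. Once these constants are pinned down, the inductive step is purely formal, relying only on Proposition \ref{propaniso1}, Theorem \ref{thmaniso1} and (\ref{eqintro1}).
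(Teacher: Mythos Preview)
Your proposal is correct and follows essentially the same approach as the paper: derive the recursion $\beta_p(a_1,a_2,a_3)=\beta_p((a_1,a_2)\cup(a_3-2))+\mathcal{T}_{a_1,a_2}$ from Proposition \ref{propaniso1} and Theorem \ref{thmaniso1}, match it with the Gross--Keating recursion for $\alpha_p$, and verify the two base cases $(0,0,1)$ and $(0,1,1)$ by computing $\mathcal{F}_L$ from Corollary \ref{rmkse}. The only minor difference is that the paper carries out the base-case computation explicitly (obtaining $\beta_p(0,0,1)=1$ and $\beta_p(0,1,1)=2$) and cites Proposition 1.6 of \cite{Rap1} for the corresponding initial values of $\alpha_p$, rather than \cite{GK1} directly.
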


\begin{proof}
Let $\widetilde{\mathcal{T}}_{a_1, a_2, a_3}=\frac{-2p}{(p-1)^2}\cdot\frac{\mathcal{F}'_L(1/p^2)}{\alpha(L, O_D)}$.
Then by Theorem \ref{thmaniso1}, the formula of Proposition \ref{propaniso1} turns to be
\[
\widetilde{\mathcal{T}}_{a_1, a_2, a_3}=\widetilde{\mathcal{T}}_{a_1, a_2, a_3-2}+\mathcal{T}_{a_1, a_2}.
\]
On the other hand, by Lemma 5.6 and Equations (5.16) and (5.18) of \cite{GK1},
the local intersection multiplicity $\alpha_p(a_1,a_2,a_3)$ satisfies the following inductive formula:
\[
\alpha_p(a_1,a_2,a_3)=\alpha_p(a_1,a_2,a_3-2)+\mathcal{T}_{a_1, a_2}.
\]

Therefore, it suffices to show that both $\widetilde{\mathcal{T}}_{a_1, a_2, a_3}$ and $\alpha_p(a_1,a_2,a_3)$ have the same initial values.

We write   $\mathcal{F}_{(a_1, a_2, a_3)}(X)$ for $\mathcal{F}_L(X)$.
If $a_i$ consists of either $0$ or $1$, then the associated quadratic lattice $L$  is maximal by Remark \ref{01max}.
Using Corollary \ref{rmkse}, the Siegel series for a maximal quadratic lattice is described explicitly as follows:
\[
\left\{
  \begin{array}{l}
\mathcal{F}_{(0, 0, 1)}(X)=(1-X)(1-p^2X^2)(1-p^2X);\\
\mathcal{F}_{(0, 1, 1)}(X)=(1-X)(1-p^2X^2)(1-p^4X^2).
    \end{array} \right.
\]

Thus

\[
\left\{
  \begin{array}{l}
\mathcal{F}'_{(0, 0, 1)}(\frac{1}{p^2})=-p^2(1-\frac{1}{p^2})^2,
~~~~~ \frac{-2p}{(p-1)^2}\cdot\frac{\mathcal{F}'_{(0, 0, 1)}(\frac{1}{p^2})}{\alpha(L, O_D)}=1 
  ;\\
 \mathcal{F}'_{(0, 1,1)}(\frac{1}{p^2})=-2p^2(1-\frac{1}{p^2})^2, ~~~~~ \frac{-2p}{(p-1)^2}\cdot\frac{\mathcal{F}'_{(0, 1, 1)}(\frac{1}{p^2})}{\alpha(L, O_D)}=2.
    \end{array} \right.
\]

Therefore, both sides have the same initial values by Proposition 1.6 of \cite{Rap1}.
\end{proof}

\section{Application 1: The intersection number over a finite field}\label{sectionapp}
In this section we revisit the results of Gross-Keating \cite{GK1}  and give a new identity between certain intersection numbers
of cycles over a finite field and the sum of the Fourier coefficients of the Siegel-Eisenstein series for $\mathrm{Sp}_{4}/\Q$ of weight $2$. 
We follow the notation and results in Chapters 3-5 of \cite{A} (cf. \cite{G1}, \cite{G2}, \cite{Wed2}). 

\subsection{Main results}
For a positive integer $m$, we denote by $T_m$ the modular correspondence of degree $m$ defined in \cite{G2}.  
It can be regarded as a flat scheme $T_m$ over $\Z$ which is explicitly given by $T_m={\rm Spec}~ \Z[x,y]/(\varphi_m)
\subset {\rm Spec}~ \Z[x,y]=:S$, where we think the latter scheme $S$ as the product of two copies of the 
coarse moduli space $Y_0(1)$ of elliptic curves.
Here $\varphi_m$ is the modular polynomial of degree $m$ (see \cite{V1} and \cite{G2}). 
We consider $T_{m,p}:=T_m\otimes {\rm Spec}~ \bF_p$
and $T_{m, \mathbb{C}}:=T_m\otimes {\rm Spec}~ \mathbb{C}$. 
Two cycles $T_{m_1,\C}$ and $T_{m_2,\C}$ intersect properly if and only if $m_1m_2$ is not a  square (cf. Proposition 2.4 of \cite{GK1}).
The associated 
intersection number over $\C$ is defined by 
\begin{equation}\label{intC}
(T_{m_1,\C},T_{m_2,\C}):={\rm length}_{\C}\C[x,y]/(\varphi_{m_1},\varphi_{m_2}).  
\end{equation}

We first state the following proposition to explain exactly when two cycles over a finite field intersect properly. 
It turns out to be 
the same as the situation over $\mathbb{C}$.

\begin{Prop}\label{ip}
For given two positive integers $m_1$ and $m_2$,
the cycles $T_{m_1,p}$ and $T_{m_2,p}$ intersect properly in $S\otimes {\rm Spec}~ \bF_p$ if and only if 
$m_1m_2$ is not a square (equivalently, $T_{m_1,\mathbb{C}}$ and $T_{m_2,\mathbb{C}}$ intersect properly inside 
$S\otimes {\rm Spec}~ \mathbb{C}$ by Proposition 2.4 of \cite{GK1}).
\end{Prop}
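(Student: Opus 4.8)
The plan is to reduce the question to the analogous statement over $\mathbb{C}$ (Proposition 2.4 of \cite{GK1}), which identifies proper intersection with $m_1m_2$ not being a square, and then to check that passing to a finite field does not change the answer. First I would recall the geometric meaning: a point of $T_{m_1,p}\cap T_{m_2,p}$ in $S\otimes\F_p$ corresponds to a pair of elliptic curves over $\overline{\F}_p$ together with cyclic isogenies of degrees $m_1/n_1^2$ and $m_2/n_2^2$ respectively (after the usual bookkeeping of the irreducible factors of $\varphi_m$ as in \cite{V1}), equivalently to a pair $(E,E')$ admitting both an $m_1$- and an $m_2$-isogeny in the appropriate sense. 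The intersection fails to be proper exactly when a whole one-dimensional component of $T_{m_1,p}$ lies inside $T_{m_2,p}$ (or vice versa), i.e. when for a generic $j$-invariant the two correspondences share an irreducible curve.

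The key step is the case split by $p$-type of the elliptic curves. On the ordinary locus, the endomorphism ring of $E$ over $\overline{\F}_p$ is an order in an imaginary quadratic field, so the set of isogenies between a fixed ordinary $E$ and varying $E'$ behaves exactly as in characteristic $0$; here one uses the theory of quasi-canonical (CM) lifts to match the local structure with the complex picture, and a one-dimensional common component would force, upon lifting, a one-dimensional common component over $\C$, contradicting Proposition 2.4 of \cite{GK1} unless $m_1m_2$ is a square. On the supersingular locus, which is zero-dimensional in $\M_{\F_p}$, no one-dimensional component can be supported there, so it cannot be the source of improper intersection. Conversely, if $m_1m_2$ is a square, then (as over $\C$) the correspondences $T_{m_1}$ and $T_{m_2}$ literally share an irreducible component already over $\Z[\tfrac1p]$ — this is a statement about the factorization of modular polynomials, insensitive to reduction mod $p$ for $p$ odd and checkable directly — hence $T_{m_1,p}$ and $T_{m_2,p}$ fail to intersect properly.

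Concretely the steps I would carry out are: (1) translate ``intersect properly'' into ``no common irreducible component of dimension $1$'', using that $S\otimes\F_p$ is a surface; (2) observe the locus of common points meeting the supersingular points is finite, hence irrelevant to $1$-dimensional components; (3) on the ordinary locus, use the lifting theory (Deuring lifts / quasi-canonical lifts) to produce from any hypothetical common $1$-dimensional component a corresponding common component in characteristic zero, and invoke Proposition 2.4 of \cite{GK1} to conclude $m_1m_2$ must be a square; (4) for the reverse implication, exhibit the common component directly from the shape of $\varphi_{m_1},\varphi_{m_2}$ when $m_1m_2=\square$, valid over $\Z[\tfrac1p]$ and hence over $\F_p$.

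The main obstacle I anticipate is step (3): making precise that a common $1$-dimensional component over $\F_p$ on the ordinary locus really does lift to a common component over a characteristic-zero base, rather than merely to a formal or infinitesimal coincidence. One must argue at the generic point of the component — where the elliptic curve is ordinary with known CM-type endomorphism structure — and use the deformation theory of isogenies (Serre–Tate, quasi-canonical lifts) to spread the coincidence of isogeny data out to an honest algebraic family lifting to $\Z_p^{\mathrm{unr}}$ and then to $\C$. Handling the case $p=2$ is genuinely different (the modular-polynomial factorization and the supersingular structure are more delicate), which is why the statement is phrased for $p$ odd; I would flag that and refer to Remark \ref{conrad}.
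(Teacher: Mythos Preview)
Your argument for the ``$m_1m_2$ square $\Rightarrow$ improper intersection'' direction is fine and matches the paper's: the common factor $\psi_n$ of $\varphi_{m_1}$ and $\varphi_{m_2}$ in $\Z[x,y]$ reduces mod $p$ to a curve.

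For the other direction, however, your plan is more complicated than necessary and contains a genuine confusion. You write that at the generic point of a hypothetical common component $D$ the elliptic curve is ``ordinary with known CM-type endomorphism structure.'' This is false: the generic point of $D$ lives over a field $K$ of transcendence degree one over $\bF_p$, and an elliptic curve over such a field with $j$-invariant not in $\bF_p$ has ${\rm End}(E)\otimes\Q=\Q$ (Theorem~12 of \cite{CL}), not an order in an imaginary quadratic field. The CM structure you have in mind applies only at closed points of $D$, not at its generic point; and Serre--Tate theory and quasi-canonical lifting are tools for individual elliptic curves over $\bF_p$, not a mechanism for lifting an entire curve $D\subset S_{\F_p}$ to characteristic zero.

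The paper's proof exploits precisely the observation you have backwards: at the generic point of $D$ one has ${\rm Hom}(E,E')=\Z$, so the two isogenies $f_1,f_2\in{\rm Hom}(E,E')$ satisfy $n_1f_1=n_2f_2$ for some nonzero integers, whence $n_1^2m_1=n_2^2m_2$ and $m_1m_2$ is a square. No lifting, no ordinary/supersingular dichotomy, and no restriction on $p$ --- note that the proposition as stated does \emph{not} assume $p$ odd, and the paper's argument works for every prime. Your route could in principle be salvaged by arguing at closed points of $D$ and taking canonical lifts to produce infinitely many intersection points over $\C$, but this is a detour compared to the direct generic-point argument.
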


\begin{proof}Assume that $T_{m_1,p}$ and $T_{m_2,p}$ intersect properly. 
If $m_1m_2$ is square, then we can write $m_i=d^2_i n$ for some positive integers $d_1,d_2,n$. By the description of 
the modular polynomial in p.2 of \cite{V1} we see that the polynomial $\psi_{n}$ (see loc.cit. for the notation of $\psi_n$) 
is a common divisor of $\varphi_{m_1}$ and $\varphi_{m_2}$ as an element of $\Z[X,Y]$. 
Since $p\nmid \psi(X,Y)$, it defines a curve defined over $\F_p$. This gives a contradiction with the assumption. 

Next assume that $m_1m_2$ is not a square. 
If $T_{m_1,p}$ and $T_{m_2,p}$ do not intersect properly, then they share an irreducible common divisor $D$. 
Let $K$ be the function field of $D$ which is of transcendental degree one over $\bF_p$. 
It follows that there exists a geometric point $x={\rm Spec}\overline{K}=((E,E'),f_1,f_2)$ with two endomorphisms $f_1,f_2$ of degree $m_1,m_2$ between elliptic curves $E,E'$ over $K$. By the proof of Proposition \ref{decom} below $j(E),j(E')$ belong to $K\setminus \bF_p$. 
Further it holds that ${\rm Hom}(E,E')\otimes_\Z \Q\simeq {\rm End}(E)\otimes_\Z \Q=\Q$ by Theorem 12 of \cite{CL}. Combining this with 
the $\Z$-freeness of ${\rm Hom}(E,E')$ (cf. Chapter III, Proposition 4.2-(b) in p.68 of \cite{Sil}) we see that ${\rm Hom}(E,E')=\Z$. 
Since $f_1,f_2$ belong to ${\rm Hom}(E,E')=\Z$ there exist two (non-zero) integers $n_1,n_2$ such that $n_1f_1=n_2f_2$. 
It implies that $n^2_1m_1=n^2_2m_2$ and also $m_1m_2$ has to be square. However this yields a contradiction.       
\end{proof}

\begin{Cor}\label{reg}
If $m_1m_2$ is not a square, then 
$\varphi_{m_1}$ and $\varphi_{m_2}$ make up a regular sequence in $\F_p[x,y]$ and 
also in $\bar{\F}_p[[x,y]]$.  
\end{Cor}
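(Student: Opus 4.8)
The plan is to deduce Corollary \ref{reg} directly from Proposition \ref{ip} together with some commutative algebra about regular sequences in two-dimensional Cohen--Macaulay rings. First I would observe that $R := \F_p[x,y]$ is a Cohen--Macaulay (indeed regular) ring of Krull dimension $2$, and likewise $\widehat{R} := \bar\F_p[[x,y]]$ is a complete regular local ring of dimension $2$. In such a ring, a sequence $f_1, f_2$ of two elements is a regular sequence if and only if it is a ``system of parameters up to the relevant height'', i.e.\ if and only if $\dim R/(f_1,f_2) = \dim R - 2 = 0$; this is the standard fact that in a Cohen--Macaulay ring, regular sequences, $R$-sequences, and part-of-a-system-of-parameters all coincide, so that any sequence of length $\ell$ cutting the dimension down by exactly $\ell$ is regular. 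Thus the whole statement reduces to the assertion that $V(\varphi_{m_1},\varphi_{m_2})$ is zero-dimensional over $\F_p$ (and over $\bar\F_p$ after completion at a point).

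Next I would note that $\varphi_{m_1}$ is a nonzero non-unit in $R$, so $R/(\varphi_{m_1})$ is pure of dimension $1$ (a hypersurface in $\A^2_{\F_p}$); this needs only that $\varphi_{m_1} \neq 0$ in $\F_p[x,y]$, which follows since the modular polynomial has leading coefficients $\pm 1$ and hence does not vanish mod $p$. Then ``$\varphi_{m_1}, \varphi_{m_2}$ is a regular sequence'' is equivalent to ``$\varphi_{m_2}$ is a nonzerodivisor on $R/(\varphi_{m_1})$'', equivalently $\varphi_{m_2}$ does not lie in any minimal prime of $(\varphi_{m_1})$, equivalently $V(\varphi_{m_1}) \cap V(\varphi_{m_2})$ contains no irreducible component of $V(\varphi_{m_1})$, i.e.\ has dimension $0$. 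But that is precisely the content of Proposition \ref{ip}: $T_{m_1,p}$ and $T_{m_2,p}$ intersect properly in $S \otimes \F_p$, which (since each is a curve in the surface $S\otimes\F_p$) says exactly that their scheme-theoretic intersection is zero-dimensional, provided $m_1 m_2$ is not a square. For the formal local statement at a geometric point $\xi$, I would base-change: $\bar\F_p[[x,y]]$ is faithfully flat over (the localization and henselization of) $\F_p[x,y]$ at the maximal ideal of $\xi$, and regular sequences are preserved under flat base change; alternatively one argues directly that $\dim \bar\F_p[[x,y]]/(\varphi_{m_1},\varphi_{m_2})_\xi = 0$ because the intersection is a finite set of closed points, so each completed local ring is Artinian, hence $\varphi_{m_1},\varphi_{m_2}$ is a system of parameters in the $2$-dimensional regular local ring $\bar\F_p[[x,y]]$ and therefore a regular sequence.

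Concretely, the steps in order are: (1) recall $\varphi_{m_i} \neq 0$ in $\F_p[x,y]$ from the integrality/leading-coefficient properties of modular polynomials, so $(\varphi_{m_1})$ is a height-one ideal; (2) invoke Proposition \ref{ip} to get that $\F_p[x,y]/(\varphi_{m_1},\varphi_{m_2})$ has Krull dimension $0$; (3) apply the Cohen--Macaulay criterion for regular sequences (a length-$2$ sequence cutting a $2$-dimensional CM ring down to dimension $0$ is regular), which is a textbook fact, e.g.\ from Matsumura or Bruns--Herzog, to conclude regularity in $\F_p[x,y]$; (4) pass to $\bar\F_p[[x,y]]$ either by faithfully flat base change along $\F_p[x,y]_\mathfrak{m} \to \bar\F_p[[x,y]]$ at each point of the (finite) intersection, or by directly checking $\varphi_{m_1},\varphi_{m_2}$ is a system of parameters in this $2$-dimensional regular local ring. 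I do not expect any serious obstacle here — the only point requiring a little care is (4), making sure the completion is taken at a geometric point lying on the intersection and that properness of the intersection over $\F_p$ indeed implies properness over $\bar\F_p$ at each such point, which is immediate since base change to $\bar\F_p$ does not change dimension of the intersection scheme. The main conceptual input, Proposition \ref{ip}, is already proved, so this corollary is essentially a formal unwinding.
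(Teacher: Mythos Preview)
Your proposal is correct and follows essentially the same approach as the paper: both reduce to Proposition \ref{ip} to conclude that $\F_p[x,y]/(\varphi_{m_1},\varphi_{m_2})$ is zero-dimensional, and then deduce that the pair is a regular sequence in the two-dimensional ring. The only difference is packaging: the paper argues directly via the UFD property (a zero-divisor would force a common irreducible factor $h$, hence a curve in the intersection, contradicting Artinianity), whereas you invoke the general Cohen--Macaulay criterion that a length-$2$ sequence in a $2$-dimensional CM ring is regular iff the quotient has dimension $0$; both are standard and equivalent here.
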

\begin{proof}It suffices to prove that $\varphi_{m_1}$ is not a zero divisor in $A:=\F_p[x,y]/(\varphi_{m_2})$. 
Suppose the contrary. 
Then there exists $\alpha \in \F_p[x,y]$ which is not divided by $\varphi_{m_2}$ such that 
$\varphi_{m_2}$ divides $\alpha \varphi_{m_1}$. Since $\F_p[x,y]$ is UFD, there exists an irreducible common factor $h$ of $\varphi_{m_2}$ and 
$\varphi_{m_1}$. 
 Proposition \ref{ip} implies that $\F_p[x,y]/(\varphi_{m_1}, \varphi_{m_2})$ is Artinian but this gives a contradiction with the existence of $h$.  The same argument works for $\bar{\F}_p[[x,y]]$. 
\end{proof}

Let us take two positive integers $m_1$ and $m_2$ such that $m_1m_2$ is not a square.
We write $(T_{m_1,p},T_{m_2,p})$, the intersection number over a finite field, which is explicitly defined as follows: 
\begin{equation}\label{int}
(T_{m_1,p},T_{m_2,p}):={\rm length}_{\F_p}\F_p[x,y]/(\varphi_{m_1},\varphi_{m_2}).  
\end{equation}

Our goal is to compute $(T_{m_1,p},T_{m_2,p})$ explicitly.


\begin{Thm}\label{newid}
Assume that $p$ is odd.
Then for any two positive integers $m_1$ and $m_2$ such that $m_1m_2$ is not a square, the intersection number $(T_{m_1,p},T_{m_2,p})$ is independent of the choice of a prime number $p$ and its explicit value is given as follows:
$$(T_{m_1,p},T_{m_2,p})=\frac{1}{288}
 \sum_{T\in {\rm Sym}_{2}(\Z)_{> 0} \atop {\rm diag}(T)=(m_1, m_2)}c(T)=
 (T_{m_1,\C},T_{m_2,\C}).$$
Here, $c(T)$ is the Fourier coefficient of the Siegel-Eisenstein series for ${\rm Sp}_4(\Z)$ of weight 2 with respect to the $(2\times 2)$- half-integral symmetric matrix $T$ (cf. \cite{nagaoka}). 
\end{Thm}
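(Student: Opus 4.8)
The plan is to evaluate the Artinian algebras $\F_p[x,y]/(\varphi_{m_1},\varphi_{m_2})$ and $\C[x,y]/(\varphi_{m_1},\varphi_{m_2})$ point by point and to match the two contributions term by term; once $(T_{m_1,p},T_{m_2,p})=(T_{m_1,\C},T_{m_2,\C})$ is known, the value $\tfrac{1}{288}\sum_T c(T)$ is inherited from Proposition 2.4 of \cite{GK1} together with \cite{nagaoka}, and independence of $p$ is automatic. By Proposition \ref{ip} and Corollary \ref{reg} both intersections are finite, so both numbers are finite sums of local lengths, indexed by geometric intersection points. A geometric point of $T_{m_1,p}\cap T_{m_2,p}$ is a triple $((E,E'),f_1,f_2)$ with $f_i\colon E\to E'$ an isogeny of degree $m_i$, and over $\overline{\F}_p$ the curves $E,E'$ are either both ordinary, with complex multiplication by an order in an imaginary quadratic field $K$, or both supersingular; this is Proposition \ref{decom} below and gives the splitting
\[
(T_{m_1,p},T_{m_2,p})=\sum_{\xi\ \mathrm{ord}}e_p(\xi)+\sum_{\xi\ \mathrm{ss}}e_p(\xi).
\]
Over $\C$, the argument in the proof of Proposition \ref{ip} (where the hypothesis that $m_1m_2$ is not a square is used) forces $\mathrm{Hom}(E,E')$ to have rank exactly $2$, so every intersection point $\eta$ is a CM point; by Deuring's criterion $\eta$ reduces modulo a prime above $p$ to an ordinary or a supersingular point according as $p$ splits or not in $K$, and hence
\[
(T_{m_1,\C},T_{m_2,\C})=\sum_{\eta:\ p\ \mathrm{spl}}e_\C(\eta)+\sum_{\eta:\ p\ \mathrm{nonspl}}e_\C(\eta).
\]

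For the ordinary contribution I would invoke the theory of canonical and quasi-canonical liftings. An ordinary geometric point $\xi$ together with its pair of isogenies lifts to characteristic zero, and, after the flat base change $\Z\to W(\overline{\F}_p)$ and completion, the local ring of $T_{m_1}\cap T_{m_2}$ at $\xi$ becomes flat over $W(\overline{\F}_p)$ with generic fibre a finite product of the complete local rings of $T_{m_1}\cap T_{m_2}$ at the CM points of $\mathbb{C}_p\simeq\C$ reducing to $\xi$ (this is the content of the quasi-canonical lifting picture; the ordinary case is treated in \cite{GK1}). Since length is insensitive to completion and to faithfully flat base change, and since the bijection between quasi-canonical lifts and CM points with the prescribed reduction is the standard Deuring correspondence, summing over $\xi$ gives $\sum_{\xi\ \mathrm{ord}}e_p(\xi)=\sum_{\eta:\ p\ \mathrm{spl}}e_\C(\eta)$.

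The supersingular contribution is the heart of the argument, and it is here that the earlier results enter. At a supersingular $\xi$ we have $E\simeq E'$ with $\mathrm{End}(E)$ a maximal order in the definite quaternion algebra $B_{p,\infty}$ and $f_1,f_2\in\mathrm{End}(E)$; Gross--Keating's analysis of the special fibre (Lemma 5.6 together with Equations (5.3) and (5.16) of \cite{GK1}) shows that $e_p(\xi)$ depends only on the Gross--Keating invariant $(a_1,a_2)$ of the rank-$2$ anisotropic quadratic $\Z_p$-lattice $M$ canonically attached to $(f_1,f_2)$ (with its reduced-norm form, in the normalization of \cite{GK1}), and that $e_p(\xi)=\mathcal{T}_{a_1,a_2}$. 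By Theorem \ref{thmaniso1} this equals $\tfrac{-1}{p-1}\,\mathcal{F}'_M(1/p)$. I would then sum over the supersingular $\xi$ using the Deuring correspondence between supersingular points and ideal classes, together with the Gross--Keating count of the characteristic-zero CM points reducing to a given supersingular $\xi$ --- equivalently, the relation between $e_\C(\eta)$ at a nonsplit prime and the horizontal components of $T_{m_1}\cap T_{m_2}$ passing through the supersingular locus --- to conclude that $\sum_{\xi\ \mathrm{ss}}e_p(\xi)=\sum_{\eta:\ p\ \mathrm{nonspl}}e_\C(\eta)$. Adding the two contributions yields $(T_{m_1,p},T_{m_2,p})=(T_{m_1,\C},T_{m_2,\C})$, and the displayed identity follows.

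The step I expect to be the main obstacle is precisely this supersingular bookkeeping: identifying the rank-$2$ anisotropic lattice $M$ attached to $(f_1,f_2)$ at a supersingular point and verifying that, after weighting by the correct ideal-class/mass factors, the total over all supersingular points reproduces exactly the nonsplit CM contribution on the complex side. This forces one to import Gross--Keating's special-fibre computation --- which there appears only as an intermediate step inside the triple arithmetic intersection --- into the present two-cycle situation, and to pass back and forth between $\mathcal{T}_{a_1,a_2}$, the local intersection multiplicity $\alpha_p(a_1,a_2,a_3)$, and derivatives of Siegel series via Theorems \ref{thmaniso1}--\ref{thmaniso2}. The ordinary matching, by comparison, is essentially formal once quasi-canonical lifting is in hand.
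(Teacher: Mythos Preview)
Your overall architecture --- split $(T_{m_1,p},T_{m_2,p})$ into ordinary and supersingular pieces via Proposition \ref{decom2}, and match these against the $p$-split and $p$-nonsplit CM contributions on the complex side --- is exactly the paper's strategy. The ordinary step is also close to what the paper does, though your phrase ``essentially formal once quasi-canonical lifting is in hand'' undersells it: the paper does not invoke an abstract flatness statement but computes the special-fibre length explicitly as ${\rm IM}_{p,y}=p^{r_T}$ via Serre--Tate coordinates (Proposition \ref{local-im}(1)) and then verifies, by a case-by-case count of quasi-canonical lifts (Proposition \ref{nend}, Corollary \ref{imp}), that $\sum_{s,s'} N(s,s';f_1,f_2)/\sharp{\rm Aut}$ reproduces $p^{r_T}/\sharp{\rm Aut}$. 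That is Theorem \ref{ordpart}.

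The supersingular step, however, is a genuine gap in your proposal, and the paper handles it quite differently from what you outline. You want to match $\sum_{\xi\ {\rm ss}} e_p(\xi)$ directly against $\sum_{\eta:\ p\ {\rm nonsplit}} e_\C(\eta)$ by an analogue of the lifting/flatness argument, and you cite a ``Gross--Keating count of the characteristic-zero CM points reducing to a given supersingular $\xi$''; but no such count in this two-cycle setting is available in \cite{GK1}, and your proposal does not supply one. The paper (Theorem \ref{sspart}) never attempts this direct matching. Instead it rewrites the supersingular sum as $\sum_T \mathcal{T}_{a_1,a_2}\cdot \sum_{(E,E')\ {\rm ss}} R_{{\rm Hom}(E,E')}(T)/\sharp{\rm Aut}$, applies the Minkowski--Siegel mass formula to convert the inner sum into $\prod_l \alpha_l(T,O_D)$, uses Theorem \ref{thmaniso1} to write $\mathcal{T}_{a_1,a_2}=\frac{-1}{p-1}\mathcal{F}'_{T,p}(1/p)$, and then combines the functional equation of the Siegel series (Equation \eqref{eq8.7}) with the explicit value of $\alpha_p(T,O_D)$ in Lemma \ref{ldrk2aniso} to collapse everything into $\mathcal{F}_{T,p}(1/p^2)\prod_{l\neq p}\mathcal{F}_{T,l}(1/l^2)$, which Nagaoka's formula identifies with $\frac{1}{288}c(T)$. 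The hypothesis $p>2$ enters precisely through Lemma \ref{ldrk2aniso}. None of Minkowski--Siegel, the functional equation, or the explicit $\alpha_p(T,O_D)$ appears in your sketch, and these are the ingredients that actually close the argument.

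A side remark: your instinct that this ``should'' be flatness of the local ring of $T_{m_1}\cap T_{m_2}$ over $W(\bar\F_p)$ is correct, and a direct Cohen--Macaulay/regular-sequence argument from Corollary \ref{reg} would in fact yield flatness at every point (ordinary and supersingular alike) without any Siegel-series manipulation and without the restriction $p>2$. This is the route alluded to in Remark \ref{conrad}. But your proposal does not carry out that argument either; as written, the supersingular half remains unproven.
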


If we reinterpret Theorem \ref{newid} in terms of modular polynomials in Equation (\ref{int}), then the $\mathbb{Z}[\frac{1}{2}]$-module $\mathbb{Z}[\frac{1}{2}][x,y]/(\varphi_{m_1}, \varphi_{m_2})$ satisfies the following interesting properties.

\begin{Thm}\label{flat}
We have the following interpretation about $\mathbb{Z}[\frac{1}{2}][x,y]/(\varphi_{m_1}, \varphi_{m_2})$:
\begin{enumerate}

\item $\mathbb{Z}[\frac{1}{2}][x,y]/(\varphi_{m_1}, \varphi_{m_2})$ is a free $\mathbb{Z}[\frac{1}{2}]$-module. 
\item 
The rank of $\mathbb{Z}[\frac{1}{2}][x,y]/(\varphi_{m_1}, \varphi_{m_2})$, as a $\mathbb{Z}[\frac{1}{2}]$-module,  is equal to 
\[
\frac{1}{288}
 \sum_{T\in {\rm Sym}_{2}(\Z)_{> 0} \atop {\rm diag}(T)=(m_1, m_2)}c(T).
\]
Here, $c(T)$ is as described in the above theorem.
\end{enumerate}
\end{Thm}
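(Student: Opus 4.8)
The plan is to deduce both statements from Theorem \ref{newid} together with a local analysis at each odd prime. Write $R = \Z[\tfrac12]$ and $A = R[x,y]/(\varphi_{m_1},\varphi_{m_2})$. First I would observe that, since $m_1m_2$ is not a square, Corollary \ref{reg} tells us that $\varphi_{m_1},\varphi_{m_2}$ form a regular sequence in $\F_p[x,y]$ for every odd $p$, and also (by the same argument, or by generic flatness) in $\Q[x,y]$. Hence $A\otimes_R \kappa$ is a finite-dimensional algebra over every residue field $\kappa$ of $R$, i.e.\ $A$ is a finite $R$-module supported in dimension $0$ on $\mathrm{Spec}\,R$. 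The key point for freeness is then to show that the fibre dimension $\dim_{\F_p} A\otimes \F_p$ is independent of the odd prime $p$ and equals $\dim_{\Q} A\otimes \Q$. But $\dim_{\F_p}A\otimes\F_p$ is exactly $\mathrm{length}_{\F_p}\F_p[x,y]/(\varphi_{m_1},\varphi_{m_2}) = (T_{m_1,p},T_{m_2,p})$, and $\dim_\Q A\otimes\Q = (T_{m_1,\C},T_{m_2,\C})$ (the latter because $\C[x,y]/(\varphi_{m_1},\varphi_{m_2}) = (\Q[x,y]/(\varphi_{m_1},\varphi_{m_2}))\otimes_\Q\C$ and length is insensitive to the field extension). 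By Theorem \ref{newid} all of these numbers coincide with $\tfrac{1}{288}\sum_{T}c(T)$, so the Hilbert function of $A$ over $\mathrm{Spec}\,R$ is constant.

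Next I would upgrade ``constant fibre dimension'' to ``free module.'' For a finitely generated module $A$ over the Dedekind domain $R$, write $A \cong R^r \oplus (\text{torsion})$ where the torsion part is a finite direct sum of modules $R/\mathfrak{p}^{k}$. At a prime $\mathfrak{p} = (p)$ (odd $p$) at which the torsion part is nonzero, one has $\dim_{\F_p} A\otimes\F_p > \dim_{\Q}A\otimes\Q = r$, contradicting the constancy just established. Hence the torsion part vanishes and $A$ is free of rank $r$. This proves part (1), and part (2) follows immediately since $r = \dim_\Q A\otimes\Q = (T_{m_1,\C},T_{m_2,\C}) = \tfrac{1}{288}\sum_{T}c(T)$ by Theorem \ref{newid} and the identification of $(T_{m_1,\C},T_{m_2,\C})$ with the sum of Fourier coefficients from Proposition 2.4 of \cite{GK1}.

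The main obstacle is making the semicontinuity comparison completely airtight: one must be sure that $A$ is genuinely a \emph{finite} $R$-module (not merely finite over $\Q$ and over each $\F_p$ separately), so that the classical structure theorem for modules over a Dedekind domain applies. This requires knowing that $\varphi_{m_1},\varphi_{m_2}$ have no common factor in $R[x,y]$ — which is precisely the content of Proposition \ref{ip} over $\Q$ — and then invoking that $R[x,y]/(\varphi_{m_1},\varphi_{m_2})$, being a quotient of a finitely generated $R$-algebra by an ideal cutting out a fibrewise-zero-dimensional scheme that is proper (or at least finite) over $\mathrm{Spec}\,R$, is module-finite over $R$; here I would use that $\varphi_m$ is monic, up to a scalar, in one of the variables after the standard normalization of the modular polynomial (cf.\ \cite{V1}), so that $R[x,y]/(\varphi_{m_1})$ is already finite free over $R[x]$, reducing the finiteness to a one-variable statement. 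Once finiteness over $R$ is secured, everything else is formal. I would also remark that the argument is robust: it never uses the precise value $\tfrac{1}{288}\sum c(T)$, only that $(T_{m_1,p},T_{m_2,p})$ is independent of odd $p$ and agrees with the complex intersection number, which is exactly Theorem \ref{newid}.
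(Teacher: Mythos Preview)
Your proposal is correct and matches the paper's intended argument: the paper presents Theorem~\ref{flat} as a direct reinterpretation of Theorem~\ref{newid} without giving a separate proof, and what you have written is precisely the standard way to carry out that reinterpretation (constant fibre dimension over a PID forces freeness, with the rank read off from Theorem~\ref{newid} and Proposition~2.4 of \cite{GK1}). Your identification of module-finiteness of $A$ over $\Z[\tfrac12]$ as the only nontrivial verification, and your resolution via the monicity of $\varphi_m$ in each variable, is exactly right and in fact makes explicit a point the paper leaves implicit.
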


\subsection{Decomposition of the intersection number over a finite field}
In what follows let us go into the proof of Theorem \ref{newid}. Let $m_1,m_2$ be positive integers such that $m_1m_2$ is not a square. 
We denote by ${\rm CLN}_{\bF_p}$ (resp. ${\rm CLN}_{W(\bF_p)}$) the category of complete local Noetherian $\bF_p$ 
(resp. $W(\bF_p)$)-algebras with 
the residue field $\bF_p$. The local deformation functor for a pair of elliptic curves $x:=(E,E')$ over $\bF_p$ on ${\rm CLN}_{\bF_p}$ 
is pro-represented by $R_p:=\bF_p[[t,t']]$, which is called the universal deformation ring of $x$ on ${\rm CLN}_{\bF_p}$. 
Similarly we have the universal deformation ring $R:=W(\bF_p)[[t,t']]$ of $x$ on ${\rm CLN}_{W(\bF_p)}$. 
Let us clarify the relation between the completion $\widehat{\mathcal{O}}_{S_{\bF_p},x}$ of the structure sheaf $\mathcal{O}_{S_{\bF_p}}$ at $x$ and $R_p$. Note that $\widehat{\mathcal{O}}_{S_{\bF_p},x}\simeq \bF_p[[j-j(E),j'-j(E')]]$. 
Here, $j(E)$ and $j(E')$ are the $j$-invariants of $E$ and $E'$, respectively. 
Since ${\rm Aut}(x)={\rm Aut}(E)\times {\rm Aut}(E')$ acts naturally on the deformation datum 
(cf. (8.2) of \cite{KM}) we have that $\widehat{\mathcal{O}}_{S_{\bF_p},x}\simeq R^{{\rm Aut}(x)}_p\subset R_p$ and that $R_p$ is a free 
$\widehat{\mathcal{O}}_{S_{\bF_p},x}$-module of rank $\frac{\sharp {\rm Aut}(x)}{4}$ (cf. page 21 of \cite{G2}). 
It is better to work on $R_p$ instead of $\widehat{\mathcal{O}}_{S_{\bF_p},x}$ because the moduli space of 
elliptic curves is not a fine moduli space.
The difference between these two objects in the computation of local intersection 
multiplicity is understood as below. We first consider the decomposition of the modular polynomial $\varphi_m$ over $R_p$ in terms of the local deformation theory. 

\begin{Prop}\label{decom} For a positive integer $m$, let $(\varphi_m)$ be the ideal of $R_p$ generated by 
the modular  polynomial $\varphi_m$. 
Then 
$$(\varphi_m)=\prod_{f:E\lra E'\ {isogeny\ of\ }\atop 
{\rm degree}\ m,\ {\rm mod} \pm1}I_{m,f,p},$$
where $I_{m,f,p}=(\varphi_{m,f,p})$ with $\varphi_{m,f,p}\in R_p$ is the minimal ideal of $R_p$ such that 
$f$ lifts to an isogeny over $R_p/I_{m,f,p}$. 
\end{Prop}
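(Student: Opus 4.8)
The plan is to prove the factorization by working one prime-to-$p$ isogeny at a time and then showing the local factors are coprime in $R_p$, so that the product of ideals equals the intersection. First I would recall the moduli-theoretic meaning of $\varphi_m$ on the formal neighborhood: a point of $\mathrm{Spec}(R_p/(\varphi_m))$ is a deformation of the pair $(E,E')$ together with a cyclic isogeny of degree $m$ (or, after the standard adjustments, a degree-$m$ isogeny up to the $\pm 1$ ambiguity), since $R_p$ pro-represents the deformation functor of $(E,E')$ and $\varphi_m$ cuts out exactly the locus where such an isogeny extends. Concretely, over the generic point (characteristic $0$, after inverting $p$, or rather over the universal ring $R$) the pair $(E,E')$ is ordinary-or-generic so that $\mathrm{Hom}(E,E')$ is of rank at most one and the degree-$m$ isogenies $f$, taken modulo $\pm 1$, are a finite set $\{f_1,\dots,f_r\}$; each $f_i$ extends over a unique minimal quotient $R_p/I_{m,f_i,p}$, and this defines the ideal $I_{m,f_i,p}=(\varphi_{m,f_i,p})$ as asserted. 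I would cite Proposition \ref{ip} (or rather its proof) and Proposition \ref{decom} itself is what we are proving, so instead I would invoke the endomorphism/Hom-group structure used in the proof of Proposition \ref{ip} together with the deformation-theoretic description in \cite{G2}, \cite{KM}, \cite{GK1}.

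Next I would carry out the containment $(\varphi_m)\subseteq I_{m,f,p}$ for every $f$: since $\varphi_m$ vanishes on the locus where $f$ lifts, and $R_p/I_{m,f,p}$ is by definition the largest quotient carrying a lift of $f$, the image of $\varphi_m$ in $R_p/I_{m,f,p}$ is zero, giving $\varphi_m\in I_{m,f,p}$. Summing over $f$ and using that the $I_{m,f,p}$ are pairwise coprime (two distinct isogenies modulo $\pm 1$ cannot simultaneously lift to the same quotient domain, by the rank-one Hom argument — exactly the mechanism in the proof of Proposition \ref{ip}: two lifts would force a $\Z$-linear relation forcing $m_1m_2$, here $m\cdot m$, to behave in a way incompatible with their being genuinely different), we get $\prod_f I_{m,f,p}=\bigcap_f I_{m,f,p}\supseteq (\varphi_m)$. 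For the reverse inclusion I would compare lengths or, better, compare the two divisors on $\mathrm{Spec}(R_p)$: both $(\varphi_m)$ and $\prod_f I_{m,f,p}$ are divisors whose support is the lifting locus, and a degree count — the number of degree-$m$ isogenies lifting, weighted correctly — shows the multiplicities agree. Since $R_p$ is a regular local ring (a power series ring in two variables) it is a UFD, so an equality of divisors with the same support and multiplicities is an equality of ideals; alternatively one checks $\varphi_{m,f,p}$ generates a prime (or at least that the product has the right total multiplicity) using that $\varphi_m$ over $\C$ has degree matching $\sum_f 1$ counted with the quasi-canonical-lift multiplicities.

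The main obstacle, I expect, is pinning down the multiplicity with which each $I_{m,f,p}$ occurs — a priori the naive product might need each $I_{m,f,p}$ to appear with multiplicity one, and justifying ``multiplicity one'' requires knowing that the minimal lifting quotient $R_p/I_{m,f,p}$ is reduced (indeed a domain, a quasi-canonical lifting ring), which is where the serious input from the theory of quasi-canonical lifts and the explicit deformation theory of \cite{GK1}, \cite{G2} enters. A secondary subtlety is the bookkeeping of the $\pm 1$ (and more generally $\mathrm{Aut}(E)\times\mathrm{Aut}(E')$) action: one must be careful that passing between $\widehat{\mathcal{O}}_{S_{\bF_p},x}$ and $R_p$ does not disturb the factorization, which is handled by noting $R_p$ is free over $\widehat{\mathcal{O}}_{S_{\bF_p},x}$ of the stated rank and that the isogeny classes are indexed modulo $\pm 1$ precisely to absorb this. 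Once those points are secured, the factorization $(\varphi_m)=\prod_f I_{m,f,p}$ follows by assembling the two inclusions above.
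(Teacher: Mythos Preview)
Your coprimality step is right and matches the paper: two distinct isogenies $f,g$ (modulo $\pm1$) cannot simultaneously lift over an integral one-dimensional quotient of $R_p$, because over such a quotient the $j$-invariants are transcendental and hence $\mathrm{Hom}$ of the universal curves is free of rank one. This gives $\prod_f I_{m,f,p}=\bigcap_f I_{m,f,p}\supseteq(\varphi_m)$ exactly as you say.

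The gap is in the reverse inclusion. Your proposed ``degree count'' or ``multiplicity comparison'' is not an argument: $R_p$ is a two-dimensional local ring, there is no global degree to compare, and you yourself flag that you do not know each $I_{m,f,p}$ is reduced or occurs with multiplicity one. Invoking quasi-canonical lifts here is a red herring --- those control the structure of $R_p/I_{m,f,p}$ in the ordinary case, but the proposition must also cover supersingular $(E,E')$, where the local picture is quite different. (Relatedly, your first paragraph conflates two things: the isogenies $f$ in the index set are isogenies over $\bF_p$, where $\mathrm{Hom}(E,E')$ can have rank $4$; the rank-one fact only holds after moving to a generic one-dimensional quotient.)

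The paper sidesteps the multiplicity problem entirely by \emph{lifting to characteristic zero first}. Over $R=W(\bF_p)[[t,t']]$ the factorization $(\varphi_m)=\prod_f(\varphi_{m,f})$ is already known (Lemma~4.1 of \cite{G2}), with each $\varphi_{m,f}\in R$ not divisible by $p$. Reducing modulo $p$ therefore gives a factorization of $(\varphi_m)$ in $R_p$ into the principal ideals generated by the images $\varphi_{m,f,p}$. It remains only to check that $(\varphi_{m,f,p})$ coincides with your $I_{m,f,p}$; this is immediate from the functor-of-points identity $\mathrm{Def}_{f,\bF_p}(S)=\mathrm{Def}_{f,W(\bF_p)}(S)$ for formal schemes $S$ over $\mathrm{Spf}\,R_p$, since both sides represent the same deformation problem. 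So the equality of ideals comes for free from the characteristic-zero statement plus the observation that nothing collapses mod $p$ --- no multiplicity bookkeeping is needed.
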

\begin{proof}
We imitate the proof of Lemma 4.1 of \cite{G2} for $R_p$. 
Let ${\rm Def}_{f,\ast}$ for $\ast\in \{\bF_p,W(\bF_p)\}$ be the deformation functor on ${\rm CLN}_{\ast}$ for $f$.
As is similar to the proof of Lemma 4.1 of \cite{G2}, the deformation functor ${\rm Def}_{f,\bF_p}$ is represented by the divisor ${\rm div}(\varphi_{f,p})$ in 
${\rm Spf}\hspace{0.5mm} R_p$ for some $\varphi_{f,p}\in R_p$.  
For two isogenies $f,g:E\lra E'$ of degree $m$,  $\varphi_{f,p}$ and $\varphi_{g,p}$ are coprime unless $f=\pm g$. 
This is proved as follows. Assume that $\varphi_{f,p}$ and $\varphi_{g,p}$ has a common irreducible factor $h$. 
Let $\mathbb{E},\mathbb{E}'$ be the universal deformations of $E,E'$ over $R_p$. 
Since $R_p/(h)$ is of dimension one, the $j$-invariants $j(\mathbb{E}\otimes {\rm Spf}R_p/(h)),\ j(\mathbb{E}'\otimes {\rm Spf}R_p/(h))$ are not constant, 
hence they do not belong to $\bF_p$. It follows from Theorem 12 of \cite{CL} that 
${\rm End}(\mathbb{E}\otimes {\rm Spf}R_p/(h))={\rm End}(\mathbb{E}'\otimes {\rm Spf}R_p/(h))=\Z.$
This implies that $${\rm Hom}(\mathbb{E}\otimes {\rm Spf}R_p/(h),\mathbb{E}'\otimes {\rm Spf}R_p/(h))=\Z.$$
Since $f,g$ are liftable to isogenies 
$\tilde{f},\tilde{g}\in {\rm Hom}(\mathbb{E}\otimes {\rm Spf}R_p/(h),\mathbb{E}'\otimes {\rm Spf}R_p/(h))$ of the same degree, 
$\tilde{f}$  has to be $\pm \tilde{g}$. Reduction to $\bF_p$ we have $f=\pm g$. 

By Lemma 4.1 of \cite{G2} which is in the situation with $R$, we also decompose the ideal $\varphi_m R$ of $R$ generated by $\varphi_m$ as follows: 
$$\varphi_m R=\prod_{f:E\lra E'\ {\rm iso.\ of\ }\atop 
{\rm degree}\ m,\ {\rm mod} \pm1}I_{m,f},$$
where $I_{m,f}$ is the minimal ideal of $R$ such that $f$ lifts to an isogeny over $R/I_{m,f}$.
The ideal  $I_{m,f}$ is generated by a single element $\varphi_{m,f}$ in $R$ which cannot be divisible by $p$.  

Let $\varphi_{m,f,p}$ be the image of $\varphi_{m,f}$ under the natural projection $R\lra R_p$.  
Then we have 
$$
{\rm div}(\varphi_{f,p})(S)= {\rm Def}_{f,\bF_p}(S)= {\rm Def}_{f,W(\bF_p)}(S)=
{\rm div}(\varphi_{m,f})(S)={\rm div}(\varphi_{m,f,p})(S)$$
for any formal scheme $S$ over ${\rm Spf}\hspace{0.5mm}R_p$.  
Hence we have $I_{m,f,p}=(\varphi_{f,p})=(\varphi_{m,f,p})$.  
\end{proof}
 
\begin{Lem}\label{difference} For positive integers $m_1,m_2$ with $m_1m_2$ non-square and a pair $(E,E')$ of elliptic curves over a finite field, it holds that 
\begin{eqnarray}
{\rm length}_{\bF_p}\bF_p[[j,j']]_{(j-j(E),j'-j(E'))}/(\varphi_{m_1},\varphi_{m_2})=\nonumber \\
\sum_{f_1}
\sum_{f_2}
\frac{4}{\sharp {\rm Aut}(E) \sharp {\rm Aut}(E')}{\rm length}_{\bF_p}\bF_p[[t,t']]/(\varphi_{m_1,f_1,p},\varphi_{m_2,f_2,p}) \nonumber
\end{eqnarray}
where $\varphi_{m_i,f_i,p}$ is a factor of $\varphi_{m_i}$ given in the previous proposition. 
Here, the sums are over isogenies $f_i:E\rightarrow E'$ of degree $m_i$ up to $\pm 1$.
\end{Lem}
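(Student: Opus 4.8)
The plan is to transport the length on the left to a length over the universal deformation ring $R_p=\bF_p[[t,t']]$, use the factorization of Proposition \ref{decom}, and then invoke additivity of lengths (local intersection multiplicities) on the regular two-dimensional ring $R_p$. Write $A=\widehat{\mathcal{O}}_{S_{\bF_p},x}\cong\bF_p[[j-j(E),j'-j(E')]]$, so that the left-hand side of the lemma is $\mathrm{length}_{\bF_p}A/(\varphi_{m_1},\varphi_{m_2})$; by Proposition \ref{ip} the cycles meet properly at $x$, so this is finite, and since $R_p$ is finite over $A$ the ideal $(\varphi_{m_1},\varphi_{m_2})R_p$ has finite colength as well (equivalently $\varphi_{m_1},\varphi_{m_2}$ form a system of parameters, hence a regular sequence, in $R_p$; cf. Corollary \ref{reg}). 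Because $R_p$ is free over $A$ of rank $r:=\tfrac14\sharp\mathrm{Aut}(x)=\tfrac14\sharp\mathrm{Aut}(E)\,\sharp\mathrm{Aut}(E')$ and $R_p/\mathfrak{m}_A R_p\cong\bF_p^{\,r}$, tensoring an $\bF_p$-composition series of the $A$-module $A/(\varphi_{m_1},\varphi_{m_2})$ with the flat extension $R_p$ gives
\[
\mathrm{length}_{\bF_p}R_p/(\varphi_{m_1},\varphi_{m_2})R_p \;=\; r\cdot \mathrm{length}_{\bF_p}A/(\varphi_{m_1},\varphi_{m_2}).
\]
Thus it suffices to prove the additivity identity $\mathrm{length}_{\bF_p}R_p/(\varphi_{m_1},\varphi_{m_2})=\sum_{f_1}\sum_{f_2}\mathrm{length}_{\bF_p}R_p/(\varphi_{m_1,f_1,p},\varphi_{m_2,f_2,p})$, after which dividing by $r$, i.e. multiplying by $4/(\sharp\mathrm{Aut}(E)\,\sharp\mathrm{Aut}(E'))$, yields the lemma.

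For the additivity, Proposition \ref{decom} gives in the regular local UFD $R_p$ the factorizations $\varphi_{m_i}=u_i\prod_{f_i}\varphi_{m_i,f_i,p}$ with $u_i\in R_p^{\times}$, the product over isogenies $f_i\colon E\to E'$ of degree $m_i$ modulo $\pm1$. Since $m_1m_2$ is not a square, no irreducible factor of $\varphi_{m_1}$ divides $\varphi_{m_2}$, so every $\varphi_{m_1,f_1,p}$ is a nonzerodivisor modulo $\varphi_{m_2}$ and modulo each $\varphi_{m_2,f_2,p}$, and symmetrically; in particular all quotients occurring below are of finite length. I would first set $B=R_p/(\varphi_{m_2})$, a one-dimensional local ring on which the image of $\varphi_{m_1}$ is a nonzerodivisor, and apply repeatedly the short exact sequence
\[
0\longrightarrow B/bB\xrightarrow{\;\cdot a\;}B/abB\longrightarrow B/aB\longrightarrow 0,
\]
valid for nonzerodivisors $a,b$ of $B$, to the factorization of $\varphi_{m_1}$ in $B$; this gives $\mathrm{length}(R_p/(\varphi_{m_1},\varphi_{m_2}))=\sum_{f_1}\mathrm{length}(R_p/(\varphi_{m_1,f_1,p},\varphi_{m_2}))$. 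Then, for each fixed $f_1$, I would repeat the argument with $B=R_p/(\varphi_{m_1,f_1,p})$ and the factorization of $\varphi_{m_2}$ in $B$, obtaining $\mathrm{length}(R_p/(\varphi_{m_1,f_1,p},\varphi_{m_2}))=\sum_{f_2}\mathrm{length}(R_p/(\varphi_{m_1,f_1,p},\varphi_{m_2,f_2,p}))$. Combining the two displays gives the desired additivity identity.

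The main obstacle is exactly this additivity step. The ideals $(\varphi_{m_i,f_i,p})$ all lie in the maximal ideal of $R_p$, so they are far from comaximal and one cannot split $R_p/(\varphi_{m_1},\varphi_{m_2})$ by the Chinese Remainder Theorem; moreover applying the multiplicativity exact sequence directly on $R_p$ fails, since a nonunit of an Artinian quotient is always a zerodivisor. The device of passing first to the one-dimensional quotient by one of the two curves, where the factors of the other become genuine nonzerodivisors, is the technical heart of the argument, and it is precisely here that the hypothesis that $m_1m_2$ is not a square (equivalently, Corollary \ref{reg}) enters. The remaining point — matching the constant $4/(\sharp\mathrm{Aut}(E)\,\sharp\mathrm{Aut}(E'))$ with the reciprocal of the rank of $R_p$ over $\widehat{\mathcal{O}}_{S_{\bF_p},x}$ — is routine given the discussion preceding the lemma, using only that $R_p$ is finite free over $\widehat{\mathcal{O}}_{S_{\bF_p},x}$ with both rings having residue field $\bF_p$.
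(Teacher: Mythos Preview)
Your argument is correct and follows essentially the same approach as the paper's proof: both use the freeness of $R_p$ over $\widehat{\mathcal{O}}_{S_{\bF_p},x}$ of rank $\tfrac{\sharp\mathrm{Aut}(E)\,\sharp\mathrm{Aut}(E')}{4}$ to transport the length, Proposition~\ref{decom} for the factorization, and Corollary~\ref{reg} for the regular sequence property. The only difference is that where the paper defers the additivity step to Lemma~4.2 of \cite{G2}, you spell it out explicitly via the short exact sequence $0\to B/bB\xrightarrow{\cdot a}B/abB\to B/aB\to 0$ on the one-dimensional quotient, which is exactly the content of that lemma in this setting.
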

\begin{proof}
 By Corollary \ref{reg},  $\varphi_{m_1}$ and $\varphi_{m_2}$ make up a regular sequence.  
Using a similar argument of Equation (4.1) in page 23 of \cite{G2},
the claim follows from Lemma 4.2 of \cite{G2}, Proposition \ref{decom}, and  the fact that $R_p$ is a free 
$\mathcal{O}_{S_{\bF_p}}$-module of rank $\frac{\sharp {\rm Aut}(E) \sharp {\rm Aut}(E')}{4}$. 
\end{proof}

We denote by $(\mathbb{E}, \mathbb{E}'$) the universal pair of elliptic curves over  $R_p$. 
Let $y=((E,E'),f_1,f_2)$ be a pair of $(E,E')$ and  two isogenies $f_1,f_2:E\lra E'$ with deg$(f_i)=m_i, i=1,2$. 
We define  $I_y$ as the minimal ideal of $R_p$ such that both $f_i$'s lift to  isogenies 
$\mathbb{E}\lra \mathbb{E}'$ of degree $m_i$'s  modulo $I_y$ for $i=1,2$, respectively. 
Put 
$${\rm IM}_{p,y}:={\rm length}_{\bF_p} R_p/I_y,$$
which is exactly the same as the local contribution in the summation of Lemma \ref{difference}.

From now on, for a pair of two elliptic curves $(E,E')$ defined over $\bF_p$, we use the following notation:
\[
\left\{
\begin{array}{l l}
(E,E'):{\rm (ord)} & \quad \textit{if both $E$ and $E'$ are ordinary};\\
(E,E'):{\rm (ss)} & \quad \textit{if both $E$ and $E'$ are supersingular}.
\end{array}
\right.
\]

The intersection number $(T_{m_1,p},T_{m_2,p})$ over a finite field is described as follows:
\begin{Prop}\label{decom2}
Assume that $m_1m_2$ is non-square. 
Then we have
\begin{equation}\label{eqssord}
(T_{m_1,p},T_{m_2,p})=\sum_{y=((E,E'),f_1,f_2) \atop (E,E'):{\rm (ord)}}\frac{{\rm IM}_{p,y}}{\sharp {\rm Aut}(E) \sharp {\rm Aut}(E')}+
\sum_{y=((E,E'),f_1,f_2) \atop (E,E'):{\rm (ss)}}\frac{{\rm IM}_{p,y}}{\sharp {\rm Aut}(E) \sharp {\rm Aut}(E')}.
\end{equation} 
\end{Prop}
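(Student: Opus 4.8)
The plan is to localize the intersection number computation and then decompose it according to the reduction types of the elliptic curves involved. First I would recall that by Proposition \ref{ip} and Corollary \ref{reg}, under the hypothesis that $m_1m_2$ is non-square, the ring $\F_p[x,y]/(\varphi_{m_1},\varphi_{m_2})$ is Artinian, so by the structure theorem for Artinian rings it decomposes as a finite product of local rings indexed by its maximal ideals, i.e. by the closed points of $T_{m_1,p}\cap T_{m_2,p}$ in $S_{\bF_p}$. Hence
\[
(T_{m_1,p},T_{m_2,p})=\sum_{P}{\rm length}_{\F_p}\left(\F_p[x,y]/(\varphi_{m_1},\varphi_{m_2})\right)_P,
\]
where $P$ ranges over such closed points. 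After base change to $\bF_p$, each such point $P$ corresponds to a pair $(j_0,j_0')\in\bF_p^2$, hence to a pair $(E,E')$ of elliptic curves over $\bF_p$ (taken up to isomorphism), and the local length is computed on the completed local ring $\bF_p[[j-j(E),j'-j(E')]]$ at $(j(E),j(E'))$.

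Next I would rewrite each local term via Lemma \ref{difference}, which passes from the coarse moduli completion $\widehat{\mathcal O}_{S_{\bF_p},x}$ to the universal deformation ring $R_p=\bF_p[[t,t']]$ at the expense of the factor $\frac{4}{\sharp{\rm Aut}(E)\,\sharp{\rm Aut}(E')}$, and expresses the length as a sum over pairs of isogenies $f_1\colon E\to E'$ of degree $m_1$ and $f_2\colon E\to E'$ of degree $m_2$ (each up to $\pm1$) of ${\rm length}_{\bF_p}\bF_p[[t,t']]/(\varphi_{m_1,f_1,p},\varphi_{m_2,f_2,p})$. I would then identify the ideal $(\varphi_{m_1,f_1,p},\varphi_{m_2,f_2,p})$ with the ideal $I_y$ of Proposition \ref{decom}'s construction for $y=((E,E'),f_1,f_2)$: indeed $I_{m_i,f_i,p}$ is by definition the minimal ideal over which $f_i$ deforms, so the minimal ideal over which both $f_1$ and $f_2$ simultaneously deform is the sum $I_{m_1,f_1,p}+I_{m_2,f_2,p}=(\varphi_{m_1,f_1,p},\varphi_{m_2,f_2,p})$, giving ${\rm length}_{\bF_p}R_p/I_y={\rm IM}_{p,y}$. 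Substituting back, the contribution of the point $(E,E')$ is exactly $\sum_{f_1,f_2}\frac{{\rm IM}_{p,y}}{\sharp{\rm Aut}(E)\,\sharp{\rm Aut}(E')}$, and summing over all closed points gives the asserted formula once we know that the only closed points contributing are of type (ord) or (ss).

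Finally I would justify that partition of the intersection points: at a closed point $(E,E')$ supporting the intersection there exist isogenies $f_1,f_2\colon E\to E'$ of non-square product of degrees, and as in the proof of Proposition \ref{ip} (via Theorem 12 of \cite{CL} and the $\Z$-freeness of ${\rm Hom}(E,E')$) the existence of such $f_i$ forces ${\rm End}(E)\otimes\Q\cong{\rm End}(E')\otimes\Q$; moreover an isogeny $E\to E'$ forces $E$ ordinary $\iff$ $E'$ ordinary and $E$ supersingular $\iff$ $E'$ supersingular, so the mixed case $(E,E')$ with one ordinary and one supersingular cannot occur. Thus the sum over closed points splits into the (ord) and the (ss) sums, which is Equation \eqref{eqssord}. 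The main obstacle I anticipate is the bookkeeping in the identification ${\rm length}_{\bF_p}\bF_p[[t,t']]/(\varphi_{m_1,f_1,p},\varphi_{m_2,f_2,p})={\rm IM}_{p,y}$: one must check carefully that the product decomposition of $(\varphi_{m})$ in $R_p$ of Proposition \ref{decom} is into pairwise coprime factors (so that cross terms do not contribute) and that taking the two ideals together is genuinely intersection-theoretically the simultaneous deformation locus of the pair $(f_1,f_2)$ — this is where the regularity of the sequence $\varphi_{m_1},\varphi_{m_2}$ from Corollary \ref{reg} is used to ensure lengths add correctly, exactly as in the argument surrounding Equation (4.1) of \cite{G2}.
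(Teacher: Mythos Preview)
Your proposal is correct and follows essentially the same approach as the paper: localize the Artinian ring at closed points, apply Lemma~\ref{difference} to pass to the universal deformation ring and split over pairs of isogenies, identify the local length with ${\rm IM}_{p,y}$, and partition into ordinary and supersingular contributions. The paper is terser (it leaves the exclusion of mixed ordinary/supersingular pairs and the identification $I_y=(\varphi_{m_1,f_1,p},\varphi_{m_2,f_2,p})$ implicit), but the argument is the same; the only bookkeeping step you do not spell out is the conversion of the factor $4$ together with the sum over $f_i\bmod\pm1$ into a sum over all $f_i$, which is immediate since ${\rm IM}_{p,y}$ is invariant under $f_i\mapsto -f_i$.
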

\begin{proof} 
 LHS is decomposed as follows:
\begin{eqnarray}
(T_{m_1,p},T_{m_2,p})&=&\sum_{(E,E')}(T_{m_1,p},T_{m_2,p})_{(j(E,)j(E'))} \nonumber \\
&=&
\sum_{(E,E'):{\rm (ord)}}(T_{m_1,p},T_{m_2,p})_{(j(E,)j(E'))}+\sum_{(E,E'):{\rm (ss)}}(T_{m_1,p},T_{m_2,p})_{(j(E,)j(E'))}.
\end{eqnarray}

Using Lemma \ref{difference},  we have that 
\begin{eqnarray}
(T_{m_1,p},T_{m_2,p})_{(j(E,)j(E'))}
&=&\frac{4}{\sharp {\rm Aut}(E) \sharp {\rm Aut}(E')}\sum_{f_i:E\lra E'\ {\rm  iso.\ of\ }\atop 
{\rm degree}\ m_i,\ {\rm mod} \pm1}
{\rm IM}_{p,((E,E'),f_1,f_2)}  \\
&=&
\sum_{f_i:E\lra E'\ {\rm  iso.}\atop 
{\ of\ \rm degree}\ m_i}\frac{{\rm IM}_{p,((E,E'),f_1,f_2)}}{\sharp {\rm Aut}(E) \sharp {\rm Aut}(E')}.
\nonumber  
\end{eqnarray}
This completes the claim.
\end{proof}


\subsection{The local intersection multiplicity over a finite field}
Based on Proposition \ref{decom2}, we  compute
 ${\rm IM}_{p,y}$ by using Serre-Tate theory for the ordinary case  and 
\cite{A} for the supersingular case. 
Let us start with a  series of the following lemmas. 
\begin{Lem}\label{ind} Let $m_1,m_2$ be two integers with $m_1m_2$  non-square and let $E,E'$ be two elliptic curves over a finite field. 
Then 
two isogenies $f_1,f_2:E\lra E'$ with  ${\rm deg}(f_i)=m_i$ are linearly independent in the $\Z$-module ${\rm Hom}(E,E')$ and also in ${\rm Hom}(E,E')\otimes_\Z\Z_p$. 
\end{Lem}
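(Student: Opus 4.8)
The plan is to argue by contradiction. Suppose $f_1$ and $f_2$ are linearly dependent in $\mathrm{Hom}(E,E')$; since this group is torsion-free (it injects into $\mathrm{Hom}(E,E')\otimes\Q$), there exist nonzero integers $n_1,n_2$ with $n_1 f_1 = n_2 f_2$. Applying the degree map, which is a positive-definite quadratic form on $\mathrm{Hom}(E,E')$, gives $n_1^2 m_1 = n_1^2\deg(f_1) = \deg(n_1 f_1) = \deg(n_2 f_2) = n_2^2 m_2$, so $m_1 m_2 = (n_1^2 m_1)(m_2/m_1)\cdot(\text{appropriate rearrangement})$ — more cleanly, $m_1 m_2 = (n_1 f_1 \text{-degree}) \cdot (m_2/n_2^2)\cdot n_2^2 = n_1^2 n_2^2 \cdot (m_1 m_2)/(n_2^2)\cdots$; the point is simply that $n_1^2 m_1 = n_2^2 m_2$ forces $m_1 m_2 = (n_1 m_1/n_2)^2$ up to checking integrality, i.e. $m_1 m_2$ is a square in $\Q$, hence in $\Z$, contradicting the hypothesis. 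This handles the statement over $\Z$.

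For the statement over $\Z_p$, the key observation is that $\mathrm{Hom}(E,E')$ is a finitely generated free $\Z$-module of rank at most $4$, and linear independence of $f_1, f_2$ in $\mathrm{Hom}(E,E')\otimes_\Z\Z_p$ is equivalent to their linear independence in $\mathrm{Hom}(E,E')\otimes_\Z\Q_p$, which in turn (since $f_1,f_2$ lie in the $\Q$-subspace $\mathrm{Hom}(E,E')\otimes_\Z\Q$) is equivalent to their linear independence over $\Q$. Indeed, if $\lambda_1 f_1 + \lambda_2 f_2 = 0$ with $\lambda_i \in \Q_p$ not both zero, then writing $f_1, f_2$ in terms of a $\Z$-basis of $\mathrm{Hom}(E,E')$ yields a $\Q_p$-linear relation among integer vectors, which forces a nontrivial $\Q$-linear relation by comparing ranks of the relevant matrices (the rank of an integer matrix is the same over $\Q$ and over $\Q_p$). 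So linear independence over $\Q$, already established, gives linear independence over $\Z_p$.

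The only genuinely substantive input is that $\deg$ restricts to a positive-definite integral quadratic form on $\mathrm{Hom}(E,E')$, together with the torsion-freeness of $\mathrm{Hom}(E,E')$; both are standard (cf. the properties of the degree pairing on isogenies, e.g. Chapter III of \cite{Sil}, already cited in the excerpt). I do not anticipate any real obstacle here — the lemma is essentially a formal consequence of "$n_1^2 m_1 = n_2^2 m_2 \Rightarrow m_1 m_2$ is a square" plus the elementary linear-algebra fact that $\Q$-rank and $\Q_p$-rank of an integer matrix agree. The main care needed is just to phrase the $\Z_p$-part correctly, reducing it to the $\Q$-part rather than attempting a separate $p$-adic argument.
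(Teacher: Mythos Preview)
Your argument is correct. Over $\Z$ it matches the paper's proof essentially verbatim: assume a dependence $n_1 f_1 = n_2 f_2$, apply degree to get $n_1^2 m_1 = n_2^2 m_2$, and conclude $m_1 m_2$ is a square (the paper phrases this as $(n_1 m_1)^2 = n_2^2 m_1 m_2$, which is the clean version of what you were reaching for).

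For the $\Z_p$-statement your route differs slightly from the paper's. The paper observes that $\mathrm{Hom}(E,E')\otimes_\Z\Z_p$ is free over $\Z_p$, extends the degree map $\Z_p$-linearly to a quadratic form there, and reruns the same degree computation verbatim. You instead reduce to a pure linear-algebra fact: the rank of an integer matrix is unchanged when passing from $\Q$ to $\Q_p$, so $\Q$-independence of $f_1,f_2$ gives $\Q_p$-independence, hence $\Z_p$-independence. Both arguments are short and valid; yours avoids thinking about the $p$-adic extension of the degree form, while the paper's has the virtue of being literally the same argument in both cases. Either is perfectly acceptable here.
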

\begin{proof}Assume that $f_1$ and $f_2$ are linearly dependent. Then $n_1f_1=n_2f_2$ for some integers $n_1,n_2$. 
We may assume that $n_1$ and $n_2$ are coprime since ${\rm Hom}(E,E')$ is a free $\Z$-module (III, Proposition 4.2 of \cite{Sil}). 
By comparing the degree, we have $n^2_1m_1=n^2_2m_2$. Observe $(n_1m_1)^2=n^2_2m_1m_2$.
This  implies that $m_1m_2$ is a square, which is   a contradiction. 
For the latter since $\Z_p$ is a $\Z$-flat module, $\Z$-freeness of ${\rm Hom}(E,E')$ implies that 
${\rm Hom}(E,E')\otimes_\Z\Z_p$ is a free $\Z_p$-module. The degree map on ${\rm Hom}(E,E')$ is extended $\Z_p$-linearly on 
${\rm Hom}(E,E')\otimes_\Z\Z_p$. Then the same argument above works and this completes the proof.    
\end{proof}
For a $p$-adic integer $z=\ds\sum_{i=0}^\infty a_i p^i$ with $\ a_i\in \{0,\ldots,p-1\}$ and the indeterminant $t$, we abusively define   
$$(1+t)^{z}:=\sum_{n=0}^\infty 
\Big(\begin{array}{c}
z\\
n
\end{array}\Big)t^n=
\prod_{i=0}^\infty(1+t^{p^i})^{a_i} ~~~\textit{modulo $p$}$$
as an element in $\F_p[[t]]$, rather than $\Z_p[[t]]$. Here $\Big(\begin{array}{c}
z\\
n
\end{array}\Big):=\ds\frac{z(z-1)\cdots(z-n+1)}{n!}$ and set 
$\Big(\begin{array}{c}
z\\
0
\end{array}\Big):=1$. It follows from the argument in p. 288  of \cite{Ro} (see ``THE PROOF OF LEMMA") that $(1+t)^z(1+t)^{z'}=(1+t)^{z+z'}$ and 
$((1+t)^z)^{z'}=(1+t)^{zz'}$ for any $z,z'\in \Z_p$ and these facts yields the above second equality for $(1+t)^z$. 

\begin{Lem}\label{power}For $z,w\in \Z^\times_p$ and $r\in \Z_{\ge 0}$ define an element $H(t,t')=(1+t)^{zp^r}-(1+t')^{w}$ in $\F_p[[t,t']]$. 
Then there exists an element $f(t)$ in $\F_p[[t]]$ such that $H(t,f(t))=0$ and 
$f(t)\equiv zw^{-1}t^{p^r}\ {\rm mod}\ (t^{{p^r}+1})$. 
\end{Lem}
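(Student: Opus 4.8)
The plan is to solve the equation $H(t,t')=0$ for $t'$ as a formal power series $t'=f(t)\in\F_p[[t]]$ by an explicit iterative (Hensel-type) construction, keeping careful track of leading terms modulo $p$. First I would set $g(t)=zw^{-1}t^{p^r}$, so that the claimed congruence $f(t)\equiv g(t)\pmod{t^{p^r+1}}$ pins down the lowest-order behaviour, and then look for $f(t)=g(t)+(\text{higher order})$. The point is that $(1+t')^{w}$, viewed as a power series in $t'$, has the form $1+wt'+\cdots$ with $w\in\Z_p^\times$, hence invertible leading coefficient; likewise $(1+t)^{zp^r}=1+zt^{p^r}+\cdots$ by the formula $(1+t)^{zp^r}=\prod_{i\ge 0}(1+t^{p^{i+r}})^{a_i}$ recalled just before the statement, where $z=\sum a_ip^i$ and $a_0\ne 0$ since $z\in\Z_p^\times$. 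So modulo $t^{p^r+1}$ the equation reads $1+zt^{p^r}\equiv 1+wt'$, giving $t'\equiv zw^{-1}t^{p^r}$, which is exactly the asserted initial condition.

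Next I would run the formal implicit function theorem in $\F_p[[t]]$. Write $H(t,t')=\Phi(t^{p^r})-\Psi(t')$ where $\Phi(u)=(1+u)^{z}$ has a unit linear coefficient — here I am using $((1+t)^z)$ as an element of $\F_p[[t]]$ and the substitution $t\mapsto t^{p^r}$ is well-defined in $\F_p[[t]]$ — and $\Psi(t')=(1+t')^{w}$ has a unit linear coefficient $w$. Since $\Psi$ has invertible linear term, it has a compositional inverse $\Psi^{-1}\in\F_p[[s]]$ with $\Psi^{-1}(1)=0$ in the appropriate sense; more concretely I would build $f$ degree by degree: having determined $f$ modulo $t^{N}$ for some $N>p^r$, the condition $H(t,f(t))\equiv 0\pmod{t^{N+1}}$ determines the coefficient of $t^N$ in $f$ uniquely because the partial derivative $\partial H/\partial t'$ evaluated at $t'=0$ is $-w\not\equiv 0$ in $\F_p$ (as $w\in\Z_p^\times$). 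This standard recursion converges in the $(t)$-adic topology of $\F_p[[t]]$ and yields the desired $f(t)$; uniqueness of the solution with $f(0)=0$ follows from the same invertibility of the linear part of $\Psi$.

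For the precise leading-term claim, once $f$ is constructed I would simply read off: $f(t)$ has no terms of degree $<p^r$ (since $H(t,0)=zt^{p^r}+\cdots$ starts in degree $p^r$, forcing $\Psi(f(t))$ to start in degree $p^r$, hence $f(t)$ starts in degree $p^r$ because $\Psi$ starts linearly), and the coefficient of $t^{p^r}$ is forced to be $zw^{-1}$ by matching the two sides in degree $p^r$. Therefore $f(t)\equiv zw^{-1}t^{p^r}\pmod{t^{p^r+1}}$.

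The main obstacle I anticipate is purely bookkeeping: verifying carefully that the substitution $t\mapsto t^{p^r}$ interacts correctly with the $\F_p$-coefficient definition of $(1+t)^{z}$, i.e.\ that $(1+t)^{zp^r}$ as defined equals $\Phi(t^{p^r})$ with $\Phi(u)=(1+u)^z$ in $\F_p[[u]]$ — this uses the identity $((1+u)^z)$ applied to $u=t^{p^r}$ together with the fact (cited from \cite{Ro}) that in characteristic $p$ raising to the $p^r$ power is the Frobenius, so $(1+t)^{p^r}=1+t^{p^r}$ and hence $(1+t)^{zp^r}=((1+t)^{p^r})^{z}=(1+t^{p^r})^{z}$. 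Everything else is the routine formal Hensel/implicit-function argument, which I would not spell out in full.
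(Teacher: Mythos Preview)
Your proposal is correct and takes essentially the same approach as the paper: both arguments are Hensel's lemma applied to $F(t')=H(t,t')$ over the complete local ring $\F_p[[t]]$, using that $\partial H/\partial t'|_{t'=0}=-w$ is a unit and that the initial approximation $a=zw^{-1}t^{p^r}$ satisfies $F(a)\in (t)^{p^r+1}$. The paper simply cites Silverman (IV, Lemma~1.2) as a black box with exactly these inputs, whereas you unwind the recursive construction by hand and add the helpful observation that $(1+t)^{zp^r}=(1+t^{p^r})^{z}$ in characteristic $p$; the content is the same.
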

\begin{proof}Apply IV, Lemma 1.2 (Hensel's lemma) of \cite{Sil} with $R=\F_p[[t]]$ (here $R$ is the notation there), $I=(t)$, $F(t')=H(t,t')$, $a=zw^{-1}t^{p^r}$, and $\alpha=-w$. 
Notice that $F(a)\in I^{p^r+1}$ and $F'(a)=-w(1+zw^{-1}t^{p^r})^{w-1}\in R^\times=\F_p[[t]]^\times$.   
\end{proof}

\begin{Lem}\label{plength}Let $e_1,e_2$ be two non-negative integers. For any two elements $f,g\in R_p=\bF_p[[t,t']]$ which are coprime, it holds that 
$${\rm length}_{\bF_p}R_p/(f^{p^{e_1}},g^{p^{e_2}})=p^{e_1+e_2}\cdot {\rm length}_{\bF_p}R_p/(f,g).$$ 
\end{Lem}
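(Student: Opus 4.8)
The plan is to reduce to the case $e_2 = 0$ by symmetry and then induct on $e_1$, so the core is the identity ${\rm length}_{\bF_p} R_p/(f^{p}, g) = p \cdot {\rm length}_{\bF_p} R_p/(f,g)$ for coprime $f,g$, after which iterating gives the factor $p^{e_1}$, and applying the same argument to $g$ gives the remaining $p^{e_2}$. To get the base identity I would use the exact sequence of $R_p$-modules
\[
0 \to R_p/(f, g) \xrightarrow{\ \cdot f^{p-1}\ } R_p/(f^{p}, g) \to R_p/(f^{p-1}, g) \to 0,
\]
where the first map is multiplication by $f^{p-1}$; exactness on the left uses that $f,g$ form a regular sequence (they are coprime in the regular local ring $R_p = \bF_p[[t,t']]$, which is a UFD, so $f$ is a nonzerodivisor mod $g$), hence the annihilator of $f^{p-1}$ in $R_p/(f^p,g)$ is exactly $(f,g)/(f^p,g)$. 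Additivity of length on short exact sequences then yields
\[
{\rm length}\, R_p/(f^{p}, g) = {\rm length}\, R_p/(f,g) + {\rm length}\, R_p/(f^{p-1}, g),
\]
and descending induction on the exponent of $f$ (starting from $p$ down to $1$) gives ${\rm length}\, R_p/(f^{p}, g) = p \cdot {\rm length}\, R_p/(f,g)$. Note all these lengths are finite precisely because $f,g$ coprime forces $R_p/(f,g)$ to be Artinian (its support is the closed point), which is the same finiteness input used in Corollary \ref{reg}.

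With the base case in hand, I would write $f^{p^{e_1}} = (f^{p^{e_1-1}})^{p}$ and apply the base identity with the coprime pair $(f^{p^{e_1-1}}, g^{p^{e_2}})$ — coprimality is preserved since $R_p$ is a UFD and $f,g$ share no irreducible factor — to obtain
\[
{\rm length}\, R_p/(f^{p^{e_1}}, g^{p^{e_2}}) = p \cdot {\rm length}\, R_p/(f^{p^{e_1-1}}, g^{p^{e_2}}),
\]
and induct on $e_1$ down to $e_1 = 0$. This reduces to ${\rm length}\, R_p/(f, g^{p^{e_2}})$, to which the symmetric version of the same argument (swapping the roles of $f$ and $g$, inducting on $e_2$) applies, giving the total factor $p^{e_1 + e_2}$.

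The only genuine subtlety — and the step I would be most careful about — is justifying left-exactness of the displayed sequence, i.e. that multiplication by $f^{p-1}$ (more generally by $f^{p^{e_1}-p^{e_1-1}}$ in the inductive step) is injective on $R_p/(f^{p}, g)$ with image exactly $(f,g)/(f^p,g)$. This is where the regular-sequence / UFD structure of $R_p=\bF_p[[t,t']]$ is essential: one needs that if $f^{p-1}h \in (f^p, g)$ then $h \in (f,g)$, which follows because $g$ is a nonzerodivisor on $R_p/(f^p) = \bF_p[[t,t']]/(f^p)$ (as $f,g$ is a regular sequence, so is $f^p, g$) together with unique factorization to cancel powers of $f$. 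Everything else is a formal induction using additivity of length on short exact sequences of finite-length modules.
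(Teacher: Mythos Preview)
Your argument is correct. The paper's proof is simply a citation of Lemma~4.2 of \cite{G2}, which is the additivity formula ${\rm length}\,R/(f_1f_2,g)={\rm length}\,R/(f_1,g)+{\rm length}\,R/(f_2,g)$ for coprime $f_1f_2,g$ in a two-dimensional regular local ring; your exact-sequence argument is precisely the standard proof of that statement, so the approaches agree. As a minor remark, nothing in your proof uses that the exponents are powers of~$p$: you in fact establish ${\rm length}\,R_p/(f^{m},g^{n})=mn\cdot{\rm length}\,R_p/(f,g)$ for all $m,n\ge 1$, of which the lemma is the special case $m=p^{e_1}$, $n=p^{e_2}$.
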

\begin{proof}It follows from Lemma 4.2 of \cite{G2}.  
\end{proof}

For an isogeny $f:E\lra E'$ between ordinary elliptic curves over a finite field $k$, by functoriality we can associate a unique element of 
$${\rm Hom}(\widehat{E},\widehat{E}')\times {\rm Hom}_{\Z_p}(T_p(E)\otimes_{\Z_p}\Q_p/\Z_p,T_p(E')\otimes_{\Z_p}\Q_p/\Z_p)=
{\rm End}(\widehat{{\mathbb{G}}}_m/k)\times {\rm End}_{\Z_p}(\Q_p/\Z_p)\simeq \Z^2_p$$
where $\widehat{E},\widehat{E}'$ are formal groups associated to $E,E'$ respectively and $\widehat{{\mathbb{G}}}_m$ is 
the multiplicative formal group over $k$.  
We write $(z(f),w(f))\in \Z^2_p$ for the element corresponding to $f$ via the above identification. 

\begin{Prop}\label{local-im} 
Let $y=((E, E'),f_1,f_2)$ as explained in Lemma \ref{ind}.
Let $T$ be the half-integral symmetric matrix associated to the quadratic lattice spanned by $(f_1,f_2)$ so that ${\rm diag}(T)=(m_1,m_2)$. 
Then 
\begin{enumerate}
\item {\rm (ord)} if $(E,E')$ is a pair of ordinary elliptic curves, then ${\rm ord}_p(\det(2T))$ is an even integer and 
${\rm IM}_{p,y}=p^{r_T}$, where $r_T:=\frac{1}{2}{\rm ord}_p(\det(2T))$;

\item {\rm (ss)} if $(E,E')$ is a pair of supersingular elliptic curves, then ${\rm IM}_{p,y}=\T_{a_1,a_2}$.
Here, $(a_1, a_2)=\mathrm{GK}(T\otimes \Z_p)$.
For the definition of $\T_{a_1,a_2}$, see Equation $\left(\ref{eqta}\right)$.

\end{enumerate}
\end{Prop}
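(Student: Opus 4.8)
The plan is to separate the two cases according to whether $(E,E')$ is ordinary or supersingular, since the local deformation theory is governed by completely different principles in the two situations.

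\textbf{The ordinary case.} First I would invoke Serre--Tate theory, which identifies the local deformation space $R_p=\bF_p[[t,t']]$ with the deformation space of the two formal groups $\widehat{E},\widehat{E}'$, each of which is (after choice of coordinates) $\widehat{\mathbb{G}}_m$. Concretely, the canonical coordinates $t,t'$ are such that a lift of an isogeny $f$ with invariants $(z(f),w(f))\in\Z_p^2$ imposes the equation $(1+t)^{z(f)}=(1+t')^{w(f)}$ on $R_p$; equivalently, using that $z(f)w(f)$ has $p$-adic valuation equal to $\mathrm{ord}_p(\deg f)$ and renormalizing, the ideal $I_{m,f,p}$ is generated by an element of the shape $(1+t)^{z}-(1+t')^{w}$ with suitable units times powers of $p$. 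By Lemma \ref{power}, after solving for $t'=f(t)$ via Hensel's lemma, this cuts out a divisor whose equation modulo the maximal ideal looks like $t^{p^{r_1}}$ up to a unit, where $p^{r_1}$ is the relevant power coming from $f_1$; similarly for $f_2$. So the ideal $I_y$ is generated by two elements that, after a coordinate change, are essentially $g_1^{p^{e_1}}$ and $g_2^{p^{e_2}}$ for coprime $g_1,g_2$ in $\bF_p[[t,t']]$ with $\mathrm{length}\,R_p/(g_1,g_2)=1$; then Lemma \ref{plength} gives $\mathrm{IM}_{p,y}=p^{e_1+e_2}$. The bookkeeping to check that $e_1+e_2=r_T=\tfrac12\mathrm{ord}_p(\det 2T)$ — and in particular that $\mathrm{ord}_p(\det 2T)$ is even — comes from the product formula $\det(2T)$ being (up to units) the product of the $p$-parts of the data attached to $f_1,f_2$, using linear independence from Lemma \ref{ind} to guarantee $g_1,g_2$ are genuinely coprime.

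\textbf{The supersingular case.} Here the formal group is one-dimensional of height $2$, and the deformation-theoretic computation of $\mathrm{IM}_{p,y}=\mathrm{length}_{\bF_p}R_p/I_y$ is precisely the one carried out by Gross--Keating and reproduced in \cite{A}: the ideal $I_y$ records the obstruction to simultaneously lifting $f_1,f_2$, and its colength is the quantity they denote (in terms of the ternary/binary quadratic data) by the sum appearing in Equation (\ref{eqta}). I would cite Chapters 3--5 of \cite{A} (or Lemma 5.6 and Equations (5.3), (5.16) of \cite{GK1}) for the identity $\mathrm{length}_{\bF_p}R_p/I_y=\sum_{x=0}^{a_1}\sum_{y=0}^{a_2}p^{\min\{a_1-x+y,\,a_2-y+x\}}$, where $(a_1,a_2)=\mathrm{GK}(T\otimes\Z_p)$ — noting that this is exactly the local intersection multiplicity ``on the special fiber'' in the Gross--Keating setting, which is why it depends only on the Gross--Keating invariant of the binary form attached to $(f_1,f_2)$. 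One subtlety is that $\mathcal{T}_{a_1,a_2}$ as defined in \cite{GK1} is attached to the pair of isogenies living in a rank-$2$ sublattice of the rank-$3$ lattice $\mathrm{Hom}(E,E')$ of a supersingular pair; the matching with our $\mathcal{T}_{a_1,a_2}$ of (\ref{eqta}) is a direct comparison of the two closed-form expressions.

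\textbf{Main obstacle.} The routine-looking but genuinely delicate part is the ordinary case: making the Serre--Tate canonical-coordinate computation precise enough that the powers of $p$ appearing in the two generators of $I_y$ are correctly identified, and that their sum equals $r_T$. This requires carefully tracking how $\deg f_i$ and the off-diagonal entry of $T$ (i.e. $\langle f_1,f_2\rangle = \tfrac12(\deg(f_1+f_2)-\deg f_1-\deg f_2)$) enter the invariants $(z(f_i),w(f_i))$, and then reducing the two-variable colength to a one-variable computation via Lemma \ref{power} and $t'=f(t)$ before applying Lemma \ref{plength}. The supersingular case is, by contrast, a citation of the Gross--Keating / \cite{A} computation, so its only difficulty is notational alignment.
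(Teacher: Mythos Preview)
Your supersingular case is fine and matches the paper: both simply cite the Gross--Keating/Rapoport computation (the paper points to Equation~(4.1), p.~151 of \cite{Rap1}).

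The ordinary case has a real gap. Your plan is to write each generator $H_i=(1+t)^{z_i}-(1+t')^{w_i}$ as $g_i^{p^{e_i}}$ in characteristic~$p$, claim that $g_1,g_2$ are coprime with $\mathrm{length}\,R_p/(g_1,g_2)=1$, and conclude $\mathrm{IM}_{p,y}=p^{e_1+e_2}$ via Lemma~\ref{plength}. But take $z_1=w_1=z_2=1$ and $w_2=1+p$: all exponents are units, so $e_1=e_2=0$ and $g_i=H_i$, yet $R_p/(H_1,H_2)\cong\bF_p[[t]]/(t^{p})$ has length $p$, not $1$; correspondingly $r_T=\mathrm{ord}_p(z_1w_2-z_2w_1)=1\neq 0=e_1+e_2$. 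Both of your claimed identities therefore fail. The underlying error is the assertion that $\det(2T)$ is ``up to units the product of the $p$-parts of the data attached to $f_1,f_2$'': from the explicit matrix~(\ref{matrix}) one has $\det(2T)=-(z_1w_2-z_2w_1)^{2}$, and it is this \emph{cross term}, not any sum of individual valuations, that both forces $\mathrm{ord}_p(\det 2T)$ to be even and pins down $r_T$.

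The paper argues by elimination rather than by extracting $p$-th powers. Lemma~\ref{plength} is used only once, to arrange $w_1\in\Z_p^\times$. One then observes that $(1+t)^{az_1}\equiv(1+t')^{aw_1}\pmod{H_1}$ for every $a\in\Z_p$; taking $a=w_2$ and subtracting from $(1+t)^{z_2w_1}-(1+t')^{w_2w_1}$ (which generates the same ideal as $H_2$ modulo $H_1$ since $w_1$ is a unit) replaces the second generator by the one-variable element $(1+t)^{z_1w_2}-(1+t)^{z_2w_1}$. Now Lemma~\ref{power} solves $H_1=0$ for $t'$, giving $R_p/I_y\cong\bF_p[[t]]/\bigl((1+t)^{z_1w_2-z_2w_1}-1\bigr)$; writing $z_1w_2-z_2w_1=p^{r_T}\alpha$ with $\alpha\in\Z_p^\times$ yields length $p^{r_T}$ directly. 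The key move you are missing is this algebraic cancellation of $t'$ \emph{before} passing to one variable, which is exactly what makes the cross term $z_1w_2-z_2w_1$ appear.
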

\begin{proof}
The second claim follows from Equation (4.1), p. 151 of \cite{Rap1}. 
Assume  the first case.  
As in Equation (2.1) of \cite{G1} there exists $d\in \Z$ such that $d^2 d(E)=\det(2T)$, where $d(E)$ is the discriminant of ${\rm End}(E)$. Since $E$ is ordinary, we see that
$p\nmid d(E)$ by Theorem 12, p.182 of \cite{Lang}.
We express the matrix $T$ in
terms of $(z_i,w_i):=(z(f_i),w(f_i)),i=1,2$
as follows:
\begin{eqnarray}\label{matrix}
T&=&\left(\begin{array}{cc}
{\rm deg}(f_1) & \frac{1}{2}\left({\rm deg}(f_1+f_2)-{\rm deg}(f_1)-{\rm deg}(f_2)\right) \\
\frac{1}{2}\left({\rm deg}(f_1+f_2)-{\rm deg}(f_1)-{\rm deg}(f_2)\right) & {\rm deg}(f_2)
\end{array}\right) \\
&=&
\left(\begin{array}{cc}
z_1w_1 &  \frac{1}{2}\left(z_1w_2+z_2w_1\right) \\
 \frac{1}{2}\left(z_1w_2+z_2w_1\right) & z_2w_2
\end{array}\right). \nonumber
\end{eqnarray}
Then we have
$${\rm ord}_p(\det(2T))=2{\rm ord}_p(z_1w_2-z_2w_1).$$ 
It follows from Serre-Tate theory, Theorem 2.1-3), p. 148 of \cite{Katz} (see also the observation in p. 140 of \cite{Rap1}) that  the minimal ideal $I_y$ is given by 
$I_y=(H_1,H_2)$, where 
$$H_i=H_i(t,t'):=(1+t)^{z_i}-(1+t')^{w_i},\ i=1,2.$$
By Lemma \ref{plength} and changing $z_i$ and $w_i$ if necessary,    
we may assume that $w_1\in \Z^\times_p$. Remark that  
$H_{1,a}:=(1+t)^{az_1}-(1+t')^{aw_1}\equiv 0$ mod $H_1$ in $R_p$ for any $a\in \Z_p$. 
By replacing $(f_1,f_2)$ with $(f_1,w_1f_2)$ (here we view them as a pair of two homomorphisms between $\widehat{E}$ and $\widehat{E}'$)  
$R_p/(H_1,H_2)\simeq R_p/((1+t)^{z_2w_1}-(1+t')^{w_2w_1},H_1)$.  
Then we have that
$$R_p/(H_1,H_2)\simeq R_p/((1+t)^{z_2w_1}-(1+t')^{w_2w_1},H_1,H_{1,w_2})=R_p/((1+t)^{z_1w_2}-(1+t)^{z_2w_1},H_1).$$
 Lemma \ref{power} yields an existence of  $f(t)\in \F_p[[t]]$ such that $H_1(t,f(t))=0$. 
This implies that
$$R_p/(H_1,H_2)=\bF_p[[t]]/((1+t)^{z_1w_2}-(1+t)^{z_2w_1}).$$
 Write $z_1w_2-z_2w_1=p^{r_T}\alpha$ for $\alpha\in \Z^\times_p$. 
Then $R_p/(H_1,H_2)=\bF_p[[t]]/(t^{p^{r_T}})$. 
Hence we have the claim. 
\end{proof}

\subsection{Comparison in the ordinary locus using quasi-canonical lifts: over $\mathbb{C}$ and over $\bF_p$}\label{subsec7.4}
Let us recall the canonical lift and quasi-canonical lifts of an ordinary elliptic curve $E$ over $\bF_p$. 
By Serre-Tate's theorem (\cite{Katz}), there exists a unique canonical lift $\widetilde{E}^{{\rm can}}$ to $W=W(\bF_p)$ such that 
the reduction map induces an isomorphism ${\rm End}_W(\widetilde{E}^{{\rm can}})\stackrel{\sim}{\lra}{\rm End}(E)$. 
For the canonical lift, it is known that ${\rm End}_{\C}(\widetilde{E}^{{\rm can}})={\rm End}_{W(\bF_p)}(\widetilde{E}^{{\rm can}})={\rm End}_{\bF_p}(E)$ is an 
order $\mathcal{O}_{K,n}$ of the conductor $n$ which is prime to $p$ and that $K:={\rm End}_{\bF_p}(E)\otimes_\Z\Q$ is an imaginary quadratic field for which $p$ is split. Therefore the discriminant $d(\widetilde{E}^{{\rm can}})$ of ${\rm End}_{\C}(\widetilde{E}^{{\rm can}})$ (which  equals that for 
${\rm End}_{\bF_p}(E)$) is given by $nD_K$, where $D_K$ is the discriminant, and importantly we see that 
$p\not| nD_K$ (see Theorem 12, p.182 of \cite{Lang}). 
Furthermore for two ordinary elliptic curves $E,E'$, 
the reduction map induces an isomorphism 
$${\rm Hom}_{\C}(\widetilde{E}^{{\rm can}},\widetilde{E}'^{{\rm can}})\simeq 
{\rm Hom}_{W(\bF_p)}(\widetilde{E}^{{\rm can}},\widetilde{E}'^{{\rm can}})\stackrel{\sim}{\lra} {\rm Hom}(E,E')$$
since the Serre-Tate local coordinates $q(E),q(E')$ for $E,E'$ satisfy $q(E)=q(E')=1$ respectively (cf. Theorem 2.1 and also last a few lines in p.180 of \cite{Katz}).

On the other hand if an elliptic curve $\tilde{E}$ over $\C$ has CM by an order in an imaginary quadratic field $K$, then 
 ${\rm End}_\C(\tilde{E})=\mathcal{O}_{K,np^s}$ for some positive integer $n$ which is coprime to $p$ and $s\in \Z_{\ge 0}$. 
 Assume that $p$ is split in $K$ or equivalently that $\tilde{E}$ has a good $p$-ordinary reduction $E/\bF_p$ (notice that 
one can take a smooth integral model over the ring $\mathcal{O}_L$ of integers of some number field $L$ among the isomorphism class of $\tilde{E}$ since its $j$-invariant is an algebraic integer). 
Then we see   that
\begin{equation}\label{conductor}
\mathcal{O}_{K,np^s}={\rm End}_\C(\tilde{E})={\rm End}_{\mathcal{O}_L}(\tilde{E})\hookrightarrow {\rm End}_{\bF_p}(E)=\mathcal{O}_{K,n}.
\end{equation} 
by Theorem 12, p.182 of \cite{Lang} again. 
The elliptic curve $\tilde{E}$ is also a lift of an ordinary elliptic curve $E$ and it is called a quasi-canonical lift of level $s$ for $E$ (cf. \cite{G}, 
\cite{Yu}).
It is known by Section 6 of \cite{G} or Proposition 3.5 of \cite{Me} (see also p.97 of \cite{Me}) that the number of isomorphism classes of quasi-canonical lifts of level $s$ for an ordinary elliptic curve $E$ is given  
as follows: 
\begin{equation}\label{nq}
\frac{\sharp \mathcal{O}^\times_{K,np^s}}{\sharp \mathcal{O}^\times_{K,n}}\sharp (\Z/p^s\Z)^\times=
\frac{\sharp {\rm Aut}(\tilde{E})}{\sharp {\rm Aut}(E)}(p^s-p^{s-1}).
\end{equation}
Let $t$ be the local parameter for the local deformation of quasi-canonical lift of $p$-divisible group of $E$ and let
$j$ be the parameter of the coarse moduli space $\A_j$ defined by $j$-invariant.  
Then we have the relation $j-j(E)=t^{\frac{\sharp {\rm Aut}(E)}{2}}$. This explains the appearance of 
$\ds\frac{\sharp {\rm Aut}(\tilde{E})}{\sharp {\rm Aut}(E)}$ in (\ref{nq}).

  From now on, we will count the number of lifts to quasi-canonical lifts for given two isogenies $f_1, f_2 : E\rightarrow E'$ where $E$ and $E'$ are ordinary elliptic curves.
 Let $(z_i,w_i)=(z(f_i),w(f_i))$ be an element of $\Z^2_p$ for $f_i$'s with $i=1,2$, as explained in the paragraph following Lemma \ref{plength}. 
  We write $z_i=u_ip^{a_i}$ and $w_i=v_ip^{b_i}$, where $u_i, v_i$ are units in $\mathbb{Z}_p$. 
Let $T$ be the  symmetric matrix associated to $(f_1, f_2)$ as Equation (\ref{matrix}).
Note that Lemma \ref{ind} confirms that $T$ is nonsingular in our situation.
Put $r=r_T$. 
For two isogenies $f_1,f_2$ given as above we denote by 
$N(s,s';f_1,f_2)$ the number of isomorphism classes of quasi-canonical lifts $(\tilde{E}_s,\tilde{E}_{s'})$ of level $s,s'$ respectively ($0\le s,s'\le r$) such that 
$f_1,f_2$ lift to isogenies from $E_s$ to $E_{s'}$.

\begin{Prop}\label{nend}
Keep the notation being as above. Assume that $a_1=\min\{a_1, a_2, b_1, b_2\}$. 

\begin{enumerate}

\item If $b_1< b_2$ and $a_2+b_1\le a_1+b_2$, then  we have that
\[
N(s,s';f_1,f_2)=\left\{
  \begin{array}{l l}
 \frac{\sharp \mathcal{O}^\times_{K,np^s}}{\sharp \mathcal{O}^\times_{K,n}}
\frac{\sharp \mathcal{O}^\times_{K,np^{s'}}}{\sharp \mathcal{O}^\times_{K,n}}\sharp (\Z/p^s\Z)^\times\cdot \sharp (\Z/p^{s'}\Z)^\times  & \quad  \textit{if $0\leq s\leq a_1$ 
and $0\le s'\le b_1$};\\
 \frac{\sharp \mathcal{O}^\times_{K,np^s}}{\sharp \mathcal{O}^\times_{K,n}}
\frac{\sharp \mathcal{O}^\times_{K,np^{s'}}}{\sharp \mathcal{O}^\times_{K,n}}\sharp (\Z/p^s\Z)^\times\cdot p^{b_1}  & \quad  \textit{if $a_1<s\leq a_2$ and 
$s'=s+b_1-a_1$};\\
\frac{\sharp \mathcal{O}^\times_{K,np^s}}{\sharp \mathcal{O}^\times_{K,n}}
\frac{\sharp \mathcal{O}^\times_{K,np^{s'}}}{\sharp \mathcal{O}^\times_{K,n}}\sharp (\Z/p^s\Z)^\times\cdot p^{b_1}  & \quad  
\textit{if $a_2<s\le r-b_1$} \\
& \quad   \textit{\quad   and $s'=s+b_1-a_1=s+b_2-a_2$};\\
0&\quad    \textit{otherwise}.
    \end{array} \right.
    \]
    
\item     If $b_1<  b_2$ and  $a_2+b_1> a_1+b_2$, then $r=a_1+b_2$ and we have that 
\[
N(s,s';f_1,f_2)=\left\{
  \begin{array}{l l}
 \frac{\sharp \mathcal{O}^\times_{K,np^s}}{\sharp \mathcal{O}^\times_{K,n}}
\frac{\sharp \mathcal{O}^\times_{K,np^{s'}}}{\sharp \mathcal{O}^\times_{K,n}}\sharp (\Z/p^s\Z)^\times\cdot \sharp (\Z/p^{s'}\Z)^\times  & \quad  \textit{if $0\leq s\leq a_1$ 
and $0\le s'\le b_1$};\\
 \frac{\sharp \mathcal{O}^\times_{K,np^s}}{\sharp \mathcal{O}^\times_{K,n}}
\frac{\sharp \mathcal{O}^\times_{K,np^{s'}}}{\sharp \mathcal{O}^\times_{K,n}}\sharp (\Z/p^s\Z)^\times\cdot p^{b_1}  & \quad  \textit{if $a_1<s\leq r-b_1$ and 
$s'=s+b_1-a_1$};\\
0&\quad    \textit{otherwise}.
    \end{array} \right.
    \]
    
\item If $b_1\ge b_2$ and $a_2+b_1\le a_1+b_2$, then we have  that 
\[
N(s,s';f_1,f_2)=\left\{
  \begin{array}{l l}
 \frac{\sharp \mathcal{O}^\times_{K,np^s}}{\sharp \mathcal{O}^\times_{K,n}}
\frac{\sharp \mathcal{O}^\times_{K,np^{s'}}}{\sharp \mathcal{O}^\times_{K,n}}\sharp (\Z/p^s\Z)^\times\cdot \sharp (\Z/p^{s'}\Z)^\times  & \quad  \textit{if $0\leq s'\leq b_2$ 
and $0\le s\le a_1$};\\
 \frac{\sharp \mathcal{O}^\times_{K,np^s}}{\sharp \mathcal{O}^\times_{K,n}}
\frac{\sharp \mathcal{O}^\times_{K,np^{s'}}}{\sharp \mathcal{O}^\times_{K,n}}\sharp (\Z/p^s\Z)^\times\cdot p^{a_1}  & \quad  
\textit{if $b_2<s'\leq b_1$ and 
$s=s'-b_2+a_2$};\\
\frac{\sharp \mathcal{O}^\times_{K,np^s}}{\sharp \mathcal{O}^\times_{K,n}}
\frac{\sharp \mathcal{O}^\times_{K,np^{s'}}}{\sharp \mathcal{O}^\times_{K,n}}\sharp (\Z/p^s\Z)^\times\cdot p^{a_1}  & \quad  
\textit{if $b_1<s'\le r-a_1$} \\
& \quad   \textit{\quad   and $s=s'-b_1+a_1=s'-b_2+a_2$};\\
0&\quad    \textit{otherwise}.
    \end{array} \right.
    \]
    
\item     If $b_1\ge  b_2$ and  $a_2+b_1> a_1+b_2$, then $r=a_1+b_2$ and we have that 
\[
N(s,s';f_1,f_2)=\left\{
  \begin{array}{l l}
 \frac{\sharp \mathcal{O}^\times_{K,np^s}}{\sharp \mathcal{O}^\times_{K,n}}
\frac{\sharp \mathcal{O}^\times_{K,np^{s'}}}{\sharp \mathcal{O}^\times_{K,n}}\sharp (\Z/p^s\Z)^\times\cdot \sharp (\Z/p^{s'}\Z)^\times  & \quad  \textit{if $0\leq s'\leq b_2$ 
and $0\le s\le a_1$};\\
 \frac{\sharp \mathcal{O}^\times_{K,np^s}}{\sharp \mathcal{O}^\times_{K,n}}
\frac{\sharp \mathcal{O}^\times_{K,np^{s'}}}{\sharp \mathcal{O}^\times_{K,n}}\sharp (\Z/p^s\Z)^\times\cdot p^{a_1}  & \quad  
\textit{if $b_2<s'\leq r-a_1$ and 
$s=s'-b_2+a_2$};\\
0&\quad    \textit{otherwise}.
    \end{array} \right.
    \]
    \end{enumerate}
    
\end{Prop}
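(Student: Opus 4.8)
The plan is to reduce the count to a problem about primitive $p$-power roots of unity via Serre--Tate theory, in the same spirit as the proof of Proposition~\ref{local-im}. First I would recall that a quasi-canonical lift of level $s$ of the ordinary elliptic curve $E$ is classified, over $W(\bF_p)[\zeta_{p^s}]$, by its Serre--Tate parameter $q$, a primitive $p^s$-th root of unity (the canonical lift being $q=1$), and that the passage from such parameters to isomorphism classes of lifts contributes precisely the automorphism factor appearing in Equation~(\ref{nq}), so that the number of isomorphism classes of level-$s$ lifts of $E$ is $\frac{\sharp\mathcal{O}^\times_{K,np^s}}{\sharp\mathcal{O}^\times_{K,n}}\,\sharp(\Z/p^s\Z)^\times$. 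The key input is the lifting criterion already used in the proof of Proposition~\ref{local-im}: the isogeny $f_i$ with Serre--Tate invariants $(z_i,w_i)$ lifts to an isogeny from the level-$s$ lift $\tilde E_s$ (parameter $q$) to the level-$s'$ lift $\tilde E'_{s'}$ (parameter $q'$) if and only if $q^{z_i}=q'^{w_i}$, by Theorem~2.1 of \cite{Katz} (this is the $W(\bF_p)$-deformation-theoretic incarnation of the ideal $I_{m_i,f_i,p}$ of Proposition~\ref{decom}).

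Next I would translate this into statements about orders. Writing $z_i=u_ip^{a_i}$ and $w_i=v_ip^{b_i}$ with $u_i,v_i\in\Z^\times_p$, the root $q^{z_i}$ has exact order $p^{\max(s-a_i,0)}$ and $q'^{w_i}$ has exact order $p^{\max(s'-b_i,0)}$; hence a necessary condition for $f_1$ and $f_2$ to lift simultaneously to $(\tilde E_s,\tilde E'_{s'})$ is $\max(s-a_i,0)=\max(s'-b_i,0)$ for $i=1,2$. Granting this order-matching, I would count admissible pairs $(q,q')$: if $s\le a_i$ and $s'\le b_i$ then $f_i$ imposes no condition at all; if $s>a_i$ (so $s-a_i=s'-b_i>0$) then, for each of the $\sharp(\Z/p^s\Z)^\times$ choices of $q$, the map $q'\mapsto q'^{w_i}$ from primitive $p^{s'}$-th roots onto primitive $p^{s'-b_i}$-th roots is surjective with fibres of size $p^{b_i}$, so $f_i$ leaves exactly $p^{b_i}$ admissible $q'$. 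Combining this with the automorphism bookkeeping of (\ref{nq}) applied to both $\tilde E_s$ and $\tilde E'_{s'}$ yields, on each region of $(s,s')$, a value of the form $\frac{\sharp\mathcal{O}^\times_{K,np^s}}{\sharp\mathcal{O}^\times_{K,n}}\frac{\sharp\mathcal{O}^\times_{K,np^{s'}}}{\sharp\mathcal{O}^\times_{K,n}}\sharp(\Z/p^s\Z)^\times$ times a power of $p$ (or times $\sharp(\Z/p^{s'}\Z)^\times$ when neither $f_i$ is active), exactly the shape in the statement.

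It remains to pin down, under $a_1=\min\{a_1,a_2,b_1,b_2\}$ and in each of the four cases, the exact region of the square $0\le s,s'\le r$ on which the system is solvable. Here I would split according to whether $s\le a_1$, $a_1<s\le a_2$ or $a_2<s$ (and symmetrically in $s'$), and use $r={\rm ord}_p(z_1w_2-z_2w_1)={\rm ord}_p\big(u_1v_2p^{a_1+b_2}-u_2v_1p^{a_2+b_1}\big)$, which equals $\min(a_1+b_2,a_2+b_1)$ when these differ and forces $r=a_1+b_2$ in parts~(2) and~(4). The decisive points are: (i) whenever $f_1$ is ``active'' the inequality governing the case ($a_2+b_1\le a_1+b_2$, respectively $>$) guarantees $f_2$ is automatically satisfied, so the two constraints are never simultaneously active unless $a_1+b_2=a_2+b_1$; (ii) in that equality case, and precisely on the range $s\le r-b_1$, a short valuation computation (using ${\rm ord}_p(u_1v_2-u_2v_1)=r-a_1-b_2$) shows the second constraint follows from the first, which is why the third sub-range of parts~(1) and~(3) tops out at $r-b_1$ (respectively $r-a_1$) and the count vanishes beyond it; (iii) the parts with $b_1\ge b_2$ are obtained by the same analysis with the roles of $(a_i,s)$ and $(b_i,s')$ interchanged, this interchange being the dual-isogeny symmetry $f_i\mapsto\hat f_i$, which swaps $(z_i,w_i)$ with $(w_i,z_i)$ and leaves $T$ and $r$ unchanged. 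The main obstacle is exactly this case analysis — matching open versus closed endpoints of the $(s,s')$-ranges to the statement, verifying the automatic satisfaction of the inactive constraint, and correctly locating the top of each range relative to $r$; once the regions are identified the arithmetic is elementary.
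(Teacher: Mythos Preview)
Your proposal is correct and follows essentially the same route as the paper: you invoke the Serre--Tate parametrization of quasi-canonical lifts by primitive $p$-power roots of unity (Proposition~3.5 of \cite{Me}), reduce the lifting of $f_i$ to the equation $q^{z_i}=q'^{w_i}$ exactly as in Equation~(\ref{eqdefor}), and then carry out the same three-range case analysis on $s$ (respectively $s'$), including the key valuation computation with $\mathrm{ord}_p(u_1v_2-u_2v_1)=r-(a_1+b_2)$ that bounds the third sub-range; the paper likewise dispatches cases~(2)--(4) by saying the computation is similar and that for $b_1\ge b_2$ one swaps the roles of $s$ and $s'$, which is precisely your point~(iii).
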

\begin{proof}
 We choose quasi-canonical lifts $\tilde{E}_s$ and $\tilde{E}'_{s'}$ of level $s$ and $s'$ for $E$ and $E'$ respectively. 
Let $q_s:=q(\tilde{E}_s)$ and $\ q_{s'}:=(\tilde{E}'_{s'})$ be the Serre-Tate coordinates for $\tilde{E}_s$ and $\tilde{E}'_{s'}$ respectively.
 Then $q_s$ (resp. $q_{s'}$) is a primitive $p^s$ (resp. $p^{s'}$)-th root of unity in $\bQ_p$ by Proposition 3.5-(3) of \cite{Me}.
The condition  to lift $f_1$ and $f_2$ as an element of ${\rm Hom}(\tilde{E}_s,\tilde{E}'_{s'})$ is given by the following two equations:
\begin{equation}\label{eqdefor}
q^{z_1}_s=q^{w_1}_{s'},\ q^{z_2}_s=q^{w_2}_{s'}.
\end{equation}
Here, we follow notations of page 140 of \cite{Rap1}. 

Firstly we consider the case when $b_1\le b_2$ and $a_2+b_1\le a_1+b_2$. 
If $s\leq a_1$, then $q^{z_1}_s=q^{z_2}_{s}=1$ since $a_1\le a_2$ by the assumption. Thus any $q_{s'}$ with $0\leq s'\leq \min\{b_1,b_2\}=b_1$ also satisfies the second equation in (\ref{eqdefor}). In addition, such a $q_{s'}$ runs over all  
primitive $p^{s'}$-roots of unity.
 Thus we have 
 $$N(s,s';f_1,f_2)=\frac{\sharp \mathcal{O}^\times_{K,np^s}}{\sharp \mathcal{O}^\times_{K,n}}
\frac{\sharp \mathcal{O}^\times_{K,np^{s'}}}{\sharp \mathcal{O}^\times_{K,n}}\sharp (\Z/p^s\Z)^\times\cdot \sharp (\Z/p^{s'}\Z)^\times$$
for $s,s'$ satisfying $0\le s \le a_1$ and $0\le s'\le b_1$. 

If $a_1<s\le a_2$, then the first equation in (\ref{eqdefor}) gives the equality $s'=s+b_1-a_1$ but there are $p^{b_1}$ numbers of $q_{s'}$ since 
$q^{z_1}_s=q^{w_1}_{s'}=(q^{v_1}_{s'-b_1})^{p^{b_1}}$. 
Notice that $q^{z_2}_s=1$ since $s\le a_2$. On the other hand $s'=s+b_1-a_1\le a_2+b_1-a_1\le b_2$ since 
we have assumed that $a_2+b_1\le a_1+b_2$. Hence the second equation in (\ref{eqdefor}) is automatically fulfilled.    
Thus 
$$N(s,s';f_1,f_2)=\frac{\sharp \mathcal{O}^\times_{K,np^s}}{\sharp \mathcal{O}^\times_{K,n}}
\frac{\sharp \mathcal{O}^\times_{K,np^{s'}}}{\sharp \mathcal{O}^\times_{K,n}}\sharp (\Z/p^s\Z)^\times\cdot p^{b_1}$$
where $s'=s+b_1-a_1$. 

Assume that $s> a_2$. Equation (\ref{eqdefor}) implies $s=s'-b_1+a_1=s'-b_2+a_2$. In particular $a_2+b_1=a_1+b_2$. 
In this case $a_2+b_1\le r$ since $r={\rm ord}_p(z_1w_2-z_2w_1)$. 
As discussed in the previous case there are $p^{b_1}$ numbers of $q_{s'}$. 
With the notation fixed we may write $u_1v_2-u_2v_1=p^{r-(a_2+b_1)}\alpha$ for some $\alpha\in \Z^\times_p$. 
For such a $q_{s'}$ we compute 
$$q^{w_2}_{s'}=q^{v_2}_{s'-b_2}=q^{v_2v_1v^{-1}_1}_{s'-b_2}=
q^{p^{b_2-b_1}v_2v_1v^{-1}_1}_{s'-b_1}=
q^{p^{b_2-b_1}v_2u_1v^{-1}_1}_{s-a_1}=
q^{v_2u_1v^{-1}_1}_{s-a_1-b_2+b_1}=
q^{(v_1u_2+p^{r-(a_2+b_1)}\alpha)v^{-1}_1}_{s-a_2}$$
$$=q^{u_2+p^{r-(a_2+b_1)}\alpha v^{-1}_1}_{s-a_2}=
q^{z_2+p^{r-b_1}\alpha v^{-1}_1}_{s}.$$
It follows from this that if Equation (\ref{eqdefor}) has a solution, then it forces $s$ to be $s\le r-b_1$ 
(note that $q^{z_2}_s=q^{w_2}_{s'}$ and the above equation imply $q^{p^{r-b_1}\alpha v^{-1}_1}_{s}=1$). 
Otherwise (hence when $s>r-b_1$) there is no solution of  (\ref{eqdefor}). 

Next we assume that $b_1<b_2$ and $a_2+b_1>a_1+b_2$. Then $a_1+b_2=r$ since $r={\rm ord}_p(z_1w_2-z_2w_1)$. 
A similar computation done before gives the desired results and hence 
details are omitted. 

In the case when $b_1\ge b_2$ we may replace the role of $s$ with one of $s'$ and carry out a similar computation.  
\end{proof}

\begin{Cor}\label{imp} 
We have the following formula:
$$\sum_{0\le s,s'\le r}\frac{N(s,s';f_1,f_2)}{\sharp {\rm Aut}(\tilde{E}_s)\sharp {\rm Aut}(\tilde{E}'_s)}=
\frac{p^r}{\sharp {\rm Aut}(E)\sharp {\rm Aut}(E')}$$
\end{Cor}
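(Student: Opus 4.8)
The identity to prove is
\[
\sum_{0\le s,s'\le r}\frac{N(s,s';f_1,f_2)}{\sharp {\rm Aut}(\tilde{E}_s)\sharp {\rm Aut}(\tilde{E}'_{s'})}=\frac{p^r}{\sharp {\rm Aut}(E)\sharp {\rm Aut}(E')}.
\]
The strategy is to substitute the explicit values of $N(s,s';f_1,f_2)$ from Proposition \ref{nend} and do a bookkeeping computation. The key simplification comes from the identity \eqref{nq}, which I would rewrite as
\[
\frac{\sharp \mathcal{O}^\times_{K,np^s}}{\sharp \mathcal{O}^\times_{K,n}}\,\sharp(\Z/p^s\Z)^\times=\frac{\sharp{\rm Aut}(\tilde{E}_s)}{\sharp{\rm Aut}(E)}(p^s-p^{s-1})\quad(s\ge 1),
\]
with the $s=0$ term equal to $1$. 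Hence each summand $N(s,s';f_1,f_2)/(\sharp{\rm Aut}(\tilde{E}_s)\sharp{\rm Aut}(\tilde{E}'_{s'}))$ becomes $\frac{1}{\sharp{\rm Aut}(E)\sharp{\rm Aut}(E')}$ times a product of elementary factors $(p^s-p^{s-1})$, $(p^{s'}-p^{s'-1})$, or powers of $p$, with the convention that a factor with exponent $0$ is replaced by $1$. So after pulling out $\frac{1}{\sharp{\rm Aut}(E)\sharp{\rm Aut}(E')}$, it suffices to show that the remaining combinatorial sum equals $p^r$ in each of the four cases.

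**Execution.** I would treat the four cases of Proposition \ref{nend} in turn; by the $(f_1,f_2)\leftrightarrow$ transpose symmetry, cases (3) and (4) follow from (1) and (2) by interchanging the roles of $(s,a_i)$ and $(s',b_i)$, so really only (1) and (2) need a direct computation. Take case (1): $b_1<b_2$, $a_2+b_1\le a_1+b_2$, so $r=a_2+b_2$ (this should be checked from $r={\rm ord}_p(z_1w_2-z_2w_1)$, using $a_1=\min$). The sum splits into three ranges. The first range ($0\le s\le a_1$, $0\le s'\le b_1$) contributes
\[
\Big(\sum_{s=0}^{a_1}\phi(p^s)\Big)\Big(\sum_{s'=0}^{b_1}\phi(p^{s'})\Big)=p^{a_1}\cdot p^{b_1},
\]
using the telescoping $\sum_{s=0}^{k}\phi(p^s)=p^{k}$ where $\phi(p^0):=1$. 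The second range ($a_1<s\le a_2$, $s'=s+b_1-a_1$) contributes $\sum_{s=a_1+1}^{a_2}\phi(p^s)\cdot p^{b_1}=(p^{a_2}-p^{a_1})p^{b_1}$. The third range ($a_2<s\le r-b_1=a_2+b_2-b_1$, $s'=s+b_2-a_2$) contributes $\sum_{s=a_2+1}^{a_2+b_2-b_1}\phi(p^s)\cdot p^{b_1}=(p^{a_2+b_2-b_1}-p^{a_2})p^{b_1}$. Adding: $p^{a_1+b_1}+(p^{a_2}-p^{a_1})p^{b_1}+(p^{a_2+b_2-b_1}-p^{a_2})p^{b_1}=p^{a_2+b_2}=p^r$, as desired. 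Case (2) ($r=a_1+b_2$) is analogous with only two ranges: $p^{a_1+b_1}+\sum_{s=a_1+1}^{a_1+b_2-b_1}\phi(p^s)p^{b_1}=p^{a_1+b_1}+(p^{a_1+b_2-b_1}-p^{a_1})p^{b_1}=p^{a_1+b_2}=p^r$.

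**The main obstacle.** The computations above are routine once the setup is right; the only genuinely delicate points are (i) confirming that $r=r_T={\rm ord}_p(z_1w_2-z_2w_1)$ takes the claimed value ($a_2+b_2$ in cases (1),(3) and $a_1+b_2$ in cases (2),(4)) consistently with the case hypotheses and the normalization $a_1=\min\{a_1,a_2,b_1,b_2\}$ — this requires a short valuation argument that should already be implicit in (or deducible from) the proof of Proposition \ref{nend}; and (ii) the careful handling of boundary/degenerate cases, e.g. when $a_1=a_2$ (some ranges empty) or $b_1=b_2$ (case (1) versus case (2) boundary, where the third range degenerates) or when various exponents vanish — here one has to make sure the $\phi(p^0)=1$ convention and the telescoping identity $\sum_{s=0}^k \phi(p^s)=p^k$ are applied uniformly. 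I expect the referee-style objection would be exactly about these edge cases, so I would state the telescoping lemma explicitly once and invoke it, and note that the empty-range cases give $p^r$ trivially. Everything else is a direct substitution of Proposition \ref{nend} and \eqref{nq}.
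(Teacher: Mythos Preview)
Your approach is essentially identical to the paper's: substitute the values from Proposition~\ref{nend}, use \eqref{nq} to cancel the automorphism factors, and telescope. The paper in fact merges your second and third ranges into a single sum over $a_1<s\le r-b_1$ and arrives at $p^{a_1+b_1}+p^{b_1}(p^{r-b_1}-p^{a_1})=p^r$ directly; the reduction of cases (3),(4) to (1),(2) is handled via dual isogenies $N(s,s';f_1,f_2)=N(s',s;f_1^\vee,f_2^\vee)$, which is the symmetry you invoke.

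There is, however, a genuine error in your execution of case~(1). You assert $r=a_2+b_2$, but this is false. Since $r=\mathrm{ord}_p(z_1w_2-z_2w_1)$ with $\mathrm{ord}_p(z_1w_2)=a_1+b_2$ and $\mathrm{ord}_p(z_2w_1)=a_2+b_1$, one has $r=a_2+b_1$ when $a_2+b_1<a_1+b_2$ (and then the third range is empty, since its defining condition $s'=s+b_1-a_1=s+b_2-a_2$ forces $a_2+b_1=a_1+b_2$), while if $a_2+b_1=a_1+b_2$ then $r\ge a_2+b_1$ depends on the units $u_i,v_i$ and is \emph{not} determined by the $a_i,b_i$ alone. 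Your third-range upper limit should be $r-b_1$ (exactly as written in Proposition~\ref{nend}), not $a_2+b_2-b_1$; with that correction the telescope gives $(p^{r-b_1}-p^{a_2})p^{b_1}$ and the total is $p^r$ without ever needing to identify $r$. So your ``main obstacle (i)'' is a red herring: you do not need to compute $r$, and your claimed values for it are wrong.
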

\begin{proof}
By exchanging $f_1$ and $f_2$, if necessary, we may assume that either $a_1$ or $b_1$ is the minimum among $a_1, a_2, b_1, b_2$.
We first assume that $a_1$ is the minimum.
Let us treat the case when $b_1<b_2$ and $a_2+b_1\le a_1+b_2$.
The other  cases can be handled similarly so that we may skip them. 
Using Proposition \ref{nend},  the left hand side turns to be 
$$ \sum_{0\le s\le a_1 \atop 0\le s' \le b_1 }\frac{1}{\sharp \mathcal{O}^\times_{K,n}\sharp \mathcal{O}^\times_{K,n}}\sharp (\Z/p^s\Z)^\times\cdot \sharp (\Z/p^{s'}\Z)^\times+
 \sum_{a_1<s\le r-b_1}\frac{1}{\sharp \mathcal{O}^\times_{K,n}\sharp \mathcal{O}^\times_{K,n}}\sharp (\Z/p^s\Z)^\times\cdot p^{b_1}$$
 $$=\frac{1}{\sharp \mathcal{O}^\times_{K,n}\sharp \mathcal{O}^\times_{K,n}}(p^{a_1+b_1}+p^{b_1}(p^{r-b_1}-p^{a_1}))=
\frac{p^r}{\sharp \mathcal{O}^\times_{K,n}\sharp \mathcal{O}^\times_{K,n}}=\frac{p^r}{\sharp {\rm Aut}(E)\sharp {\rm Aut}(E')}.$$
We next assume that $b_1$ is the minimum.
Let $f_i^{\vee}$ be the dual isogeny of $f_i$.
Then we can see that 
\[
N(s,s';f_1,f_2)=N(s',s;f_1^{\vee},f_2^{\vee}).
\]
Thus the argument used in the above case gives the desired formula.

\end{proof}

\subsection{The intersection number in the ordinary locus}
The intersection number $(T_{m_1,\mathbb{C}},T_{m_2,\mathbb{C}})$  is described  in Proposition 2.4 of  \cite{GK1} and it turns out to be the sum of the Fourier coefficients of the Siegel-Eisenstein series for ${\rm Sp}_{4}/\Q$. 
We first consider the contribution coming from the ordinary part. For a symmetric positive definite $(2\times 2)$-half-integral matrix $T$ (namely 
diagonal entries are integer and anti-diagonal entries are elements in $\frac{1}{2}\Z$), 
we define $\chi_T(p)$ by 
$$
\chi_T(p)=\left\{
\begin{array}{cl}
1 & (\mbox{if $p$ is split in $\Q(\sqrt{-\det(2T)})$}) \\
0 & (\mbox{if $p$ is ramified in $\Q(\sqrt{-\det(2T)})$}) \\
-1 & (\mbox{if $p$ is inert in $\Q(\sqrt{-\det(2T)})$})
\end{array}\right..
$$

\begin{Thm}\label{ordpart}
Assume that $m_1m_2$ is not a square.
Then for any prime number $p$, we have that
$$\sum_{\substack{y=((E,E'),f_1,f_2) \\  (E,E'):{\rm (ord)}\\ {\rm deg}(f_i)=m_i}}\frac{{\rm IM}_{p,y}}{\sharp {\rm Aut}(E) \sharp {\rm Aut}(E')}= 
\frac{1}{288}
 \sum_{T\in {\rm Sym}_{2}(\Z)_{> 0} \atop {\rm diag}(T)=(m_1, m_2), \chi_T(p)=1}c(T).$$ 
Here $c(T)$ is the Fourier coefficient of the Siegel-Eisenstein series of weight 2 with respect to ${\rm Sp}_4(\Z)$ (cf. \cite{nagaoka}). 
\end{Thm}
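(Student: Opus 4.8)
The plan is to unwind both sides and match them term by term over the ordinary locus. On the geometric side, I would start from Proposition \ref{decom2} and discard the supersingular contribution, focusing on the sum $\sum_{y=((E,E'),f_1,f_2),\ (E,E'):{\rm (ord)}} \frac{{\rm IM}_{p,y}}{\sharp{\rm Aut}(E)\sharp{\rm Aut}(E')}$. By Proposition \ref{local-im}.(1), for an ordinary pair the local intersection multiplicity is ${\rm IM}_{p,y}=p^{r_T}$ where $r_T=\frac12{\rm ord}_p(\det(2T))$ and $T$ is the Gram matrix of $(f_1,f_2)$. The key reorganization is to pass from counting isogenies between fixed $\bF_p$-curves to counting isogenies between quasi-canonical lifts: Corollary \ref{imp} tells us precisely that $p^{r_T}/(\sharp{\rm Aut}(E)\sharp{\rm Aut}(E'))=\sum_{0\le s,s'\le r_T} N(s,s';f_1,f_2)/(\sharp{\rm Aut}(\tilde E_s)\sharp{\rm Aut}(\tilde E'_{s'}))$. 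Summing over all ordinary pairs $(E,E')$ and all isogeny pairs of degrees $(m_1,m_2)$, the left-hand side becomes a sum over isomorphism classes of pairs $(\tilde E,\tilde E')$ of CM elliptic curves over $\overline{\Q}$ with $p$-ordinary reduction, together with a pair of isogenies $(\tilde f_1,\tilde f_2)$ of degrees $(m_1,m_2)$, weighted by $1/(\sharp{\rm Aut}\cdot\sharp{\rm Aut})$, where each such lift descends uniquely from the reduction isomorphism ${\rm Hom}_{\C}(\tilde E,\tilde E')\xrightarrow{\sim}{\rm Hom}(E,E')$ on quasi-canonical lifts.

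Next I would compare this with the complex side. By Proposition 2.4 of \cite{GK1} (reformulated in Chapters 3--5 of \cite{A}), the intersection number $(T_{m_1,\C},T_{m_2,\C})=\frac{1}{288}\sum_{T\in{\rm Sym}_2(\Z)_{>0},\ {\rm diag}(T)=(m_1,m_2)}c(T)$ decomposes as a sum over positive definite $T$ of local factors, and each $T$ contributes a term indexed by pairs of elliptic curves with CM by the order of discriminant $-\det(2T)$ together with an embedding of the rank-2 quadratic lattice with Gram matrix $T$. The restriction $\chi_T(p)=1$ on the right-hand side of the statement exactly picks out those $T$ for which $p$ is split in $\Q(\sqrt{-\det(2T)})$, i.e. the reduction at $p$ is ordinary; this is the compatibility coming from \eqref{conductor} and Theorem 12 of \cite{Lang}. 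So the task reduces to: for each such $T$, the weighted count of $(\tilde E,\tilde E',\tilde f_1,\tilde f_2)$ with Gram matrix $T$, including contributions of quasi-canonical lifts of all levels up to $r_T$, reproduces exactly the $T$-th term $c(T)/288$ of the Eisenstein Fourier coefficient. I would extract this matching directly from the complex computation in \cite{A}/\cite{GK1}: both are ultimately the same class-number-weighted sum $\sum_T \frac{1}{288} c(T)$ restricted to split $T$, once one identifies the orbit counts via the mass formula for the genus of $T$ and the Eichler/Shimizu-type count of optimal embeddings.

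The main obstacle I expect is the bookkeeping in the quasi-canonical lift count: verifying that the aggregate $\sum_{0\le s,s'} N(s,s';f_1,f_2)/(\sharp{\rm Aut}(\tilde E_s)\sharp{\rm Aut}(\tilde E'_{s'}))$, summed over $\bF_p$-isogeny pairs and then over $\bF_p$-curve pairs, reassembles without over- or under-counting into the sum over $\overline\Q$-data that appears in the complex formula. The delicate points are (i) that a quasi-canonical lift of level $s$ of $E$ together with a lift of the isogeny corresponds to a genuine CM elliptic curve over $\overline\Q$ with a prescribed conductor relation \eqref{conductor}, and conversely every such $\overline\Q$-datum arises this way from a unique ordinary reduction; (ii) that the automorphism-group weights on both sides agree (this is where the factors $\frac{\sharp{\rm Aut}(\tilde E)}{\sharp{\rm Aut}(E)}$ from \eqref{nq} and the relation $j-j(E)=t^{\sharp{\rm Aut}(E)/2}$ are used); and (iii) that the level-$r_T$ truncation in Corollary \ref{imp} matches the finiteness of the complex intersection. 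I would handle this by setting up an explicit bijection between the two indexing sets — ordinary triples $((E,E'),f_1,f_2)$ over $\bF_p$ refined by quasi-canonical lift levels $(s,s')$ on one side, and isomorphism classes of CM triples $(\tilde E,\tilde E',\tilde f_1,\tilde f_2)$ over $\overline\Q$ with $p$-ordinary reduction on the other — and checking that Corollary \ref{imp} is exactly the identity that makes the weights transport correctly. Once the bijection and weight-matching are in place, the equality of the two expressions is immediate from Proposition 2.4 of \cite{GK1}.
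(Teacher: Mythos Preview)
Your proposal is correct and follows essentially the same route as the paper: use Proposition~\ref{local-im}.(1) to write ${\rm IM}_{p,y}=p^{r_T}$, invoke Corollary~\ref{imp} to reexpress $p^{r_T}/(\sharp{\rm Aut}(E)\sharp{\rm Aut}(E'))$ as a weighted sum over quasi-canonical lifts, and then identify the resulting sum over CM elliptic curves over $\overline{\Q}$ with $p$-ordinary reduction with the $\chi_T(p)=1$ part of the Fourier-coefficient sum. The paper runs the argument in the reverse order (starting from the complex side) and cites the precise identity you need as (2.19), p.~231 of \cite{GK1}, rather than Proposition~2.4; this equation already gives the weighted count over $p$-ordinary CM pairs equal to $\frac{1}{288}\sum_{\chi_T(p)=1}c(T)$, so you can avoid re-deriving it from the mass formula.
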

\begin{proof}
With the observation in CM elliptic curves having good ordinary reduction at $p$
(which we call it \textit{$p$-ordinary} and denote it by \textit{$p$-ord} throughout this proof), we first obtain 
\begin{equation}\label{1st}
\sum_{\substack{((\tilde{E},\tilde{E}'),f_1,f_2) \\  (\tilde{E},\tilde{E}')/\C:{\rm CM}\ p-{\rm ord} \\ {\rm deg}(f_i)=m_i}}\frac{1}{\sharp {\rm Aut}(\tilde{E}) \sharp {\rm Aut}(\tilde{E}')}  =
\frac{1}{288}
 \sum_{T\in {\rm Sym}_{2}(\Z)_{> 0} \atop {\rm diag}(T)=(m_1, m_2), \chi_T(p)=1}c(T)
 \end{equation} 
by (2.19), p. 231 of \cite{GK1}. 

For a pair of $p$-ordinary elliptic curves $(\tilde{E}, \tilde{E}')$ having two isogenies $f_1$ and $f_2$ as in LHS of the above equation,
we already observed in Section \ref{subsec7.4} that
$\tilde{E}=\tilde{E}_s$ and $\tilde{E}'=\tilde{E}'_{s'}$ for suitable $s$ and $s'$, where
$\tilde{E}_{s}$ and $\tilde{E}_{s'}$ 
 are quasi-canonical lifts of level $s$ and $s'$ for ordinary elliptic curves $E$ and $E'$ respectively. 
 Since the reduction of endomorphism groups is injective, the reductions of $f_1$ and $f_2$ (which are also denoted by $f_1$ and $f_2$ respectively) are  isogenies from $E$ to $E'$ with the same degrees.  
On the other hand, any two isogenies $f_1, f_2: E\rightarrow E'$ over $\bF_p$ can be lifted to those having the same degree defined over $\C$ by choosing canonical lifts (cf. Section \ref{subsec7.4}).
Therefore, 
using Corollary \ref{imp}, LHS of Equation (\ref{1st}) turns to be

\begin{eqnarray}\label{2st}
\sum_{\substack{((\tilde{E},\tilde{E}'),f_1,f_2) \\(\tilde{E},\tilde{E}')_{/\C}:{\rm CM}\ p-{\rm ord} \\ {\rm deg}(f_i)=m_i}}\frac{1}{\sharp {\rm Aut}(\tilde{E}) \sharp {\rm Aut}(\tilde{E}')}           
&=&\sum\limits_{s,s'\geq 0}\sum_{\substack{((\tilde{E}_s,\tilde{E}'_{s'}),f_1,f_2)\\ {\rm deg}(f_i)=m_i}}\frac{1}{\sharp {\rm Aut}(\tilde{E}_s) \sharp {\rm Aut}(\tilde{E}'_{s'})} 
\nonumber \\
=\sum_{\substack{((E,E'),f_1,f_2) \\  (E,E')_{/\bar{\F}_p}:{\rm (ord)}\\ {\rm deg}(f_i)=m_i}}
\sum_{0\le s,s'\le r=r_T\atop T=Q(f_1,f_2)}\frac{N(s,s';f_1,f_2)}{\sharp {\rm Aut}(\tilde{E}_s)\sharp {\rm Aut}(\tilde{E}'_s)}
&=&
\sum_{\substack{((E,E'),f_1,f_2) \\  (E,E')_{/\bar{\F}_p}:{\rm (ord)}\\ {\rm deg}(f_i)=m_i}}\frac{p^{r_T}}{\sharp {\rm Aut}(E) \sharp {\rm Aut}(E')}\nonumber \\
=
\sum_{\substack{y=((E,E'),f_1,f_2) \\  (E,E')_{/\bar{\F}_p}:{\rm (ord)}\\ {\rm deg}(f_i)=m_i}}\frac{{\rm IM}_{p,y}}{\sharp {\rm Aut}(E) \sharp {\rm Aut}(E')}. && 
\nonumber
 \end{eqnarray} 
 Here, $a_i$ and $b_i$ are as explained in the paragraph just before Proposition \ref{nend} associated to $(f_1, f_2)$.
 This, combined with Equation (\ref{1st}), completes the proof.
\end{proof}

\  \

\subsection{The intersection number in the supersingular locus}
We next consider the contribution coming from the supersingular part of Equation (\ref{eqssord}). 
Let us remind readers that our definition of the Siegel series is the same as Katsurada's one given in \cite{Kat}. 
This convention has been used in Wedhorn's article \cite{Wed2} and Nagaoka's article \cite{nagaoka}. 
The difference to Kitaoka's Siegel series given in \cite{Kit} is explained in the last line of Section 4.3, p. 43 of \cite{Wed2}.


\begin{Thm}\label{sspart}
Assume that $p$ is odd and that $m_1m_2$ is not a square.
Then we have that\begin{equation}\label{eqsp4final}
\sum_{\substack{y=((E,E'),f_1,f_2) \\  (E,E')_{/\bar{\F}_p}:{\rm (ss)}\\ {\rm deg}(f_i)=m_i}}\frac{{\rm IM}_{p,y}}{\sharp {\rm Aut}(E) \sharp {\rm Aut}(E')}=
\frac{1}{288}\cdot   \sum_{T \in {\rm Sym}_2(\Z)_{>0}  
\atop {\rm diag}(T)=(m_1,m_2), \chi_T(p)=-1,0} 
c(T),
\end{equation}
where $T$ is a half-integral symmetric matrix.
\end{Thm}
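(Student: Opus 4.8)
The approach is to compute both sides of \eqref{eqsp4final} by relating them through the Gross--Keating theory and the results of \cite{A}, \cite{GK1}, and Theorem \ref{thmaniso1}. The starting point is that for a pair $(E,E')$ of supersingular elliptic curves over $\bar{\F}_p$ with $p$ odd, one has $\mathrm{End}(E)\otimes\Z_p\cong\mathrm{End}(E')\otimes\Z_p\cong O_D$, the maximal order of the quaternion division algebra $D$ over $\Q_p$, and $\mathrm{Hom}(E,E')\otimes\Z_p$ is a rank-$2$ quadratic $\Z_p$-lattice which is anisotropic (a nonzero homomorphism has nonzero degree). So for $y=((E,E'),f_1,f_2)$ with $\deg(f_i)=m_i$ and $m_1m_2$ not a square, the lattice $L$ spanned by $(f_1,f_2)$ inside $\mathrm{Hom}(E,E')\otimes\Z_p$ is anisotropic of rank $2$ (nondegeneracy coming from Lemma \ref{ind}), and Proposition \ref{local-im}.(2) identifies $\mathrm{IM}_{p,y}=\mathcal{T}_{a_1,a_2}$ where $(a_1,a_2)=\mathrm{GK}(T\otimes\Z_p)$ and $T$ is the half-integral Gram matrix of $(f_1,f_2)$.

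\textbf{Key steps.} First I would rewrite the left side of \eqref{eqsp4final} as a sum over local data: group the pairs $y=((E,E'),f_1,f_2)$ with $(E,E')$ supersingular by the isometry class over $\Z$ of the positive-definite half-integral matrix $T=Q(f_1,f_2)$ with $\mathrm{diag}(T)=(m_1,m_2)$. Here I use the standard bijection (as in \cite{GK1}, \S 2, and \cite{A}) between triples $(E,E',f)$ over $\bar\F_p$ with $E,E'$ supersingular and the quaternionic/theta-series description: the weighted count $\sum 1/(\#\mathrm{Aut}(E)\#\mathrm{Aut}(E'))$ of such $y$ with fixed $T$ is, up to the normalizing factor, the number of representations of $T$ by the genus of the quaternion norm form, and this count is nonzero exactly when $p$ is non-split in $\Q(\sqrt{-\det(2T)})$, i.e.\ $\chi_T(p)\in\{-1,0\}$. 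Second, I would invoke the main local comparison: by Theorem \ref{thmaniso1}, $\mathcal{T}_{a_1,a_2} = \frac{-1}{p-1}\mathcal{F}'_L(1/p)$ where $L$ is an anisotropic rank-$2$ $\Z_p$-lattice with $\mathrm{GK}(L)=(a_1,a_2)$; but in fact it is cleaner to stay with $\mathcal{T}_{a_1,a_2}$ and appeal directly to the archimedean computation. Specifically, Proposition 2.4 of \cite{GK1} (via \cite{A}, \cite{Wed2}, \cite{nagaoka}) expresses the total intersection number $(T_{m_1,\C},T_{m_2,\C})$ as $\frac{1}{288}\sum_{\mathrm{diag}(T)=(m_1,m_2)}c(T)$, and the Fourier coefficient $c(T)$ of the weight-$2$ Siegel--Eisenstein series for $\mathrm{Sp}_4(\Z)$ factors as a product of local densities; for $p$ with $\chi_T(p)\in\{-1,0\}$ the relevant local factor at $p$ is precisely (a constant times) the Siegel series value governing $\mathcal{T}_{a_1,a_2}$, while the archimedean and away-from-$p$ factors reassemble the quaternionic representation number. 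Third, matching these: the weighted supersingular count times $\mathrm{IM}_{p,y}=\mathcal{T}_{a_1,a_2}$ equals $\frac{1}{288}\sum_{\mathrm{diag}(T)=(m_1,m_2),\,\chi_T(p)\in\{-1,0\}}c(T)$ by the exact same bookkeeping that \cite{GK1}, \cite{A} carry out for the arithmetic intersection of three correspondences, restricted to the $2$-dimensional (two-correspondence) situation. Concretely I would combine this with Theorem \ref{ordpart}: since $(T_{m_1,\C},T_{m_2,\C})=\frac{1}{288}\sum_T c(T)$ splits according to $\chi_T(p)=1$ versus $\chi_T(p)\in\{-1,0\}$, and the $\chi_T(p)=1$ part has already been matched with the ordinary contribution in Theorem \ref{ordpart}, the supersingular identity \eqref{eqsp4final} follows once one checks that for each $T$ with $\chi_T(p)\in\{-1,0\}$ the supersingular local contribution $\sum_{y:\,Q(y)=T}\frac{\mathcal{T}_{a_1,a_2}}{\#\mathrm{Aut}(E)\#\mathrm{Aut}(E')}$ equals $\frac{1}{288}c(T)$.

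\textbf{Main obstacle.} The hard part will be the final matching step: showing that for a fixed $T$ with $\chi_T(p)\in\{-1,0\}$, the $p$-local weighted count of supersingular triples representing $T$, multiplied by the Gross--Keating quantity $\mathcal{T}_{a_1,a_2}$ attached to $T\otimes\Z_p$, reproduces exactly the full Fourier coefficient $c(T)$ (divided by $288$). This requires carefully reconciling three normalizations simultaneously: the mass-formula/quaternionic count of supersingular pairs with level structure (including the $\frac{1}{\#\mathrm{Aut}}$ weights and the factor arising from $j-j(E)=t^{\#\mathrm{Aut}(E)/2}$, cf.\ Section \ref{subsec7.4}), the product formula for $c(T)$ as a product of local representation densities (Siegel's formula, with the local density at $p$ for the quaternion norm form versus the split form), and the definition of $\mathcal{T}_{a_1,a_2}$ as a local intersection multiplicity on the special fiber. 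I expect that the $p$-adic local density for representing $T$ by the anisotropic quaternary form $O_D$ — which by Proposition \ref{propld} is $[O_D:L]\cdot p^{3}p^{-2}(2(p+1))^2$ — is what mediates between $c(T)$ and $\mathcal{T}_{a_1,a_2}$, and tracking the power-of-$p$ bookkeeping and the $2(p+1)$ factors through all three conventions is where the real work lies; it essentially re-derives, in the degree-$2$ setting, the argument of \cite[\S\S 4--5]{A} and \cite[\S\S 2, 5]{GK1}. The restriction to $p$ odd enters both through Lemma \ref{l5}/Conjecture \ref{conj4} being available for anisotropic $\Z_2$-lattices (so actually $p=2$ might also be reachable, cf.\ Remark \ref{conrad}) and, more essentially, through the clean diagonal form of $O_D$ and the validity of the quadratic-form computations in \cite{A} and \cite{Wed2} at odd primes.
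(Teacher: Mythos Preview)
Your overall architecture matches the paper's proof: group the supersingular contributions by the Gram matrix $T$, use Proposition~\ref{local-im}.(2) to replace $\mathrm{IM}_{p,y}$ by $\mathcal{T}_{a_1,a_2}$, invoke Theorem~\ref{thmaniso1} to write $\mathcal{T}_{a_1,a_2}=\frac{-1}{p-1}\mathcal{F}'_{T,p}(1/p)$, and use the Minkowski--Siegel mass formula (via \cite{Wed2}) to convert the weighted representation count $\sum_{(E,E'):\mathrm{ss}}R_{\mathrm{Hom}(E,E')}(T)/(\#\mathrm{Aut}(E)\#\mathrm{Aut}(E'))$ into a product of local densities, with the $p$-factor being $\alpha_p(T,O_D)$. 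So the skeleton is right.

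There is, however, a genuine missing step in your ``Main obstacle'' paragraph. The Fourier coefficient $c(T)$ is (up to archimedean factors) the product $\prod_{l<\infty}\mathcal{F}_{T,l}(1/l^2)$; in particular the local factor at $p$ is the \emph{value} $\mathcal{F}_{T,p}(1/p^2)$, not the \emph{derivative} $\mathcal{F}'_{T,p}(1/p)$. Your proposal never explains how the derivative at $1/p$ becomes a value at $1/p^2$. In the paper this is done by the functional equation of the Siegel series (Theorem~4.1 of \cite{I1}), which for an anisotropic binary $T_p$ gives an explicit identity of the shape $\mathcal{F}'_{T,p}(1/p)=c_p\cdot\mathcal{F}_{T,p}(1/p^2)$ with $c_p$ depending only on $p$ and $|\mathrm{GK}(T_p)|$. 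Without this, the product $\alpha_p(T,O_D)\cdot\mathcal{F}'_{T,p}(1/p)\cdot\prod_{l\neq p}\mathcal{F}_{T,l}(1/l^2)$ cannot be collapsed back into $\prod_l\mathcal{F}_{T,l}(1/l^2)$, and the matching with $c(T)$ does not go through. This is the one non-formal ingredient beyond bookkeeping, and it is absent from your outline.

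Two smaller points. First, your citation of Proposition~\ref{propld} is wrong: that proposition computes $\alpha_p(L,O_D)$ for $L$ of rank~$4$, whereas here $T$ has rank~$2$; the relevant input is Lemma~\ref{ldrk2aniso}, and this is precisely where the hypothesis $p>2$ is actually used. Second, the subtraction idea you float---deduce the supersingular identity from Theorem~\ref{ordpart} by subtracting from the total $(T_{m_1,\C},T_{m_2,\C})$---is circular as stated: you would need $(T_{m_1,p},T_{m_2,p})=(T_{m_1,\C},T_{m_2,\C})$, which is exactly Theorem~\ref{newid}, the statement that Theorems~\ref{ordpart} and~\ref{sspart} combine to prove. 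You correctly retreat to the direct per-$T$ matching, but the detour should be removed.
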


\begin{proof} 
We will proceed our proof without assuming $p>2$. 
The assumption will be made later when it is needed.

Let $(E,E')$ be a pair of two supersingular elliptic curves and $f_1,f_2:E\lra E'$ be isogenies with 
${\rm deg}(f_i)=m_i$. Let $T$ be the  $(2\times 2)$ half-integral symmetric matrix associated to  $(f_1, f_2)$. 
Since $(E,E')$ are supersingular elliptic curves and $D:={\rm Hom}(E)\otimes \Q={\rm End}(E')\otimes \Q$ is ramified at $p$, 
the prime $p$ has to be inert or ramified in $\Q(\sqrt{-\det T})$ (cf. Theorem 12, page 182 of \cite{Lang}). Hence $\chi_T(p)=-1$ or 0.   

By the argument explained in page 23 of  \cite{G2} and Proposition \ref{local-im}.(2), we see that 
\begin{equation}\label{fce1}
\sum_{\substack{y=((E,E'),f_1,f_2) \\  (E,E')_{/\bar{\F}_p}:{\rm (ss)}\\ {\rm deg}(f_i)=m_i}}\frac{{\rm IM}_{p,y}}{\sharp {\rm Aut}(E) \sharp {\rm Aut}(E')} 
=\sum_{T \in {\rm Sym}_2(\Z)_{>0}  
\atop {\rm diag}(T)=(m_1,m_2), \chi_T(p)=-1,0} \Big(\sum_{(E,E'):{\rm (ss)}}
\frac{R_{{\rm Hom}(E,E')}(T)}{\sharp {\rm Aut}(E) \sharp {\rm Aut}(E')}\T_{a_1,a_2} \Big),  
\end{equation}
where $(a_1, a_2)=\mathrm{GK}(T\otimes \Z_p)$.

Let $\mathcal{F}'_{T, l}(X)$ be the Siegel series associated to the local completion $T\otimes_{\mathbb{Z}}\mathbb{Z}_l$ for any finite place $l$.

Firstly, by Theorem \ref{thmaniso1} we have
\begin{equation}\label{fce2}
\T_{a_1,a_2}=\frac{-1}{p-1}\cdot\mathcal{F}'_{T, p}(1/p).
\end{equation}
Secondly, by using the argument used in the proof of Theorem 4.3 in \cite{Wed2} and 
the Minkowski-Siegel formula (cf. Theorem 4.2 of loc.cit.), we have
\begin{equation}\label{fce3}
\displaystyle \sum_{(E,E'):{\rm (ss)}}
\frac{R_{{\rm Hom}(E,E')}(T)}{\sharp {\rm Aut}(E) \sharp {\rm Aut}(E')}=
\frac{1}{3^2\cdot 2^4}\cdot \Big(\frac{p-1}{p}\Big)^2\cdot \frac{\pi^{\frac{7}{2}}}{\Gamma(2)\Gamma(3/2)}\cdot 
{\rm det}(T)^{1/2}\cdot \prod_{l<\infty} \alpha_l(T, O_D).
\end{equation}

Here $O_D$ is a maximal order in $D$.
Then $O_D\otimes_{\mathbb{Z}}\mathbb{Z}_p$ is the maximal order in the quaternion division algebra over $\mathbb{Q}_p$ and $O_D\otimes_{\mathbb{Z}}\mathbb{Z}_l$ with $l\neq p$ is isomorphic to $H_2$, the hyperbolic space of rank $4$.

Thus we have
\begin{equation}\label{fce4}
\prod_{l<\infty} \alpha_l(T, O_D)=\alpha_{T, p}(T, O_D)\cdot \prod_{l<\infty, l\neq p} \mathcal{F}_{T, l}(1/l^2). 
\end{equation}

We now plug in Equations (\ref{fce2})-(\ref{fce4}) into Equation (\ref{fce1}) then we have the following:

\begin{multline}\label{eq8.6}
\sum_{\substack{y=((E,E'),f_1,f_2) \\  (E,E')_{/\bar{\F}_p}:{\rm (ss)}\\ {\rm deg}(f_i)=m_i}}\frac{{\rm IM}_{p,y}}{\sharp {\rm Aut}(E) \sharp {\rm Aut}(E')}=\\
\sum_{T \in {\rm Sym}_2(\Z)_{>0}  
\atop {\rm diag}(T)=(m_1,m_2), \chi_T(p)=-1,0} 
\Big(-\frac{1}{3^2\cdot 2^4}\cdot \frac{p-1}{p^2}\cdot \frac{\pi^{\frac{7}{2}}}{\Gamma(2)\Gamma(3/2)}\cdot 
{\rm det}(T)^{1/2}\cdot \alpha_{T, p}(T, O_D)\cdot \mathcal{F}'_{T, p}(1/p)\cdot \prod_{l<\infty, l\neq p} \mathcal{F}_{T, l}(1/l^2)\Big).
\end{multline}

On the other hand, by the functional equation of the Siegel series (cf. Theorem 4.1 in \cite{I1}) combined with Theorem 0.1 of \cite{IK1} for anisotropic binary quadratic lattice, 
we have 
\begin{equation}\label{eq8.7}
\mathcal{F}'_{T, p}(1/p)=\left\{
  \begin{array}{l l}
-\frac{p^{3+|\mathrm{GK}(T_p)|/2}}{(p-1)(p+1)} \mathcal{F}_{T, p}(\frac{1}{p^2}) & \quad  \textit{if  $|\mathrm{GK}(T_p)|$ is even};\\
-\frac{2p^{4+(|\mathrm{GK}(T_p)|-1)/2}}{(p-1)(p+1)^2} \mathcal{F}_{T, p}(\frac{1}{p^2}) & \quad  
\textit{if  $|\mathrm{GK}(T_p)|$ is odd}.
    \end{array} \right.
\end{equation}
Here, $T_p=T\otimes \mathbb{Z}_p$
and $|\mathrm{GK}(T_p)|=a_1+a_2$ if $\mathrm{GK}(T_p)=(a_1, a_2)$.

From now on, we make our assumption of $p>2$, in order to use Lemma \ref{ldrk2aniso} to compute $\alpha_p(L, O_D)$.
Then Equation (\ref{eq8.6}) turns to be (for $p>2$)
\begin{eqnarray}\label{eq8.8}
\sum_{\substack{y=((E,E'),f_1,f_2) \\  (E,E')_{/\bar{\F}_p}:{\rm (ss)}\\ {\rm deg}(f_i)=m_i}}\frac{{\rm IM}_{p,y}}{\sharp {\rm Aut}(E) \sharp {\rm Aut}(E')}&=&
\frac{1}{3^2 2^3}\cdot\frac{\pi^{\frac{7}{2}}}{\Gamma(2)\Gamma(3/2)} \cdot  \sum_{T \in {\rm Sym}_2(\Z)_{>0}  
\atop {\rm diag}(T)=(m_1,m_2), \chi_T(p)=-1,0} 
{\rm det}(T)^{1/2}  \prod_{l<\infty} \mathcal{F}_{T, l}(1/l^2) \nonumber \\
&=&
\frac{\pi^3}{3^2 2^2} \cdot  \sum_{T \in {\rm Sym}_2(\Z)_{>0}  
\atop {\rm diag}(T)=(m_1,m_2), \chi_T(p)=-1,0} 
{\rm det}(T)^{1/2}  \prod_{l<\infty} \mathcal{F}_{T, l}(1/l^2),    
\end{eqnarray}
since $\Gamma(3/2)=\ds\frac{\sqrt{\pi}}{2}$. 
Notice that the product $\ds\prod_{l<\infty}\mathcal{F}_{T, l}(1/l^2)$ coincides with $\ds\lim_{s\to 0}\alpha^{(2)}(2s+2,T)$ 
defined in Section 1.3 of \cite{nagaoka} (note that the limit $\ds\lim_{s\to 0}$ will be taken after 
the analytic continuation given by Kauffold's theorem, cf. Theorem 1.3.1 of \cite{nagaoka}). 
The formula (2.5.6), p.84 of \cite{nagaoka} says 
$$\ds\lim_{s\to 0}\alpha^{(2)}(2s+2,T)=36\pi^{-3}{\rm det}(T)^{-1/2}\frac{1}{288}c(T)$$
where $\ds\frac{1}{288}c(T)=\sum_{d|{\rm cont}(T)}dH(\frac{\det(T)}{d^2})$ in the notation of \cite{nagaoka}. 

In conclusion, we have
\begin{equation}\label{eq8.9}
\sum_{\substack{y=((E,E'),f_1,f_2) \\  (E,E')_{/\bar{\F}_p}:{\rm (ss)}\\ {\rm deg}(f_i)=m_i}}\frac{{\rm IM}_{p,y}}{\sharp {\rm Aut}(E) \sharp {\rm Aut}(E')}=
\frac{1}{3^22^5}\cdot   \sum_{T \in {\rm Sym}_2(\Z)_{>0}  
\atop {\rm diag}(T)=(m_1,m_2), \chi_T(p)=-1,0} 
c(T).
\end{equation}

Here, $c(T)$ is the Fourier coefficient of the Siegel-Eisenstein series for $\mathrm{Sp}_4$ of weight $2$, with respect to the half-integral symmetric matrix $T$.
\end{proof}

\begin{proof}[The proof of Theorem \ref{newid}]
The identity of Theorem \ref{newid} now follows by combining Theorems \ref{ordpart} and  \ref{sspart}. 
\end{proof}

\begin{Lem}\label{ldrk2aniso}
Let $p>2$.
Let $(L, q_L)$ be an anisotropic quadratic $\mathbb{Z}_p$-lattice of rank $2$.
Assume that $(L\otimes_{\mathbb{Z}_p}\mathbb{Q}_p, q_L\otimes_{\mathbb{Z}_p}\mathbb{Q}_p)$ is nondegenerate.
Let $(O_D, q_D)$ be the quadratic lattice, where $O_D$ is the maximal order of the quaternion division algebra $D$ over $\mathbb{Q}_p$ with $q_D$  the reduced norm on $D$.
Then the local density $\alpha_p(L, O_D)$ is given as follows:
\[
\alpha_p(L, O_D)=\left\{
  \begin{array}{l l}
 p^{\frac{-d}{2}}\cdot 2(1+\frac{1}{p})   & \quad  \textit{if $d$ is even};\\
 p^{\frac{-(d-1)}{2}}\cdot(1+\frac{1}{p})^2   & \quad  \textit{if $d=2d'+1$ is odd}.
    \end{array} \right.
\]
Here, $d=|\mathrm{GK}(L)|$.


\end{Lem}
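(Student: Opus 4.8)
\textbf{Proof proposal for Lemma \ref{ldrk2aniso}.}

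The plan is to reduce the computation to the explicit structure of an anisotropic binary quadratic $\mathbb{Z}_p$-lattice with $p$ odd, and then to apply the local density formula for the single lattice $O_D$ together with the index formula of Proposition \ref{propld}. First I would recall that, since $p>2$, the lattice $(L,q_L)$ is diagonalizable, say $L\cong (u_1 p^{a_1})\perp(u_2 p^{a_2})$ with $u_i\in\mathbb{Z}_p^\times$ and $\mathrm{GK}(L)=(a_1,a_2)$ by Remark \ref{rgk}.(1); thus $d=|\mathrm{GK}(L)|=a_1+a_2$. The parity statement in Lemma \ref{lemtype2}'s analogue for odd $p$ (only two of the GK-integers of an anisotropic lattice can share a parity — here trivially, since there are only two entries) tells us nothing extra in rank $2$, so I will instead exploit the embedding $L\hookrightarrow O_D$ of Remark \ref{rmk5.1}.(3), noting $O_D\cong (1)\perp(-\delta)\perp(p)\perp(-\delta p)$ for $p$ odd.

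Next I would invoke the reduction theorem for local densities: by Hilfssatz 17 of \cite{Sie} (the same tool used in Proposition \ref{propld}, but with ranks $2$ and $4$), one has $\alpha_p(L,O_D)=[L_{\max}:L]\cdot \alpha_p(L_{\max},O_D)$ where $L_{\max}$ is the maximal lattice in $L\otimes\mathbb{Q}_p$ — except that for non-equal ranks the precise statement I want is the scaling $\alpha_p(L',O_D)=[L':L]\cdot\alpha_p(L,O_D)$ for $L\subseteq L'$ of the same rank. So the strategy is: first treat the maximal case, then scale. For the maximal case, by Remark \ref{01max} a maximal anisotropic binary lattice has $\mathrm{GK}=(0,0)$ or $(0,1)$, corresponding to $d=0$ and $d=1$. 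For these two base cases I would compute $\alpha_p(L_{\max},O_D)$ directly: $L_{\max}$ with $\mathrm{GK}=(0,0)$ embeds as $(1)\perp(-\delta)$ inside $O_D$, and $L_{\max}$ with $\mathrm{GK}=(0,1)$ embeds as, say, $(1)\perp(-\delta p)$ (or $(1)\perp(p)$, according to the Hasse invariant). The primitive local density of such an embedding into the split quadratic space $H_2\otimes\kappa$ over the residue field is computed from Equation (\ref{eq37}) with $n=2$, $2k=4$: one gets $\alpha_p = (1+\chi\cdot p^{-1})(1-p^{-2})\cdots$ assembled via Theorem \ref{tpld} and Equation (\ref{equationldprimld}); summing over the (finitely many) overlattices — of which there are none in the maximal case — yields the closed forms $2(1+1/p)$ when $d=0$ and $(1+1/p)^2$ when $d=1$. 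These match the claimed values $p^{0}\cdot 2(1+1/p)$ and $p^{0}\cdot(1+1/p)^2$.

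Finally I would handle the general case by the index computation $[L_{\max}:L]=p^{\lfloor d/2\rfloor}$ (if $d=2d'$ then $L_{\max}$ is spanned by $(p^{-d'}$ times the appropriate basis vector$)$, giving index $p^{d'}$; if $d=2d'+1$, index $p^{d'}$ as well, since one entry has order $2d'$ and the other order $1$, both of which are lowered to $0$ or $1$ by dividing), and multiply through by the base-case values, obtaining $p^{-d/2}\cdot 2(1+1/p)$ for even $d$ and $p^{-(d-1)/2}\cdot(1+1/p)^2$ for odd $d=2d'+1$, exactly as stated. (Here I am matching signs: the exponents $-d/2$ and $-(d-1)/2$ in the statement are the negatives of the index exponents because the normalization of the local density for a sublattice includes a factor $f^{[L':L]\cdot(n+1-2k)}$, cf. Equation (\ref{equationldprimld}) with $n=2$, $2k=4$, giving $f^{[L':L]\cdot(-1)}$; one then checks $[L':L]=d/2$ or $(d-1)/2$ accordingly.) The main obstacle I anticipate is bookkeeping the normalization conventions consistently — the excerpt uses several: the measure-theoretic $\alpha(L,H_k)$ of Definition \ref{deflocaldensity}, the "classical" normalization implicit in Proposition \ref{propld} and in the statement of this lemma, and the convention of \cite{IK2} referenced in Section \ref{sectionin}; I would need to fix one and verify the base cases $d=0,1$ numerically against, e.g., the known value $\alpha_p(O_D,O_D)$ to pin down the constant. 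The rest is a routine index count.
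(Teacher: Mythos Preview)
Your overall strategy---reduce to the maximal lattice via an index/scaling formula and then compute a small number of base cases---is exactly the paper's approach. But there are two genuine gaps.

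First, you omit the base case $\mathrm{GK}(L_{\max})=(1,1)$. Remark \ref{01max} says precisely that an anisotropic binary lattice with $\mathrm{GK}=(1,1)$ \emph{is} maximal; it does not say the maximal lattice always has $\mathrm{GK}\in\{(0,0),(0,1)\}$. Concretely, the space $(p)\perp(-\delta p)$ is anisotropic with Hasse invariant $-1$ and is not isometric to $(1)\perp(-\delta)$, so its maximal lattice has $\mathrm{GK}=(1,1)$ and $d=2$. For general even $d$ your index claim $[L_{\max}:L]=p^{d/2}$ is therefore wrong in this situation (it should be $p^{(d-2)/2}$), and you have no base value to multiply against. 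The paper lists three base cases $(0,0),(0,1),(1,1)$ and computes all three; the fact that the final formula is uniform in the parity of $d$ is a pleasant coincidence, not something you can assume a priori.

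Second, your proposed computation of the base cases cannot work as written. Equation (\ref{eq37}), Theorem \ref{tpld}, and Equation (\ref{equationldprimld}) are all formulated for the target lattice $H_k$, whose reduction $H_k\otimes\kappa$ is the split hyperbolic space. Here the target is $O_D$, and $O_D\otimes\kappa\cong (1)\perp(-\delta)\perp(0)\perp(0)$ has a nondegenerate part which is the \emph{anisotropic} plane over $\F_p$, not a hyperbolic one; the count in (\ref{eq37}) simply does not apply. The paper sidesteps this by quoting Yang's explicit local density formula (Theorem~7.1 of \cite{Y0}) for the three maximal cases, obtaining $2(1+\frac{1}{p})$, $\frac{2}{p}(1+\frac{1}{p})$, and $(1+\frac{1}{p})^2$ for $(0,0)$, $(1,1)$, $(0,1)$ respectively. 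For the reduction step it cites Theorem~5.6.4(d) of \cite{Kit} (your invocation of Hilfssatz~17 of \cite{Sie} and of Equation~(\ref{equationldprimld}) are both for the wrong setting---the former is equal-rank source/target, the latter is target $H_k$). Once you supply a correct citation for the scaling and plug in the three base values, the rest of your bookkeeping goes through.
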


\begin{proof}
Let $L'$ be a sublattice of $L$ of rank $2$.
Then By Theorem 5.6.4.(d) of \cite{Kit}, we have, for any prime $p$ including $p=2$,
\[
\alpha_p(L', O_D)=p^{-[L:L']}\cdot \alpha_p(L, O_D).
\]
Indeed Theorem 5.6.4.(d) of \cite{Kit} says inequality but in our case of anisotropic lattice, the inequality turns to be the equality.

Thus we may assume that $L$ is a maximal lattice so that $\mathrm{GK}(L)=(a_1, a_2)$ has only three possibilities: $(a_1, a_2)=(0,0), (0,1), (1,1)$.
We will use the local density formula of Yang in  \cite{Y0} ($p>2$).

From now on we assume that $p$ is odd.
Then $L$ is diagonalizable so that the exponential order of each diagonal entry is $a_i$. 
Based on Theorem 7.1 of \cite{Y0}, we have
\[
\alpha_p(L, O_D)=\left\{
  \begin{array}{l l}
  2(1+\frac{1}{p})  & \quad  \textit{if $(a_1, a_2)=(0, 0)$};\\
  \frac{2}{p}\Big(1+\frac{1}{p}\Big)  & \quad  \textit{if $(a_1, a_2)=(1, 1)$};\\
  \Big(1+\frac{1}{p}\Big)^2  & \quad  \textit{if $(a_1, a_2)=(0, 1)$}.
    \end{array} \right.
\]
This completes the proof.
\end{proof}

\begin{Rmk}\label{conrad} 
\begin{enumerate}
 \item  B. Conrad  informed us that Theorem \ref{newid} is true 
when $(m_1, m_2)=1$ with any prime $p$
(cf.  \cite{conrad}), by proving that  the scheme ${\rm Spec}\Z[x,y]/(\varphi_{m_1},\varphi_{m_2})$ is flat.

On the other hand, Theorem \ref{newid} is true when $m_1, m_2 \leq 9$ with any prime $p$ by numerical calculation given in Appendix \ref{App:Appendix}.

\item We note that Theorem \ref{newid} is a combination of Theorems \ref{ordpart} and \ref{sspart}, and the identity of Theorem \ref{ordpart} holds for $p=2$.
In the proof of Theorem  \ref{sspart}, the only place we make the assumption $p>2$ is the usage of Lemma \ref{ldrk2aniso}.

On the other hand, one can also compute $\alpha_p(L, O_D)$  of Lemma \ref{ldrk2aniso} with $p=2$ by using the local density formula given in \cite{Y}, which is more complicated than that of \cite{Y0} ($p>2$).
Consequently the explicit computation of $\alpha_p(L, O_D)$ when $p=2$  would directly yield  the identity of Theorem \ref{newid}.



\end{enumerate}
\end{Rmk}

\section{Application 2: Local intersection multiplicities on the special fiber}\label{sectionin} In this section we recall the special cycles on the Shimura variety 
for $\mathrm{GSpin}(n,2),\ 0\le n\le 3$ defined by Kudla and Rapoport with a collaborator Yang. 
We refer the articles \cite{KRY1},\cite{KRY2},\cite{KR2},\cite{KR1} for $n=0,1,2,3$ respectively 
and the readers are supposed to be familiar with these references. 
Let $(n_1,\ldots,n_r)$ be a partition of $n+1$, hence $n_i\ge 1,\ n_1+\cdots+n_r=n+1$. 
We always consider $r=1$ when $n \le 1$. 

Let $V$ be a quadratic form over $\Z$ with the signature 
$(n,2)$ over $\R$ considered in each paper. 
Let $G={\rm GSpin}(V)$ be the generalized spinor group associated to $V$. Let $p$ be an odd prime so that $G$ is smooth over $\Z_p$.  
Then for any neat open compact subgroup $K^p\subset G(\A^p_f)$ and a hyperspecial open compact subgroup $K_p$ of $G(\Q_p)$ 
defined by a suitable structure on $G(\Q_p)$ in each paper (cf. p. 704, line 9 of \cite{KR1} for $n=3$), $d_i\in {\rm Sym}_{n_i}(\Q)$ and open compact subgroups 
$\omega_i\subset V(\A^p_f)^{n_i}$ which are invariant under $K^p$, one can associate 
the special cycles $\mathcal{Z}(d_i,\omega_i)$ and consider the intersection of them: 
$$\mathcal{Z}=\mathcal{Z}(d_1,\omega_1)\times_{\mathcal{M}}\cdots \times_{\mathcal{M}} \mathcal{Z}(d_r,\omega_r)=
\coprod_{T\in {\rm Sym}_{n+1}(\Z_{(p)})_{\ge 0} \atop {\rm diag}(T)=(d_1,\ldots,d_r)}\mathcal{Z}(T,\omega) $$
where $\mathcal{M}$ is the integral model over $\Z_{(p)}$ for the Shimura variety associated to $(G,K^pK_p)$ and 
$\omega=\{\omega_i\}^r_{i=1}$ (cf. Section 2,3 of \cite{KR1} for $n=3$). 
Since $n\le 3$, the Shimura variety $\mathcal{M}$ is of PEL type, namely a moduli space of abelian varieties with endomorphism structure 
by $\mathcal{O}$ which is a maximal order of  $M_2(B_\Q),\ B_F,\ B_\Q,$ or $K$ for $n=3,2,1,0$ respectively. 
Here $B_\Q$ (resp. $B_\F$) is a quaternion algebra over $\Q$ (resp. over a real quadratic field $F$) and $K$ is an imaginary quadratic field.     
Any geometric point $\xi={\rm Spec}\hspace{0.5mm}k$ on $\mathcal{Z}$ in characteristic $p$ consists of quintuple $(A,\iota,\lambda,\overline{\eta}^p,\textbf{j}')$ satisfying the 
following conditions; 
\begin{enumerate}
\item $A$ is an abelian variety of dimension $2^n$ over $k$ considered up to prime to $p$ isogeny;
\item $\iota:\mathcal{O}\otimes \Z_{(p)}\hookrightarrow {\rm End}_k(A)\otimes \Z_{(p)}$ is a ring homomorphism such that 
$$\det (\iota(c):{\rm Lie}(A))=N^0(c)^2$$
for any $c\in \mathcal{O}$ where $N^0$ is the reduced norm of $\mathcal{O}$. 
\item $\lambda$ is a $\Z^\times_{(p)}$-class of a prime to $p$ isogeny $A\lra A^\ast$ 
such that $n'\lambda$ comes from an ample line bundle on $A$ for some $n'\in\Z$. Here $A^\ast$ is the dual 
abelian variety of $A$; 
\item $\textbf{j}'=(\textbf{j}_1,\ldots,\textbf{j}_r)$ and $\textbf{j}_i\in {\rm End}_k(A)^{n_i}$ is a vector of 
special endomorphisms for $i=1,\ldots,r$ such that $q(\textbf{j}_i)=d_i$,
where the quadratic form $q$ is 
defined by  the Rosati-involution $\star$ with $q(x){\rm id}_A=x\circ x^\star$ for any $x\in {\rm End}_k(A)$ (cf. Lemma 2.2 
of \cite{KR1} for $n=3$).
Here a special endomorphism is defined to be an endomorphism $f$ on $A$ which satisfies $f^\star=f$ and ${\rm tr}^0(f)=0$ 
where $\star$ stands for the Rosati-involution with respect to $\lambda$ and ${\rm tr}^0$ means the reduced trace of 
${\rm End}_k((A,\iota))^{{\rm op}}$ (note that special endomorphisms can be regarded as elements in ${\rm End}_k((A,\iota))^{{\rm op}}$ 
(cf. (2.13) of \cite{KR1} for $n=3$)). 
\item  $\overline{\eta}^p=\{\eta^p k\ |\ k\in K^p\}$ is 
a $K^p$-class of a $\mathcal{O}$-linear isomorphism $\eta^p:V^p(A):=\prod_{\ell\not=p}T_\ell(A)\otimes\Q_\ell \stackrel{\sim}{\lra}\mathcal{O}\otimes \A^p_f$. 
Here the action of $K^p$ on $\mathcal{O}$ is defined by the Clifford structure of ${\rm GSpin}(n,2)$ in each case. 
It is known that ${\rm End}_{\mathcal{O}}(\mathcal{O}\otimes_\Z\A_f)$ contains $V(\A_f)$.
We require  
that $(\eta^p)^\ast(\textbf{j}_i):=\eta^p\circ \textbf{j}_i|_{V^p(A)} \circ  (\eta^p)^{-1}$ belongs to $\omega_i$.  
\end{enumerate}

By Theorem 0.1 in \cite{KR1} for $n=3$, Theorem 6.1 in \cite{KR2} for $n=2$, Theorem 3.6.1 in \cite{KRY2} for $n=1$, and 
Proposition 5.9 in \cite{KRY1} for $n=0$ we know a criterion for $T$ which yields  that any geometric point of $\mathcal{Z}(T):=\mathcal{Z}(T,\omega)$ is isolated. From now on we assume this condition. 
For any geometric point $\xi$ on $\mathcal{Z}(T)$ we define the local intersection multiplicity of $\mathcal{Z}(T)$ at $\xi$ by 
$$e(\mathcal{Z}(T),\xi):={\rm length}_{\Z_{(p)}}\mathcal{O}_{\mathcal{Z}(T),\xi}$$
where $\mathcal{O}_{\mathcal{Z}(T),\xi}$ is the localization of $\mathcal{O}_{\mathcal{Z}(T)}$ at $\xi$. By the assumption 
$e(\mathcal{Z}(T),\xi)$ is finite. 

 To compute $e(\mathcal{Z}(T),\xi)$, we need to consider the formal completion of it along $\xi$ to apply 
the deformation theory. 
Put $\textbf{j}'=(f_1,\ldots,f_{n+1})$ for simplicity. 
By the assumption of each reference as above, we  see that $A$ is isomorphic to a product of supersingular elliptic curves $E$ over $\bF_p$. 
Let $G=\widehat{A}$ be the formal group associated to $A$ and we denote by $\widehat{f}_i$ the corresponding special endomorphism 
on $G$ for each special endomorphism $f_i$ via a natural algebra homomorphism 
\begin{equation}\label{endo}
{\rm End}_k((A,\iota))^{{\rm op}}\otimes_\Z\Z_p\hookrightarrow 
{\rm End}((G,F))\subset \mathcal{O}\otimes_\Z \Q_p
\end{equation}
where $F$ is the Frobenius endomorphism and ${\rm End}((G,F))$ stands for the set of 
endomorphisms on $G$ commuting $F$ (cf. (5.13), p. 726 of \cite{KR1} for $n=3$). 
 The $\Z_p$ submodule $L'$ spanned by $\{\widehat{f}_i\}$ in ${\rm End}(G)$ endows with 
the structure as a quadratic space $L'=(L,q_{L'})$ which comes from the Clifford structure. For instance 
$xy+yx=(x,y)_{L'}$ for any $x,y\in L'$.   

Since $p$ is odd, there exists a basis of 
$L'$ such that $q_{L'}$ is isometric  to $T'=\diag(u_1 p^{a_1},\ldots,u_{n+1} p^{a_{n+1}} )$ over $\Z_p$ with integers 
$a_1\le \cdots \le a_{n+1}$ and with units $u_i,\ 1\le i \le n+1$ in $\Z_p$. 
Then the Gross-Keating invariant of $T'$ is given simply by ${\rm GK}(L')={\rm GK}(T')=(a_1,\ldots, a_{n+1})\in \Z^{n+1}_{\ge 0}$. 
Accordingly we can take an optimal basis $\varphi_1,\ldots,\varphi_{n+1}$ in ${\rm End}(G)$ such that 
$\varphi^2_i=q_{L'}(\varphi_i)=u_i p^{a_i}\ (1\le i\le n+1)$. 

Let $R=W(\bF_p)[[t_1,\ldots,t_{n}]]$ be the universal deformation ring of $G$ on ${\rm CLN}_{W(\bF_p)}$ which is isomorphic to 
the strict completion of $\mathcal{O}_{\mathcal{M}}$ at $\xi$. This follows from Serre-Tate theorem (Theorem 1.2.1 of \cite{Katz}), Theorem 2.3.3, p. 242 of \cite{O}, and the fact that 
the Shimura variety $\mathcal{M}$ is a fine moduli space by the assumption on the compact open subgroup $K^p$.  
Let  $\mathcal{G}$ be its universal family. 
Here we make the convention that $R=W(\bF_p)$ when $n=0$. 

We denote by $I=I(\varphi_1,\ldots,\varphi_{n+1})$ 
the minimal ideal of $R$ such that all $\varphi_i$'s are liftable to special endomorphisms on $\mathcal{G}$ modulo $I$. 
By the theorem of Serre and Tate it is easy to see that 
$$e(\mathcal{Z}(T'),\xi)={\rm length}_{W(\bF_p)}R/I$$
(cf. (6.1) of \cite{KR1}).
By Section 6 of \cite{KR1} for $n=3$, the proof of Theorem 6.1 of \cite{KR2} for $n=2$, Theorem 3.6.1 of \cite{KRY2} for $n=1$ , and 
Theorem 5.11 of \cite{KRY1} for $n=0$, it turns out that $e(\mathcal{Z}(T'),\xi)$ depends 
only on ${\rm GK}(T')=(a_1,\ldots,a_{n+1})$.  Hence we may write it for 
\begin{equation}\label{lim}
e(a_1,\ldots, a_{n+1}):=e(\mathcal{Z}(T'),\xi)={\rm length}_{W(\bF_p)}R/I(\varphi_1,\ldots,\varphi_{n+1}).
\end{equation}
This will be checked in Theorem \ref{special2} below. 
When $a_{n+1}\ge 2$, we see that there exist $\varphi'_{n+1}\in {\rm End}(G)$ such that $\varphi_{n+1}=p\varphi'_{n+1}$ and  that
$L'=\langle \varphi_1,\ldots,\varphi_n,\varphi'_{n+1} \rangle$ makes a sublattice of $L$ with ${\rm GK}(L')=(a_1,\ldots,a_{n})\cup (a_{n+1}-2)$ 
where 
$(a_1,\ldots,a_{n})\cup (a_{n+1}-2)$ means the re-ordering of $\{a_1,\ldots,a_{n},a_{n+1}-2\}$ to be 
the non-decreasing sequence.  
Therefore we have 
$${\rm length}_{W(\bF_p)}R/I(\varphi_1,\ldots,\varphi'_{n+1})=e((a_1,\ldots,a_{n})\cup (a_{n+1}-2)).$$
Our interest is to understand the difference between 
$${\rm length}_{W(\bF_p)}R/I(\varphi_1,\ldots,\varphi_{n+1})\  {\rm and}\  
{\rm length}_{W(\bF_p)}R/I(\varphi_1,\ldots,\varphi'_{n+1})$$ in terms of the 
local intersection multiplicity of special cycles over a finite field.  
This motivates us to consider the following situation in special cycles in the special fiber $\mathcal{M}_{\F_p}$. 
Assume that $1\le n\le 3$.  
Let $(n_1,\ldots,n_r)$ be a partition of $n$, hence $n_i\ge 1,\ n_1+\cdots+n_r=n$. 
We always consider $r=1$ when $n=1$. 
For $d_i\in {\rm Sym}_{n_i}(\Z_{(p)})$ ($1\le i\le r$) 
let us consider the closed subscheme in $\mathcal{M}_{\F_p}$ given by  
$$\mathcal{Z}(d_1,\omega_1)_{\F_p}\times_{\mathcal{M}_{\F_p}}\cdots\times_{\mathcal{M}_{\F_p}}  \mathcal{Z}(d_r,\omega_r)_{\F_p}
=\coprod_{T\in {\rm Sym}_{n}(\Z_{(p)})_{\ge 0} \atop {\rm diag}(T)=(d_1,\ldots d_r)}
\mathcal{Z}(T,\omega)_{\F_p}$$
for $1\le n\le 3$. Any geometric point on $\mathcal{Z}(T,\omega)_{\F_p}$ is similarly a quintuple $(A,\iota,\lambda,\overline{\eta}^p,\textbf{j})$ as before but in this case  
we replace $n+1$ endomorphisms with $n$ endomorphisms $\textbf{j}$ which is related to the fourth condition $(4)$.  
In this section, when $n\le 3$ and 
the underlying space $A=A_x$ of a geometric point $x$ is superspecial, we will study that the multiplicity   
$$e(\mathcal{Z}(T,\omega)_{\F_p},x):={\rm length}_{\bF_p}\mathcal{O}_{\mathcal{Z}(T,\omega)_{\F_p},x}$$
is finite and that it depends only on ${\rm GK}(T\otimes \Z_p)$ under some conditions. Let us confirm this as follows. 

Recall that $G$ is the formal group of $A$. 
Let $R_p=\bF_p[[t_1,\ldots,t_{n}]]$ be the universal deformation of $G$ on ${\rm CLN}_{\bF_p}$ which is 
isomorphic to the completion of $\mathcal{O}_{\mathcal{M}_{\bF_p}}$ at $x$. We put $R_p=\bF_p$ when $n=0$. Clearly $\mathcal{G}_p:=\mathcal{G}\otimes \bF_p$ is the local deformation of $G$ over 
$R_p$. Let $\varphi_1,\ldots,\varphi_n$ be an optimal basis of the lattice consisting of $n$ special endomorphisms 
$\textbf{j}$ in the data of $x$ via (\ref{endo}). 

We define the minimal ideal $I_p=I_p(\varphi_1,\ldots,\varphi_{n})$ of $R_p$ such that all $\varphi_i$'s ($1\le i\le n$) are liftable to special endomorphisms on $\mathcal{G}_p$ modulo $I_p$. 
As before we put $I_p=\{0\}$ when $n=0$. By definition we see that 
$$e(\mathcal{Z}(T,\omega)_{\F_p},x)={\rm length}_{\bF_p}R_p/I_p.$$
\begin{Prop}\label{specialcycles}Let $T$ be an element in ${\rm Sym}_{n}(\Z_{(p)})$. For any geometric point $x=(A_x,\iota,\lambda, \overline{\eta}^p,\textbf{j})$ on 
$\mathcal{Z}(T,\omega)_{\F_p}$ let $L=(L,q_L)$ be 
the quadratic space over $\Z_p$ corresponding to the special endomorphisms $\textbf{j}$ with ${\rm GK}(L)={\rm GK}(T\otimes \Z_p)=(a_0,\ldots,a_{n-1})$.  
Assume that $A_x$ is superspecial and $q_L$ represents 1. Then it holds that 
$$e(\mathcal{Z}(T,\omega)_{\F_p},x)=\T_{b_1,b_2}$$
where 
$$(b_1,b_2)=
\left\{
\begin{array}{ll}
(a_1,a_2) &\ {\rm if}\ n=3 \\
(0,a_{n-1}) &\ {\rm if}\ n=1,2  \\
(0,0) & \ {\rm if}\ n=0
\end{array}
\right..
$$  
\end{Prop}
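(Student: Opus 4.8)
The plan is to reduce the statement to a computation of lengths of deformation rings for the formal group $G=\widehat{A_x}$, and then to match that computation with the quantity $\mathcal{T}_{b_1,b_2}$ via the theory already developed. Since $A_x$ is superspecial, $G$ is a product of copies of the formal group of a supersingular elliptic curve, so $R_p$ is a power series ring over $\bF_p$ in $n$ variables and each special endomorphism $\varphi_i$ defines a divisor in $\mathrm{Spf}\,R_p$ cutting out its locus of liftability. First I would fix an optimal basis $\varphi_1,\dots,\varphi_n$ of $L$, using the hypothesis that $q_L$ represents $1$ to arrange that (up to re-choosing the basis) $a_0=0$, i.e. one of the basis vectors $\varphi_1$ satisfies $q_L(\varphi_1)\in\Z_p^\times$; this is exactly the input that allows $\varphi_1$ to be lifted along the whole deformation space, so that it does not contribute to the length. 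The case $n=0$ is trivial ($R_p=\bF_p$, $I_p=0$, and $\mathcal{T}_{0,0}=1$), and the case $n=1$ follows directly from the reduction $a_0=0$.

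Next I would treat the heart of the matter, $n=2$ and $n=3$. After splitting off the unimodular summand $\langle\varphi_1\rangle$ (when $n=2,3$, respectively after splitting off two unimodular summands), the lifting problem for the remaining endomorphisms on the deformation of $G$ is governed by the quadratic lattice of the residual special endomorphisms, whose Gross–Keating invariant is $(0,a_{n-1})$ for $n=2$ and $(a_1,a_2)$ for $n=3$ (here one uses that $\mathrm{GK}(L)=(a_0,\dots,a_{n-1})$ with $a_0=0$, and for $n=3$ one of $a_0,a_1$ is $0$ again by the ``represents $1$'' hypothesis). The key geometric input is the description of the liftability ideal of a single special endomorphism of a supersingular formal group, which is precisely the content recalled in the references \cite{KR1},\cite{KR2},\cite{KRY1},\cite{KRY2}: the length of $R_p/I_p$ equals the local contribution computed by Gross–Keating on the special fiber, which by Equations (5.3) and (5.16) and Lemma 5.6 of \cite{GK1} is exactly $\mathcal{T}_{b_1,b_2}$ as defined in Equation $(\ref{eqta})$. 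This is the same number $\mathrm{IM}_{p,y}$ that appeared in Proposition \ref{local-im}.(2), and I would cite that identification (via Equation (4.1), p. 151 of \cite{Rap1}) to close the $n=2$ two-variable intersection computation, then bootstrap to $n=3$ by the same mechanism applied to the three-dimensional deformation space.

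The main obstacle I anticipate is bookkeeping, not a conceptual gap: one must carefully check that reducing the lattice by a unimodular summand really does identify the liftability ideal with the one for the smaller lattice — i.e. that a special endomorphism of norm a unit is unconditionally liftable and is ``transverse'' to the remaining conditions, so that Lemma \ref{plength}-type length multiplicativity applies and no extra factor is introduced. This is where the hypothesis ``$q_L$ represents $1$'' and the superspeciality of $A_x$ (which forces $G$ to be a self-product, so the relevant Serre–Tate/Lubin coordinates behave multiplicatively) are both essential. Once that transversality is in place, the identification $e(\mathcal{Z}(T,\omega)_{\F_p},x)={\rm length}_{\bF_p}R_p/I_p=\mathcal{T}_{b_1,b_2}$ follows, and in particular shows $e$ depends only on $\mathrm{GK}(T\otimes\Z_p)$, which is the remaining assertion implicit in the statement. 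I would also remark that this simultaneously verifies the claim made just before Equation $(\ref{lim})$ that $e(\mathcal{Z}(T'),\xi)$ depends only on $\mathrm{GK}(T')$, since the mixed-characteristic computation degenerates to this special-fiber one.
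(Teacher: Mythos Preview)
Your overall strategy is right in spirit --- use the norm-$1$ element to reduce the deformation problem, then identify the reduced problem with the special-fiber Gross--Keating computation giving $\mathcal{T}_{b_1,b_2}$ --- but the mechanism you propose for that reduction is not the one that actually works, and this is a genuine gap rather than mere bookkeeping.

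The paper does \emph{not} argue that a special endomorphism $x_0$ of unit norm ``lifts unconditionally'' along the deformation; that claim is in fact false (if it held, your $n=1$ argument would give $I_p=(0)$ in $R_p=\bF_p[[t_1]]$ and hence infinite length, contradicting $\mathcal{T}_{0,0}=1$). What is used instead is the Clifford-algebra structure: since $x_0^2=q_L(x_0)=1$, the elements $e_0=\tfrac{1}{2}(1+x_0)$ and $e_1=\tfrac{1}{2}(1-x_0)$ are orthogonal idempotents acting on the formal group $\widehat{A}_0$, and they induce a splitting $\widehat{A}_0\simeq e_0\widehat{A}_0\times e_1\widehat{A}_0\simeq G_0^2$ with $G_0$ of dimension $1$ and height $2$. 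The anticommutation $xx_0+x_0x=\langle x,x_0\rangle=0$ for $x\in M_0:=\langle x_0\rangle^\perp$ yields $xe_0=e_1x$, so $M_0$ embeds in $\mathrm{Hom}(e_0\widehat{A}_0,e_1\widehat{A}_0)\simeq\mathrm{End}(G_0)$. Because the principal quasi-polarization deforms automatically, the deformation problem for $(G,\mathbf{j})$ becomes the deformation problem for $(G_0,M_0)$, which is precisely the special-fiber Gross--Keating problem with $\mathrm{GK}(M_0)=(a_1,a_2)$; this is what gives $\mathcal{T}_{a_1,a_2}$ when $n=3$. The cases $n\le 2$ are handled by the analogous reductions carried out in \cite{KR2}, \cite{KRY2}, \cite{KRY1}, where the extra ``$0$'' in $(b_1,b_2)=(0,a_{n-1})$ arises from the built-in endomorphism structure of $\mathcal{O}$ in each PEL setting, not from splitting a second unimodular summand off $L$. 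In particular, your parenthetical about ``splitting off two unimodular summands'' and the assertion that ``for $n=3$ one of $a_0,a_1$ is $0$ again by the represents-$1$ hypothesis'' are both off: only $a_0=0$ is forced, and only one summand is split from $L$.
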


\begin{proof}Let us first consider the case of $n=3$. 
We follow the argument in page 
733 of \cite{KR1}. By our assumption $q_L$ represents 1 over $\Z_p$. This implies $a_0=0$. 
In this case the formal group $G$ of $A_x$ decomposes into $\widehat{A}^4_0$ where $\widehat{A}_0$ is a formal group of dimension 2 and height 4 
equipped with a principal quasi polarization $\lambda_{\widehat{A}_0}$ (cf. Section 4 of \cite{KR1}). 
As explained right after (6.3) of \cite{KR1} there exists $x_0\in L$ such that $q_{L}(x_0)=1$. 
Then the idempotents $e_0=\frac{1}{2}(1+x_0),\ e_1=\frac{1}{2}(1-x_0)$ induce the further decomposition of $\widehat{A}_0$ as 
$$\widehat{A}_0\simeq e_0\widehat{A}_0\times e_1 \widehat{A}_0\simeq G^2_0$$
where $G_0$ is a formal group of dimension 1 and height 2.  Put $M_0=\langle x_0 \rangle^\perp$ in $L$ which is of rank 2 and 
${\rm GK}(M_0)=(a_1,a_2)$. 
By the Clifford structure for the special endomorphisms we see that $xx_0+x_0x=\langle x,x_0 \rangle=0$ for any $x\in M_0$. 
It follows from this that $xe_0=e_1x$. Therefore $M_0$ can be regarded as a sublattice in 
${\rm Hom}( e_0\widehat{A}_0,e_1 \widehat{A}_0)\simeq {\rm End}(G_0)$ which is the maximal order of a unique 
quaternion division algebra over $\Q_p$.  
Since any principal quasi-polarization deforms automatically the deformation problem on ${\rm CLN}_{\bF_p}$ of 
$x$ is same as one of $(G_0,M_0)$. 
This shows that  
\begin{equation}\label{n3}
e(\mathcal{Z}(T,\omega)_{\F_p},x)=\T_{a_1,a_2}. 
\end{equation}
Hence we have the claim. 

The remaining cases will be done similarly in which the arguments of 
Theorem 6.1 of \cite{KR2}, Theorem 3.6.1 of \cite{KRY2}, and  Proposition 5.9 of \cite{KRY2} for $n=2,1$ and $n=0$ should be 
consulted respectively.   
\end{proof}
\begin{Cor}\label{specialfiber} Keep the notation and the assumptions in Proposition \ref{specialcycles}. 
Then the geometric point $x$ is isolated in $\mathcal{Z}(T)_{\bF_p}$. 
\end{Cor}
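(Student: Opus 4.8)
The plan is to deduce this at once from Proposition \ref{specialcycles}. Under the hypotheses there ($A_x$ superspecial and $q_L$ representing $1$), that proposition evaluates the local intersection multiplicity and gives
\[
{\rm length}_{\bF_p}\mathcal{O}_{\mathcal{Z}(T,\omega)_{\F_p},x}=e(\mathcal{Z}(T,\omega)_{\F_p},x)=\mathcal{T}_{b_1,b_2},
\]
with $(b_1,b_2)$ the explicit pair attached to $\mathrm{GK}(T\otimes\Z_p)$. The first step is simply to notice that the right-hand side is finite: by the defining formula (\ref{eqta}), $\mathcal{T}_{b_1,b_2}$ is a finite sum of non-negative powers of $p$, hence a finite integer. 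Therefore $\mathcal{O}_{\mathcal{Z}(T,\omega)_{\F_p},x}$ has finite length; in particular it is an Artinian local ring, so $\dim\mathcal{O}_{\mathcal{Z}(T,\omega)_{\F_p},x}=0$.

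Next I would record the ambient finiteness. For $0\le n\le 3$ the integral model $\mathcal{M}$ is of finite type over $\Z_{(p)}$ — it is a moduli space of abelian varieties of dimension $2^{n}$ with the endomorphism, polarization and level structure fixed in Section \ref{sectionin} — so $\mathcal{M}_{\F_p}$ is of finite type over $\bF_p$ and the closed subscheme $\mathcal{Z}(T,\omega)_{\F_p}$ is Noetherian. Moreover the residue field of $x$ is algebraic, hence finite, over $\bF_p$ (it is the image of a geometric point in characteristic $p$ in a scheme of finite type over $\bF_p$), so $x$ is a closed point of $\mathcal{Z}(T,\omega)_{\F_p}$.

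Finally I would invoke the elementary fact that a closed point $x$ of a Noetherian scheme $Y$ at which $\dim\mathcal{O}_{Y,x}=0$ is isolated: the vanishing of the local dimension forces $\overline{\{x\}}=\{x\}$ to be an irreducible component of $Y$, and it is the only component through $x$; since $Y$ is Noetherian it has finitely many irreducible components, and the union of those different from $\{x\}$ is a closed set not containing $x$, so $\{x\}$ is open in $Y$. Applying this with $Y=\mathcal{Z}(T,\omega)_{\F_p}$ yields the corollary. I expect no genuine obstacle here: all the geometric content — the splitting of the formal group of $A_x$ into factors of dimension one and height two, and the identification of the deformation problem with the one governing $\mathcal{T}_{b_1,b_2}$ — was already carried out in the proof of Proposition \ref{specialcycles}; what remains is only the observation that $\mathcal{T}_{b_1,b_2}$ is visibly finite, together with the standard translation of ``finite-length local ring'' into ``isolated point''.
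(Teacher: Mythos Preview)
Your argument is correct and is precisely the intended one: the paper states the corollary without proof because it is immediate from Proposition~\ref{specialcycles}, namely the finiteness of $\mathcal{T}_{b_1,b_2}$ forces $\mathcal{O}_{\mathcal{Z}(T,\omega)_{\F_p},x}$ to be Artinian and hence $x$ to be isolated. Your added justification that $x$ is a closed point of a Noetherian scheme is routine but welcome.
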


\begin{Prop}\label{special2}Let $T'$ be an element in ${\rm Sym}_{n+1}(\Z_{(p)})$. For any isolated geometric point $\xi=(A_\xi,\iota,\lambda, \overline{\eta}^p,\textbf{j}')$ of $\mathcal{Z}(T')$ let $L'$ be 
the $\Z_p$-lattice corresponding to the special endomorphisms $\textbf{j}'$ with ${\rm GK}(L')={\rm GK}(T'\otimes \Z_p)=(a_0,\ldots,a_{n})$.  
Then for any $T\in {\rm Sym}_{n}(\Z_{(p)})$ with 
${\rm GK}(T\otimes\Z_p)=(a_0,\ldots,a_{n-1})$ such that $T\otimes\Z_p$ comes from a sublattice of $L'$ and 
a geometric point $x$ of $\mathcal{Z}(T,\omega)_{\F_p}$ whose underlying abelian variety $A_x$ is $A_\xi$, it holds that  
$$e(\mathcal{Z}(T,\omega)_{\F_p},x)=e(a_0,\ldots,a_{n})-e(a_0,\ldots,a_{n}-2).$$
\end{Prop}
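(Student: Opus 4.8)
Proposition \ref{special2} asserts that the length of the deformation of $n$ special endomorphisms in the special fiber is exactly the "defect" $e(a_0,\ldots,a_n) - e(a_0,\ldots,a_n-2)$ arising when one increases the $(n+1)$-st GK-entry from $a_n-2$ to $a_n$ in the mixed-characteristic problem. The plan is to mimic the inductive decomposition of the modular-polynomial ideal used in Section \ref{sectionapp} (Proposition \ref{decom}, Lemma \ref{difference}) together with the factorization $\varphi_{n+1} = p\varphi'_{n+1}$ in ${\rm End}(G)$. Concretely, let $R = W(\bF_p)[[t_1,\ldots,t_n]]$ be the universal deformation ring of $G = \widehat{A}_\xi$, let $I = I(\varphi_1,\ldots,\varphi_{n+1})$ and $I' = I(\varphi_1,\ldots,\varphi_n,\varphi'_{n+1})$ be the minimal liftability ideals, and recall from Equation (\ref{lim}) that $e(a_0,\ldots,a_n) = {\rm length}_{W(\bF_p)}R/I$ while $e(a_0,\ldots,a_n-2) = {\rm length}_{W(\bF_p)}R/I'$. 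First I would show $I \subseteq I'$ (since liftability of $\varphi_{n+1}$ over $R/J$ forces liftability of $\varphi'_{n+1}=\frac{1}{p}\varphi_{n+1}$ whenever the latter is an honest endomorphism modulo $J$, as $p$ is a nonzerodivisor), so that $R/I \twoheadrightarrow R/I'$ and
\begin{equation*}
e(a_0,\ldots,a_n) - e(a_0,\ldots,a_n-2) = {\rm length}_{W(\bF_p)} I'/I.
\end{equation*}

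The heart of the argument is then to identify $I'/I$ with the ideal $I_p = I_p(\bar\varphi_1,\ldots,\bar\varphi_n)$ in $R_p = \bF_p[[t_1,\ldots,t_n]]$ governing the special cycle $\mathcal{Z}(T,\omega)_{\F_p}$ at $x$, i.e. to show ${\rm length}_{W(\bF_p)}I'/I = {\rm length}_{\bF_p}R_p/I_p = e(\mathcal{Z}(T,\omega)_{\F_p},x)$. The mechanism is exactly the one in the Gross--Keating computation reproduced in the proof of Theorem \ref{thmaniso2}: deforming $\varphi_{n+1} = p\varphi'_{n+1}$ over $R/I'$ (where $\varphi_1,\ldots,\varphi_n,\varphi'_{n+1}$ already lift) is an obstruction problem whose solution locus is cut out by $p$ times a unit in the deformation parameter, which by the Serre--Tate / crystalline deformation description of liftability of endomorphisms (cf. the references for $n=3,2,1,0$: \cite{KR1}, \cite{KR2}, \cite{KRY2}, \cite{KRY1}) collapses $R/I'$ onto an $\bF_p$-algebra and leaves precisely the residual deformation problem of $\bar\varphi_1,\ldots,\bar\varphi_n$ modulo $p$. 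I would make this precise by passing to the formal-group level via (\ref{endo}), using that $A_\xi$ (hence $A_x$) is superspecial and $q_{L'}$ represents $1$ so that the idempotent splitting $G \simeq G_0^{\,\oplus 2}$ of the proof of Proposition \ref{specialcycles} applies; under this splitting the lifting equations for $\varphi_1,\ldots,\varphi_n$ become the endomorphism-lifting equations over ${\rm End}(G_0)$ (the maximal order in the quaternion division algebra over $\Q_p$), and the extra generator $\varphi_{n+1} = p\varphi'_{n+1}$ contributes the single relation forcing reduction mod $p$.

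Assembling: combining $I'/I \cong R_p/I_p$ as $W(\bF_p)$-modules with Proposition \ref{specialcycles}, which already evaluates ${\rm length}_{\bF_p}R_p/I_p = \mathcal{T}_{b_1,b_2}$ for the stated $(b_1,b_2)$, yields
\begin{equation*}
e(\mathcal{Z}(T,\omega)_{\F_p},x) = {\rm length}_{W(\bF_p)} I'/I = e(a_0,\ldots,a_n) - e(a_0,\ldots,a_n-2),
\end{equation*}
which is the claim; note this also re-proves the well-definedness of $e(a_0,\ldots,a_n)$ (i.e. its dependence only on ${\rm GK}(T'\otimes\Z_p)$) by induction on $a_n$, reducing to the base cases $a_n \le 1$ handled in the cited references, so the statement "$e(\mathcal{Z}(T'),\xi)$ depends only on ${\rm GK}(T')$" promised before (\ref{lim}) follows en route. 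The main obstacle I anticipate is the identification $I'/I \cong R_p/I_p$: one must check carefully that the reduction-mod-$p$ phenomenon for the single generator $\varphi_{n+1}=p\varphi'_{n+1}$ is clean (no higher $p$-power contributions, no interaction with the $t_i$-adic structure beyond a unit), and that this holds uniformly across the four cases $n=0,1,2,3$ — which requires quoting the respective deformation-theoretic inputs of \cite{KRY1}, \cite{KRY2}, \cite{KR2}, \cite{KR1} in a compatible form, since those papers phrase the obstruction calculus somewhat differently. The superspecial hypothesis on $A_x$ and the representability of $1$ by $q_L$ are exactly what make the idempotent reduction to the one-dimensional formal group $G_0$ legitimate, so the argument should not need any genuinely new deformation theory beyond what is in the literature plus the bookkeeping of $\varphi_{n+1}=p\varphi'_{n+1}$.
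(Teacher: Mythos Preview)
Your approach is genuinely different from the paper's, and the gap you yourself flag is real and not easily closed. The paper does \emph{not} attempt a direct deformation-theoretic identification of $I'/I$ with $R_p/I_p$. Instead, for each $n\in\{0,1,2,3\}$ it quotes the relevant reference (\cite{KR1}, \cite{KR2}, \cite{KRY2}, \cite{KRY1}) to obtain an explicit identification $e(a_0,\ldots,a_n)=\alpha_p(\underbrace{0,\ldots,0}_{},a_1,\ldots,a_n)$ with Gross--Keating's local intersection multiplicity, then invokes Lemma~5.6 of \cite{GK1} (the inductive formula $\alpha_p(a_1,a_2,a_3)=\alpha_p(a_1,a_2,a_3-2)+\mathcal{T}_{a_1,a_2}$) together with Proposition~\ref{specialcycles}. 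Both sides of the desired identity are thus computed to equal the same $\mathcal{T}_{b_1,b_2}$, and the proposition follows by comparison of values rather than by any structural isomorphism of quotient modules.

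Two remarks on your write-up. First, your justification for $I\subseteq I'$ has the implication reversed: it is liftability of $\varphi'_{n+1}$ that forces liftability of $\varphi_{n+1}=p\varphi'_{n+1}$ (multiply the lift by $p$), not the other way around; dividing a lift of $\varphi_{n+1}$ by $p$ need not produce an endomorphism of the deformed formal group. The conclusion $I\subseteq I'$ is nonetheless correct. Second, and more seriously, the identification ${\rm length}_{W(\bF_p)}I'/I={\rm length}_{\bF_p}R_p/I_p$ is precisely what Gross--Keating's Lemma~5.6 establishes in the prototype case (after the idempotent reduction to ${\rm End}(G_0)$), and the cited references reduce the higher-$n$ cases to that one. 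Your sketch ``cut out by $p$ times a unit, hence collapse to $R_p$'' is the right heuristic, but making it rigorous amounts to redoing those computations; it is not a consequence of general obstruction theory alone, because one must control how the liftability ideal for $p\varphi'_{n+1}$ compares to $(p)+I'$ inside $R$, and this uses the specific structure of the displays/Dieudonn\'e modules in each case. So your route is more conceptual but not shorter: it reproves the hard input rather than citing it.
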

\begin{proof}
When $n=3$, by Theorem 0.1 or Corollary 5.15 of \cite{KR1}, we see that $T'\otimes \Z_p$ represents 1 over $\Z_p$. This implies $a_0=0$. 
Let $L'$ be the lattice in $\Q_p$ corresponding to $T'$ over $\Z_p$. Then it turns out that 
its Gross-Keating invariant becomes ${\rm GK}(L')=(0,a_1,a_2,a_3)$ with nondecreasing integers $0\le a_1\le a_2\le a_3$ which 
satisfy the condition that the parities of the three integers never be same. 
Then the argument in p. 733 of \cite{KR1} tells us that  
\begin{equation}\label{n3}
e(0,a_1,a_2,a_3)=\alpha_p(a_1,a_2,a_3)
\end{equation}
where $\alpha_p$ is the intersection number in Proposition 5.4 of \cite{GK1} as mentioned before. 
By Lemma 5.6 of \cite{GK1}  and Proposition \ref{specialcycles}
$$e(0,a_1,a_2,a_3)-e(0,a_1,a_2,a_3-2)=\mathcal{T}_{a_1,a_2}=e(\mathcal{Z}(T,\omega)_{\F_p},x).$$

When $n=2$, by Theorem 6.1 of \cite{KR2}, we see that $T'\otimes \Z_p$ is isometric to ${\rm diag}(1, u_1 p^{a_1},u_2 p^{a_2})$ over $\Z_p$ with units $u_1,u_2$ in $\Z_p$. 
Let $L'$ be the lattice in $\Q_p$ corresponding to $T'$ over $\Z_p$. Then it turns out that 
its Gross-Keating invariant becomes ${\rm GK}(L')=(0,a_1,a_2)$ with nondecreasing integers $0\le a_1\le a_2$ which 
satisfy the condition that the parities of $0,a_2,a_3$ never be same.  
Then the argument in p. 195 loc.cit. shows that  
\begin{equation}\label{n2}
e(0,a_1,a_2)=\alpha_p(0,a_1,a_2)
\end{equation}
and similarly we have $e(0,a_1,a_2)-e(0,a_1,a_2-2)=\mathcal{T}_{0,a_1}=e(\mathcal{Z}(T,\omega)_{\F_p},x)$.  

When $n=1$, by Theorem 3.6.1 of \cite{KRY2}, we see that $T'\otimes \Z_p$ is isometric to 
${\rm diag}(u_0 p^{a_0},u_1 p^{a_1})$ over $\Z_p$ with units $u_0,u_1$ in $\Z_p$. 
Let $L$ be the lattice of rank three in $\Q_p$ corresponding to ${\rm diag}(1,T')$ over $\Z_p$. Then it turns out that 
its Gross-Keating invariant becomes ${\rm GK}(L)=(0,a_0,a_1)$ with nondecreasing integers $0\le a_0\le a_1$ which 
satisfy the condition that the parities of $0,a_0,a_1$ never be same.  
Then Theorem 3.6.1 of loc.cit. shows that  
\begin{equation}\label{n1}
e(a_0,a_1)=\alpha_p(0,a_0,a_1)
\end{equation}
and similarly we have $e(a_0,a_1)-e(a_0,a_1-2)=\mathcal{T}_{0,a_0}=e(\mathcal{Z}(T,\omega)_{\F_p},x)$.

Finally we consider the case when $n=0$.  
By Proposition 5.9 of \cite{KRY2}, we see that $T'$ (it is denoted by $t$ in the reference) satisfies that 
$a_0:={\rm ord}_p(t)\equiv 1$ mod 2. 

Let $L$ be the lattice of rank three in $\Q_p$ corresponding to ${\rm diag}(1,1,T')$ over $\Z_p$. Then it turns out that 
its Gross-Keating invariant becomes ${\rm GK}(L)=(0,0,a_3)$ and it satisfies that $a_0$ is not even.  
Then Theorem 5.11 of loc.cit. shows that  
\begin{equation}\label{n0}
e(a_0)=\alpha_p(0,0,a_0)
\end{equation}
Then $e(a_0)-e(a_0-2)=1=e(\mathcal{Z}(T,\omega)_{\F_p},x)$ by Lemma 5.6 of \cite{GK1} and the claim follows with the convention made when $n=0$.
\end{proof}

Assume that $T\in {\rm Sym}_{n}(\Z_{(p)})$ satisfies the condition in Proposition \ref{special2}. For $n$ and Gross-Keating invariant for $T$ in  Equations (\ref{n3})-(\ref{n0}) 
we take an anisotropic lattice $M$ of rank 2 with 
$${\rm GK}(M)=
\left\{
\begin{array}{ll}
(a_1,a_2) &\ {\rm if}\ n=3 \\
(0,a_{n-1}) &\ {\rm if}\ n=1,2  \\
(0,0) & \ {\rm if}\ n=0
\end{array}
\right.
$$  
Note that in our situation $T\otimes \Z_p$ is always anisotropic and hence we can apply the results in Section \ref{reGK} to $T\otimes \Z_p$. 
Then plugging Proposition \ref{special2} with Section \ref{reGK} (cf. Theorem \ref{thmaniso1}) we have 
\begin{Thm}\label{main-ind}
Keep the assumption in Proposition \ref{special2}. 
Then 
$$e(\mathcal{Z}(T)_{\bF_p},x)=\frac{-1}{p-1}\cdot \mathcal{F}'_M(\frac{1}{p}).   
$$

\end{Thm}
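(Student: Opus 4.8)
The plan is to chain together two facts established earlier in the excerpt. First I would invoke Proposition \ref{special2}, which identifies the local intersection multiplicity on the special fiber with a difference of values of the function $e$, and more precisely pins it down via the chain of identities $(\ref{n3})$--$(\ref{n0})$ and Proposition \ref{specialcycles} to the quantity $\mathcal{T}_{b_1,b_2}$, where $(b_1,b_2)$ is exactly the Gross-Keating invariant of the rank-$2$ anisotropic lattice $M$ attached to $T$ (namely $(a_1,a_2)$ when $n=3$ and $(0,a_{n-1})$ otherwise, with the convention for $n=0$). So the first step is purely bookkeeping: observe that in each of the four ranges $0\le n\le 3$ the assumption of Proposition \ref{special2} forces $T\otimes\Z_p$ to be anisotropic of the stated shape, so that $M$ is a genuine anisotropic quadratic $\Z_p$-lattice of rank $2$ and $e(\mathcal{Z}(T)_{\bF_p},x)=\mathcal{T}_{\mathrm{GK}(M)}$.

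Second, I would apply Theorem \ref{thmaniso1}, which gives the clean identity
\[
\mathcal{T}_{a_1,a_2}=\frac{-1}{p-1}\cdot\mathcal{F}'_M(1/p)
\]
for an anisotropic binary lattice $M$ with $\mathrm{GK}(M)=(a_1,a_2)$. Substituting the output of the first step into this identity immediately yields
\[
e(\mathcal{Z}(T)_{\bF_p},x)=\mathcal{T}_{\mathrm{GK}(M)}=\frac{-1}{p-1}\cdot\mathcal{F}'_M(1/p),
\]
which is the assertion of Theorem \ref{main-ind}. So the proof is essentially a two-line citation once the hypotheses are matched up.

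The only genuine content to check — and the step I would expect to be the main (mild) obstacle — is the consistency of conventions across the references: one must verify that the lattice $M$ appearing in the statement really carries the Gross-Keating invariant that the chain $(\ref{n3})$--$(\ref{n0})$ produces, and that the normalization of $\mathcal{T}_{b_1,b_2}$ used in Proposition \ref{specialcycles} (inherited from \cite{KR1},\cite{KR2},\cite{KRY1},\cite{KRY2} and ultimately from \cite{GK1}) agrees with the normalization used in Theorem \ref{thmaniso1} and in Equation $(\ref{eqta})$. Once the definitions of $\mathcal{T}_{a_1,a_2}$ are seen to coincide, the argument for $1\le n\le 3$ is uniform; the case $n=0$ is degenerate but covered by the convention that there $\mathcal{T}_{0,0}=1$ and $\mathcal{F}'_M(1/p)=-(p-1)$ for $M$ with $\mathrm{GK}(M)=(0,0)$ (Lemma \ref{lemaniso2}), so that the formula still reads $1=\frac{-1}{p-1}\cdot(-(p-1))$. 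I would phrase the final proof as: "By Proposition \ref{special2} (together with Proposition \ref{specialcycles}), $e(\mathcal{Z}(T)_{\bF_p},x)=\mathcal{T}_{\mathrm{GK}(M)}$, where $M$ is the indicated anisotropic rank-$2$ lattice; now apply Theorem \ref{thmaniso1}."
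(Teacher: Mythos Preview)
Your proposal is correct and matches the paper's own argument essentially verbatim: the paper's proof consists of the single sentence preceding the theorem, namely to combine Proposition \ref{special2} (whose proof, via Proposition \ref{specialcycles}, already identifies $e(\mathcal{Z}(T)_{\bF_p},x)$ with $\mathcal{T}_{b_1,b_2}$ for the appropriate $(b_1,b_2)=\mathrm{GK}(M)$) with Theorem \ref{thmaniso1}. Your additional remarks on matching conventions and the degenerate $n=0$ case are accurate and simply make explicit what the paper leaves implicit.
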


\appendix
\section{The table of intersection numbers} \label{App:Appendix} 
For each positive integer $m$ we denote by $\psi_{m/n^2}$  the polynomial in $\Z[x,y]$ which appears as a factor of $\varphi_m=\ds\prod_{n^2|m}\psi_{m/n^2}$ (cf. p.2 of \cite{V1}). 
 In this appendix we give a table for  
 $${\rm dim}_\C\C[x,y]/(\psi_{m_1},\psi_{m_2}),\ {\rm dim}_{\F_p}\F_p[x,y]/(\psi_{m_1},\psi_{m_2})$$
  for $2\le m_1\le m_2 \le 9$ such that $m_1m_2$ is not a square and $2\le p<50$. 
 S. Yokoyama (cf. \cite{yok}) kindly computed both quantities and checked they coincide directly. 
From this computation with Corollary \ref{reg} and Theorem 2.1 of \cite{V1}  it is easy to see that
$$(T_{m_1,p},T_{m_2,p})=\sum_{n^2_1|m_1,n^2_2|m_2}{\rm dim}_{\F_p}\F_p[x,y]/(\psi_{\frac{m_1}{n^2_1}},\psi_{\frac{m_2}{n^2_2}})$$
$$=
\sum_{n^2_1|m_1,n^2_2|m_2}{\rm dim}_{\C}\C[x,y]/(\psi_{\frac{m_1}{n^2_1}},\psi_{\frac{m_2}{n^2_2}})=(T_{m_1,\C},T_{m_2,\C})$$ 
for $m_1,m_2,p$ above  including the case where $p=2$. 
Put $d(m_1,m_2):={\rm dim}_\C\C[x,y]/(\psi_{m_1},\psi_{m_2})={\rm dim}_{\F_p}\F_p[x,y]/(\psi_{m_1},\psi_{m_2})$. 
We list up all of them as below:

\begin{table}[htbp]
\label{tab1}
\begin{center}
\begin{tabular}{|c|c|c|c|c|c|c|c|c|c|c|c|c|}
\hline
$(m_1,m_2)$ & $(2,3)$ &  $(2,4)$ & $(2,5)$ & $(2,6)$&
$(2,7)$ & $(2,9)$  & $(3,4)$&
$(3,5)$ & $(3,6)$   &  $(3,7)$ & $(3,8)$ & $(3,9)$ \\
\hline
$d(m_1,m_2)$ & 18 & 28 &  30 & 56 & 42 & 62 & 38 & 40 & 78 & 56 & 82 & 84 \\
\hline 
\hline 
$(m_1,m_2)$ & $(4,5)$ &  $(4,6)$ & $(4,7)$ & $(4,8)$&
$(5,6)$ & $(5,7)$  & $(5,8)$&
$(5,9)$ & $(6,7)$   &  $(6,8)$ & $(6,9)$ & $(7,8)$ \\
\hline
${\rm IN}(m_1,m_2)$ & 60 & 118 &  84 & 124 & 122 & 84 & 126 & 128 & 168 & 248 & 252 & 170 \\
\hline 
\hline
$(m_1,m_2)$ & $(7,9)$ &  $(8,9)$ & & &&&&&&&& \\
\hline 
${\rm IN}(m_1,m_2)$ & 172 & 256  & & &&&&&&&&  \\
\hline

\end{tabular}
\end{center}
\caption{}
\end{table}
Let us remark that $d(1,m)=d(m,1)=\ds\sum_{d|m}\max\{d,\frac{m}{d}\}$. By using this we see, for example, that 
$(T_{2,p},T_{4,p})=(T_{2,\C},T_{4,\C})=d(2,1)+d(2,4)=4+28=32$ for any $p<50$.


\begin{thebibliography}{99}

\bibitem[ARGOS07]{A} \textit{Argos Seminar on Intersections of Modular Correspondences}, Held at the University of Bonn, Bonn, 2003-2004. Ast\'erisque No. 312 (2007).



    \bibitem[BLR90]{BLR} S. Bosch, W. L$\ddot{\mathrm{u}}$tkebohmert, and M. Raynaud,  \textit{N$\acute{\mathrm{e}}$ron Models}, Ergeb. Math. Grenzgeb.(3) 21, Springer, Berlin, 1990


\bibitem[BY]{BY} J. Bruinier and T. Yang, \textit{Arithmetic degrees of special cycles and derivatives of Siegel Eisenstein series},   arXiv:1802.09489.


\bibitem[Bou07]{B} I. I. Bouw, \textit{Invariants of ternary quadratic forms},  Ast\'erisque 312 (2007) 113-137.

\bibitem[Cho15]{C1} S. Cho, \textit{Group schemes and local densities of quadratic lattices in residue characteristic 2}, Compositio Math.
 Vol. 151 (2015) 793-827.

\bibitem[Cho16]{C2} S. Cho, \textit{Group schemes and local densities of ramified hermitian lattices in residue characteristic 2 Part I}, Algebra \& Number Theory, 10-3 (2016)  451-532.






\bibitem[CIKY1]{CIKY1} S. Cho, T. Ikeda, H. Katsurada, and T. Yamauchi, \textit{An inductive formula of the Gross-Keating invariant of a quadratic form}, preprint.

 \bibitem[CIKY2]{CIKY2} S. Cho, T. Ikeda, H. Katsurada, and T. Yamauchi, \textit{Remarks on the Extended Gross-Keating data and the Siegel series of a quadratic form}, arXiv:1709.02772.

\bibitem[CL]{CL}P. Clark, Lectures on Shimura curves 1: Endomorphisms of elliptic curves, available at 
http://math.uga.edu/~pete/SC1-endomorphisms.pdf. 


\bibitem[Con17]{conrad}B. Conrad, \textit{Private communication in April 2017}.




\bibitem[Dur44]{Dur} W. H. Durfee, \textit{Congruence of quadratic forms over valuation rings}, Duke Math. J. 11 (1944) 687-697.

\bibitem[GY00]{GY}  W. T. Gan and J.-K. Yu,  \textit{Group schemes and local densities}, Duke Math. J. 105 (2000) 497-524.

\bibitem[Gro86]{G} B-H. Gross, On canonical and quasi-canonical liftings, Invent math 84, 321-326 (1986).

\bibitem[GK93]{GK1} B-H. Gross and K. Keating, \textit{On the intersection of modular correspondences}, Invent math.112 (1993) 225-245.

\bibitem[G\"or07-1]{G1} U. G\"ortz, \textit{A sum of representation numbers},  Ast\'erisque 312 (2007) 9-14.


\bibitem[G\"or07-2]{G2} U. G\"ortz, \textit{Arithmetic intersection numbers},  Ast\'erisque 312 (2007) 15-24.






\bibitem[Han99]{Han} J. P. Hanke, \textit{An exact mass formula for orthogonal groups over number fields}, Ph.D.
dissertation, Princeton Univ. (1999)  Princeton.

\bibitem[Ike17]{I1} T. Ikeda, \textit{On the functional equation of the Siegel series}, J.  Number Theory. 172 (2017) 44-62.

\bibitem[IK1]{IK1} T. Ikeda and H. Katsurada, \textit{On the Gross-Keating invariant of a quadratic form over a non-archimedean local field},
arXiv:1504.07330, to appear in Amer. J. Math.

\bibitem[IK2]{IK2} T. Ikeda and H. Katsurada, \textit{Explicit formula for the Siegel series of a half-integral matrix over the ring of integers in a non-archimedian local field},
arXiv:1602.06617.





\bibitem[Kat99]{Kat} H. Katsurada, \textit{An explicit formula for Siegel series}, American J. Math. 121 (1999) 415-452.


\bibitem[Katz81]{Katz}N. Katz, Serre-Tate local moduli, Surfaces
alg\'ebriques, in: Lect. Notes in Math., vol. 868, Springer-Verlag, Berlin, 1981, pp. 138-202.

\bibitem[KM85]{KM} N. Katz and B. Mazur, Arithmetic of elliptic curves, Annals of Mathematics Studies, 1985. 





\bibitem[Kit83]{K83} Y. Kitaoka, \textit{A note on local densities of quadratic forms}, Nagoya Math. J. 92 (1983) 145-152.



\bibitem[Kit93]{Kit}  Y. Kitaoka, \textit{Arithmetic of Quadratic Forms}, Cambridge Tracts in Math. 106, Cambridge Univ. Press, Cambridge (1993).




\bibitem[Kud97]{Kudla1}S. Kudla, \textit{Central derivatives of Eisenstein series and height pairings}, Ann of math. (2) 146, no.3 (1997) 545-646.


\bibitem[KR99]{KR2} S. Kudla and M. Rapoport, \textit{Arithmetic of Hirzebruch-Zagier cycles},  J. Reine Angew. Math. 515 (1999) 155-244. 

\bibitem[KR00]{KR1} S. Kudla and M. Rapoport, \textit{Cycles on Siegel 3-folds and derivatives of Eisenstein series}, Ann. Sci. \'Ecole Norm. Sup. (4) 33, no. 5 (2000) 695-756. 


\bibitem[KRY99]{KRY1} S. Kudla and M. Rapoport, and T. Yang, \textit{On the Derivative of an Eisenstein Series of Weight One},
Int. Math. Res. Not., 7 (1999) 347-385.

\bibitem[KRY06]{KRY2} S. Kudla and M. Rapoport, and T. Yang, \textit{Modular Forms and Special Cycles on Shimura Curves},
Annals of Mathematics Studies, vol. 161. Princeton University Press, Princeton (2006).

\bibitem[La87]{Lang} S. Lang,  Elliptic functions. With an appendix by J. Tate. Second edition. Graduate Texts in Mathematics, 112. Springer-Verlag, New York, 1987.



\bibitem[LZ]{LZ} C. Li and W. Zhang, \textit{Kudla-Rapoport cycles and derivatives of local densities}, 
arXiv:1908.01701.


\bibitem[Meu07]{Me}V. Meusers, 
Canonical and quasi-canonical liftings in the split case,  
 Ast\'erisque 312 (2007) 87-98.

\bibitem[Nag92]{nagaoka}S. Nagaoka, A note on the Siegel-Eisenstein series of weight 2 on ${\rm Sp}_2(\Z)$. Manuscripta Math. 77 (1992), no. 1, 
71-88.

\bibitem[O'Me00]{O}  O.~T.~O'Meara, \emph{Introduction to Quadratic Forms}, reprint of 1973 ed., Classics\ Math.\ (2000), Springer, Berlin

\bibitem[O70]{O}F. Oort, Finite group schemes, local moduli for abelian varieties, and lifting problems. Compositio Math. 23 (1971), 265-296.

\bibitem[Pop11]{Pop} M. Popa, \textit{Course on p-adic and motivic integration},  UIC lecture note (2011), 
available at https://sites.math. northwestern.edu/$_{\widetilde{~}}$mpopa/571

\bibitem[Rap07]{Rap1} M. Rapoport, \textit{Deformations of isogenies of formal groups},  Ast\'erisque 312 (2007) 139-169.

\bibitem[Ro]{Ro}A-M. Robert, A Course in $p$-adic Analysis, Graduate Texts in Mathematics, 198. Springer-Verlag, New York, 2000.

 \bibitem[Sie35]{Sie} C. Siegel, \textit{$\ddot{U}ber$ die analytische Theorie der quadratischen Formen}, Ann. Math. 36 (1935) 527-606.

\bibitem[Sil09]{Sil}J. Silverman,  \textit{The Arithmetic of Elliptic Curves}, Second edition. Graduate Texts in Mathematics, 106. Springer, Dordrecht, 2009. xx+513 pp.


\bibitem[Tam66]{Tam} T. Tamagawa, \textit{Ad\'eles, Algebraic Groups and Discontinuous Subgroups (Proc. Sympos.
Pure Math., Boulder, Colo., 1965)}, vol. 9, Amer. Math. Soc., Providence, R.I. (1966)
pp. 113-121.

\bibitem[Vog07]{V1} G. Vogel, \textit{Modular polynomials},  Ast\'erisque 312 (2007) 1-7.


\bibitem[Wed07-1]{Wed1} T. Wedhorn, \textit{Calculation of representation densities},  Ast\'erisque 312 (2007) 179-190.

\bibitem[Wed07-2]{Wed2} T. Wedhorn, \textit{The genus of the endomorphisms of a supersingular elliptic curve},  Ast\'erisque 312 (2007) 25-47.

\bibitem[Weil82]{Weil} A. Weil, \textit{Adeles and Algebraic Groups}, Progress in Mathematics, vol. 23, Birkh\"auser,
Boston Mass. (1982) pp. iii+126.
\bibitem[Yan98]{Y0} T. Yang, \textit{An explicit formula for local densities of quadratic forms}, J. Number Theory 72 (1998) 309-356.


\bibitem[Yan04]{Y} T. Yang, \textit{Local densities of 2-adic quadratic forms}, J. Number Theory 108 (2004) 287-345.



\bibitem[Yok17]{yok} S. Yokoyama, \textit{Private communication in March 2017}. 

\bibitem[Yu95]{Yu} J-.K. Yu, \textit{On the moduli of quasi-canonical liftings}, Compositio Math. 96 (1995), no. 3, 293-321. 
\bibitem[Yu08]{Yu1} J-.K. Yu, \textit{Tamagawa number} Purdue lecture note (2008)

\end{thebibliography}
\end{document}